\newif\if@check@engine  \@check@enginetrue 
\numberwithin{equation}{section}
\newcommand{\skipfig}[1]{#1}
\newcommand{\CC}{\mathbb{C}}
\newcommand{\RR}{\mathbb{R}}
\newcommand{\QQ}{\mathbb{Q}}
\newcommand{\ZZ}{\mathbb{Z}}
\newcommand{\R}{\mathcal{R}}
\newcommand{\PP}{{\mathbb P}}
\newcommand{\vvspace}{\vspace{1.5ex}}
\newcommand{\mbbo}{{\bf 1}}
\renewcommand{\dim}{{\rm dim}}
\newcommand{\codim}{{\rm codim}}
\newcommand{\Vol}{{\rm Vol}}
\newcommand{\e}{\varepsilon}
\newcommand{\relint}{{\rm rel.int}}
\newcommand{\simto}{\overset{\sim}{\longrightarrow}}
\newcommand{\dint}{\displaystyle \int}
\newtheorem{theorem}{Theorem}[section]
\newtheorem{proposition}[theorem]{Proposition}
\newtheorem{conjecture}[theorem]{Conjecture}
\newtheorem{lemma}[theorem]{Lemma}
\newtheorem{corollary}[theorem]{Corollary}
\theoremstyle{definition}
\newtheorem{definition}[theorem]{Definition}
\newtheorem*{definition*}{Definition}
\newtheorem*{example*}{Example}
\newtheorem{remark}[theorem]{Remark}
\newtheorem{example}[theorem]{Example}
\title{On the monodromy conjecture for 
non-degenerate hypersurfaces 
\footnote{{\bf 2010 Mathematics 
Subject Classification: }14M25, 
32S40}}
\author{Alexander ESTEROV 
\footnote{HSE University\newline The article was prepared within the framework of the Academic Fund Program at the National Research University Higher School of Economics (HSE University) in 2018 -- 2019 (grant 18-01-0029) and by the Russian Academic Excellence Project ``5-100''.}, 
Ann LEMAHIEU 
\footnote{This research is partially supported by the MCI-Spain grant
MTM2010-21740-C02, the ANR `SUSI' project and the ANR `Defigeo' project.} 
and Kiyoshi TAKEUCHI }
\begin{document}

\maketitle

\begin{abstract}
The monodromy conjecture is an umbrella term for several conjectured relationships
between poles of zeta functions, monodromy eigenvalues and roots of
Bernstein-Sato polynomials in arithmetic geometry and singularity theory.
Even the weakest of these relations --- the Denef--Loeser conjecture on
topological zeta functions --- is open for surface singularities.

We prove it for a wide class of multidimensional singularities that are
non-degenerate with respect to their Newton polyhedra, including all such
singularities of functions of four variables.

A crucial difference from the known case of three variables is the existence of
degenerate singularities arbitrarily close to a non-degenerate one. Thus, even aiming
at the study of non-degenerate singularities, we have to go beyond this
setting.

We develop new tools to deal with such multidimensional phenomena, and conjecture how the proof for
non-degenerate singularities of arbitrarily many variables might look like.

\vspace{1ex}

{\sc Sur la conjecture de monodromie pour les singularités d'hypersurfaces nondégénérées.}

\vspace{1ex}

La conjecture de monodromie est un terme parapluie pour plusieurs relations conjecturées entre les pôles de fonctions zêta, les valeurs propres de monodromie et les zéros de polynômes de Bernstein en géométrie arithmétique et en théorie des singularités. 
Même la plus faible de ces relations - la conjecture de Denef-Loeser pour la fonction zêta topologique - est ouverte pour les singularités de surfaces. 

Nous démontrons cette conjecture pour une grande classe de singularités multidimensionnelles qui sont nondégénérées pour leur polyèdre de Newton, y incluses toutes celles de fonctions de quatre variables.

Une différence cruciale avec le cas de trois variables est l'existence de singularités dégénérées proche d'une singularité nondégénérée. Ainsi, même pour l'étude des singularités nondégénérées, il faut surmonter le contexte des polyèdres de Newton et des résolutions toriques. 

Finalement nous esquissons l'idée d'une preuve pour les singularités nondégénérées en un nombre quelconque de variables.
\end{abstract}

\tableofcontents

\section{Introduction}\label{sec:1}
Over the fields $\RR$ and $\CC$ it is well-known 
that the poles of the local zeta function associated to 
a polynomial $f$ are contained in the set of roots 
of the Bernstein-Sato polynomial and 
integer shifts of them. By a celebrated theorem 
of Kashiwara and Malgrange, this implies that for any 
such pole $s_0 \in \QQ$ the complex number 
$\exp (2 \pi i s_0) \in \CC$ is an eigenvalue of the 
monodromies of the complex hypersurface defined by $f$. 
Igusa predicted a similar beautiful relationship 
between the poles of $p$-adic integrals and the 
complex monodromies. This is now called the monodromy 
conjecture (see the papers of Denef \cite{D}, 
Nicaise \cite{N} and Denef and Loeser \cite{D-L-3} for excellent reviews on 
this subject). 
Later in \cite{D-L-1}, Denef and Loeser introduced 
the local topological zeta function $Z_{ {\rm top}, f}(s)$ 
associated to $f$ and proposed a weaker version of 
the monodromy conjecture. However, even this weaker version, proposed thirty years ago, is proved so far without any restrictions in dimension two only (\cite{L-1}). 

For other important contributions to 
this Denef-Loeser conjecture, 
see for example \cite{L-2} by Loeser, where he studies the monodromy 
conjecture for non-degenerate singularities satisfying some 
non-resonance conditions. The result holds for  isolated singularities. 
In \cite{L-V}, the second author and Van Proeyen restrict 
to non-degenerate surface singularities and then prove 
the monodromy conjecture without further conditions. 
In the works \cite{A-B-C-M} and \cite{A-B-C-M2}, 
Artal Bartolo--Cassou-Nogu\`es--Luengo--Melle 
Hern\'andez prove the monodromy conjecture for some 
particular classes of singularities for 
which they establish an explicit formula for 
the topological zeta function. It 
was also proved for hyperplane arrangements 
by Budur, Mustata and Teitler \cite{B-M-T}. 
In \cite{V-1} and \cite{V-2}, 
Veys obtained various results in dimension 3. 
Moreover, the case of homogeneous and 
isolated quasihomogeneous singularities was proved by 
Blanco--Budur--van der Veer \cite{B-B-V} 
and Rodrigues--Veys \cite{R-V}. 
Other important contributions to the 
Denef-Loeser conjecture and related results include 
\cite{B-V}, \cite{G-L}, \cite{L-Ve}, \cite{N-V}, 
\cite{Veys}. 

Generalizing the Igusa zeta function to an ideal and using the notion of Verdier monodromy, one can similarly formulate the monodromy conjecture for ideals. At the level of ideals, the conjecture has only been proven in full generality for ideals in two variables (\cite{V-Ve}). Very recently Musta\c t\u a showed that the monodromy conjecture for polynomials implies the monodromy conjecture for ideals (\cite{M}).

The aim of the present paper is to explore to what extent 
the results of \cite{L-V} hold true in higher dimensions, and what we are missing to step one dimension higher for non-degenerate singularities. 

A crucial difference that we observe in dimension four is the existence of degenerate singularities arbitrarily close to a non-isolated non-degenerate singularity. So, even aiming at the study of non-degenerate singularities, 
we have to go beyond the setting of Newton polyhedra and toric resolutions at some point. 

This is in contrast to all preceding results on non-isolated singularities: in the three-dimensional setting of \cite{L-V}, all singularities close to a non-degenerate one are non-degenerate, and, in the setting of \cite{A-B-C-M2}, all singularities close to a quasi-ordinary one are quasi-ordinary.

The paper consists of three parts. In Sections \ref{sec:3}--\ref{sec:4new}, we study configurations of facets of the Newton polyhedron that do not assure the existence of the corresponding pole of the topological zeta function. In Sections \ref{sec:4}--\ref{sec:6}, we study configurations of faces that, on the contrary, always non-trivially contribute to the multiplicity of the expected monodromy eigenvalue. Finally, in the last section, we use these results to prove the monodromy conjecture for singularities of non-degenerate functions of four variables.

\begin{theorem} The Denef--Loeser monodromy conjecture (and moreover its polyhedral version from \cite{E}) holds true for all non-degenerate hypersurface singularities of four variables.
\end{theorem}

Our proof of this theorem admits the following conjectural generalization to arbitrary dimension.
Let $f:({\mathbb C}^n,0)\to({\mathbb C},0)$
be a germ of a holomorphic function non-degenerate with
respect to its Newton polyhedron $\Gamma_+(f)$.

\begin{definition}
1) A \emph{bounded facet $F$ of the Newton
polyhedron $\Gamma_+(f)$ produces
the number $s_0$}, if the affine span of
$F$ is given by an equation $a_1v_1+\cdots+a_nv_n=q$
with coprime coefficients $a_i$
such that $s_0=-(a_1+\cdots+a_n)/q$.

2) A \emph{bounded face $F$ of the Newton polyhedron $\Gamma_+(f)$
is said to produce the number $s_0$}, if every
bounded facet of $\Gamma_+(f)$ containing $F$
produces $s_0$.
\end{definition}
We say that a polytope $P$ is inscribed into a polytope $Q$ if $\dim\, P=\dim\, Q$, $P\subset Q$ and ${\rm vert}\, P\subset {\rm vert}\, Q$ (where ${\rm vert}$ denotes the set of vertices).
\begin{conjecture} 
Let $\mathcal{F}$ be the set of all faces of the Newton
polyhedron $\Gamma_+(f)\subset\RR^n$, producing the number $s_0$, and let $\mathcal{V}$ be the set of all of their vertices having at least one coordinate equal to 1.

1) Assume there exists a function $b:\mathcal{V}\to\{1,\ldots,n\}$, assigning to every vertex $v\in\mathcal{V}$ the index of one of its unit coordinates, such that every simplex $\Delta$ inscribed into a face from $\mathcal{F}$ has some vertex $v\in\mathcal{V}$ for which the other vertices ${\rm vert}\, \Delta\setminus\{v\}$ belong to the $b(v)$-th coordinate hyperplane.
Then $s_0$ is not a pole of the
topological zeta function of $f$.

2) If such function $b$ does not exist, then $\exp(2\pi i s_0)$ is an eigenvalue
of the monodromy of $f$ at some point near the origin
(and, moreover, a nearby tropical monodromy eigenvalue of the polyhedron
$\Gamma_+(f)$ in the sense of \cite{E}).
\end{conjecture}

Both of these statements belong to polyhedral geometry, and together they imply the monodromy conjecture for non-degenerate singularities in arbitrary dimension. When proving Theorem 1.1, we actually prove both parts of this conjecture for $n=4$ in Sections 5 and 8 respectively. 

A key step in proving the first part would be to combinatorially classify faces that can appear in the aforementioned family $\mathcal{F}$. We call them $B$-faces.
\begin{definition}\label{defbfacet0}
1) A lattice simplex in ${\mathbb R}^n$ with the standard coordinate system $v_1,\ldots,v_n$ is called a \emph{$B_1$-simplex} with respect to the $i$-th coordinate if one of its vertices lies in the plane $v_i=1$ and the others in the plane $v_i=0$.

2) A lattice polytope in ${\mathbb R}^n$ is called a \emph{$B$-polytope}, if every lattice simplex that it contains is a $B_1$-simplex.
\end{definition}
For $n=4$, $B$-faces are classified in Lemma \ref{ALEX}: besides $B_1$-pyramids that are well known from the three-dimensional case, we detect another combinatorial type, which we call $B_2$-faces. As soon as the classification is done in any dimension, we believe that the general technique from Section 4 can be extended to arbitrary dimension, though this is still non-trivial, because the fact that the family $\mathcal{F}$ from the conjecture entirely consists of $B$-faces does not assure the existence of the function $b$, see e.g. the phenomenon of $B$-borders (Definition \ref{defborder}) in dimension 4. As to the second part of the conjecture, one step in its proof for $n=4$ is already done for arbitrary dimension, see Section 6.

\vspace{2ex}

The structure of the paper is as follows. In Section \ref{sec:2} we recall the exact statement of the monodromy conjecture and the notion of non-degenerate singularity.

In Section \ref{sec:3}, as a generalization 
of the notion of $B_1$-facets in \cite{L-V}, we introduce 
$B_1$-facets of the Newton polyhedron 
$\Gamma_+(f)$ (Definition \ref{B-F}) and discover so called $B_2$-facets (Definition \ref{B2-F}) that behave similarly, although do not exist in the lower-dimensional setting.

In Section \ref{sec:35}, we show that 
many configurations of $B_1$- and $B_2$-facets alone never assure the existence of 
the corresponding pole of the topological zeta function (Theorem \ref{ADJJJ}). In the course of the proof we introduce an important notion of a critical face of the Newton polyhedron (Definition \ref{defcritn}). Its role in the proof indicates that it might be possible to find a similarly important notion of a critical stratum of the exceptional divisor in the context of arbitrary singularities and their non-toric resolutions.

In Section \ref{sec:4new}, we apply the tools from the preceding two sections to completely classify configurations of facets of four-dimensional Newton polyhedra that never assure the existence of the corresponding pole of the topological zeta function (Theorem \ref{mainB}). Besides the previously found configurations, we discover so called $B^2$-borders (Definition \ref{defborder}). 

In Section \ref{sec:4}, following the strategy of 
\cite{L-V}, we prove  
that the candidate poles of $Z_{ {\rm top}, f}(s)$   
contributed by certain non-$B_1$-facets of 
$\Gamma_+(f)$ always yield monodromy eigenvalues. 
Most notably, we obtain 
Theorem \ref{Key-5}. 

Its proof 
relies upon the new notion of a hypermodular function  (Definition \ref{FSM}), which is 
inspired by supermodular functions 
in convex geometry and analysis, and may be of independent interest. 

As a corollary, we can confirm the monodromy conjecture 
of Denef-Loeser for many 
non-degenerate hypersurfaces in 
higher dimensions, see e.g. Theorem \ref{APPT}. 

In Section \ref{sec:6} we prove that singularities adjacent to a 
Newton non-degenerate singularity along a 
coordinate line are themselves Newton non-degenerate (see Proposition \ref{NDPP}). 

However, we notice that starting 
from dimension $4$, not all singularities adjacent to 
Newton non-degenerate singularities are non-degenerate themselves (see Example \ref{exquitnondegenerate}). 
In particular, even in dimension $4$ it is not possible
to prove the Denef-Loeser conjecture for Newton non-degenerate singularities within the framework of non-degenerate singularities.

To this end, the first author introduced in \cite{E}
the notion of tropical nearby monodromy eigenvalues and the corresponding monodromy conjecture, which implies the Denef--Loeser conjecture and (in contrast to the latter) turns into a purely combinatorial statement on the Newton polyhedron for non-degenerate singularities. This tool helps to study monodromy eigenvalues of singularities that are adjacent to a singularity with a given resolution. 

In particular, it allows us to prove in Section \ref{sec:7} the monodromy conjecture for non-degenerate functions of four variables: if the sought monodromy eigenvalue is not a tropical monodromy eigenvalue outside the origin, this imposes lots of restrictions on the combinatorial structure of the Newton polyhedron, and a detailed study of this structure shows that the sought monodromy eigenvalue is a root of the monodromy zeta function at the origin. 

\begin{remark}
Note that the order of this reasoning is opposite to the one usually seen in the literature: first try to find the sought monodromy eigenvalue at the origin, then in the case of a trouble switch to nearby singularities. We proceed in the following order:

1) try to find the sought monodromy eigenvalue outside the origin;

2) notice that we can fail only if the Newton polyhedron has certain combinatorial properties allowing to triangulate it naturally (in a sense);

3) if this occurs, then the resulting natural triangulation allows to find the sought monodromy eigenvalue at the origin.

One could speculate that it is reasonable to expect the same in the general (non-toric) setting: the absence of the sought monodromy eigenvalue outside the origin assures the existence of a certain geometric/deformation-theoretic structure on the resolution space of the singularity, whose combinatorial counterpart is the aforementioned triangulation, and which likewise allows to find the sought monodromy eigenvalue at the origin.
\end{remark}

\medskip \par 
\noindent {\bf Aknowledgements:} The authors 
are very grateful to the University of Nice for 
its hospitality. They also thank
the anonymous referees whose comments
improved this paper substantially.

\section{The monodromy conjecture for the topological 
zeta function}\label{sec:2}

In this section, we recall the monodromy conjecture for 
the topological zeta function and 
related results. 

\subsection{The conjecture}

Let $f: ( \CC^n, 0) \longrightarrow ( \CC, 0)$ 
be a germ of a non-trivial analytic function. We assume that $f$ is 
defined on an open neighborhood $X$ of the origin $0 \in \CC^n$. 
Let $\pi : Y \longrightarrow X$ be an embedded resolution of 
the complex hypersurface $f^{-1}(0) \subset X$ and $E_j$ 
$(j \in J)$ the irreducible components of the 
normal crossing divisor $\pi^{-1}(f^{-1}(0)) \subset Y$. 
For $j \in J$ we denote by $N_j$ (resp. $\nu_j-1$) the 
multiplicity of the divisor associated to $f \circ \pi$ 
(resp. $\pi^* (dx_1 \wedge \cdots \wedge dx_n)$) 
along $E_j \subset Y$. For a non-empty subset $I \subset J$ 
we set 
\begin{equation*}
E_I= \bigcap_{i \in I} E_i, \ \quad \ E_I^{\circ}=E_I \setminus 
\Bigl( \bigcup_{j \notin I}E_j \Bigr). 
\end{equation*}
In \cite{D-L-1} Denef and Loeser defined the local 
topological zeta function $Z_{ {\rm top}, f}(s) 
\in \CC (s)$ associated to $f$ (at the origin) by 
\begin{equation*}
Z_{ {\rm top}, f}(s) = \sum_{I \not= \emptyset} 
\chi ( E_I^{\circ} \cap \pi^{-1}(0) )  \prod_{i \in I} 
\frac{1}{N_i s + \nu_i}, 
\end{equation*}
where $\chi ( \cdot )$ denotes the topological Euler 
characteristic. More precisely, they introduced 
$Z_{ {\rm top}, f}(s)$ by $p$-adic 
integrals and showed that it does not depend 
on the choice of the embedded resolution 
$\pi : Y \longrightarrow X$ by algebraic 
methods. Later in \cite{D-L-2} and 
\cite{D-L-3}, they redefined $Z_{ {\rm top}, f}(s)$ by 
using the motivic zeta function of $f$ and reproved 
this independence of $\pi$ more elegantly. For a 
point $x \in f^{-1}(0) \cap X$ let $F_x \subset X \setminus 
f^{-1}(0)$ be the Milnor fiber of $f$ at $x$ and 
$\Phi_{j,x}: H^j(F_x; \CC ) \simto H^j(F_x; \CC )$ 
$(j \in \ZZ_+ := \{ m \in \ZZ \ | \ m \geq 0 \} )$ 
the Milnor monodromies associated to 
it. Then the monodromy conjecture of Denef-Loeser 
for the local topological 
zeta function $Z_{ {\rm top}, f}(s)$ is stated 
as follows. 

\bigskip \noindent
{\bf Monodromy Conjecture} (Denef-Loeser \cite[Conjecture 3.3.2]{D-L-1}):
Assume that $s_0 \in \CC$ is a pole of $Z_{ {\rm top}, f}(s)$.
Then $\exp (2 \pi i s_0) \in \CC$ is an eigenvalue of
the monodromy $\Phi_{j,x}: H^j(F_x; \CC ) \simto H^j(F_x; \CC )$
for some point $x \in f^{-1}(0) \cap X$
in a neighborhood of the origin $0 \in \CC^n$
and some $j \geq 0$.
\bigskip

In \cite{D-L-1} the authors also formulated an even stronger 
conjecture concerning the Bernstein-Sato polynomial $b_f(s)$ 
of $f$. Namely they conjectured that the poles of 
$Z_{ {\rm top}, f}(s)$ are roots of $b_f(s)$. 

From now on, we assume that $f$ is a non-trivial polynomial 
on $\CC^n$ such that $f(0)=0$ and recall the results of 
Denef-Loeser \cite[Section 5]{D-L-1} and Varchenko \cite{V}. 
For $f(x)= \sum_{v \in \ZZ_+^n} c_v x^v \in \CC [ x_1, \ldots, 
x_n]$ ($\ZZ_+ = \{ m \in \ZZ \ | \ m \geq 0 \}$), its support ${\rm supp} f \subset \ZZ_+^n$ is the subset
\begin{equation*}
{\rm supp} f = \{ v \in \ZZ_+^n \ | \ c_v \not= 0 \} 
\subset \ZZ_+^n. 
\end{equation*}
We denote by $\Gamma_+(f) \subset \RR_+^n$ the convex 
hull of $\cup_{v \in {\rm supp} f} (v + \RR_+^n)$ in 
$\RR_+^n$. It is called the Newton polyhedron of $f$ 
at the origin $0 \in \CC^n$. The polynomial $f$ 
such that $f(0)=0$ is called convenient if 
$\Gamma_+(f)$ intersects the positive part of 
any coordinate axis of $\RR^n$. 

\begin{definition}[Kouchnirenko \cite{K}]\label{Non-deg} 
The \emph{polynomial $f$ is non-degenerate 
(at the origin $0 \in \CC^n$)} if for any compact 
face $\tau \prec \Gamma_+(f)$ the 
complex hypersurface 
\begin{equation*}
 \{ x \in ( \CC^*)^n  \ | \ f_{\tau}(x)=0 \} 
\subset ( \CC^*)^n 
\end{equation*} 
in $( \CC^*)^n$ is smooth, where we set 
\begin{equation*}
f_{\tau}(x)= \sum_{v \in \tau \cap \ZZ_+^n} 
c_v x^v \in \CC [ x_1, \ldots, x_n]. 
\end{equation*} 
\end{definition} 
It is well-known that generic polynomials having a 
fixed Newton polyhedron are 
non-degenerate (see for example 
\cite[Chapter V, paragraph 2]{Oka}). 

\subsection{The topological zeta function and  Newton polyhedra}\label{ssj}

In what follows, we assume that the reader is familiar with basic facts and notions of integer lattice geometry, see Appendix at the end of the paper for some digest. For 
$u=(u_1, \ldots, u_n) \in \RR_+^n$ we set 
\begin{equation*}
N( u )= \min_{v \in \Gamma_+(f)} 
\langle u, v \rangle , \quad  
\nu ( u )= | u | = \sum_{i=1}^n u_i 
\end{equation*}
and 
\begin{equation*}
F(u)= \{ v \in \Gamma_+(f) \ | \ \langle u, v \rangle =N(u) \} 
\prec \Gamma_+(f). 
\end{equation*} 
We call $F(u)$ the supporting face of the vector 
$u \in \RR_+^n$ on $\Gamma_+(f)$. 
To a face $\tau \prec \Gamma_+(f)$ one can associate a dual cone 
\begin{equation*}
\tau^{\circ} = 
\overline{  \{ u \in \RR_+^n \ | \ F(u)= \tau \} } 
\subset \RR_+^n. 
\end{equation*} 
Note that $\tau^{\circ}$ is an 
$(n- \dim \tau )$-dimensional rational polyhedral 
convex cone in $\RR_+^n$. The subdivision of $\RR_+^n$ 
by the cones $\tau^{\circ}$ 
($\tau \prec \Gamma_+(f)$) satisfies the axiom 
of fans (for the definition, see \cite{Fulton} and \cite{Oda}) 
and is called the dual fan of $\Gamma_+(f)$. 
Let $\Delta = \RR_+ a(1) + \cdots 
+ \RR_+ a(l)$ $( a(i) \in \ZZ_+^n)$ be a rational 
simplicial cone in $\RR_+^n$, where the 
lattice vectors $a(i) \in \ZZ^n_+$ are 
linearly independent over $\RR$ and primitive. 
Let ${\rm aff} ( \Delta ) \simeq \RR^{l}$ be the 
affine span of $\Delta$ in $\RR^n$ 
and $s( \Delta ) \subset \Delta$ the 
$l$-dimensional lattice simplex whose vertices are 
$a(1), \ldots, a(l)$ and the origin $0 \in \Delta 
\subset \RR_+^n$. We denote by ${\rm mult} ( \Delta ) \in 
\ZZ_{>0}$ the $l$-dimensional normalized 
volume $\Vol_{\ZZ}( s( \Delta ))$ of $s( \Delta )$, 
i.e.  $l!$ times the usual volume of $s( \Delta )$ 
with respect to the affine lattice 
${\rm aff} ( \Delta ) \cap \ZZ^n 
\simeq \ZZ^l$ in ${\rm aff} ( \Delta )$. By using this 
integer ${\rm mult} ( \Delta )$ we set 
\begin{equation*}
J_{\Delta}(s)= 
\frac{{\rm mult} ( \Delta )}{\prod_{i=1}^l 
\{ N(a(i)) s + \nu (a(i)) \} } 
\in \CC (s). 
\end{equation*}
For a face $\tau \prec \Gamma_+(f)$ 
we choose a decomposition $\tau^{\circ} = 
\cup_{1 \leq i \leq r} \Delta_i$ of its dual cone 
$\tau^{\circ}$ into rational simplicial cones 
$\Delta_i$ of dimension $l= \dim \tau^{\circ}$ 
such that $\dim ( \Delta_i \cap \Delta_j ) <l$ 
$(i \not= j)$ and set 
\begin{equation*}
J_{\tau}(s)= \sum_{i=1}^r J_{\Delta_i}(s)
\in \CC (s). 
\end{equation*}
By the following result of Denef-Loeser 
\cite{D-L-1}, this rational 
function $J_{\tau}(s)$ does not depend on the choice 
of the decomposition of $\tau^{\circ}$. Let us set 
\begin{equation*} 
\mbbo = 
 \left(  \begin{array}{c}
      1 \\
      1 \\
      \vdots \\
      1 
    \end{array}  \right) \in \RR_+^n. 
\end{equation*} 

\begin{lemma}[{see 
the proof of \cite[Lemme 5.1.1]{D-L-1}}]\label{DLLE} 
We have an equality
\begin{equation*}
J_{\tau}(s)= \dint_{\tau^{\circ}}
\exp \Bigl( - 
\langle u , P \rangle s - 
\langle  u , \mbbo \rangle 
\Bigr) du, 
\end{equation*}
where $P$ is a point in $\tau$ and 
$du$ is the $l$-dimensional volume form 
on the affine span ${\rm aff} ( \tau^{\circ} ) 
\simeq \RR^{l}$ for which the volume of 
the parallelepiped spanned by a basis of 
the affine lattice 
${\rm aff} ( \tau^{\circ} ) \cap \ZZ^n 
\simeq \ZZ^l$ is equal to $1$. 
\end{lemma}

It is also well-known 
that one can decompose $\tau^{\circ}$ into 
rational simplicial cones without adding new edges. 
Then we have the following formula for 
$Z_{ {\rm top}, f}(s)$. 

\begin{theorem}
[{Denef-Loeser \cite[Th\'eor\`eme 5.3 (ii)]{D-L-1}}]\label{TZF} 
Assume that $f(x) \in \CC [ x_1, \ldots, x_n]$ is 
non-degenerate. Then  
\begin{equation*}
Z_{ {\rm top}, f}(s) = \sum_{\gamma} J_{\gamma}(s) 
+ \frac{s}{s+1} \sum_{\tau} (-1)^{\dim \tau} 
\Vol_{\ZZ}( \tau ) \cdot J_{\tau}(s),  
\end{equation*}
where in the sum $\sum_{\gamma}$ (resp. 
$\sum_{\tau}$) the face $\gamma \prec \Gamma_+(f)$ 
(resp. $\tau \prec \Gamma_+(f)$) ranges through 
the vertices of $\Gamma_+(f)$ (resp. the compact 
ones such that $\dim \tau \geq 1$) and 
$\Vol_{\ZZ}( \tau ) \in \ZZ_{>0}$ is the 
$(\dim \tau )$-dimensional normalized volume 
of $\tau$ with respect to the affine lattice 
${\rm aff} ( \tau ) \cap \ZZ^n 
\simeq \ZZ^{\dim \tau}$ in ${\rm aff} ( \tau )
\simeq \RR^{\dim \tau}$. 
\end{theorem}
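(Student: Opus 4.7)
My plan is to derive the formula from the definition of $Z_{\text{top}, f}(s)$ by constructing an explicit toric embedded resolution of $f^{-1}(0)$ adapted to $\Gamma_+(f)$, with non-degeneracy providing the necessary transversality. Concretely, I would choose a regular simplicial subdivision $\Sigma$ of $\RR_+^n$ refining the collection of dual cones $\{\tau^\circ : \tau \prec \Gamma_+(f)\}$ and let $\pi : X_\Sigma \to \CC^n$ be the associated smooth toric morphism. For each ray $\rho \in \Sigma(1)$ with primitive generator $a(\rho) \in \ZZ_+^n$, the torus-invariant divisor $E_\rho$ carries multiplicity $N_\rho = N(a(\rho))$ along $f \circ \pi$ and $\nu_\rho - 1 = |a(\rho)| - 1$ along $\pi^*(dx_1 \wedge \cdots \wedge dx_n)$. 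Non-degeneracy of $f$ forces the strict transform $\tilde Z$ of $f^{-1}(0)$ to be smooth and to meet every torus stratum transversally, so the union of those $E_\rho$ with $N_\rho > 0$ together with $\tilde Z$ is a simple normal crossing divisor furnishing an embedded resolution; moreover $\tilde Z$ itself has $N_{\tilde Z} = \nu_{\tilde Z} = 1$, contributing the factor $\frac{1}{s+1}$ whenever it enters an index set.

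Next, I would regroup the defining sum of $Z_{\text{top}, f}(s)$ according to the torus orbits $O_\Delta \cong (\CC^*)^{n-\dim \Delta}$ of $X_\Sigma$. Only cones $\Delta$ whose relative interior lies in $\RR_{>0}^n$ correspond to strata over the origin, and these are exactly the cones $\Delta \subset \tau^\circ$ for some compact face $\tau \prec \Gamma_+(f)$. The decomposition $O_\Delta = (O_\Delta \setminus \tilde Z) \sqcup (O_\Delta \cap \tilde Z)$ yields a total contribution of
\[
\Bigl( \prod_{\rho \in \Delta(1)} \frac{1}{N_\rho s + \nu_\rho} \Bigr)\Bigl[\chi(O_\Delta \setminus \tilde Z) + \frac{1}{s+1}\chi(O_\Delta \cap \tilde Z)\Bigr].
\]
When $\dim \Delta = n$, so $\Delta \subset \gamma^\circ$ for a vertex $\gamma$ of $\Gamma_+(f)$, the orbit $O_\Delta$ is a point disjoint from $\tilde Z$ (because $f_\gamma$ is a monomial), the bracket equals $1$, and summing over the decomposition of $\gamma^\circ$ reproduces $\sum_\gamma J_\gamma(s)$. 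When $\dim \Delta < n$, one has $\chi(O_\Delta) = 0$, hence $\chi(O_\Delta \setminus \tilde Z) = -\chi(O_\Delta \cap \tilde Z)$, and the bracket collapses to $-\frac{s}{s+1}\chi(O_\Delta \cap \tilde Z)$. The hypersurface $O_\Delta \cap \tilde Z$ is cut out by $f_\tau$; a free-torus-action argument forces its Euler characteristic to vanish unless $\dim \Delta$ attains its maximal value $\dim \tau^\circ$, in which case Kouchnirenko's Euler-characteristic formula gives $\chi(O_\Delta \cap \tilde Z) = (-1)^{\dim \tau - 1}\Vol_{\ZZ}(\tau)$. Summing over maximal $\Delta \subset \tau^\circ$ and using $J_\tau(s) = \sum_\Delta J_\Delta(s)$ produces the second sum with the correct sign $(-1)^{\dim \tau}$ and prefactor $\frac{s}{s+1}$.

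The main obstacle is the Euler-characteristic analysis on the orbit intersections $O_\Delta \cap \tilde Z$: one must derive transversality of $\tilde Z$ with the toric stratification from the non-degeneracy hypothesis, identify precisely when the free-torus-action argument forces vanishing of $\chi(O_\Delta \cap \tilde Z)$, and correctly invoke Kouchnirenko's formula in the maximal case. Once this is done, independence of $Z_{\text{top}, f}(s)$ from the refinement $\Sigma$ is subsumed by the already-noted independence of $J_\tau(s)$ from the simplicial decomposition of $\tau^\circ$.
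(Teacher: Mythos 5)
The paper itself does not prove Theorem \ref{TZF}; it is quoted directly from Denef--Loeser \cite{D-L-1}, so there is no in-paper argument to compare against. Your proposal reconstructs what is essentially the original toric proof: take a regular simplicial refinement $\Sigma$ of the dual Newton fan, use the toric modification $\pi:X_\Sigma\to\CC^n$ as the embedded resolution (with non-degeneracy giving transversality of the strict transform $\tilde Z$ to the toric strata), and regroup the defining Euler-characteristic sum for $Z_{{\rm top},f}(s)$ along the torus orbits $O_\Delta$ lying over the origin. The bookkeeping you describe --- $\chi(O_\Delta)=0$ collapsing the bracket to $-\frac{s}{s+1}\chi(O_\Delta\cap\tilde Z)$ in positive codimension, the free-torus-action vanishing of $\chi(O_\Delta\cap\tilde Z)$ unless $\dim\Delta=\dim\tau^\circ$, Kouchnirenko's formula $(-1)^{\dim\tau-1}\Vol_{\ZZ}(\tau)$ in the extremal case, and the subdivision-independence of $J_\gamma,J_\tau$ --- is sound and produces both terms of the asserted formula with the correct signs.

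There is, however, one step that you pass over silently and which is a genuine, if repairable, gap. You assign to the orbit $O_\Delta$ the weight $\prod_{\rho\in\Delta(1)}\frac{1}{N_\rho s+\nu_\rho}$, which is exactly what appears in $J_\Delta(s)$. But in the defining sum $\sum_{I\neq\emptyset}\chi(E_I^\circ\cap\pi^{-1}(0))\prod_{i\in I}\frac{1}{N_is+\nu_i}$, the index set $J$ consists only of the irreducible components of $\pi^{-1}(f^{-1}(0))$, i.e.\ the rays $\rho$ with $N_\rho>0$ together with $\tilde Z$. A ray $\rho\in\Delta(1)$ with $N_\rho=0$ is absent from $J$ and contributes no factor to the defining sum, yet it contributes $\frac{1}{\nu_\rho}$ to your product $\prod_{\rho\in\Delta(1)}$. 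Since $f$ is not assumed convenient such rays do occur, and the two products agree only if every such $\rho$ has $\nu_\rho=1$, i.e.\ $a(\rho)$ is a coordinate vector. This can be forced by choosing $\Sigma$ carefully: the locus $\{a\in\RR^n_+ : N(a)=0\}$ is the union of coordinate faces $\RR^S_+$ with $\Gamma_+(f)\cap\RR^{S^c}\neq\emptyset$, and on the relative interior of each such $\RR^S_+$ the supporting face $F(a)=\Gamma_+(f)\cap\RR^{S^c}$ is constant, so the dual fan already restricts there to the smooth coordinate subfan and a regular refinement $\Sigma$ can be taken to add no new rays on $\{N(a)=0\}$. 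Without that observation (or an equivalent one) the orbit-regrouping step is not an identity, so you should make it explicit.
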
 

Recall that a face $\tau$ of $\Gamma_+(f)$ 
is called a facet if $\dim \tau =n-1$. 
For a facet $\tau \prec \Gamma_+(f)$ let 
$a( \tau ) =( a( \tau )_1 , \ldots, a( \tau )_n ) 
\in \tau^{\circ} \cap \ZZ_+^n$ be its primitive 
conormal vector and set 
\begin{equation*}
N( \tau )= \min_{v \in \Gamma_+(f)} 
\langle a( \tau ) , v \rangle , 
\end{equation*}
\begin{equation*}
\nu ( \tau )= | a( \tau ) | 
= \sum_{i=1}^n a( \tau )_i = 
\langle a( \tau ) , \mbbo \rangle . 
\end{equation*}
We call $N( \tau )$ the lattice distance 
of $\tau$ from the origin $0 \in \RR^n$. 
It follows from Theorem \ref{TZF} that any pole 
$s_0 \not= -1$ of 
$Z_{ {\rm top}, f}(s)$ is 
contained in the finite set 
\begin{equation*}
\left\{ - \frac{\nu ( \tau )}{N ( \tau )} \ | \  
\tau \prec \Gamma_+(f) \quad \text{is a facet 
not lying in a coordinate hyperplane} 
\right\} \subset \QQ. 
\end{equation*}
Its elements are called candidate poles of 
$Z_{ {\rm top}, f}(s)$. 

If $\tau$ is a simplicial facet, the normalized volume $\Vol_{\ZZ}( \tau )$ is equal to the multiplicity of the cone spanned by the vertices divided by $N(\tau)$. 

\subsection{The monodromy zeta function and  Newton polyhedra}

Finally we recall the result of Varchenko \cite{V}. 
For a polynomial $f(x) \in \CC [ x_1, \ldots, x_n]$ 
such that $f(0)=0$, its monodromy 
zeta function $\zeta_{f,0} (t) \in \CC (t)$ at the origin 
$0 \in \CC^n$ is defined by 
\begin{equation*}
\zeta_{f,0} (t) = \prod_{j \in \ZZ_+} 
\left\{ {\rm det} \Bigl( {\rm id} - t \Phi_{j,0} \Bigr) 
\right\}^{(-1)^j} \in \CC (t). 
\end{equation*}
Similarly one can define also $\zeta_{f,x} (t) \in \CC (t)$ 
for any point $x \in f^{-1}(0)$. Then 
by considering the decomposition of the 
nearby cycle perverse sheaf $\psi_f( \CC_{\CC^n})[n-1]$ 
with respect to the monodromy eigenvalues of $f$ 
and the concentrations of its components 
at generic points $x \in f^{-1}(0)$ 
(see e.g. \cite{Dimca} and 
\cite{H-T-T}), in order to prove the monodromy 
conjecture, it suffices to show that for any 
pole $s_0 \in \CC$ of $Z_{ {\rm top}, f}(s)$ 
the complex number $\exp (2 \pi i s_0) \in \CC$ 
is a root or a pole of $\zeta_{f,x} (t)$ 
for some point $x \in f^{-1}(0)$ 
in a neighborhood of the origin $0 \in \CC^n$ 
(see Denef \cite[Lemma 4.6]{D-1}). 

For a subset $S \subset \{ 1,2, \ldots, n \}$ 
we define a coordinate subspace $\RR^S \simeq 
\RR^{|S|}$ of $\RR^n$ by 
\begin{equation*}
\RR^S = \{ v=(v_1, \ldots, v_n) \in \RR^n \ | \ 
v_i=0 \quad \text{for any} \quad i \notin S \} 
\end{equation*}
and set 
\begin{equation*}
\RR_+^S = \RR^S \cap \RR_+^n \simeq \RR_+^{|S|}. 
\end{equation*}
For a compact face $\tau \prec \Gamma_+(f)$ 
we take the minimal coordinate subspace $\RR^S$ 
of $\RR^n$ containing $\tau$ and set $s_{\tau}= |S|$. 
If $\tau$ satisfies the condition $\dim \tau = 
s_{\tau} -1$ we set 
\begin{equation*}
\zeta_{\tau} (t) = 
\Bigl( 1-t^{N( \tau )} \Bigr)^{\Vol_{\ZZ}( \tau )} 
\in \CC [t],   
\end{equation*}
where $N( \tau ) \in \ZZ_{>0}$ is the lattice distance 
(for the definition, see Appendix) 
of the affine hyperplane ${\rm aff} ( \tau )
\simeq \RR^{\dim \tau}$ in $\RR^S$ from the origin 
$0 \in \RR^S$. Let $a( \tau ) \in \tau^{\circ} \cap 
\ZZ_+^S$ be the primitive conormal vector of 
$\tau \subset \RR^S$ whose value on $\tau$ 
is equal to $N( \tau )>0$. 

\begin{lemma}\label{RESON} 
Let $\tau \prec \Gamma_+(f)$, $S$ and $a( \tau )$ be 
as above and $\alpha \in \QQ$ a rational number. 
For $\beta \in \QQ$ we define a hyperplane 
$L( \beta )$ in $\RR^S$ by 
\begin{equation*}
L( \beta )= \{ v \in \RR^S \ | \ 
\langle a( \tau ) , v \rangle = \beta \cdot N ( \tau ) \}. 
\end{equation*}
Then the complex number $\lambda = \exp (2 \pi i \alpha ) \in \CC$ 
is a root of the polynomial $\zeta_{\tau} (t)$ if and only if 
the hyperplane $L( \alpha ) \subset \RR^S$ 
is rational i.e. $L( \alpha ) \cap \ZZ^S \not= 
\emptyset$. 
\end{lemma} 

\begin{proof} 
Note that $0 \in L(0)$, $\tau \subset L(1) 
={\rm aff} ( \tau )$ and hyperplanes 
$L( \beta )$'s ($\beta \in \QQ$) are parallel to each other. 
The lattice distance $N( \tau )>0$ 
is equal to the number of (mutually parallel) ``rational" 
hyperplanes $L( \beta )$'s 
($\beta \in \QQ$, $0< \beta <1$) between $L(0)$ and $L(1)$ 
plus one. Then the assertion immediately follows 
from this geometric interpretation of $N( \tau )$. 
\end{proof} 

\begin{theorem}\label{TOV} 
(Varchenko \cite{V}) 
Assume that $f(x) \in \CC [ x_1, \ldots, x_n]$ is 
non-degenerate. Then one has 
\begin{equation*}
\zeta_{f,0} (t) = \prod_{\tau} 
\Bigl\{ \zeta_{\tau} (t) \Bigr\}^{(-1)^{\dim \tau}},  
\end{equation*}
where in the product $\prod_{\tau}$ 
the face $\tau \prec \Gamma_+(f)$ ranges through 
the compact ones satisfying the condition $\dim \tau = 
s_{\tau} -1$. 
\end{theorem}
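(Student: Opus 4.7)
The plan is to reduce the computation of $\zeta_{f,0}(t)$ to a combinatorial calculation on the Newton polyhedron via a toric embedded resolution combined with A'Campo's formula for the monodromy zeta function. First I would choose a regular simplicial subdivision $\Sigma$ of the dual fan of $\Gamma_+(f)$, obtained by subdividing each dual cone $\tau^{\circ}$ (possibly with new edges, for smoothness) into unimodular simplicial cones. This yields a smooth toric variety $Y_{\Sigma}$ and a proper birational morphism $\pi : Y_{\Sigma} \to \CC^n$. Using non-degeneracy of $f$, one checks in the standard way that the strict transform of $f^{-1}(0)$ is smooth and meets the toric boundary transversally, so $\pi$ is (locally over $0$) an embedded resolution whose exceptional divisors $E_{\rho}$ are indexed by the rays $\rho \in \Sigma$ that are not coordinate axes.

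The second step is to apply A'Campo's formula
\begin{equation*}
\zeta_{f,0}(t) \;=\; \prod_{j \in J} \bigl( 1 - t^{N_j} \bigr)^{\chi(E_j^{\circ} \cap \pi^{-1}(0))},
\end{equation*}
where $N_j$ is the multiplicity of $f \circ \pi$ along $E_j$. For a ray $\rho = \RR_+ a \in \Sigma$ with primitive generator $a \in \ZZ_+^n$, the exponent $N_j$ equals $N(a) = \min_{v \in \Gamma_+(f)} \langle a, v \rangle$, so one wants to compute $\chi(E_{\rho}^{\circ} \cap \pi^{-1}(0))$ in terms of the supporting face $F(a) \prec \Gamma_+(f)$. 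A standard toric-geometric description exhibits $E_{\rho}^{\circ} \cap \pi^{-1}(0)$ as a Zariski locally trivial fibration over (an open subset of) the affine hypersurface $\{f_{F(a)} = 0\} \subset (\CC^*)^{s_{F(a)}}$, with fibres that are products of complex tori coming from the remaining coordinate directions.

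The third, combinatorial step is to group the rays $\rho$ by their supporting face $\tau = F(a)$ and perform the Euler characteristic bookkeeping. Since $\chi$ of a nonempty complex torus $(\CC^*)^k$ vanishes for $k \geq 1$, all contributions from rays whose supporting face fails $\dim \tau = s_{\tau} - 1$ drop out, because a positive-dimensional torus factor is always present. For the remaining faces, the fibre dimension drops to zero, and Kouchnirenko's computation of $\chi(\{f_{\tau} = 0\} \cap (\CC^*)^{s_{\tau}})$ in terms of the normalized volume $\Vol_{\ZZ}(\tau)$ produces the exponent $(-1)^{\dim \tau} \Vol_{\ZZ}(\tau)$, together with the correct $N(\tau)$. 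Summing over rays $\rho$ inside a fixed dual cone $\tau^{\circ}$ and invoking additivity of the Euler characteristic over the simplicial subdivision of $\tau^{\circ}$ (plus the fact that inner walls contribute $0$ by the same torus argument) shows the result is independent of $\Sigma$ and yields exactly one factor $\zeta_{\tau}(t)^{(-1)^{\dim \tau}}$ per face $\tau$ with $\dim \tau = s_{\tau} - 1$.

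The main obstacle is step three: carefully matching the Euler-characteristic contributions from auxiliary rays introduced by the subdivision and from higher-codimension strata $E_I^{\circ}$, and showing all of them cancel so that only the faces satisfying $\dim \tau = s_{\tau} - 1$ survive with the correct exponent. Once this combinatorial cancellation is set up, together with Kouchnirenko's Newton-polyhedral formula for the Euler characteristic of a non-degenerate hypersurface in $(\CC^*)^k$, the identity in the theorem falls out directly.
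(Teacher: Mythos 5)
The paper does not contain a proof of this theorem; it is quoted directly from Varchenko \cite{V}. Your outline does follow the standard route by which Varchenko's formula is established (a toric embedded resolution coming from a regular subdivision of the dual fan, A'Campo's formula, and Newton--polyhedral bookkeeping of Euler characteristics), so the overall strategy is the right one. However, as written, step two is wrong, and step three --- which you yourself flag as the main obstacle --- does not assemble in the way your sketch suggests.

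In step two, let $\widetilde D$ denote the strict transform of $f^{-1}(0)$. On the dense orbit $O_{\rho}\simeq(\CC^*)^{n-1}$ of $E_{\rho}$ the stratum $E_{\rho}^{\circ}$ is the \emph{complement} $O_{\rho}\setminus\widetilde D$ of the strict transform, not a fibration over $\{f_{F(a)}=0\}$; it is $\widetilde D\cap O_{\rho}$ that fibers (via tori) over a quotient of $\{f_{F(a)}=0\}\cap(\CC^*)^{s_{F(a)}}$. The two are related only through $\chi(O_{\rho}\setminus\widetilde D)=-\chi(\widetilde D\cap O_{\rho})$, which uses $\chi((\CC^*)^{n-1})=0$. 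You must also quotient by the free $\CC^*$-action coming from quasi-homogeneity of $f_{F(a)}$ before invoking Kouchnirenko: applied directly in $(\CC^*)^{s_{F(a)}}$ the formula gives Euler characteristic $0$ (the Newton polytope is degenerate in that torus), and it is the quotient that produces the exponent $(-1)^{\dim\tau}\Vol_{\ZZ}(\tau)$ and replaces the raw multiplicity $N(a)$ by $N(\tau)$.

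In step three, ``grouping rays by supporting face and summing over $\tau^{\circ}$'' is not how the $V$-face contributions arise when $\tau$ is not a facet. For example, if $\tau$ is a vertex on a coordinate axis, every ray $\rho\subset\Int\tau^{\circ}\cap\Int\RR_+^n$ has $O_{\rho}\cap\widetilde D=\emptyset$ and contributes $\chi(E_{\rho}^{\circ}\cap\pi^{-1}(0))=0$; the nonzero contribution comes from the rays $\rho$ that lie in a regular cone together with a coordinate ray of multiplicity zero, because then $E_{\rho}^{\circ}$ retains extra boundary orbits (the coordinate divisor is not among the $E_j$). It is the regularity of those adjacent cones --- not additivity over the subdivision of $\tau^{\circ}$ --- that forces $N(a_{\rho})=N(\tau)$ and produces the factor $\zeta_{\tau}(t)^{(-1)^{\dim\tau}}$. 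This combinatorial bookkeeping is the real content of the proof and your sketch names it but does not supply it.
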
 

\begin{definition}\label{V-FA} 
We say that a face $\tau$ of $\Gamma_+(f)$ 
is a \emph{$V$-face} (or a Varchenko face) 
if it is compact and satisfies the condition 
$\dim \tau = s_{\tau} -1$. 
\end{definition} 

\begin{definition}\label{CONTR} 
\begin{enumerate}
\item 
We say that \emph{a candidate pole $s_0 \in \CC$ of 
$Z_{ {\rm top}, f}(s)$ is contributed by a 
facet $\tau \prec \Gamma_+(f)$} or that \emph{$\tau$ contributes $s_0$}
if we have $s_0=-\nu ( \tau )/N ( \tau )$.  
\item
Let $\sigma$ be a $V$-face in 
$\Gamma_+(f)$. We say that \emph{$\sigma$ 
contributes to (the multiplicity of) $t_0 \in \CC$} 
if $t_0$ is a root of 
the polynomial $\zeta_{\sigma}(t)$. 
\end{enumerate}
\end{definition} 

\section{Candidate poles of the topological 
zeta function and $B$-facets}\label{sec:3} 

In this section, we develop new tools 
(Lemmas \ref{magiccancel} and \ref{regintegr}) 
to detect configurations of facets contributing 
fake poles of $Z_{ {\rm top}, f}(s)$ ---
so that once a candidate pole is contributed only by facets from this 
configuration, then it is definitely fake. 

As a first example, 
we define $B_1$-pyramid facets 
(Definition \ref{Pyramid}) of the Newton polyhedron 
$\Gamma_+(f)$. Our definition is a 
straightforward generalization of that of 
Lemahieu-Van Proeyen \cite{L-V}. However, 
starting from dimension
$n=4$, there exist many other combinatorial types of 
facets and configurations that may 
contribute fake poles. In particular, we introduce so 
called $B_2$-facets (Definition \ref{B2-F}) 
and detect some non-contributing configurations of $B_1$ 
and $B_2$-facets, see Propositions \ref{Key-2}, \ref{ADJ} 
and \ref{Key-21}. The proofs of these facts are intended to 
motivate general constructions in the next section, where we 
prove a more general Theorem \ref{ADJJJ}.

From now on, we introduce the following convention on figures 
in this paper: whenever we depict some configuration of cones in 
$\RR^n$, we draw its projectivization, 
resulting in an $(n-1)$-dimensional figure.

\subsection{$B_1$-faces}
 
For a subset $S \subset \{ 1,2, \ldots, n \}$ 
let $\pi_S: \RR_+^n \longrightarrow 
\RR_+^{S^c} \simeq \RR_+^{n-|S|}$ be the 
natural projection. We say that a 
polyhedron 
$\tau$ in $\RR_+^n$ is non-compact for 
$S \subset \{ 1,2, \ldots, n \}$ if the 
Minkowski sum $\tau + \RR_+^S$ is contained 
in $\tau$.

\begin{definition}\label{Pyramid} 
(cf. Lemahieu-Van Proeyen \cite{L-V}) 
Let $\tau$ be a 
polyhedron in $\RR_+^n$. 
\begin{enumerate}
\item 
We say that $\tau$ is a \emph{$B_1$-pyramid 
of compact type for the 
variable $v_i$} if $\tau$ is a compact pyramid 
over the base $\gamma = \tau \cap \{ v_i=0 \}$ 
and its unique vertex $P \prec \tau$ such that 
$P \notin \gamma$ has height one from the 
hyperplane $\{ v_i=0 \} \subset \RR^n_+$. 
\item 
We say that $\tau$ is a 
\emph{$B_1$-pyramid of non-compact 
type} if there exists a non-empty subset 
$S \subset \{ 1,2, \ldots, n \}$ such that 
$\tau$ is non-compact for $S$ and 
$\pi_S ( \tau ) \subset  
\RR_+^{S^c} \simeq \RR_+^{n-|S|}$ is a 
$B_1$-pyramid of compact 
type for some variable $v_i$ 
$(i \notin S)$. 
\item We say that $\tau$ is a \emph{$B_1$-pyramid} if 
it is a $B_1$-pyramid of compact or 
non-compact type. 
\item
We say that a face $\tau$ of the Newton 
polyhedron $\Gamma_+(f)$ is a \emph{$B_1$-face} if 
it is a $B_1$-pyramid. In particular, 
$B_1$-faces 
of dimension $n-1$ and $1$ will be called 
\emph{$B_1$-facets} and \emph{$B_1$-segments} 
respectively.
\end{enumerate}
\end{definition} 

We shall see later in this section that $B_1$-facets alone 
tend not to contribute poles to the topological zeta function.

\begin{remark}
The fact that $B_1$-facets might not give rise to eigenvalues of monodromy 
was already discovered by Loeser 
(see \cite[Remark 6.3]{L-2}). The condition he 
requires on the facets expels among 
others all $B_1$-facets. Let us recall this condition.

For two distinct facets $\tau$ and $\tau^{\prime}$ 
of $\Gamma_+(f)$ let $\beta ( \tau, \tau^{\prime} ) 
\in \ZZ$ be the greatest common divisor of 
the $2 \times 2$ minors of the matrix 
$(a( \tau ), a( \tau^{\prime} )) \in M(n,2; \ZZ )$.
Recall that $a(\tau)\in\tau^\circ\cap\ZZ^n$ is the 
primitive conormal vector of $\tau$, and $\beta ( \tau, \tau^{\prime} ) 
\in \ZZ$ is equal to the lattice area of the 
triangle spanned by $a( \tau )$ and $a( \tau^{\prime} )$.
 
If $N( \tau ) \not= 0$ (e.g. if $\tau$ is 
compact) we set 
\begin{equation*}
\lambda ( \tau, \tau^{\prime} ) = 
\nu ( \tau^{\prime} ) - 
\frac{\nu ( \tau )}{N ( \tau )} 
N( \tau^{\prime} ), \quad 
\mu ( \tau, \tau^{\prime} ) = \frac{ 
\lambda ( \tau, \tau^{\prime} )}{ 
\beta ( \tau, \tau^{\prime} )}
 \quad \in \QQ.  
\end{equation*}
In \cite{L-2} the author considered only 
compact facets $\tau$ of $\Gamma_+(f)$ 
which satisfy the following 
technical condition: 

\medskip \par 
\indent ``For any facet $\tau^{\prime} 
\prec \Gamma_+(f)$ such that $\tau^{\prime} 
\not= \tau$ and $\tau^{\prime} \cap \tau 
\not= \emptyset$ we have 
$\mu ( \tau, \tau^{\prime} ) \notin \ZZ$."

\medskip \par 
\noindent He showed that 
if $f$ is non-degenerate, the candidate pole 
of $Z_{ {\rm top}, f}(s)$ associated to such a compact 
facet $\tau$ is a root of the local 
Bernstein-Sato polynomial of $f$. Now let $\tau \prec \Gamma_+(f)$ be a 
facet containing a 
$B_1$-pyramid of compact type for the 
variable $v_i$ and set 
$\gamma = \tau \cap \{ v_i=0 \}$. 
Let $\tau_0 \prec \Gamma_+(f)$ 
be the unique (non-compact) 
facet such that 
$\gamma \prec \tau_0$, $\tau_0 \not= \tau$ 
and $\tau_0 \subset 
\{ v_i=0 \}$. Then we can easily show that 
$\beta ( \tau, \tau_0 )= 1$. 
Indeed, by a rotation of $\RR^n$ which 
preserves the hyperplane $\{ v_i=0 \} \simeq \RR^{n-1}$ in it, 
we can reduce the problem to the case $n=2$. 
Moreover, by $\nu ( \tau_0 )=1$, $N( \tau_0 )=0$ 
and $\lambda ( \tau, \tau_0 )=1$ we obtain 
$\mu ( \tau, \tau_0 )=1$. Namely 
such a facet $\tau$ does not 
satisfy the above-mentioned condition of \cite{L-2}.
\end{remark} 

The atypical behavior of candidate 
poles of $Z_{ {\rm top}, f}(s)$ 
associated to $B_1$-facets 
essentially arises from the 
following simple computation 
(cf. Lemma \ref{DLLE}). For a subcone $C$ of the dual cone $\tau^\circ$ to a 
$k$-dimensional face $\tau$ of the Newton polyhedron $\Gamma_+(f)$, define 
the contribution of $C$ to the topological $\zeta$-function $Z_{\rm top, f}$ 
as $$\int_{C}\exp\bigl(-N(u) s- \langle u,\mbbo\rangle\bigr)\, du^{(n)}$$
for $k=0$ (see Lemma \ref{DLLE} for the details) and otherwise 
$$(-1)^{k}\Vol_\ZZ(\tau)\frac{s}{s+1} \int_{C}\exp\bigl(-N(u) 
s- \langle u, \mbbo \rangle\bigr)\, du^{(n-k)},$$
where $N(\cdot)$ is the support function of the Newton polyhedron, 
and $du^{(m)}$ is the $m$-dimensional lattice volume form. This 
definition is chosen so that the topological $\zeta$-function of $f$ 
equals the sum of contributions 
of the dual cones to all bounded faces of $\Gamma_+(f)$.

\begin{lemma}\label{magiccancel} 
Assume that a $B_1$-face 
$\tau$ of $\Gamma_+(f)$ is the convex hull 
of its base $\gamma$ 
in the coordinate hyperplane $\{ v_n=0 \}$ and its 
apex $P=(*,\ldots,*, 1)$. 
Furthermore, assume that $C \subset \gamma^\circ$ 
is the convex hull of a rational polyhedral 
subcone $C'\subset \tau^\circ$ and 
the $n$-th coordinate axis 
$O_n= \RR_+ (0, \ldots, 0, 1) \subset \RR^n_+$. 
Then the sum of the contributions from the cones 
$C \subset \gamma^\circ$ and $C'\subset \tau^\circ$ 
to $Z_{ {\rm top}, f}(s)$ is equal to 
\begin{equation*} 
\dint_{C}
\exp \Bigl( - 
\langle u , P \rangle s - 
\langle  u , \mbbo \rangle 
\Bigr) du_1 \cdots du_n
\end{equation*}
if $\tau$ is a 
$B_1$-segment, and is $0$ otherwise.
\end{lemma}

\begin{proof} 
In the second case, the contributions 
from $C$ and $C'$ are 
equal up to sign and cancel each other. 
Indeed, if we decompose $C'$ into simplicial 
cones $\Delta_i^{\prime} \subset C'$ and 
take the convex hulls 
$\Delta_i \subset C$ of them 
and the coordinate axis 
$O_n=\RR_+ \cdot (0, \ldots, 0, 1)$, then 
by using the condition $P=(*,\ldots,*, 1)$ 
we can easily show that 
${\rm mult} ( \Delta_i )={\rm mult} ( \Delta_i^{\prime} )$. 
In the first case, we may assume 
that $C'$ is simplicial and 
${\rm mult} ( C )={\rm mult} ( C^{\prime} )$.  
Let $a(i) \in C' \cap \ZZ_+^n$ 
$(1 \leq i \leq n-1)$ be the primitive 
vectors on the edges of the $(n-1)$-dimensional 
cone $C'$. Then 
the sum of the contributions is equal to 
\begin{align*}
\frac{{\rm mult} ( C )}{\prod_{i=1}^{n-1} 
\{ N(a(i))s+\nu(a(i)) \}}  &  - 
\frac{s}{s+1} \frac{{\rm mult} ( C )}{\prod_{i=1}^{n-1} \{ N(a(i))s+
\nu(a(i)) \} }
\\
  = & 
\frac{{\rm mult} ( C )}{ \{ \langle e_n ,P \rangle s+ 
\langle e_n, \mbbo \rangle \}
\cdot \prod_{i=1}^{n-1} 
\{  \langle a(i),P \rangle s+ \langle a(i),  
\mbbo \rangle \} }, 
\end{align*}
where $e_n=(0,\ldots,0, 1)$. 
The right hand side is equal to the sought 
integral by the proof of Lemma \ref{DLLE}.
\end{proof}

\subsection{Critical edges}

We now introduce our main tool to prove that a given 
number is not a pole of the topological zeta function.

\begin{lemma}\label{nontri} 
Assume that $s_0 \ne-1$. Then for the points $P$ in 
the summit of a $B_1$-facet of $\Gamma_+(f)$ the equation 
$\langle u,P \rangle s_0+ \langle u,\mbbo \rangle =0$ 
is non-trivial. Namely it defines a 
hyperplane $L_P$ in $\RR^n$. 
\end{lemma}
{\it Proof.} 
If it is trivial, then we have $P=- \mbbo /s_0$. 
Since $s_0 \ne -1$, none of the coordinates of $P$ is 
equal to 1, so it is not in the summit. $\hfill \quad\Box$

\begin{definition}\label{newdefi} 
We say that a closed set $C\subset \RR^n$ is an 
\emph{$n$-dimensional 
polyhedral cone} if it is a union of 
finitely many $n$-dimensional 
closed convex polyhedral cones. 
A ray $R$ on the boundary $\partial C$ of 
an $n$-dimensional polyhedral cone 
$C$ in $\RR^n$ is called an \emph{edge of $C$}, 
if, in an arbitrarily small neighborhood 
of a point of $\relint R= R \setminus \{ 0 \}$, 
the cone $C$ is not affinely isomorphic 
to a product $\RR^2\times C'$ for 
some subset $C'\subset\RR^{n-2}$. Moreover 
for $s_0 \in \RR$ and a point $P \in \RR^n$ 
we say that \emph{a ray $R$ in $\RR^n$ 
is critical 
with respect to the pair $(s_0,P)$} 
if for its generator $u \in R$ we have 
$\langle u,P \rangle s_0+ \langle u,\mbbo \rangle =0$, 
i.e. $u \perp (s_0 P+ \mbbo )$.
\end{definition}

\begin{lemma}\label{regintegr}
Let $C \subset \RR^n$ be an $n$-dimensional  
polyhedral cone in $\RR^n$. 
Assume that for $s_0 \in \RR$ and a point $P \in \RR^n$ 
no edge of $C$ is critical 
with respect to $(s_0,P)$. 
Then the integral 
\begin{equation}
\dint_{C}
\exp \Bigl( - 
\langle u , P \rangle s - 
\langle  u , \mbbo \rangle 
\Bigr) du_1 \cdots du_n
\end{equation}
is a rational function 
of $s$ holomorphic at $s=s_0 \in \CC$. 
\end{lemma}

\begin{proof} Subdivide $C$ into simplicial cones. 
This subdivision is combinatorially stable under small 
perturbation of each ray $R$ within its ambient face 
of $C$. Since no edge in 
this ambient face is critical with respect to 
$(s_0,P)$, the same is true also for almost all 
rays in the face. We can thus perturb the rays in the 
subdivision in their ambient 
faces so that all of them become 
non-critical with respect to 
$(s_0,P)$. Then by integrating $\exp ( - 
\langle u , P \rangle s - 
\langle  u , \mbbo \rangle )$ 
over each of the simplicial cones in the resulting subdivision, 
we obtain a rational function holomorphic at $s=s_0 \in \CC$ 
(see Lemma \ref{DLLE}). 
\end{proof}

\subsection{Some non-contributing configurations of $B_1$-facets}

We first show that a candidate pole 
contributed by a unique facet is always 
fake, once this facet is $B_1$. Then we 
discuss what happens in other cases 
(when the same candidate pole is contributed 
by several $B_1$-facets or by a non-$B_1$ facet).

The following result is not used in the 
sequel and, on the contrary, is a special 
case of the subsequent Theorem \ref{ADJJJ}. 
Nevertheless, we prefer to give it an 
independent proof, keeping things as simple 
and explicit as possible. This proof is a 
good illustration of a more general 
construction (of so called sprouts) 
used later on to prove Theorem \ref{mainB} 
leading to the monodromy conjecture in dimension 4.

\begin{proposition}\label{Key-2} 
Assume that $f$ is non-degenerate and let 
$\tau \prec \Gamma_+(f)$ 
be a $B_1$-facet.  Assume also 
that the candidate pole 
\begin{equation*}
s_0= - \frac{\nu 
( \tau )}{N ( \tau )} \not= -1  
\end{equation*}
of $Z_{ {\rm top}, f}(s)$ is 
contributed only by 
$\tau$. Then $s_0$ is fake, i.e. not an actual 
pole of $Z_{ {\rm top}, f}(s)$. 
\end{proposition}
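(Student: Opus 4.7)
The plan is to apply the Denef--Loeser formula of Theorem \ref{TZF} and verify directly that $\lim_{s \to s_0} (s - s_0) Z_{ {\rm top}, f}(s) = 0$. First I would identify the faces of $\Gamma_+(f)$ whose contribution to $Z_{ {\rm top}, f}(s)$ can produce a pole at $s_0$. Decomposing each dual cone $\gamma^{\circ}$ into simplicial cones without adding new edges, the rays that appear are precisely the primitive conormal vectors $a( \tau^{\prime} )$ of facets $\tau^{\prime} \succ \gamma$. Hence a simplicial cone $\Delta$ produces the factor $N( \tau )s + \nu ( \tau )$ in $J_\Delta ( s )$ only when $a( \tau )$ is one of its generators; by the assumption that $s_0$ is contributed only by $\tau$, no other primitive vector $a( \tau^{\prime} )$ yields the same candidate pole. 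The faces $\gamma$ with $a( \tau )$ an edge of $\gamma^{\circ}$ are exactly those with $\gamma \prec \tau$, so the residue at $s_0$ depends only on the compact subfaces of $\tau$.

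Consider first the compact type: $\tau$ is a pyramid with base $\gamma_0 = \tau \cap \{ v_i = 0 \}$ and apex $P$ of lattice height one above $\{ v_i = 0 \}$, and let $\tau_0 \prec \Gamma_+(f)$ be the facet containing $\gamma_0$ and lying in $\{ v_i = 0 \}$, so that $a( \tau_0 ) = e_i$, $N( \tau_0 ) = 0$ and $\nu ( \tau_0 ) = 1$. Every compact face of $\tau$ is either a face $\delta$ of $\gamma_0$ or a lateral face $\sigma ( \delta ) := {\rm conv} ( \delta \cup \{P\} )$ for some face $\delta$ of $\gamma_0$ (by convention $\sigma ( \emptyset ) = \{P\}$ and $\sigma ( \gamma_0 ) = \tau$). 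I would group the residue contributions via the involution $\delta \leftrightarrow \sigma ( \delta )$ and show each pair cancels. Concretely, I would choose a simplicial subdivision of the dual fan compatible with the two-dimensional cone $\gamma_0^{\circ} = \RR_+ a( \tau ) + \RR_+ e_i$, so that every simplicial cone $\Delta$ in the decomposition of $\delta^{\circ}$ containing both $a ( \tau )$ and $e_i$ has a sibling $\Delta^{\prime}$ in $\sigma ( \delta )^{\circ}$ obtained by deleting $e_i$.

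The cancellation of each pair then reduces to a numerical identity between the lattice volumes $\Vol_{\ZZ} ( \delta )$, $\Vol_{\ZZ}( \sigma ( \delta ) )$ and the multiplicities $ {\rm mult} ( \Delta )$, $ {\rm mult} ( \Delta^{\prime} )$. The essential input from the $B_1$-pyramid hypothesis is the height-one condition on $P$: it ensures $\Vol_{\ZZ}( \sigma ( \delta ) ) = h_\delta \cdot \Vol_{\ZZ}( \delta )$ with $h_\delta$ exactly the ratio $ {\rm mult} ( \Delta ) / {\rm mult} ( \Delta^{\prime} )$ produced by omitting $e_i$, while the factor $N( \tau_0 )s + \nu ( \tau_0 ) = 1$ is innocuous at $s_0$. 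Combined with the sign $(-1)^{\dim \sigma ( \delta ) } = - (-1)^{\dim \delta}$, the paired contributions cancel. The extreme case $\delta = \emptyset$ (where $\sigma ( \delta ) = \{ P \}$ is the apex, entering through the first sum of Theorem \ref{TZF} without the factor $s/(s+1)$) requires extra care to match the missing prefactor, but can be handled by the same height-one identity. The non-compact type reduces to the compact case by projection along $\RR_+^S$: the compact faces of $\tau$ biject with those of $\pi_S( \tau )$, and the extra conormal directions $e_j$ ($j \in S$) give only innocuous factors $Ns + \nu = 1 \neq 0$ at $s_0$.

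The main obstacle is the explicit verification of the multiplicity identity ${\rm mult} ( \Delta ) = h_\delta \cdot {\rm mult} ( \Delta^{\prime} )$ alongside the matching identity $\Vol_{\ZZ} ( \sigma ( \delta ) ) = h_\delta \cdot \Vol_{\ZZ} ( \delta )$. This is precisely where the height-one property of the apex $P$ enters and the analogue of the Lemahieu--Van Proeyen cancellation in \cite{L-V} is performed; the delicate point is ensuring simultaneous compatibility of the simplicial decompositions of all the cones $\delta^{\circ}$ and $\sigma ( \delta )^{\circ}$ with the pairing, as well as the careful accounting of the vertex contributions from the first sum of Theorem \ref{TZF} relative to the higher-dimensional contributions from the second sum.
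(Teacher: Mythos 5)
Your general setup is sound: the residue at $s_0$ localizes to the faces of $\tau$, and the height-one condition on the apex $P$ is indeed the key geometric input. However, your pairing argument $\delta \leftrightarrow \sigma(\delta)$ does not actually cancel everything, and the part that survives is where the real work of the proof lies.

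The two halves of the Denef--Loeser formula carry different prefactors: vertices enter with coefficient $1$, while compact faces of dimension $\geq 1$ enter with $\frac{s}{s+1}$. Your pairing works cleanly only for $\dim \delta \geq 1$, where both $\delta$ and $\sigma(\delta)$ carry the same $\frac{s}{s+1}$ factor and the signs $(-1)^{\dim\delta}$, $(-1)^{\dim\delta+1}$ produce cancellation (granting the volume and multiplicity identities). But for $\dim\delta = 0$, i.e.\ a vertex $A_i$ paired with the edge $PA_i$, the coefficients are $1$ and $-\frac{s}{s+1}$, whose difference is $\frac{1}{s+1}$, not $0$; and the case $\delta = \emptyset \leftrightarrow \{P\}$ has no partner at all. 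So after the pairing you are left with precisely the expression
$\frac{V_P}{l(\tau)\prod_j l(\tau_j)} + \frac{1}{s+1}\sum_i \frac{V_{A_i}}{l(\tau)\prod_{j\neq i} l(\tau_j)}$,
and the proposition is \emph{not} yet proved. The heart of the paper's proof --- which your sketch omits --- is a separate algebraic argument showing that this residual numerator is divisible by $l(\tau)(s) = N(\tau)s + \nu(\tau)$, via the geometric identity $V\,a(\tau) = V_P\,a(\tau_0) + \sum_i V_{A_i}\,a(\tau_i)$ among the primitive conormal vectors, together with $\langle a(\tau_0), w\rangle = 1$ for the apex $w = P$.

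Two further points. First, the multiplicity identity $\mathrm{mult}(\Delta) = h_\delta \cdot \mathrm{mult}(\Delta')$ you invoke (with $\Delta = \Delta' + \RR_+ e_i$) requires justification and is not the same statement as $\Vol_\ZZ(\sigma(\delta)) = h_\delta \Vol_\ZZ(\delta)$; these live on the dual side and the primal side, respectively. Second, the ``simultaneous compatibility of the simplicial decompositions'' you flag as a delicate point is exactly where the paper introduces a dummy primitive vector $b \in \mathrm{Int}\, C_P \cap \ZZ_+^n$ with $-\nu(b)/N(b) \neq s_0$: this vector is used to cone off a triangulation of $\partial C_P$ into simplicial pieces $\Delta_i^{\sharp}$, $\Delta_i^{\flat}$ in a way compatible with the divisibility computation when $\tau$ is not a simplex. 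Without such a device, the step from the simplex case to the general $B_1$-pyramid case remains unjustified in your proposal.
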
 

\begin{proof} 
Since the proof for 
$B_1$-pyramids of non-compact 
type is similar, we prove the assertion only 
for $B_1$-pyramids of 
compact type. Without loss of generality 
we may assume that 
$\tau$ is a compact pyramid 
over the base $\gamma = 
\tau \cap \{ v_n=0 \}$ 
and its unique vertex 
$P \prec \tau$ such that 
$P \notin \gamma$ has height one from the 
hyperplane $\{ v_n=0 \} \subset \RR^n$. 
Let $A_1, A_2, \ldots, A_m$ $(m \geq n-1)$ be the 
vertices of the $(n-2)$-dimensional polytope 
$\gamma$. For $1 \leq i \leq m$ 
we denote the dual cone $A_i^{\circ}$ of 
$A_i \prec \Gamma_+(f)$ by $C_{A_i}$. 
Similarly we set $C_P=P^{\circ}$. 
Let $a ( \tau ) \in \ZZ_+^n$ be the 
primitive vector on the ray $\tau^{\circ}$. 
Then we have 
\begin{equation*}
a( \tau ) \in {\rm Int} 
(C_P \cup C_{A_1} \cup 
\cdots \cup C_{A_m}). 
\end{equation*}
Note that $C_P \cup C_{A_1} \cup 
\cdots \cup C_{A_m}$ is an $n$-dimensional 
polyhedral cone in the sense of 
Definition \ref{newdefi}. 
In order to construct a nice $n$-dimensional 
polyhedral 
subcone $\square$ of $C_P \cup C_{A_1} \cup 
\cdots \cup C_{A_m}$ such that 
$a( \tau ) \in {\rm Int}  \square$, 
we shall introduce a new dummy vector 
$b \in {\rm Int} C_P \cap \ZZ_+^n$ satisfying 
the condition 
\begin{equation}\label{condb} 
- \frac{\nu (b)}{N(b)} \not= s_0 
\end{equation}
in the following way. First, by our 
assumption $s_0 \not= -1$ and 
Lemma \ref{nontri}, for the summit 
$P=(*,*, \ldots, *,1) 
\in \ZZ_+^n$ of the $B_1$-pyramid 
$\tau$ the equation 
$\langle u,P \rangle s_0+ \langle u,\mbbo \rangle =0$ 
is non-trivial. It thus defines 
a hyperplane $L_P$ in $\RR^n$. 
Then by taking a primitive 
vector $b \in {\rm Int} 
C_P \cap \ZZ_+^n$ such that 
$b \notin L_P$ we get the desired condition 
$N(b)s_0+ \nu (b) \not= 0$. 
Let $\tau_0 \prec \Gamma_+(f)$ 
be the unique facet such that 
$\gamma \prec \tau_0$, 
$\tau_0 \not= \tau$ and $\tau_0 \subset 
\{ v_n=0 \}$. Then 
the primitive vector $a ( \tau_0 ) \in \ZZ_+^n$ 
on the ray $\tau_0^{\circ}$ is 
given by $a ( \tau_0 )=(0,0,\ldots, 0,1)$. 
For $1 \leq i \leq m$ let $\sigma_i \prec \tau$ 
be the edge of $\tau$ connecting the two 
points $P$ and $A_i$ and $F_i \prec C_P$ 
the corresponding facet of the cone $C_P$ containing 
$\tau^{\circ} = \RR_+ a( \tau ) \prec C_P$. 
All the facets of $C_P$ containing 
$\tau^{\circ}$ are obtained in this way. 
Since the point $A_i \prec \sigma_i$ is a 
vertex of $\tau_0$, its dual cone 
$C_{A_i}=A_i^{\circ}$ contains not only 
$F_i$ but also the ray 
$\tau_0^{\circ} = \RR_+ a( \tau_0 )$. 
For $1 \leq i \leq m$ set 
\begin{equation*} 
F_i^{\sharp}= \RR_+ a( \tau_0 )+F_i, \quad 
\quad F_i^{\flat}= \RR_+ b +F_i.  
\end{equation*}
In Figure \ref{picproof1} below we presented the transversal 
hyperplane sections of the cones $F_i$, 
$F_i^{\sharp}$ and $F_i^{\flat}$.

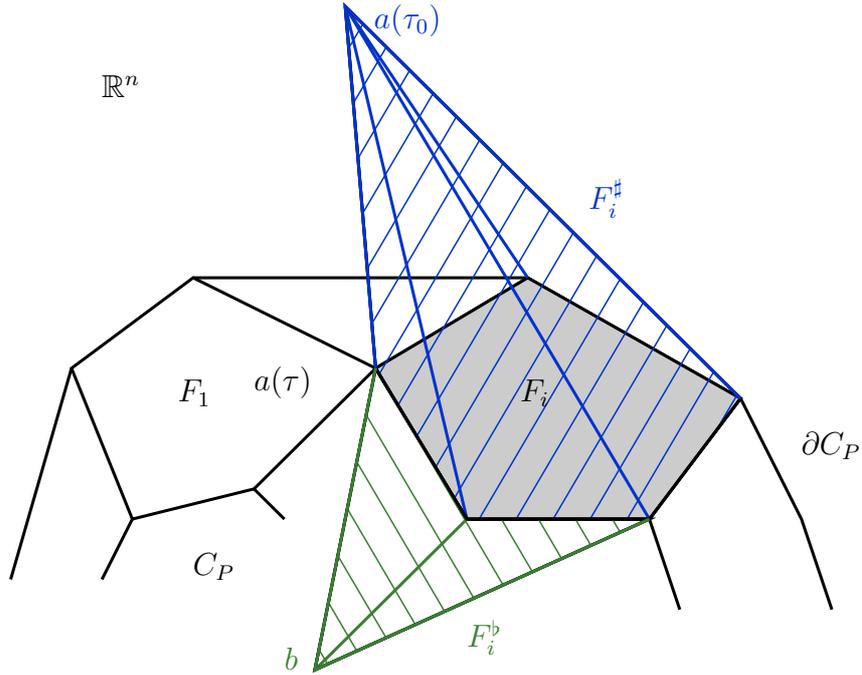
\begin{figure}[h]
\begin{center}
\skipfig{\psscalebox{0.5 0.5} 
\Large{
\begin{pspicture}(0,-4.7921977)(11.029225,4.7921977)
\definecolor{colour0}{rgb}{0.8,0.8,0.8}
\definecolor{colour2}{rgb}{0.0,0.2,0.8}
\psline[linecolor=black, linewidth=0.04](6.819226,0.4421979)(4.819226,-0.7578021)(6.0192256,-2.757802)(8.419226,-2.757802)(9.6192255,-1.1578021)(6.819226,0.4421979)
\psline[linecolor=black, linewidth=0.04](9.6192255,-1.1578021)(10.419226,-2.757802)(10.819225,-3.957802)(10.819225,-3.957802)
\psline[linecolor=black, linewidth=0.04](8.419226,-2.757802)(8.819225,-3.957802)(8.819225,-3.957802)
\psline[linecolor=black, linewidth=0.04](4.819226,-0.7578021)(3.2192256,-2.3578022)(1.6192256,-2.757802)(0.8192256,-0.7578021)(2.4192257,0.4421979)(4.819226,-0.7578021)(4.819226,-0.7578021)
\psline[linecolor=black, linewidth=0.04](2.4192257,0.4421979)(6.819226,0.4421979)
\psline[linecolor=black, linewidth=0.04](0.8192256,-0.7578021)(0.019225609,-3.5578022)
\psline[linecolor=black, linewidth=0.04](3.2192256,-2.3578022)(3.6192255,-2.757802)
\psline[linecolor=black, linewidth=0.04](1.6192256,-2.757802)(1.2192256,-3.5578022)
\pspolygon[linecolor=black, linewidth=0.04, fillstyle=solid,fillcolor=colour0](4.819226,-0.7578021)(6.0192256,-2.757802)(8.419226,-2.757802)(9.6192255,-1.1578021)(6.819226,0.4421979)
\psline[linecolor=colour2, linewidth=0.04](8.419226,-2.757802)(4.4192257,4.0421977)(4.4192257,4.0421977)
\psline[linecolor=colour2, linewidth=0.04](4.819226,-0.7578021)(4.4192257,4.0421977)
\psline[linecolor=colour2, linewidth=0.04](6.0192256,-2.757802)(4.4192257,4.0421977)
\psline[linecolor=colour2, linewidth=0.04](9.6192255,-1.1578021)(4.4192257,4.0421977)
\psline[linecolor=colour2, linewidth=0.04](6.819226,0.4421979)(4.4192257,4.0421977)
\pspolygon[linecolor=black, linewidth=0.04, fillstyle=vlines, hatchwidth=0.02, hatchangle=330, hatchsep=0.4, hatchcolor=colour2](4.4192257,4.0421977)(4.819226,-0.7578021)(6.0192256,-2.757802)(8.419226,-2.757802)(9.6192255,-1.1578021)
\psline[linecolor=black, linewidth=0.04](4.819226,-0.7578021)(4.0192256,-4.757802)(8.419226,-2.757802)
\psline[linecolor=OliveGreen, linewidth=0.04](4.0192256,-4.757802)(6.0192256,-2.757802)
\pspolygon[linecolor=black, linewidth=0.04, fillstyle=vlines, hatchwidth=0.02, hatchangle=30, hatchsep=0.4, hatchcolor=OliveGreen](4.819226,-0.7578021)(4.0192256,-4.757802)(8.419226,-2.757802)(6.0192256,-2.757802)
\rput[bl](1.2192256,2.842198){${\mathbb R}^n$}
\rput[bl](4.7992257,3.642198){\textcolor{colour2}{$a(\tau_0)$}}
\rput[bl](6.719226,-1.2578021){$F_i$}
\rput[bl](3.219226,-1.1578021){$a(\tau)$}
\psline[linecolor=colour2, linewidth=0.04](4.819226,-0.7578021)(4.4192257,4.0421977)(9.6192255,-1.1578021)
\rput[bl](7.6192255,1.2421979){\textcolor{colour2}{$F_i^\sharp$}}
\psline[linecolor=OliveGreen, linewidth=0.04](4.819226,-0.7578021)(4.0192256,-4.757802)(8.419226,-2.757802)
\rput[bl](6.0192256,-4.557802){\textcolor{OliveGreen}{$F_i^\flat$}}
\rput[bl](2.2192257,-1.2578021){$F_1$}
\rput[bl](2.4192257,-3.5578022){$C_P$}
\rput[bl](10.419226,-1.9578022){$\partial C_P$}
\rput[bl](3.6192256,-4.757802){\textcolor{OliveGreen}{$b$}}
\end{pspicture}}}
\end{center}
\caption{The proof of Proposition \ref{Key-2}}\label{picproof1}

\end{figure}

Then by our construction, $\square = 
\cup_{i=1}^m (F_i^{\sharp} \cup F_i^{\flat})$ 
is an $n$-dimensional polyhedral cone in $\RR^n$ 
and satisfies the desired condition 
$a( \tau ) \in {\rm Int}  \square$. 

By Lemma \ref{magiccancel} and the argument 
in Case $1$ of the proof 
of \cite[Proposition 14]{L-V}, 
the contribution to $Z_{ {\rm top}, f}$ 
from the dual cones $C_P, C_{A_1}, \ldots, 
C_{A_m}$, $F_1, \ldots, F_m$ of 
$P, A_1, \ldots, A_m$, $PA_1, \ldots, PA_m$ 
respectively is equal to $Z_{ {\rm top}, f}$ 
itself modulo holomorphic functions at 
$s=s_0 \in \CC$. 
We thus obtain an equality 
\begin{equation*}
Z_{ {\rm top}, f} (s) \equiv  
\dint_{\square}
\exp \Bigl( - 
\langle u , P \rangle s - 
\langle  u , \mbbo \rangle 
\Bigr) du_1 \cdots du_n 
\end{equation*}
modulo holomorphic functions at 
$s=s_0 \in \CC$. 
By our assumption and 
the condition \eqref{condb} no edge of the 
polyhedral cone 
$\square$ is critical with respect to 
$(s_0, P)$ in the sense of 
Definition \ref{newdefi}. 
Then by Lemma \ref{regintegr} 
the rational function
\begin{equation*}
\dint_{\square}
\exp \Bigl( - 
\langle u , P \rangle s - 
\langle  u , \mbbo  \rangle 
\Bigr) du_1 \cdots du_n
\end{equation*}
of $s$ is holomorphic at $s=s_0 \in \CC$. 
This implies that also $Z_{ {\rm top}, f} (s)$ 
is holomorphic there. 
\end{proof} 

We now discuss what happens when the same 
candidate pole is contributed by several $B_1$-facets.

If two $B_1$-facets for different variables 
are adjacent (i.\ e.\ have a common 
$(n-2)$-dimensional face) and contribute 
the same candidate pole, this may happen 
to be an actual pole of the topological 
$\zeta$-function. This is always so for 
$n=3$ (see \cite{L-V}), but may fail 
starting from $n=4$ (see $B^2$-borders in Theorem \ref{mainB}). 

On the contrary, two adjacent $B_1$-faces for 
the same variable cannot alone yield  
an actual pole (similarly to \cite[Proposition 14]{L-V} for $n=3$). 
The following result is a higher-dimensional 
analogue of the one in 
the proof of \cite[Proposition 14]{L-V}. 

\begin{proposition}\label{ADJ} 
Assume that $f$ is non-degenerate and let 
$\tau_1, \ldots, \tau_k \prec \Gamma_+(f)$ 
be $B_1$-facets such that 
\begin{equation*}
s_0:=   - \frac{\nu ( \tau_1 )}{N ( \tau_1 )} 
= \cdots \cdots = 
- \frac{\nu ( \tau_k )}{N ( \tau_k )} \not= -1 
\end{equation*}
and their common candidate pole $s_0 \in \QQ$ 
of $Z_{ {\rm top}, f}(s)$ is 
contributed only by them. Assume also that 
if $\tau_i$ and $\tau_j$ ($i \not= j$) 
have a common facet they are 
$B_1$-pyramids for the same variable. 
Then $s_0$ is fake i.e. not an actual 
pole of $Z_{ {\rm top}, f}(s)$. 
\end{proposition}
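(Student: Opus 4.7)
The plan is to extend the argument of Proposition \ref{Key-2} to the multi-pyramid setting by carrying out the secondary-polytope construction simultaneously for all summits $P_1, \ldots, P_k$, and then exploiting cancellations along shared facets. As in the previous proof, the non-compact case reduces to the compact one, and pyramids in different connected components of the facet-adjacency graph can be handled independently by Proposition \ref{Key-2}. So I may assume every $\tau_j$ is a $B_1$-pyramid of compact type, all for the same variable $v_n$, with base $\gamma_j = \tau_j \cap \{v_n=0\}$ and summit $P_j$; the unique non-compact facet $\tau_0 \prec \Gamma_+(f)$ lying in $\{v_n=0\}$ then satisfies $a(\tau_0) = (0,\ldots,0,1)$ and contains every $\gamma_j$.

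Next, I would choose a single primitive dummy vector $b \in \ZZ_+^n$ with $N(b)s_0 + \nu(b) \neq 0$ and lying in the interior of every $C_{P_j}$, and use secondary polytopes to build subdivisions of the relevant portions of $\partial C_{P_j}$ by $(n-1)$-dimensional rational simplicial cones $\{\Delta_{j,i}\}$ that \emph{match along shared walls} coming from common facets $\tau_j \cap \tau_{j'}$. Setting $\Delta_{j,i}^{\sharp} = \RR_+ a(\tau_0) + \Delta_{j,i}$ and $\Delta_{j,i}^{\flat} = \RR_+ b + \Delta_{j,i}$, the computation from Proposition \ref{Key-2} yields
\begin{equation}
Z_{\mathrm{top},f}(s) \equiv \sum_{j=1}^{k}\sum_{i}\left\{\frac{V_{j,i}}{(s+1)\,l(\tau_j)(s)\prod_\ell l_{j,i,\ell}(s)} + \frac{U_{j,i}}{l(b)(s)\,l(\tau_j)(s)\prod_\ell l_{j,i,\ell}(s)}\right\}
\end{equation}
modulo holomorphic functions at $s_0$. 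For each $\tau_j$ the geometric equality $V\,a(\tau_j) = V_{P_j}\,a(\tau_0) + \sum_A V_A\,a(\tau_A)$ still holds at the base vertices $A$ of $\gamma_j$ whose adjacent facet $\tau_A$ is not one of the other $\tau_i$'s, and this ensures divisibility of the corresponding terms by $l(\tau_j)(s)$ exactly as in the single-pyramid case.

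The main obstacle is the treatment of the shared facets $\sigma = \tau_i \cap \tau_j$. The dual edge $\sigma^{\circ}$ appears on the boundary of both $C_{P_i}$ and $C_{P_j}$, so in the global subdivision it is adjacent to simplicial cones coming from both sides. Precisely because both $\tau_i$ and $\tau_j$ are $B_1$-pyramids for the \emph{same} variable $v_n$, the vectors $a(\tau_0)$ and $b$ are common to the two sides, and the $\sharp$- and $\flat$-extensions glue across $\sigma^{\circ}$ without introducing any third facet $\tau_A$ distinct from $\tau_i, \tau_j$. Consequently the potentially dangerous contributions along $\sigma^{\circ}$ on the two sides pair up and cancel. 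Combining this pairwise cancellation with the per-pyramid divisibility shows that the total residue of $Z_{\mathrm{top},f}(s)$ at $s=s_0$ vanishes, so $s_0$ is fake. I expect the most delicate step to be the verification that the secondary-polytope subdivisions can indeed be chosen to match along every shared wall, and the careful matching of multiplicities in the cross-term cancellation.
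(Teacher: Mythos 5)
Your proposal identifies the right reduction steps (compact case, common variable $v_n$, common $\tau_0$, auxiliary vector $b$), but the core of your argument --- the cancellation along shared walls --- is exactly what is missing, and it is not a minor technicality. Two concrete issues:

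First, when $\tau_i$ and $\tau_j$ share a non-base facet $\sigma$, the two pyramids automatically share the same apex $P_i=P_j=P$ (any vertex of $\sigma$ outside $\{v_n=0\}$ is the unique apex of both pyramids), so there is only one cone $C_P$ and the $2$-dimensional cone $\sigma^{\circ}=\RR_+a(\tau_i)+\RR_+a(\tau_j)$ lies \emph{inside} $\partial C_P$ rather than on the boundary between two distinct cones. Consequently any $(n-1)$-dimensional simplicial cone $\Delta$ in the subdivision of $\partial C_P$ that contains $\sigma^{\circ}$ contains \emph{both} $\tau_i^{\circ}$ and $\tau_j^{\circ}$, and the corresponding summand in $Z_{\mathrm{top},f}$ has \emph{both} $l(\tau_i)(s)$ and $l(\tau_j)(s)$ in its denominator, i.e.\ a double zero at $s=s_0$. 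Your displayed formula, which assigns every cone to exactly one pyramid index $j$ with a single factor $l(\tau_j)(s)$ in the denominator, silently drops these double-pole terms, so the claimed ``per-pyramid divisibility plus cross-term cancellation'' does not account for the terms that actually threaten the conclusion.

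The paper sidesteps this by a device you do not use: it isolates the union $\square'$ of simplicial cones that contain $\sigma^{\circ}$, picks a \emph{second} auxiliary primitive vector $b'\in\mathrm{Int}\,\square'\cap\mathrm{Int}\,C_P$ with $N(b')s_0+\nu(b')\neq 0$, and re-cones $\square'$ from $b'$. This refinement makes the stars of $\RR_+a(\tau_1)$ and $\RR_+a(\tau_2)$ \emph{disjoint}, so no top-dimensional cone contains both $a(\tau_1)$ and $a(\tau_2)$, eliminating the double poles and reducing the problem to two independent applications of Proposition~\ref{Key-2}. No cancellation lemma is needed. Your plan, as stated, would need an explicit identity controlling the double-pole contributions, and it is not clear such an identity is available without something equivalent to the $b'$-refinement; you should adopt the $b'$ trick (or prove the missing cancellation carefully, which looks harder).
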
 

\begin{proof} 
If $\tau_i$ and $\tau_j$ ($i \not= j$) do 
not have a common facet, then by the proof 
of Proposition \ref{Key-2} after a suitable 
subdivision of the dual fan of $\Gamma_+(f)$ 
into rational simplicial cones we can 
calculate their contributions to 
$Z_{ {\rm top}, f}(s)$ separately. So 
we may assume that the $B_1$-facets 
$\tau_1, \ldots, \tau_k$ 
have the common summit $P \in \tau_1 \cap 
\cdots \cap \tau_k$. For the sake of 
simplicity, here we shall treat only the case 
where $k=2$, $\tau_1$ (resp. $\tau_2$) 
is a compact $B_1$-pyramid over the base $\gamma_1 = 
\tau_1 \cap \{ v_n=0 \}$ 
(resp. $\gamma_2 = 
\tau_2 \cap \{ v_n=0 \}$) and 
$\tau_1 \cap \tau_2$ is the (unique) 
common facet of $\tau_1$ and $\tau_2$. 
The proofs for the other cases are similar. 
Let $\tau_0 \prec \Gamma_+(f)$ 
be the unique facet of $\Gamma_+(f)$ such that 
$\gamma_1, \gamma_2 \prec \tau_0$, 
$\tau_0 \not= \tau_i$ ($i=1,2$) 
and $\tau_0 \subset 
\{ v_n=0 \}$. We denote by $P$ the common 
summit of $\tau_1$ and $\tau_2$. Since 
$\tau_1 \cap \tau_2$ is a 
common facet of $\tau_1$ and $\tau_2$, 
there exists a $2$-dimensional face 
of the dual cone $C_P$ 
of $P \prec \Gamma_+(f)$ containing both 
$\tau_1^{\circ} =\RR_+ a( \tau_1 )$ and 
$\tau_2^{\circ} =\RR_+ a( \tau_2 )$. 
As in the proof of Proposition \ref{Key-2}, 
let $F_1,F_2, \ldots, F_m$ be 
the facets of $C_P$ containing 
the ray $\tau_1^{\circ}$ or 
$\tau_2^{\circ}$ 
and subdivide $F_1 \cup \cdots \cup F_m \subset 
\partial C_P$ into rational simplicial cones 
without adding new edges. 
Let $\Delta_1, \ldots, \Delta_r$ be the 
$(n-1)$-dimensional simplicial cones thus obtained 
in $F_1 \cup \cdots \cup F_m \subset 
\partial C_P$ and containing 
$\tau_1^{\circ}$ or $\tau_2^{\circ}$. 
As in the proof of Proposition \ref{Key-2}, 
we take a new primitive vector 
$b \in {\rm Int} C_P \cap \ZZ_+^n$ such that 
\begin{equation}\label{condnb} 
- \frac{\nu (b)}{N(b)} \not= s_0. 
\end{equation}
For $1 \leq i \leq r$ set 
\begin{equation*} 
\Delta_i^{\sharp}= 
\RR_+ a( \tau_0 )+ \Delta_i, \quad 
\quad \Delta_i^{\flat}= \RR_+ b + \Delta_i.  
\end{equation*}
Then $\square := \cup_{i=1}^r 
( \Delta_i^{\sharp} \cup \Delta_i^{\flat})$ 
is an $n$-dimensional polyhedral cone in $\RR^n$ 
such that  
\begin{equation*} 
a( \tau_1 ), a( \tau_2 ) \in 
{\rm Int} \square .   
\end{equation*}
By Lemma \ref{magiccancel} 
(or the argument 
in Case $1$ of the proof 
of \cite[Proposition 14]{L-V}) 
we obtain an equality 
\begin{equation*}
Z_{ {\rm top}, f} (s) \equiv  
\dint_{\square}
\exp \Bigl( - 
\langle u , P \rangle s - 
\langle  u , \mbbo \rangle 
\Bigr) du_1 \cdots du_n
\end{equation*}
modulo holomorphic functions at 
$s=s_0 \in \CC$. By our assumption and 
the condition \eqref{condnb} no edge of the 
polyhedral cone 
$\square$ is critical with respect to 
$(s_0, P)$ in the sense of 
Definition \ref{newdefi}. 
Then by Lemma \ref{regintegr} 
the rational function
\begin{equation*}
\dint_{\square}
\exp \Bigl( - 
\langle u , P \rangle s - 
\langle  u , \mbbo \rangle 
\Bigr) du_1 \cdots du_n
\end{equation*}
of $s$ is holomorphic at $s=s_0 \in \CC$. 
\end{proof} 

\subsection{$B_2$-facets}

We now discuss what happens when a 
candidate pole is contributed by a 
non-$B_1$-facet. Such pole $s_0$ is always 
allowed not to be fake for $n=3$, 
because $\exp(2\pi i s_0)$ is always 
a nearby monodromy eigenvalue (see \cite{L-V}). 
However, this is not the case starting 
from $n=4$ for some non-$B_1$ facets. In 
this subsection, we introduce one such example.

\begin{definition}\label{B2-F} 
For all 
$n \geq 4$ we define \emph{$B_2$-facets} 
$\tau \prec \Gamma_+(f)$ to be non-$B_1$ 
compact facets whose projection to a certain 
$(n-2)$-dimensional coordinate plane 
coincides with the standard 
$(n-2)$-dimensional simplex. 

In particular, for $n=4$, a facet $\tau$ of $\Gamma_+(f)$  
is a $B_2$-facet if and only if, up to reordering 
the coordinates, it has the vertices 
$A,B,P,Q,X,Y$ of the form 
\begin{equation*}
\begin{cases}
A= & (1,0,*,*) \\
B= & (1,0,*,*) \\
P= & (0,1,*,*) \\
Q= & (0,1,*,*) \\
X= & (0,0, * ,*) \\
Y= & (0,0, * ,*) 
\end{cases}
\end{equation*}
as in Figure \ref{picb2} below (it can be degenerated 
so that $X=Y$). 
\end{definition}

\begin{figure}[h]
\begin{center}
\skipfig{
\psscalebox{0.7 0.7} 
{ \Large
\begin{pspicture}(0,-3.1)(13.068935,3.1)
\psline[linecolor=black, linewidth=0.04, arrowsize=0.05291666666666667cm 4.0,arrowlength=3.0,arrowinset=0.0]{->}(2.18831,-0.7)(13.3883095,-0.7)
\psline[linecolor=black, linewidth=0.04](3.78831,-0.7)(4.98831,1.3)(5.38831,-1.9)(3.78831,-0.7)
\psline[linecolor=black, linewidth=0.04](10.98831,-0.7)(9.3883095,1.3)(8.98831,-1.9)(10.98831,-0.7)
\psline[linecolor=black, linewidth=0.04](5.38831,-1.9)(8.98831,-1.9)
\psline[linecolor=black, linewidth=0.04](4.98831,1.3)(9.3883095,1.3)
\psline[linecolor=black, linewidth=0.04, arrowsize=0.05291666666666667cm 4.0,arrowlength=3.0,arrowinset=0.0]{->}(2.18831,-0.7)(2.18831,3.3)
\psline[linecolor=black, linewidth=0.04, arrowsize=0.05291666666666667cm 4.0,arrowlength=3.0,arrowinset=0.0]{->}(2.18831,-0.7)(-0.21169007,-3.1)
\psline[linecolor=black, linewidth=0.04, linestyle=dashed, dash=0.17638889cm 0.10583334cm](4.98831,1.3)(2.18831,1.3)
\psline[linecolor=black, linewidth=0.04, linestyle=dashed, dash=0.17638889cm 0.10583334cm](5.38831,-1.9)(0.9883099,-1.9)
\rput[bl](4.38831,-0.3){$\sigma_1$}
\rput[bl](7.38831,0.5){$\sigma_2$}
\rput[bl](9.3883095,-0.3){$\sigma_3$}
\rput[bl](6.18831,-0.3){$\tau$}
\rput[bl](4.98831,-2.7){$A$}
\rput[bl](8.98831,-2.7){$B$}
\rput[bl](10.98831,-1.5){$Y$}
\rput[bl](3.38831,-1.5){$X$}
\rput[bl](4.98831,1.7){$P$}
\rput[bl](9.3883095,1.7){$Q$}
\rput[bl](1.7883099,1.3){$1$}
\rput[bl](0.58830994,-1.9){$1$}
\rput[bl](2.58831,2.9){$x_2$}
\rput[bl](0.58830994,-3.1){$x_1$}
\rput[bl](12.18831,-0.3){$x_3,\, x_4$}
\rput[bl](0.18830994,1.7){${\mathbb R}^4$}
\end{pspicture}
}} \nopagebreak

\caption{a $B_2$-facet in dimension 4} \label{picb2}

\end{center}
\end{figure}
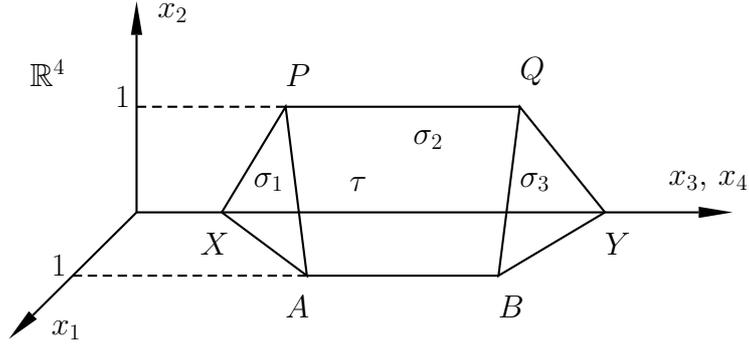

Note that this facet $\tau$ 
splits into two $B_1$-pyramids for different 
variables whose intersection does not contain 
any $1$-dimensional $V$-face. For example 
we have the decomposition 
$\tau =AXPQ \cup QXABY$. 

\begin{definition}\label{B-F}
A facet $\tau$ of $\Gamma_+(f)$  
is called a \emph{$B$-facet} if it is a $B_1$-facet or a 
$B_2$-facet.
\end{definition}

\begin{proposition}\label{Key-21} 
In the case $n \geq 4$ assume that $f$ 
is non-degenerate and let 
$\tau \prec \Gamma_+(f)$ be a $B_2$-facet. 
Assume also that the candidate pole 
\begin{equation*}
s_0= - \frac{\nu ( \tau )}{N ( \tau )} \not= -1  
\end{equation*}
of $Z_{ {\rm top}, f}(s)$ is contributed only by 
$\tau$. Then $s_0$ is fake. 
\end{proposition} 

\begin{proof} 
We prove the assertion only for $n=4$. The proof 
for the general case $n \geq 4$ is similar. 
In the notation of Figure \ref{picb2}, we 
define facets $\sigma_1, \sigma_2, \sigma_3$ 
of $\tau$ by $\sigma_1=XAP$, $\sigma_2=
PABQ$, $\sigma_3=YQB$ respectively. 
As in the proof of \cite[Proposition 14]{L-V} 
let $\tau_i \prec \Gamma_+(f)$ $(1 \leq i \leq 3)$ 
be the unique facet such that 
$\sigma_i \prec \tau_i$ 
and $\tau_i \not= \tau$. Moreover for $i=1,2$ 
let $\rho_i \prec \Gamma_+(f)$ 
be the unique facet such that 
$\rho_i \subset \{ v_i=0 \} \simeq \RR^3$, 
$\tau \cap \{ v_i=0 \}  \prec \rho_i$ 
and $\rho_i \not= \tau$.
Since the dimension of the $B_2$-facet 
$\tau$ is 3, the three segments $PQ$, $AB$ and $XY$ are 
parallel. This implies that their dual cones are on the 
same hyperplane $H \simeq \RR^3$ in $\RR^4$. 
By the four facets $\tau$, $\tau_1$, 
$\tau_2$, $\rho_1$ containing the vertex $P$ 
we define a $4$-dimensional simplicial 
cone $\Delta_P \subset \RR^4$ by 
\begin{equation*}
\Delta_P= \RR_+ a( \tau )+ \RR_+ a( \tau_1 )+
\RR_+ a( \tau_2 )+ \RR_+ a( \rho_1 ). 
\end{equation*}
Similarly we define $4$-dimensional simplicial 
cones $\Delta_A, \Delta_X, \Delta_Q, 
\Delta_B, \Delta_Y \subset \RR^4$ for the 
vertices $A,X,Q,B,Y$ and set 
\begin{equation*}
\square_P= \Delta_P \cup \Delta_A \cup \Delta_X, 
\ \ 
\square_Q= \Delta_Q \cup \Delta_B \cup \Delta_Y. 
\end{equation*}
Then by the above-mentioned property of $H$, 
$\square_P \cap H$ (resp. $\square_Q \cap H$) 
is a facet of $\square_P$ (resp. $\square_Q$) 
and $\square_P \cap H = \square_Q \cap H
= \square_P \cap \square_Q$. 
The dual ray $\tau^{\circ}$ of $\tau$ is 
contained in $\square_P \cap H = \square_Q \cap H
= \square_P \cap \square_Q$, 
but it is an edge of neither $\square_P$ nor $\square_Q$. 
Moreover $\square := \square_P \cup \square_Q$ 
is a $4$-dimensional polyhedral cone such that 
$a( \tau ) \in {\rm Int}  \square$. 
By Lemmas \ref{DLLE} and \ref{magiccancel} and the argument 
in Case $1$ of the proof 
of \cite[Proposition 14]{L-V}, 
the contribution to $Z_{ {\rm top}, f}$ 
from the dual cones of 
$P, A, X, PA, PX$ and $Q, B, Y, QB, QY$ 
is equal to $Z_{ {\rm top}, f}$ 
itself modulo holomorphic functions at 
$s=s_0 \in \CC$. 
For example, the sum of the contributions 
to $Z_{ {\rm top}, f}$ 
from the dual cones of $\sigma_1=XAP$ 
(resp. $\sigma_3=YQB$) and 
$XA$ (resp. $YB$) is zero. The same 
is true also for the dual cones of 
the three faces 
$\sigma_2=PABQ$, $PQ$ and $AB$. 
Here we used the fact that the normalized 
area of the quadrilateral face $\sigma_2=
PABQ$ of $\tau$ is equal to the sum 
of the lengths of the segments $PQ$ and $AB$ 
(see Lemma \ref{ldsproutn} for higher 
dimensional cases). Moreover by Lemma \ref{DLLE} 
it suffices to consider only the 
contribution from their subcones 
$\Delta_P, \Delta_A, \Delta_X, 
\Delta_P \cap \Delta_A, \Delta_P \cap \Delta_X$ and 
$\Delta_Q, \Delta_B, \Delta_Y, 
\Delta_Q \cap \Delta_B, \Delta_Q \cap \Delta_Y$. 
Then by applying Lemma \ref{magiccancel} 
to the pair of cones $\Delta_A \subset A^{\circ}$ 
and $\Delta_P \cap \Delta_A \subset (PA)^{\circ}$ 
(resp. $\Delta_X \subset X^{\circ}$ and 
$\Delta_P \cap \Delta_X \subset (PX)^{\circ}$) etc., 
we obtain an equality 
\begin{align*}
Z_{ {\rm top}, f} (s)  & \equiv 
\\
& \dint_{\square_P}
\exp \Bigl( - 
\langle u , P \rangle s - 
\langle  u , \mbbo \rangle 
\Bigr) du_1 \cdots du_4  + 
\dint_{\square_Q}
\exp \Bigl( - 
\langle u , Q \rangle s - 
\langle  u , \mbbo \rangle 
\Bigr) du_1 \cdots du_4
\end{align*}
modulo holomorphic functions at 
$s=s_0 \in \CC$. 
By Lemma \ref{regintegr} the rational function
\begin{equation*}
\dint_{\square_P}
\exp \Bigl( - 
\langle u , P \rangle s - 
\langle  u , \mbbo \rangle 
\Bigr) du_1 \cdots du_4
\end{equation*}
of $s$ is holomorphic at $s=s_0 \in \CC$. 
The same is true also for the above integral 
over $\square_Q$. 
Hence $Z_{ {\rm top}, f} (s)$ 
is holomorphic at $s=s_0 \in \CC$. 
\end{proof}

\section{Fake poles of the topological zeta function 
in arbitrary dimension}\label{sec:35}

In view of the observations from the preceding section, the following result does not look unexpected. From now on, by faces we mean faces of the Newton polyhedron $\Gamma_+(f)$, unless explicitly stated otherwise.
\begin{definition}
\begin{enumerate}
\item A one-element set $\{i\}\subset\{1,\ldots,n\}$ is called a \emph{base direction for a $B_1$-facet} $\tau$, if the $i$-th coordinate equals 1 for one vertex of $\tau$, and equals 0 for the other vertices.
\item An $(n-2)$-element set $I\subset\{1,\ldots,n\}$ is called a \emph{base direction for a $B_2$-facet}, if its projection to the $I$-th coordinate plane is the standard $(n-2)$-dimensional simplex.
\end{enumerate}
\end{definition}
Note that a $B_1$-facet may have more than one base direction.
\begin{definition}\label{defconsist}
\emph{A collection of $B$-facets is said to be consistent}, if their base directions can be chosen so that: 
\begin{center}
a pair of $B$-facets in the collection have a common facet $\Rightarrow$ 

the intersection of their base directions is non-empty.
\end{center}
\end{definition}
\begin{theorem}\label{ADJJJ} 
Assume that $f$ is non-degenerate and does not have a 
Morse singularity at the origin $0 \in \CC^n$. Let 
$\tau_1, \ldots, \tau_k \prec \Gamma_+(f)$ 
be $B$-facets such that 
\begin{equation*}
s_0=   - \frac{\nu ( \tau_1 )}{N ( \tau_1 )} 
= \cdots \cdots = 
- \frac{\nu ( \tau_k )}{N ( \tau_k )} \not= -1 
\end{equation*}
and their common candidate pole $s_0 \in \QQ$ 
of $Z_{ {\rm top}, f}(s)$ is 
contributed only by 
them. If we can choose
their base directions to be
consistent, then $s_0$ is fake i.e. not an actual 
pole of $Z_{ {\rm top}, f}(s)$. 
\end{theorem} 
We allow ourselves to exclude Morse singularities from consideration, because the monodromy conjecture for Morse singularities is clear.

This section is devoted to the proof of Theorem \ref{ADJJJ}.

In the course of the proof we introduce some new tools that will be used in the next section, which is devoted to a sharper version of Theorem \ref{ADJJJ}, completely classifying configurations of $B$-faces contributing fake poles for $n=4$. This classification is a key point in the proof of the monodromy conjecture for $n=4$.

\subsection{Contributions}

\begin{definition} 
Let $S\subset\RR^n_+$ be a polyhedral set, that is, a disjoint union of 
the relative interiors of some (finitely many) closed 
convex polyhedral cones 
in $\RR^n_+$. Then we define its \emph{contribution 
$Z(S)(s) \in \CC (s)$ (to the 
topological zeta function $Z_{\rm top, f}$)} by 
$$\int_{S}\exp\bigl(-N(u) s- \langle 
u,\mbbo\rangle\bigr)\, du_{\RR^n_+}+
\frac{s}{s+1}\sum_{\tau} (-1)^{\dim\, \tau}
\Vol_\ZZ(\tau)\int_{S\cap\tau^\circ}
\exp\bigl(-N(u) s
- \langle u,\mbbo\rangle\bigr)\, du_{\tau^\circ},$$
where $\tau$ ranges through all the 
positive-dimensional compact faces of 
$\Gamma_+(f)$, $N(\cdot)$ is the support 
function of $\Gamma_+(f)$, 
and $du_{C}$ is the lattice volume form on a 
rational polyhedral cone $C$ (so that all 
components of dimension smaller than $\dim\, C$ 
in the intersection $S\cap C$ do not affect the 
integral with respect to $du_{C}$). 
\end{definition}

\begin{remark} \begin{enumerate}
\item As a function of $S$, the contribution to the topological zeta function is an additive measure.
\item By Lemma \ref{DLLE}, the contribution $Z(\RR^n_+)$ of the open positive quadrant $\RR^n_+$ equals the topological zeta function of a generic $f$ with the given Newton polyhedron $\Gamma_+(f)$.
\item We do not assume the argument $S$ to be closed or open, because $Z(S)$ changes significantly when passing to the closure or the interior of $S$, and indeed we shall need sets $S$ that are neither closed nor open.
\end{enumerate}
\end{remark}

\subsection{The main theorem: the plan of the proof}
${}$
\indent I. Very loosely, the proof of Theorem \ref{ADJJJ} will consist of constructing a particular subdivision of $\RR^n_+$ into pieces $\sigma_i$ such that $Z(\sigma_i)$ has no pole at $s=s_0$. The boldest hope would be to choose $\sigma_i$ so that the key Lemma \ref{regintegr} is directly applicable to every $\sigma_i$, i.e.:

-- Every $\sigma_i$ is contained in the dual cone to some vertex $P$ of the Newton polyhedron;

-- No edge of $\sigma_i$ is critical with respect to $(s_0,P)$ in the sense of Lemma \ref{regintegr}.

\vspace{1ex}

II. Unfortunately, in general step (I) is not realistic as written, because some of the edges of the dual cone $P^\circ$ will be critical, and they also have to be edges for some $\sigma_i$. These critical edges are exactly the ones dual to the contributing facets $\tau_i$ of the Newton polyhedron. 

Fortunately, all such facets $\tau_i$ are $B$-facets in the assumptions of Theorem \ref{ADJJJ}. This will help us to surround every such critical edge $\tau_i^\circ$ by its personal conic neighborhood $\sigma'_i$ (so called {\it sprout}) such that Lemma \ref{regintegr} is still applicable to it:

-- The contribution $Z(\sigma'_i)$ can be written as the integral from Lemma \ref{regintegr} for some appropriate vertex $P_i$ (despite $\sigma'_i$ is not contained in any individual cone of the form $P^\circ$ anymore!)

-- No edge of $\sigma'_i$ is critical with respect to $(s_0,P_i)$.

\vspace{1ex}

III. Unfortunately, upon choosing neighborhoods 
in Step (II), we observe the next (and the last) obstacle: the complement to $\bigcup_i \sigma'_i$ in any cone $P^\circ$ may have new critical edges in the boundary of $P^\circ$ (different from the edges of $P^\circ$ itself). This happens because some cones (so called {\it critical cones}) in the dual fan $\Sigma_0$ entirely (!) consist of critical rays, so every 0-dimensional intersection of a face of $\sigma'_i$ with such critical cone $C$ will create a new critical edge of the complement of $\sigma'_i$ in any cone containing $C$.
\begin{example} Recall that here and in what follows we draw the projectivization of the fan $\Sigma_0$ rather than the fan itself. On the left picture of Figure \ref{picscheme1} below, the critical 1-dimensional cone $\tau_1^\circ$, whose dual facet $\tau_1$ is a $B_1$-pyramid with the apex $P$,
is surrounded with a conical neighborhood $\sigma'_1$ (shown in bold). Since the dashed segment is a critical 2-dimensional cone, its intersection point with the boundary of $\sigma'_1$ is a critical ray that is not an edge of $\sigma'_1$, but is a critical ray of the complement to $\sigma'_1$ in the 3-dimensional cone $P^\circ$. 
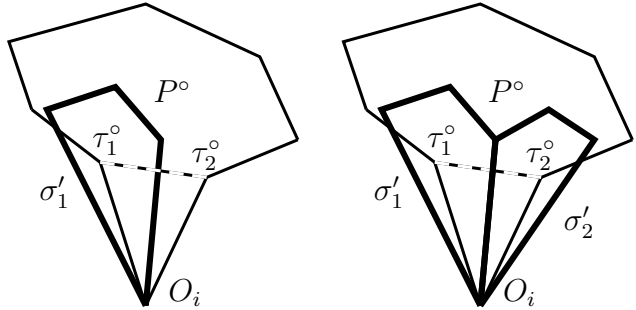
\begin{figure}[h]
\begin{center}
\skipfig{\psscalebox{1.0 1.0} 
{
\begin{pspicture}(0,-2.0218885)(8.243852,2.0218885)
\psline[linecolor=black, linewidth=0.04](1.8256423,-1.9993604)(1.2256424,-0.099360466)(2.6256423,-0.29936045)(1.8256423,-1.9993604)
\psline[linecolor=black, linewidth=0.04](1.2256424,-0.099360466)(0.3256424,0.6006395)
\psline[linecolor=black, linewidth=0.04](2.6256423,-0.29936045)(3.8256423,0.20063953)
\psline[linecolor=black, linewidth=0.08](1.8256423,-1.9993604)(0.5256424,0.6006395)(1.4256424,0.90063953)(2.0256424,0.20063953)(1.8256423,-1.9993604)
\psline[linecolor=white, linewidth=0.04, linestyle=dashed, dash=0.17638889cm 0.10583334cm](1.2256424,-0.099360466)(2.6256423,-0.29936045)
\rput[bl](1.1256424,0.0){$\tau_1^\circ$}
\rput[bl](2.4256425,-0.19936046){$\tau_2^\circ$}
\rput[bl](1.9256424,0.70063955){$P^\circ$}
\rput[bl](2.1256423,-1.9993604){$O_i$}
\psline[linecolor=black, linewidth=0.04](0.3256424,0.6006395)(0.025642395,1.5006396)(1.8256423,2.0006394)(3.3256423,1.3006395)(3.8256423,0.20063953)
\psline[linecolor=black, linewidth=0.04](6.225642,-1.9993604)(5.6256423,-0.099360466)(7.0256424,-0.29936045)(6.225642,-1.9993604)
\psline[linecolor=black, linewidth=0.04](5.6256423,-0.099360466)(4.725642,0.6006395)
\psline[linecolor=black, linewidth=0.04](7.0256424,-0.29936045)(8.225642,0.20063953)
\psline[linecolor=black, linewidth=0.08](6.225642,-1.9993604)(4.9256425,0.6006395)(5.8256426,0.90063953)(6.4256425,0.20063953)(6.225642,-1.9993604)
\psline[linecolor=white, linewidth=0.04, linestyle=dashed, dash=0.17638889cm 0.10583334cm](5.6256423,-0.099360466)(7.0256424,-0.29936045)
\rput[bl](5.5256424,0.0){$\tau_1^\circ$}
\rput[bl](6.8256426,-0.19936046){$\tau_2^\circ$}
\rput[bl](6.3256426,0.70063955){$P^\circ$}
\rput[bl](6.5256424,-1.9993604){$O_i$}
\psline[linecolor=black, linewidth=0.04](4.725642,0.6006395)(4.4256425,1.5006396)(6.225642,2.0006394)(7.725642,1.3006395)(8.225642,0.20063953)
\psline[linecolor=black, linewidth=0.08](6.225642,-1.9993604)(7.725642,0.20063953)(7.1256423,0.6006395)(6.4256425,0.20063953)(6.225642,-1.9993604)
\rput[bl](0.4256424,-0.6993605){$\sigma'_1$}
\rput[bl](4.8256426,-0.6993605){$\sigma'_1$}
\rput[bl](7.3256426,-1.0993605){$\sigma'_2$}
\end{pspicture}
}}
\end{center}
\caption{good neighborhoods of critical cones}\label{picscheme1}
\end{figure}
\end{example}

Fortunately, choosing the neighborhoods $\sigma'_i$  wisely, we can ensure that no new critical edge of the complement of an individual $\sigma'_i$ is an edge of the complement to the whole $\bigcup_i \sigma'_i$ (see the picture on the right of Figure \ref{picscheme1}). For instance, this is done in detail in the proof of Proposition \ref{ADJ} for the case of only two facets contributing the pole $s_0$. 

In the general setting, this will be done using the geometry of so called {\it delimiter planes} and will allow to literally apply Step (I) of our plan to the complement of $\bigcup_i \sigma'_i$. Warning: the resulting cones $\sigma'_i$ will not form a fan in the sense that $\sigma'_i\cap\sigma'_j$ may be not a face of $\sigma'_i$ and $\sigma'_j$.

\vspace{1ex}

We introduce all the aforementioned objects in the subsequent subsections.

\subsection{Intersections of $B$-facets}
\begin{lemma}\label{Noquadrn}
Assume that f is non-degenerate and does 
not have a Morse singularity at the origin $0 \in \CC^n$. 
Then, if two $B$-facets of the Newton polyhedron $\Gamma_+(f)$ have a common $(n-2)$-dimensional face, then it is a $B_1$-pyramid.
\end{lemma}

The proof of this lemma requires the following observation.

\begin{lemma}\label{NDGG} 
If a homogeneous polynomial $g$ of 
degree 2 on $\CC^4$ is 
non-degenerate with respect to its Newton polyhedron
and its support ${\rm supp}\; g$ contains 
the points $(1,1,0,0)$ and $(0,0,1,1)$, 
then $g$ is non-degenerate as a
quadratic form.
\end{lemma}
Upon publishing the first preprint version of this paper, this lemma was beautifully generalized to arbitrary dimension in \cite{yuran}.

\begin{proof}
The non-degeneracy of $g$ as a quadratic 
form is equivalent to the 
smoothness of the hypersurface $V= \{ g=0 \}$  
of $\PP^3$ defined by $g$. Indeed, 
the kernel of the symmetric matrix 
associated to the quadratic 
form corresponds to 
the singular locus of $V \subset \PP^3$. 
Recall that 
$\PP^3$ is naturally a toric variety 
on which the complex torus $T=( \CC^*)^3$ 
acts with 15 orbits. First of all, by 
the non-degeneracy of $g$ w.r.t. its 
Newton polyhedron, 
the hypersurface $V= \{ g=0 \}$  
of $\PP^3$ is smooth in 
$T \subset \PP^3$. So we have only to 
analyze at other points $x$ in 
$\PP^3 \setminus T$. We shall do it 
step by step, considering $x$ in each 
of the other 14 $T$-orbits in $\PP^3$.
We denote the Newton polytope of $g$ by $Q \subset \{ v_1+v_2+v_3+v_4=2 \}$. 

1) Assume that $x=(1:0:0:0)$. If the 
point $(2,0,0,0)$ is in ${\rm supp} g$, then
$g(x) \ne 0$. Otherwise, we have $dg(x) \ne 0$, 
because $(1,1,0,0)\in {\rm supp} g$ and hence 
$\partial g/\partial x_2 (x) \ne 0$. 
This implies that the hypersurface $V= \{ g=0 \}$  
of $\PP^3$ is smooth at $x \in V$.

2) Assume that $x$ is in one of the 
other three 0-dimensional $T$-orbits in 
$\PP^3$. Then the reasoning is the same as in (1).

3) Assume that $x=(s:t:0:0)$ 
($s, t \not= 0$). Then, since the 
face $F = Q \cap [ (2,0,0,0),
(0,2,0,0) ]$ of $Q$ is non-empty (containing 
at least the point $(1,1,0,0)$) and 
the restriction $g_F$ of $g$ to 
the face $F \prec Q$ defines a smooth 
hypersurface in the 
1-dimensional $T$-orbit in $\PP^3$ 
associated to $[ (2,0,0,0),
(0,2,0,0) ]$ (by the non-degeneracy of 
$g$), the hypersurface $V= \{ g=0 \}$  
of $\PP^3$ is smooth at $x$. 

4) Assume that $x=(0:0:s:t)$ ($s, t \not= 0$) 
or $x$ is in one of the four 2-dimensional
$T$-orbits in $\PP^3$. Then the reasoning 
is the same as in (3).

5) Assume that $x=(s:0:t:0)$ ($s,t \not= 0$). 
If ${\rm supp} g \cap [ (2,0,0,0), (0,0,2,0) ]$ is
not empty, then the reasoning is the same 
as in (3). Otherwise, we have $g(x)=0$. 
Assume that we have also $dg(x)=0$. Then in 
particular $\partial g/\partial
x_2(x)=\partial g/\partial x_4(x)=0$. 
From these identities, we see that the
restrictions $g_A$, $g_B$ of $g$ to  
the segments 
$A=[(1,1,0,0), (0,1,1,0)]$, 
$B=[(1,0,0,1), (0,0,1,1)]$
vanish at $x$, 
thus they are multiples of the 
same linear function. 
So the restriction $g_P$ 
of $g$ to the parallelogram
$P={\rm conv} (A \cup B)$ is a product of 
two linear functions. This implies that 
$g_P$ defines a singular hypersurface in 
$( \CC^*)^4$. This would contradict 
the non-degeneracy of $g$ with respect 
to its Newton polyhedron. 

6) Assume that $x$ is in one of the other 
three 1-dimensional $T$ orbits in 
$\PP^3$. Then the reasoning is the same as in (5).
\end{proof}

\begin{lemma}\label{Noquadr} 
Assume that $n=4$, and 
two $B_2$-facets of $\Gamma_+(f)$ have 
a common quadrilateral face. Then $f$ has a 
Morse singularity at the origin $0 \in \CC^4$. 
\end{lemma}
\begin{proof} 
Projectivizing the ambient space of  
$\Gamma_+(f)$, we see the positive octant as 
a tetrahedron, and the two 
$B_2$-facets in it as two polytopes from Figure \ref{picb2b2} with a 
common quadrilateral face. This is only possible if the 
common quadrilateral is a parallelogram, whose edges are 
parallel to two opposite edges of the tetrahedron, and 
whose vertices are contained in the four other edges, as shown on the picture.

\begin{figure}[h]
\begin{center}
\skipfig{\psscalebox{1.0 1.0} 
{
\begin{pspicture}(0,-1.6197056)(5.6372924,1.6197056)
\definecolor{colour0}{rgb}{0.8,0.8,0.8}
\pspolygon[linecolor=black, linewidth=0.034, linestyle=dashed, dash=0.17638889cm 0.10583334cm, fillstyle=solid,fillcolor=colour0](2.8194005,1.2000955)(2.4194007,-0.3999045)(2.8194005,-1.1999044)(5.6194005,-0.79990447)(5.2194004,0.0)
\psline[linecolor=black, linewidth=0.04](0.8194006,1.6000955)(0.019400634,-1.5999045)
\psline[linecolor=black, linewidth=0.04](4.819401,0.8000955)(5.6194005,-0.79990447)
\psline[linecolor=black, linewidth=0.04](0.019400634,-1.5999045)(5.6194005,-0.79990447)
\psline[linecolor=black, linewidth=0.04](0.8194006,1.6000955)(5.6194005,-0.79990447)
\psline[linecolor=black, linewidth=0.04](0.8194006,1.6000955)(4.819401,0.8000955)
\psline[linecolor=black, linewidth=0.04](2.8194005,1.2000955)(3.2194006,0.40009552)(2.8194005,-1.1999044)
\psline[linecolor=black, linewidth=0.04, linestyle=dashed, dash=0.17638889cm 0.10583334cm](2.8194005,1.2000955)(2.4194007,-0.3999045)(2.8194005,-1.1999044)
\psline[linecolor=black, linewidth=0.04, linestyle=dashed, dash=0.17638889cm 0.10583334cm](0.019400634,-1.5999045)(4.819401,0.8000955)
\psline[linecolor=black, linewidth=0.04, linestyle=dashed, dash=0.17638889cm 0.10583334cm](2.8194005,1.2000955)(0.8194006,1.2000955)
\psline[linecolor=black, linewidth=0.04, linestyle=dashed, dash=0.17638889cm 0.10583334cm](3.2194006,0.40009552)(0.8194006,1.2000955)
\psline[linecolor=black, linewidth=0.04, linestyle=dashed, dash=0.17638889cm 0.10583334cm](2.8194005,1.2000955)(5.2194004,0.0)
\psline[linecolor=black, linewidth=0.04, linestyle=dashed, dash=0.17638889cm 0.10583334cm](2.4194007,-0.3999045)(5.2194004,0.0)
\psline[linecolor=black, linewidth=0.034, linestyle=dashed, dash=0.17638889cm 0.10583334cm](2.8194005,-1.1477306)(0.384618,-0.10425231)
\psline[linecolor=black, linewidth=0.034, linestyle=dashed, dash=0.17638889cm 0.10583334cm](2.4715745,-0.4520784)(0.384618,-0.10425231)
\end{pspicture}
}}

\caption{$B_2$-facets with a common quadrilateral face}\label{picb2b2}

\end{center}
\end{figure}

Thus, reordering coordinates if necessary, 
the vertices are of the form $(*,*,0,0),\, 
(0,*,*,0),\, (0,0,*,*)\, (*,0,0,*)$, and all the 
stars are equal to 1 by the definition of  
$B_2$-facets. By Lemma \ref{NDGG}, 
the quadratic part of $f$ 
is non-degenerate, and hence $f$ has a 
Morse singularity at the origin $0 \in \CC^4$.  
\end{proof}

{\it Proof of Lemma \ref{Noquadrn}.} 
Assume that an $(n-2)$-dimensional non-$B_1$-pyramid $F$ is contained in two $B$-facets $F_1$ and $F_2$. First, note that $F$ is not a $V$-face, otherwise one of $F_1$ and $F_2$ were contained in the boundary of $\RR^n_+$, which cannot happen for a $B$-facet.

Second, note that neither of $F_1$ and $F_2$ can be a $B_1$-facet, because every $(n-2)$-dimensional face of a $B_1$-facet is either a $V$-face, or a $B_1$-pyramid.

Finally, the only $(n-2)$-dimensional non-$B_1$ non-$V$-face of a $B_2$-facet $F_i$
is combinatorially isomorphic to the product of a segment and an $(n-3)$-simplex (let us call it the front face of the $B_2$-facet). So the only exception from the statement of the lemma could come from two $B_2$-facets with different base directions and the common front face. However, for $n>4$ this is impossible, because two products of a segment and an $(n-3)$-simplex cannot be non-trivially combinatorially isomorphic, and for $n=4$ this is excluded by Lemma \ref{Noquadr}. \hfill{} $\square$

\subsection{Bases and apices}\label{ssbases}

\begin{definition}
The \emph{star 
of a cone $C$ in the dual fan of $\Gamma_+(f)$} is the set of all cones containing $C$. 

\end{definition}
For each
$B_1$-face $\tau$ contributing to the
candidate pole $s_0$ we can choose its apex
and preferred base
to be its vertex $P$ and a number
$i\in\{1,\ldots,n\}$ respectively, such
that the $i$-th coordinate of $P$ equals 1
and the $i$-th coordinates of the other
vertices of $\tau$ equals 0. Note that we
may have several options for this choice.

We now fix once and for all the choice of
apices $P_\tau$ and preferred bases $b_\tau$

-- for all $B_1$-facets $\tau$ contributing to the
candidate pole $s_0$, and

-- for all $B_1$-facets $\tau$ of $B_2$-facets
contributing to the candidate pole $s_0$
(by a $B_1$-facet of a facet $\sigma$ we mean
a codimension 2 face of the Newton polyhedron
that belongs to $\sigma$ and is a $B_1$-pyramid).

Moreover, in the setting of Theorem \ref{ADJJJ},
we can choose the apices and the preferred bases
consistently, so that the preferred base of
every aforementioned face $\tau$ belongs to
the base direction of every $B$-facet
containing $\tau$. This in particular, ensures that
 
 -- every two $B_1$-facets intersecting
by a codimension 2 $B_1$-face have the same
apex and preferred base, and
 
 -- if a codimension 2 $B_1$-face $\tau'$
is at the same time a facet of a $B_1$-facet
$\tau$ and of a $B_2$-facet (so that we have
chosen the preferred base and apex for it),
then $\tau'$ and $\tau$ have the same
apex and preferred base.

\begin{definition}\label{defdelimb2n}
Let $L$ be the 2-dimensional coordinate plane along which the projection of a $B_2$-facet $\tau$ equals the standard simplex, and let $l\subset\RR^n$ be the 1-dimensional vector space parallel to the intersection of $L$ with the affine span of $\tau$.

The dual hyperplane to $l$ will be denoted by $D_\tau=D_{\tau^\circ}$ and called the \emph{delimiter of $\tau$}. 
\end{definition}
In particular, if $n=4$, then, 
in the notation of Figure \ref{picb2}, the delimiter is the 3-dimensional plane normal to the three parallel segments. Furthermore, the hyperplane H in the proof of Proposition 3.11 is nothing but the delimiter of $\tau$. 
\begin{remark}
Most $B_2$-facets have a unique $V$-edge normal to the delimiter. However, we allow this edge to degenerate into a vertex (such $B_2$-facets are said to be degenerate). However, various attributes of this $V$-edge make natural sense even for degenerate $B_2$-facets. For instance, ``the length of the $V$-edge of $\tau$'' and ``the dual cone of the $V$-edge of $\tau$'' refer to 0 and $D_\tau\cap V^\circ$ respectively for a degenerate $B_2$-facet $\tau$ with a vertex $V$ instead of the $V$-edge. In what follows, this small abuse of terminology never causes confusion.
\end{remark}
\begin{remark}
Every $B_2$-facet (including the degenerate ones) has three distinguished facets (i.e. $(n-2)$-dimensional faces): namely, it has exactly one non-simplicial non-$V$-facet and exactly two $B_1$-facets. 

For instance, in the 4-dimensional setting of Figure \ref{picb2}, they are denoted by $\sigma_2, \sigma_1$ and $\sigma_3$ respectively.
\end{remark}

\begin{definition} \label{defontheside} \begin{enumerate}
\item We shall say that a vertex $P$ of a $B_2$-facet $\tau$ and a number $i\in\{1,\ldots,n\}$
are an \emph{apex} and a \emph{preferred base of $\tau$ on the side of a face $F \prec \tau$}, if $P$ and $i$ are the apex and the preferred base of a $B_1$-facet $\sigma$ of $\tau$, such that $\sigma\cap F\ne\varnothing$. 
If $\tau$ has a unique preferred base on the side of $F$ (that is, if $\sigma$ is uniquely defined by the condition $\sigma\cap F\ne\varnothing$), this preferred base will be denoted by $b_\tau^F=b_{\tau^\circ}^F$.

\item Similarly, if a cone $C$ in the star of $\tau^\circ$ does not intersect the delimiter $D_\tau$, then the apex and the preferred base of the $B_2$-facet $\tau$ on the side of $C$ are defined as the apex and the preferred base of the $B_1$-facet of $\tau$, whose dual cone is not separated from $C$ by the delimiter.
\item For conformity, we shall say that a vertex $P$ of a $B_1$-facet $\tau$ and a number $i$
are its apex and preferred base on the side of a face $F \prec \tau$ (or a cone $C$ in the star of $\tau^\circ$), if $P$ and $i$ are the apex and the preferred base of $\tau$ (independently of $F$ and $C$).
\end{enumerate}
\end{definition}
\begin{example}\label{exaontheside} For instance, let $n=4$, consider the $B_2$-facet $\tau$ on Figure \ref{picb2}, and assume that (in the notation of this figure) the preferred base and apex for the $B_1$-triangle $\sigma_1$ are $2$ and $P$, while those for $\sigma_3$ are $1$ and $B$. 
Then 

-- $P$ is the apex of $\tau$ on the side of $\sigma_1$ and all of its faces,

-- $B$ is the apex of $\tau$ on the side of $\sigma_3$ and all of its faces, 

-- both $B$ and $P$ are apices of $\tau$ on the side of its other 7 faces.

Further, the delimiter $D_\tau$ is the hyperplane normal to the segment $XY$. It divides the dual space into two half-spaces, containing the dual cones to the triangles $\sigma_1$ and $\sigma_3$, let us call them ``left'' and ``right'' respectively. If a cone $C$ is in the left (respectively right)  half-space, then $P$ (respectively $B$) is the apex of $\tau$ on the side of $C$.
\end{example}
\begin{lemma}\label{onthesiden} If a face $F$ is contained in a $B$-facet $\tau$ and not contained in a coordinate hyperplane, then every apex of $\tau$ on the side of $F$ is contained in $F$.
\end{lemma}

\subsection{Sprouts and cancellation of contributions}

Let $C$ be a (not necessarily convex) polyhedral cone in the star of $\tau^\circ$, where $\tau$ is a $B$-facet. In the case of a $B_2$-facet, we additionally assume that $C$ is not intersected by the delimiter $D_\tau$.
Let $P$ and $i$ be the apex and the preferred base of $\tau$ on the side of $C$. Note that they uniquely determine each other, and that we have chosen them once and for all in the preceding subsection. Recall that the standard basis in $\RR^n$ is denoted by $e_1,\ldots,e_n$.
\begin{definition} \label{defsprout} The union of $C\cap\relint P^\circ$ and all two-dimensional cones, generated by $e_i$ and a point of $C\cap \partial P^\circ$, is denoted by $S_{C,\tau}$ and is called \emph{the sprout of $C$}. The vertex $P$ is denoted by $R_{C,\tau}$ and is called \emph{the root of $C$}. \end{definition}
For example, if $\tau$ is a $B_1$-facet, then
the cone $\square$ in the proof of Proposition 3.7
is a sprout of some cone.

\begin{remark}
By the definition, if $C$ is a closed $n$-dimensional polyhedral cone (in the sense of Definition \ref{newdefi}), containing $\tau^\circ$ in its interior, then so is the sprout. On the other hand, convexity of $C$ does not imply convexity of the sprout, see Figure \ref{picsprout}.
\end{remark}
\begin{example}
Figure \ref{picsprout} gives the simplest example of a cone $C$ and its sprout in the star of the ray $\tau^\circ$.
\begin{figure}[h]\label{fig5rem}
\begin{center}
\skipfig{\psscalebox{1.0 1.0} 
{
\begin{pspicture}(0,-2.0501754)(18.002663,2.0501754)
\psline[linecolor=black, linewidth=0.04](1.8256421,-1.9710739)(1.2256422,-0.07107376)(2.625642,-0.27107376)(1.8256421,-1.9710739)
\psline[linecolor=black, linewidth=0.04](1.2256422,-0.07107376)(0.32564226,0.6289262)
\psline[linecolor=black, linewidth=0.04](2.625642,-0.27107376)(3.7256422,-0.27107376)
\psline[linecolor=black, linewidth=0.08](1.9256423,-0.17107375)(1.3256421,0.6289262)(2.4256423,0.6289262)(3.0256422,-0.27107376)(3.2256422,-0.5710737)
\rput[bl](2.325642,-0.17107375){$\tau^\circ$}
\rput[bl](1.4256423,1.2289263){$P^\circ$}
\rput[bl](1.0256423,-1.9710739){$O_i$}
\psline[linecolor=black, linewidth=0.04](0.32564226,0.6289262)(0.025642255,1.5289261)(1.8256421,2.0289264)(3.325642,1.3289263)(3.7256422,-0.27107376)
\psline[linecolor=black, linewidth=0.08](2.2256422,-0.5710737)(3.2256422,-0.5710737)
\rput[bl](2.625642,0.6289262){$C$}
\psline[linecolor=black, linewidth=0.04](1.8256421,-1.9710739)(3.7256422,-0.27107376)
\psline[linecolor=black, linewidth=0.08](1.9256423,-0.17107375)(2.2256422,-0.5710737)
\psline[linecolor=black, linewidth=0.04](6.1256423,-1.9710739)(5.5256424,-0.07107376)(6.9256425,-0.27107376)(6.1256423,-1.9710739)
\psline[linecolor=black, linewidth=0.04](5.5256424,-0.07107376)(4.6256423,0.6289262)
\psline[linecolor=black, linewidth=0.04](6.9256425,-0.27107376)(8.025641,-0.27107376)
\psline[linecolor=black, linewidth=0.08](6.2256417,-0.17107375)(5.6256423,0.6289262)(6.7256417,0.6289262)(7.3256426,-0.27107376)(6.1256423,-1.9710739)
\rput[bl](6.6256423,-0.17107375){$\tau^\circ$}
\rput[bl](5.5256424,1.2289263){$R_{C,\tau}^\circ$}
\rput[bl](5.3256426,-1.9710739){$O_i$}
\psline[linecolor=black, linewidth=0.04](4.6256423,0.6289262)(4.3256426,1.5289261)(6.1256423,2.0289264)(7.6256423,1.3289263)(8.025641,-0.27107376)
\rput[bl](6.6256423,0.7289263){$S_{C,\tau}$}
\psline[linecolor=black, linewidth=0.04](6.1256423,-1.9710739)(8.025641,-0.27107376)
\psline[linecolor=black, linewidth=0.08](6.2256417,-0.17107375)(6.1256423,-1.9710739)
\psline[linecolor=black, linewidth=0.04, linestyle=dashed, dash=0.17638889cm 0.10583334cm](6.3256426,-0.17107375)(6.1256423,-1.9710739)(6.4256425,-0.27107376)
\psline[linecolor=black, linewidth=0.04, linestyle=dashed, dash=0.17638889cm 0.10583334cm](6.5256424,-0.27107376)(6.1256423,-1.8710737)
\psline[linecolor=black, linewidth=0.04, linestyle=dashed, dash=0.17638889cm 0.10583334cm](6.6256423,-0.27107376)(6.1256423,-1.8710737)(6.7256417,-0.27107376)
\psline[linecolor=black, linewidth=0.04, linestyle=dashed, dash=0.17638889cm 0.10583334cm](6.8256426,-0.27107376)(6.1256423,-1.8710737)(7.0256424,-0.27107376)
\psline[linecolor=black, linewidth=0.04, linestyle=dashed, dash=0.17638889cm 0.10583334cm](7.1256423,-0.27107376)(6.1256423,-1.9710739)(7.2256417,-0.27107376)
\psline[linecolor=black, linewidth=0.04](10.985642,-1.9710739)(10.385642,-0.07107376)(11.785643,-0.27107376)(10.985642,-1.9710739)
\psline[linecolor=black, linewidth=0.04](10.385642,-0.07107376)(9.485642,0.6289262)
\psline[linecolor=black, linewidth=0.08](11.085642,-0.17107375)(10.485642,0.6289262)(11.585643,0.6289262)(11.947345,0.12892623)(11.628196,-0.57958436)
\rput[bl](11.8516,-0.44341418){$\tau^\circ$}
\rput[bl](10.585642,1.2289263){$P^\circ$}
\rput[bl](10.185642,-1.9710739){$O_i$}
\psline[linecolor=black, linewidth=0.04](9.485642,0.6289262)(9.185642,1.5289261)(10.985642,2.0289264)(12.485642,1.3289263)(11.789143,-0.2622886)
\psline[linecolor=black, linewidth=0.08](11.385642,-0.5710737)(11.641647,-0.5837024)
\rput[bl](11.709046,0.63743687){$C$}
\psline[linecolor=black, linewidth=0.08](11.085642,-0.17107375)(11.385642,-0.5710737)
\psline[linecolor=black, linewidth=0.04](15.285643,-1.9710739)(14.685642,-0.07107376)(16.085642,-0.27107376)(15.285643,-1.9710739)
\psline[linecolor=black, linewidth=0.04](14.685642,-0.07107376)(13.785643,0.6289262)
\psline[linecolor=black, linewidth=0.08](15.385642,-0.17107375)(14.785643,0.6289262)(15.885642,0.6289262)(16.264366,0.1204156)(15.285643,-1.9710739)
\rput[bl](16.202663,-0.46043545){$\tau^\circ$}
\rput[bl](14.685642,1.2289263){$R_{C,\tau}^\circ$}
\rput[bl](14.485642,-1.9710739){$O_i$}
\psline[linecolor=black, linewidth=0.04](13.785643,0.6289262)(13.485642,1.5289261)(15.285643,2.0289264)(16.785643,1.3289263)(16.079258,-0.28809503)
\rput[bl](15.530323,0.7289263){$S_{C,\tau}$}
\psline[linecolor=black, linewidth=0.08](15.385642,-0.17107375)(15.285643,-1.9710739)
\psline[linecolor=black, linewidth=0.04, linestyle=dashed, dash=0.17638889cm 0.10583334cm](15.485642,-0.17107375)(15.285643,-1.9710739)(15.585643,-0.27107376)
\psline[linecolor=black, linewidth=0.04, linestyle=dashed, dash=0.17638889cm 0.10583334cm](15.685642,-0.27107376)(15.285643,-1.8710737)
\psline[linecolor=black, linewidth=0.04, linestyle=dashed, dash=0.17638889cm 0.10583334cm](15.785643,-0.27107376)(15.285643,-1.8710737)(15.885642,-0.27107376)
\psline[linecolor=black, linewidth=0.04, linestyle=dashed, dash=0.17638889cm 0.10583334cm](15.985642,-0.27107376)(15.285643,-1.8710737)(15.3345785,-1.8795844)
\end{pspicture}
}}
\caption{the sprout of the cone $C$ (the case of compact $\tau$ on the left and non-compact $\tau$ on the right)}\label{picsprout}
\end{center} 
\end{figure}

\end{example}
\begin{lemma} \label{lsprout} The contribution of a sprout is simple: $$Z(S_{C,\tau})=\int_{S_{C,\tau}} \exp\bigl(-\langle u, R_{C,\tau}\rangle s-\langle u, \mbbo\rangle\bigr)\, du.$$ 
\end{lemma}
\begin{proof} The contribution of $C\cap\relint P^\circ$ equals the integral of $\exp\bigl(-\langle u, P\rangle s-\langle u, \mbbo\rangle\bigr)$ by definition. Triangulating the set $C\cap \partial P^\circ$ and, for every simplicial cone $T$ of this triangulation, applying Lemma \ref{magiccancel} to the contribution of the cone generated by $T$ and $e_i$, we conclude that the rest of $S_{C,\tau}$ also contributes the integral of $\exp\bigl(-\langle u, P\rangle s-\langle u, \mbbo\rangle\bigr)$.
\end{proof}
Let $\tau$ be a $B_2$-facet, 
and assume without loss of generality that the 
projection of $\tau$ along the $(e_{n-1}, e_n)$-coordinate 
plane is the $(n-2)$-dimensional 
standard simplex $S$ with the vertices $0,e_1,\ldots,e_{n-2}$. The preimage of the facet $(e_1\ldots e_{n-2})$ of this simplex under this projection intersects $\tau$ by its unique non-simplicial non-$V$-facet 
$\tau_0$. Choose a ray $r$ in 
the (relatively open) dual cone to $\tau_0$. 
\begin{definition} The 
$(n-1)$-dimensional (relatively open) 
cone $S_{r,\tau}$ generated by 
$r, e_1,\ldots,e_{n-2}$ is called 
\emph{the delimiter sprout of $r$}.
\end{definition}
\begin{lemma} \label{ldsproutn}  The contribution of the delimiter sprout $Z(S_{r,\tau})$
is equal to $0$ (independently of the choice of the ray $r$). 
\end{lemma}
\begin{proof}
For a subset $I\subset\{1,\ldots,n-2 \}$, introduce the following notation:

-- $C_I$ is the cone generated by $\tau^\circ$ 
and $e_i$ $( i\in I)$, 

-- $\widetilde C_I$ is the cone spanned by $C_I$ and $r$,

-- $\tau_I$ is the preimage of the simplex with the vertices $e_i,\, i\in I$, under the projection $\tau\to S$.

 -- $\tilde\tau_I$ is the preimage of the simplex with the vertices 0 and $e_i,\, i\in I$, under the projection $\tau\to S$.

Then the contribution of the delimiter sprout splits into those of the cones $C_I$ 
for $I\subset\{1,\ldots,n-2 \}$ and 
the cones $\widetilde C_I$ 
for $I\subsetneq\{1,\ldots,n-2 \}$.

In order to evaluate them, denote the lattice length of the segment $\tau_{i}$ by $g_i$ and notice that the lattice volume of $\tau_I$ equals $\sum_{i\in I} g_i$.
Then the sum of 
the contributions 
$$Z( \widetilde C_I )(s) 
= (-1)^{n-|I|-2} \frac{s}{s+1} 
( \sum_{i \notin I} g_i) 
J_{\widetilde C_I}(s) 
$$
of the cones $\widetilde C_I$ 
over $I\subsetneq\{1,\ldots,n-2 \}$ 
is equal to $0$, because the functions $J_{\widetilde C_I}(s)$ 
for all such $I$ are equal to each other by definition (c.f. a similar calculation in Lemma \ref{magiccancel}). 

In the same way we can show 
that the corresponding sum for the cones $C_I$ over $I\subset\{1,\ldots,n-2 \}$ is $0$, because the lattice volume of $\tilde\tau_I$ equals the lattice volume of $\tau_I$ plus the lattice length of the segment $\tilde \tau_\varnothing$, and all $J_{C_I}(s)$ are equal to each other as well (more specifically, they are $p$ times smaller than $J_{\widetilde C_I}(s)$, where $p$ is the coefficient from the decomposition  of the primitive generator of the ray $r$ into the linear combination $p \cdot (e_1+\ldots+e_{n-2}) + q\cdot($the primitive generator of $\tau^\circ)$). 
\end{proof}

\begin{lemma} \label{sproutrays} The edges of a sprout or a delimiter sprout $S_{C,\tau}$ are either edges of the cone 
$(\partial C)\cap\sigma^\circ$ for some face $\sigma\prec\tau$, 
or coordinate rays. 
\end{lemma}
This directly follows from the definition of a sprout.

\subsection{Critical cones}

We now introduce and study the key notion of critical 
faces for a candidate pole $s_0 \neq -1$ 
of the zeta function $Z_{top, f}(s)$. Similarly to Lemma \ref{nontri}, we have the following lemma. 

\begin{lemma} \label{nontrivn} For $s_0\ne -1$, any vertex 
$P$ of 
a $B$-facet, and any hyperplane $\{v_i=0\}$ which is either base for this $B$-facet, or contains $P$, 
the equation 
$\langle u,P \rangle s_0+ \langle u,\mbbo \rangle =0$ 
is not satisfied by the coordinate vector $u=e_i$. In particular, this equation is non-trivial and thus defines a 
hyperplane $L_P$ in $\RR^n$. 
\end{lemma}
\begin{proof}
$\langle e_i,P \rangle s_0+ \langle e_i,\mbbo \rangle$ equals $1$ or $1+s_0$ in this setting.
\end{proof}
Starting from the following definition, the things depend on the choice of preferred bases and apices of $B$-faces, which we have fixed in Section 4.4 for the rest of the paper.

\begin{definition} For a face $F$, we define the 
\emph{critical set} $K_F \subset F^{\circ}$ 
to be the closure of $\cup_P ( \relint F^\circ \cap L_P)$, 
where in the union $\cup_P$ the vertex 
$P \prec \Gamma_+(f)$ ranges through the apices on the side of $F$ for 
$B$-facets containing $F$ (see Definition \ref{defontheside}). 
\end{definition}

\begin{lemma} \label{lcritsetn} \begin{enumerate}
\item If a face $F$ is contained in a coordinate plane, then $K_F$ is a finite union of hyperplanes in $F^\circ$.
\item If a face $F$ is not contained in a coordinate plane, then $K_F$ is given by one equation $\{u\in F^\circ\,|\, \langle u,Q \rangle s_0+ \langle u,\mbbo \rangle =0\}$, 
where $Q$ is an arbitrary point of $F$. In particular, either $K_F$ is a hyperplane in $F^\circ$, or $K_F=F^\circ$.
\end{enumerate}
\end{lemma}
\begin{proof} 1) If $F$ is in the coordinate plane $v_i=0$, then $e_i\in F^\circ$. 
If $F$ is in a $B$-facet $\tau$ contributing to $s_0$, then either $F$ is in the preferred base of $\tau$ (so that $v_i=0$ may be chosen to be this preferred base), or $F$ contains the apex $P\in\tau$ on its side (so that $P$ is in $v_i=0$). In both cases, the set $L_P$ does not contain $e_i\in F^\circ$ by Lemma \ref{nontrivn}, so no $L_P$ can contain the whole $F^\circ$.

2) If $F$ is not in a coordinate plane, then any $P$ in the definition of the critical set
is contained in $F$ by Lemma \ref{onthesiden}, so $K_F$ is given by the equation 
$\langle u,P \rangle s_0+ \langle u,\mbbo \rangle =0$. The set defined by this equation in $F^\circ$  does not change if we substitute $P$ with any other point $Q\in F$, because $\langle Q-P, u\rangle=0$ for $u\in F^\circ$.
\end{proof}

\begin{definition}\label{defcritn}
We say that \emph{a face $F \prec \Gamma_+(f)$ and its 
dual cone $F^{\circ} \in \Sigma_0$ are 
critical (for the candidate pole $s_0$)} if $K_F=F^\circ$.
\end{definition} 

\begin{lemma}\label{lcritfacen} A critical face is not contained in a coordinate plane. 
\end{lemma}
This follows from Lemma \ref{lcritsetn}(1).

\begin{proposition}\label{pcritface}
Assume that a candidate pole $s_0 \not= -1$ is contributed
only by $B$-facets.
In this case, for a face $F$
the following conditions are equivalent:
\begin{enumerate}
\item The face $F$ is critical, that is,
$F$ is contained in a $B$-facet contributing to $s_0$, and, for its apex $P$
on the side of $F$, the plane $L_P$ contains the cone $F^\circ$.
\item Every facet containing $F$ is a $B$-facet contributing to $s_0$,
and, for every apex $P$ of it on the side of $F$,
the plane $L_P$ contains the cone $F^\circ$.
\item Every facet containing $F$ contributes
to
the candidate pole $s_0$ to the topological $\zeta$-function.
\end{enumerate}
\end{proposition}

The second condition will be mostly used in practice,
and the third one is especially simple (and relates the first two).

\begin{proof}

\par \noindent
{\bf ((1) $\Rightarrow$ (3))}
By Lemma \ref{lcritfacen} the critical face $F$
is not contained in a coordinate hyperplane.
Then by Lemma \ref{lcritsetn} (2), the equation
of $K_F$ in $F^{\circ}$ is given by
$$\langle u,Q \rangle s_0+ \langle u,\mbbo \rangle =0,$$
where $Q$ is an arbitrary point of $F$.
Since $F$ is critical i.e. $K_F=F^{\circ}$,
it holds for any $u \in F^{\circ}$. In particular,
for every facet $\tau$ containing $F$, its
conormal vector $a( \tau ) \in \tau^{\circ} \prec F^{\circ}$
satisfies the condition
$$\langle a( \tau ),Q \rangle s_0+ \langle a( \tau ),
\mbbo \rangle =0.$$
Thus $\tau$ contributes the candidate pole $s_0$.
\vvspace
\par \noindent
{\bf ((3) $\Rightarrow$ (2))}
By the assumptions of the proposition, every facet
$\tau$ containing $F$ is a $B$-facet.
In particular, $F$ is not contained in a coordinate hyperplane.
By Lemma \ref{onthesiden}, every apex $P \in \tau$
(of every facet $\tau$ containing $F$) on the side of $F$
is contained in $F$. Then by the condition (3),
for the conormal vector $a( \tau ) \in \tau^{\circ} \prec F^{\circ}$
we have
$$\langle a( \tau ),P \rangle s_0+ \langle a( \tau ),
\mbbo \rangle =0.$$
Since such conormal vectors $a( \tau )$ generate the cone
$F^{\circ}$, for any $u \in F^{\circ}$ we have
$$\langle u,P \rangle s_0+ \langle u,
\mbbo \rangle =0.$$
\vvspace
\par \noindent
{\bf ((2) $\Rightarrow$ (1))}
This is evident.
\end{proof}

\begin{corollary} \label{critclosed} A face of a critical cone in the dual fan $\Sigma_0$ is a critical cone. Equivalently, a face containing a critical face is critical itself.
\end{corollary}

Note that, however, for non-critical faces $F\subset G$, it is not in general true that $K_G
\subset 
K_F$.

It is now a crucial observation that, under the assumptions of Theorem \ref{ADJJJ}, every critical face $F$ is a $B_2$-facet or $B_1$-pyramid (by Lemma \ref{Noquadrn} and Proposition \ref{pcritface}). In the latter case, 
using the assumption
of Theorem 4.3 we can choose apices of
$B_1$-facets and $B$-facets of $B_2$-facets as in Section 4.4, so
that 
all $B$-facets, containing $F$, have the same preferred base $b_F$ and the same apex $P_F$ on the side of $F$.

\subsection{A tubular neighborhood of the critical subfan}\label{ssgeomn}

We are now ready to prove Theorem \ref{ADJJJ}.  When referring to $B$-facets or critical faces in the course of the proof, we always mean only the faces contributing to the candidate pole $s_0\ne-1$, for which we are proving Theorem \ref{ADJJJ} (thus all the choices and objects that we introduce for the proof completely depend on the choice of $s_0$). Recall that by this time we have 

-- chosen once and for all a preferred base of every $B_1$-facet and every $B_1$-facet of every $B_2$-facet in the Newton polyhedron $\Gamma_+(f)$, 

-- depending on this choice, called some cones critical in the dual fan $\Sigma_0$,

-- defined the delimiter hyperplane $D_\sigma$ for every cone $\sigma$, dual to a $B_2$-facet (see Definition \ref{defdelimb2n}). 

Choose once and for all an affine structure on the projectivization ${\mathbb P}\RR^n_+$. For every cone $C$ and fan $\Sigma$, denote their projectivizations by ${\mathbb P}C$ and ${\mathbb P}\Sigma$ respectively. 
In this subsection, we refer to cones and their projectivizations interchangeably whenever it causes no confusion.

According to Corollary \ref{critclosed}, the set of projectivized critical cones in the dual fan $\Sigma_0$ is a polyhedral complex $\Sigma_c$, closed with respect to taking faces and intersections. 
We shall construct a generic piecewise linear tubular neighborhood of $\Sigma_c$ in ${\mathbb P}\RR^n_+$, whose boundary is transversal to the projectivized critical sets of non-critical cones.

In differential geometry, the standard way to construct stratified tubular neighborhood starts with fixing a metric. We shall mimick the same approach in our PL setting.

Recall that two polyhedra in $\RR^n$ are said to be transversal in $U\subset\RR^n$, if they have no common points in $U$, or their union is not contained in an affine hyperplane. More generally, two piecewise linear sets in $\RR^n$ are said to be transversal in $U\subset\RR^n$, if they can be subdivided into relatively open polyhedra so that the polyhedra from these two subdivisions are pairwise transversal in $U$.

Recall that the corner locus of a continuous piecewise linear function is the (piecewise linear) set of all points at which the function is not smooth, and that for convex PL functions it has the natural structure of a polyhedral complex (defined by the projections of the faces of the subgraph of the function). When discussing the transversality to the corner locus, we always imply transversality in the sense of this polyhedral structure. 

To every polyhedral complex $M$ in ${\mathbb P}\RR^n_+$, assign its tangent bundle $TM$: define the tangent plane $T_xM$ at a point $x\in M$ as the maximal vector space $L$ 
(lying in the $(n-1)$-dimensional vector space $V$, underlying the ambient affine space of ${\mathbb P}\RR^n_+$), such that for every $\tilde x\in M$ sufficiently close to $x$ the affine plane $\tilde x+L$ is contained in $M$ in a small neighborhood of $\tilde x$. 
The tangent bundle $TM$ is the (finite) set of all tangent spaces to $M$.

We shall say that $h:V\to{\mathbb R}$ is a piecewise linear norm, transversal to a collection of subspaces $V_1,\ldots,V_M\subset V$, if:

1) $h(w)=\max_{i=1}^N h_i(w)$, where $h_1,\ldots,h_N$ are linear functions, 

2) $h(w)>0$ for $w\ne 0$,

3) For every pair of subsets $I\subset\{1,\ldots,N\}$ and $J\subset\{1,\ldots,M\}$ and every cone $\sigma\in\Sigma_0$, 
the projections of the spaces $H_I:=\{w\,|\,h_i(w)=h_k(w)$ for $i,k\in I\}$ and  $\cap_{j\in J} V_j$ along the vector space parallel to ${\mathbb P}\sigma$ are transversal outside 0.

For $N\geqslant n$, condition (2) is satisfied for all tuples of linear functions $h_1,\ldots,h_N$ from a non-empty open cone in the space of all tuples. In this cone, condition (3) is satisfied for almost all tuples of linear functions $h_i$.

Thanks to this, we can choose once and for all a norm $h$ transversal to the following collection of subspaces:

-- the tangent bundle of the critical set $T{\mathbb P}K_{\sigma'}$ for every non-critical cone $\sigma'\in\Sigma_0$;

-- the 
tangent bundle of the delimiter set $T{\mathbb P}(D_{\sigma''}\cap\sigma')$ for every non-critical cone $\sigma'\in\Sigma_0$ and its edge $\sigma''$ dual to a $B_2$-facet.

-- the tangent bundle $T{\mathbb P}{\mathbb R}^n_+$.

For every projectivized closed cone $\sigma\in\Sigma_c$, consider the convex piecewise linear (in the sense of the selected affine structure) ``distance function'' $d_\sigma:{\mathbb P}\RR^n_+\to\RR_+$ in the sense of the norm $h$, i.e. $d_\sigma(u)=\min_{u'\in\sigma}h(u-u')$. Properties 2 and 3 of the norm $h$ above translate into the following properties of the distance function $d_\sigma$ (in order to see how the property 3 of $h$ translates into the properties 2-3 of the corner locus of $d_\sigma$, notice that the affine spans of the polyhedral cells of the corner locus of $h$ are among the planes $H_I$ from its property 3):

1) It vanishes on $\sigma$ and is strictly positive outside of it.

2) For every non-critical cone $\sigma'\in\Sigma_0$, the corner locus of $d_\sigma$ is transversal to the projectivized critical set ${\mathbb P}K_{\sigma'}$ in the complement to $\sigma$.

3) Moreover, for every non-critical cone $\sigma'\in\Sigma_0$ and its edge $\sigma''$ dual to a $B_2$-facet, the corner locus of $d_\sigma$ is transversal to the projectivized delimiter ${\mathbb P}(D_{\sigma''}\cap\sigma')$ and the critical set ${\mathbb P}(D_{\sigma''}\cap K_{\sigma'})$ in the complement to $\sigma$.

In the sense of this distance, we shall consider $\varepsilon$-neighborhoods $B_\sigma(\varepsilon)=\{u\,|\, d_\sigma(u)<\varepsilon\}$ and their boundaries $S_\sigma(\varepsilon)$, which are all piecewise-linear sets.

Associating positive numbers $\varepsilon_\sigma$ to all $\sigma\in\Sigma_c$, introduce the following sets:

-- the open neighborhood $U=\bigcup_{\sigma\in\Sigma_c} B_\sigma(\varepsilon_\sigma)$ of the critical complex $\Sigma_c$;

-- for every $\sigma\in\Sigma_c$, the set $U_\sigma=B_\sigma(\varepsilon_\sigma)\setminus\bigcup_{\sigma'\subsetneq\sigma} B_{\sigma'}(\varepsilon_{\sigma'})$;

-- for every $\sigma\in\Sigma_c$ dual to a $B_2$-facet, the {\it delimiter disk} $DD_\sigma=U_\sigma\cap {\mathbb P}D_\sigma$. 

Note that, by Lemmas
4.9 and 4.28, the cones $\sigma^{\prime} \ne \sigma$
in $DD_{\sigma}$ are not critical.

\begin{lemma}\label{ltubularn} One can choose the numbers $\varepsilon_\sigma$ so that \begin{enumerate}
\item for every non-critical projectivized cone $\sigma'\in{\mathbb P}\Sigma_0$, no vertex of the boundary of 
$U_\sigma\cap\sigma'$ and $DD_\sigma\cap\sigma'$ is contained in the critical set ${\mathbb P}K_{\sigma'}\cap\relint\sigma'$;

\item $U_\sigma$ is contained in the star of $\sigma$;
\item $U_\sigma\cap U_\delta=\varnothing$ unless $\sigma=\delta$, and $\bar U_\sigma\cap \bar U_\delta=\varnothing$ unless $\sigma\subset\delta$ or vice versa;
\item $DD_\sigma$ divides $U_\sigma$ into two connected components. 
\end{enumerate}
\end{lemma}
\begin{proof} Properties (2-4) are satisfied if the tuple $(\varepsilon_\sigma)$ is chosen 
rapidly decreasing (i.e.  $\varepsilon_\sigma\ll\varepsilon_\delta\ll 1$ for all $\delta\subset\sigma$), and even without genericity assumptions (2-3) on the distance function $d_\sigma$.

If the tuple is moreover chosen generically (i.e. avoiding finitely many hyperplanes in the space of all such tuples), then it satisfies (1) as well by the properties (2-3) of the corner locus of $d_\sigma$, because the vertices of $U_\sigma\cap\sigma'$ belong to $\sigma'\cap\bigl((n+1-\dim\sigma')$-dimensional skeleton of the corner locus of $d_\sigma\bigr)$, and the same for delimiter discs.
\end{proof}

\begin{definition}\label{defvf4} Under the assumptions of Theorem \ref{ADJJJ}, the {\it vertex function} $$v:U\setminus(\mbox{delimiter disks})\to(\mbox{vertices of }\Gamma_+(f))$$ is defined on every $U_\sigma$ as follows:

-- if $\sigma\in\Sigma_c$ is dual to a non-$B_2$ critical face $F$, then we define $v(\cdot)$ on $U_\sigma$ as the apex $P_F$. 

-- if $\sigma\in\Sigma_c$ is dual to a $B_2$-facet $\tau$, then we define $v(\cdot)$ on each of the two components of $U_\sigma\setminus DD_\sigma$ as the apex of $\tau$ on the side of this component.
\end{definition}
\begin{example}
If $\sigma$ is dual to the $B_2$-facet $\tau$ from Figure \ref{picb2}, then $U_\sigma$ is split with the delimiter hyperplane (perpendicular to the segment $XY$) into two components -- ``left'' and ``right''. Then, in the setting of Example \ref{exaontheside}, the vertex function $v$ equals $P$ on the left component and $B$ on the right one.
\end{example}

Note that the vertex function is locally constant on its domain.
We now use this observation to consistently substitute every piece of the neighborhood $U$ by an appropriate sprout. Recall that we refer to cones and their projectivizations interchangeably, and, in particular,  $S_{W,\tau}$ for the projectivization $W$ of a cone $C$ is another notation for the sprout $S_{C,\tau}$.

-- If $\sigma\in\Sigma_c$ is dual to a face $F$ that is not a $B_2$-facet, then the vertex function $v$ equals a constant $P$ on it, so we define $V_{\sigma,0}$ as the sprout $S_{U_\sigma,\tau}$ (see Definition \ref{defsprout}) for any $B$-facet $\tau\supset F$. Note that neither the sprout $V_{\sigma,0}:=S_{U_\sigma,\tau}$ nor its root $R_{\sigma,0}:=R_{U_\sigma,\tau}=P$ depend on the choice of $\tau$.

-- If $\sigma\in\Sigma_c$ is dual to a $B_2$-facet $\tau$ with the apices $P_{\pm 1}$, then we define $V_{\sigma,\pm 1}$ as the sprout $S_{\{v=P_{\pm 1}\}\cap U_\sigma,\tau}$ and the root $R_{\sigma,\pm 1}$ as $R_{\{v=P_{\pm 1}\}\cap U_\sigma,\tau}=P_{\pm 1}$. Also, choosing a ray $r$ in the dual cone $\sigma'$ of the non-simplicial non-$V$-facet of $\tau$ outside of $K_{\sigma'}$, we define $V_{\sigma,0}$ as $S_{r,\tau}$ (see Lemma \ref{ldsproutn}), leaving $R_{\sigma,0}$ undefined. 

Define the {\it sprouting} $S_P\subset\RR^n$ of a vertex $P$ as the union of all $V_{\sigma,\ast}\; (\ast=0,\pm 1)$, such that $R_{\sigma,*}=P$.

\begin{lemma} \label{lproofmainBn} \begin{enumerate}
\item The contribution $Z(S_P)(s)$ of the 
sprouting $S_P$ to the zeta function $Z_{top, f}(s)$ is equal to $$\int_{S_P} \exp\bigl(-\langle u, P\rangle s-\langle u, \mbbo\rangle\bigr)\, du$$ modulo a function that has no pole at $s_0$.
\item No edge of the boundary of $S_P$ is critical for $(s_0,P)$.
\end{enumerate}
\end{lemma}
\begin{proof} The part (1) follows from Lemmas \ref{lsprout} and 4.22. To deduce (2), it is enough (by Lemma \ref{sproutrays}) to show that every vertex of $(\partial U_\sigma)\cap\sigma'$ and $(\partial DD_\sigma)\cap\sigma'$ for every projectivized cone $\sigma'\supset\sigma$ is either not in ${\mathbb P}K_{\sigma'}$, or not a projectivized edge of $\partial S_P$. For non-critical $\sigma'$, this follows from Lemma \ref{ltubularn}(1), and every critical $\sigma'$ is either in the projectivized interior of $S_P$, or disjoint from its closure, or intersects its boundary at an interior point of its facet $DD_{\sigma'}$ (thus no vertex in $\sigma'$ can be a projectivized edge of $\partial S_P$).
\end{proof}

{\it Proof of Theorem \ref{ADJJJ}.} By the preceding lemma and Lemma \ref{regintegr}, the contribution of  every $S_P$ to the topological zeta function of $f$ has no pole at $s_0$. By Lemma \ref{ldsproutn}, the same is true for $V_{\sigma,0}$ for every $B_2$-facet $\sigma$. Since $$V=\bigcup_{\sigma,\; \ast=0,\pm 1} V_{\sigma,\ast}$$ contains all the critical cones in its interior, for every cone $\sigma'\in\Sigma_0$ the set $\sigma'\setminus V$ has edges of two types: either edges of $(\partial V)\cap\sigma'$ or non-critical 1-dimensional cones of $\Sigma_0$. The edges of the first kind are not in the critical set of any cone by Lemma \ref{lproofmainBn}(2), and for the second kind the same holds by definition. Thus the contribution of $\sigma'\setminus V$ to the topological zeta function has no pole at $s_0$ as well by Lemma \ref{regintegr}. We have subdivided $\RR^n_+$ into the pieces 

-- $S_P$ for some vertices $P$, 

-- $V_{\tau^\circ,0}$ for some $B_2$-facets $\tau$ ,

-- $\sigma'\setminus V$ for some non-critical cones $\sigma'$,

\noindent so that none of them contributes the pole $s_0$.
$\hfill \square$

\section{Fake poles of the topological zeta function 
in dimension 4}\label{sec:4new}

Throughout this section we work in dimension $n=4$. In the first subsection, we classify all non-contributing configurations of $B$-faces. The proof of the classification theorem occupies the subsequent two subsections and makes use of the tools introduced in the preceding two sections. In the last subsection, we show that every non-$B$-facet contains a non-$B$-simplex and discuss a possible general definition of $B$-facets in arbitrary dimension. Both of the mentioned results will be used in the last section to prove the monodromy conjecture for all non-degenerate singularities of functions of $4$ variables.

\subsection{The main theorem}

We shall prove that if a candidate 
pole of the topological zeta function 
$Z_{ {\rm top}, f}(s)$ is contributed only by $B$-facets, 
then it is fake, with one exception:

\begin{definition}\label{defborder} 1. A \emph{border} is a triangular face of $\Gamma_+(f)$, such that, 
up to a reordering of the 
coordinates, its vertices are of the form $A=(1,1,*,*)$, 
$B=(0,0,*,*)$ and  $D=(0,0,*,*)$, and the two facets containing 
it are $B$-facets with the vertex $A$ and the bases 
in the coordinate hyperplanes 
$\{ v_1=0 \}$ and $\{ v_2=0 \}$ respectively. 

In this definition we admit ``infinite triangles'' obtained by tending some of the starred coordinates to infinity. Namely, the notion of the border includes the Minkowski sum of the segment $AD$ with the aforementioned coordinates and the 4th coordinate ray, as well as the  Minkowski sum of the point $A=(1,1,*,*)$ and the 3rd and 4th coordinate rays.

2. The border $ABD$ is said to be a $B$-border unless (up to reordering $B$ and $D$ and the last two coordinates) we have $A=(1,1,0,a)$, $B=(0,0,0,b)$ and $D=(0,0,1,d)$. In the latter case  (implying, in particular, that the edge $BD$ is itself a $B$-edge in the coordinate plane $O_{34}$) the border $ABD$ is said to be  a $B^2$-border (see Figure \ref{picborder}). The vertex $A$ is called the apex of the border, and the first two coordinates are called its bases.

In this definition we admit ``infinite triangles'' obtained by tending $b$ to infinity, i.e. the Minkowski sum of the segment $AD$ with the aforementioned coordinates and the 4th coordinate ray.
\end{definition}

\begin{figure}
    \centering
\skipfig{\psscalebox{1.0 1.0} 
{
\begin{pspicture}(0,-1.62)(9.696568,1.62)
\psline[linecolor=black, linewidth=0.04](1.2482843,-0.4)(0.048284303,-1.6)(2.8482842,-1.6)(4.0482845,-0.4)(1.2482843,-0.4)(1.2482843,1.6)(4.0482845,1.6)(4.0482845,-0.4)
\rput[bl](0.2482843,0.0){$A$}
\rput[bl](1.2482843,-0.8){$B$}
\rput[bl](3.0482843,-0.2){$D$}
\psline[linecolor=black, linewidth=0.04](5.2482843,-0.4)(4.0482845,-1.6)(8.448284,-1.6)(9.648284,-0.4)(5.2482843,-0.4)(5.2482843,1.6)(9.648284,1.6)(9.648284,-0.4)
\psline[linecolor=black, linewidth=0.08](6.0482845,-0.4)(7.2482843,1.2)(6.8482842,0.4)
\psline[linecolor=black, linewidth=0.08](7.2482843,1.2)(8.448284,-0.4)
\rput[bl](7.0482845,0.4){$A$}
\rput[bl](5.8482842,-0.2){$B$}
\rput[bl](8.448284,-0.2){$D$}
\psline[linecolor=black, linewidth=0.04](1.2482843,1.6)(0.048284303,0.4)(0.048284303,-1.6)
\psline[linecolor=black, linewidth=0.08](1.2482843,1.0)(0.6482843,0.2)(0.4482843,-1.2)(3.0482843,-0.4)(0.6482843,0.2)
\psline[linecolor=black, linewidth=0.08](3.0482843,-0.4)(1.2482843,1.0)
\psline[linecolor=black, linewidth=0.08, linestyle=dashed, dash=0.17638889cm 0.10583334cm](1.2482843,-0.4)(0.6482843,0.2)
\psline[linecolor=white, linewidth=0.04, linestyle=dashed, dash=0.17638889cm 0.10583334cm](1.2482843,-0.4)(2.8482842,-0.4)
\psline[linecolor=white, linewidth=0.04, linestyle=dashed, dash=0.17638889cm 0.10583334cm](6.0482845,-0.4)(8.448284,-0.4)
\psline[linecolor=black, linewidth=0.08, dotsize=0.07055555cm 2.0,dotsize=0.07055555cm 2.0]{cc-cc}(6.8482842,0.4)(6.0482845,-0.4)(6.448284,-1.2)(6.8482842,0.4)(8.448284,-0.4)(6.448284,-1.2)
\end{pspicture}
}}
    \caption{a $B^2$-border and a $B$-border}
    \label{picborder}
\end{figure}
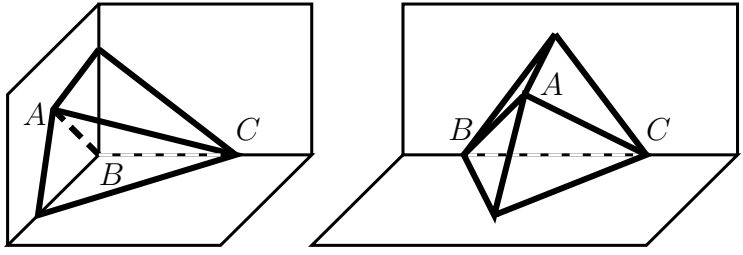

Let $f$ be a non-degenerate polynomial on $\CC^4$ with 
the Newton polyhedron $\Gamma_+(f) \subset \mathbb{R}^4$ 
and $\Sigma_0$ its dual fan. 

\begin{theorem} \label{mainB}
Assume that $f$ is non-degenerate and does not have a 
Morse singularity at the origin $0 \in \CC^4$. Let a candidate pole 
$s_0 \ne-1$  of the topological zeta function 
$Z_{ {\rm top}, f}(s)$ 
be contributed only by $B$-facets, 
and no two of them contain the same $B$-border (although they may contain the same $B^2$-border). 
Then $s_0$ is fake i.e. not an actual 
pole of $Z_{ {\rm top}, f}(s)$. 
\end{theorem}

The rest of this subsection is devoted to 
the proof of this theorem.

The first difference with the setting of Theorem \ref{ADJJJ} is that we have to prove that $B^2$-borders do not contribute to candidate poles. For this purpose, we shall  extend the notion of a sprout (Definition \ref{defsprout}) to them. Let $C$ be a (not necessarily convex) cone in the star of $\tau^\circ$, where $\tau$ is a border with bases $i$ and $j$ and the apex $P$, adjacent to two $B$-facets contributing the pole $s_0$. 

Note that we may assume w.l.o.g. throughout the rest of this section that $\tau$ is compact: indeed, if two $B$-facets sharing a common non-compact $B^2$-border contribute the same pole $s_0$, then $s_0=-2$.
\begin{definition}\label{defbordconv} The cone $C$ is said to be \emph{border-convex}, if every 3-dimensional plane through $e_i$ and $e_j$ intersects $C$ by a convex cone.
\end{definition}

\begin{definition} \label{defbsprout} The union of $C\cap\relint P^\circ$ and all three-dimensional cones, generated by the coordinate vectors $e_i, e_j$ and $u\in C\cap \partial P^\circ$, is denoted by $S_{C,\tau}$ and is called the \emph{border sprout of $C$}. The vertex $P$ is denoted by $R_{C,\tau}$ and is called the \emph{root of $C$}. \end{definition}
The definition implies the following.
\begin{lemma} \label{sproutrays4} The edges of a border sprout of a cone $C$ are either edges of $(\partial C)\cap\sigma^{\circ}$ for some faces $\sigma$ of $\tau$ outside of coordinate planes, or coordinate rays.
\end{lemma}
\begin{lemma} \label{lbsprout} Let $C$ be a border-convex cone in the star of the dual cone to a $B^2$-border $\tau$, and assume that both of its adjacent $B$-facets contribute to the same candidate pole $s_0\ne -1$. 
Then the contribution of the border sprout $Z(S_{C,\tau})$ is equal to $$\int_{S_{C,\tau}} \exp\bigl(-\langle u, R_{C,\tau}\rangle s-\langle u, \mbbo\rangle\bigr)\, du$$ modulo a function that has no pole at $s=s_0$.
\end{lemma}
\begin{remark} Most of borders do not satisfy this property.
\end{remark}
\begin{proof} Let $\tau=ABD$ be a $B^2$-border with coordinates $A=(1,1,0,a)$, 
$B=(0,0,0,b)$ and $D=(0,0,1,d)$. The fact that the adjacent $B$-facets $\tau_1$ and $\tau_2$ contribute the same candidate pole means that the line $s\cdot A+{\bf 1}$ intersects the vector spans of $\tau_1-A$ and $\tau_2-A$ at the same point $s_0\cdot A+{\bf 1}$ (note that the
dimension of the intersection of these two
3-dimensional linear subspaces is 2).
Thus this point is a linear combination of the vectors $D-B$ and $A-B$: $$D-B=\beta(s_0\cdot A+{\bf 1})+\alpha\cdot (A-B).$$
Solving this system of equations for $\alpha, \beta$ and $s_0$, we find
$$s_0=(a+d-2b-1)/b,\;\; \alpha=-s_0-1,\;\; \beta=1.$$
At the same time, for the future reference, we interpret the latter equalities as the computation of the ratio of $D-B$ and $s_0\cdot A+{\bf 1}$ as linear functions on the dual space $(\RR^4)^*$ restricted on the hyperplanes $(A-D)^\perp$ and $(A-B)^\perp$:
$$ \frac{(D-B)|_{(A-B)^\perp}}{(s_0\cdot A+{\bf 1})|_{(A-B)^\perp}}=\beta=1,\quad \frac{(D-B)|_{(A-D)^\perp}}{(s_0\cdot A+{\bf 1})|_{(A-D)^\perp}}=\beta/(1-\alpha)=1/(2+s_0).\eqno{(*)}$$

We now first prove the lemma for a very special choice of the cone $C$. 

Choose primitive vectors $u_A$, $u_{B}$ and $u_{D}$ in the interior of the cones $ABD^\circ$, $AB^\circ$ and $AD^\circ$ respectively. Recall that the standard basis is denoted by $e_1,e_2,e_3,e_4$.
Denote 
the union of the relatively open simplicial cones $\langle u_A,u_{B}, e_1,e_2\rangle$,  $\langle u_A,u_{D}, e_1,e_2\rangle$ and $\langle u_A, e_1,e_2\rangle$ by $N_{ABD}$. 

Then the statement of the lemma is valid for $C=N_{ABD}$. To prove this, split
$Z(N_{ABD})(s)-\int_{N_{ABD}} \exp\bigl(-s\langle u, A\rangle-\langle u,
\mbbo\rangle\bigr)\, du$ into $$Z(\langle u_A,u_{B}, e_1,e_2\rangle)+Z(\langle u_A,u_{D}, e_1,e_2\rangle)+Z(\langle u_A, e_1,e_2\rangle)-$$ $$\int_{\langle u_A,u_{B}, e_1,e_2\rangle} \exp\bigl(-s\langle u, A\rangle-\langle u,
\mbbo\rangle\bigr)\, du-\int_{\langle u_A,u_{D}, e_1,e_2\rangle} \exp\bigl(-s\langle u, A\rangle-\langle u,
\mbbo\rangle\bigr)\, du,$$ and apply Lemma \ref{DLLE} to each of the five terms. The result consists of the five respective terms  

$$\frac{|\det(e_1,e_2,u_A,u_D)|}{(s\cdot A+{\bf 1})|_{u_A}\cdot (s\cdot A+{\bf 1})|_{u_D}}+\frac{|\det(e_1,e_2,u_A,u_B)|}{(s\cdot A+{\bf 1})|_{u_A}\cdot (s\cdot A+{\bf 1})|_{u_B}}-\frac{s\cdot |e_1\wedge e_2\wedge u_A|}{(s+1)\cdot(s\cdot A+{\bf 1})|_{u_A}}-$$
$$\frac{|\det(e_1,e_2,u_A,u_D)|}{(s+1)^2\cdot(s\cdot A+{\bf 1})|_{u_A}\cdot (s\cdot A+{\bf 1})|_{u_D}}-\frac{|\det(e_1,e_2,u_A,u_B)|}{(s+1)^2\cdot(s\cdot A+{\bf 1})|_{u_A}\cdot (s\cdot A+{\bf 1})|_{u_B}}, \eqno{(**)}
$$
where $|\cdot|$ in the third term stands for the lattice length. Collecting similar terms and taking into account the identities $\det(e_1,e_2,u_A,x)=(e_1 \wedge e_2 \wedge u_A) \cdot x$ and $u_A\cdot (D-B)=0$ (which in coordinates $u_A=(u_1,u_2,u_3,u_4)$ reads as $u_3=u_4\cdot(b-d)$), we can rewrite $(**)$ as the uninteresting factor $\frac{s\cdot u_4}{(s+1)^2\cdot(s\cdot A+{\bf 1})|_{u_A}}$ times

$$
\frac{(s+2)\bigl|(D-B)|_{u_D}\bigr|}{(s\cdot A+{\bf 1})|_{u_D}}+\frac{(s+2)\bigl|(D-B)|_{u_B}\bigr|}{(s\cdot A+{\bf 1})|_{u_B}}-(s+1) \eqno{(\star)}
$$

Since $u_B$ and $u_D$ are chosen to be support vectors of the edges $AB$ and $AD$ respectively, we have $u_B\cdot A=u_B\cdot B<u_B\cdot D$ and $u_D\cdot A=u_D\cdot D<u_D\cdot B$, so $\bigl|(D-B)|_{u_B}\bigr|=(D-B)|_{u_B}$ and $\bigl|(D-B)|_{u_D}\bigr|=-(D-B)|_{u_D}$.
Applying this and $(*)$, we conclude that $(\star)$ for $s=s_0$ equals $-(s_0+2)/(s_0+2)+(s_0+2)-(s_0+1)=0$. 

As a result, the sought difference $Z(N_{ABD})(s)-\int_{N_{ABD}} \exp\bigl(-s\langle u, A\rangle-\langle u,
\mbbo\rangle\bigr)\, du$ equals the product of the uninteresting factor that has a simple pole at $s=s_0$ and the rational (actually linear) function $(\star)$ that has a root at $s=s_0$. Thus the product is holomorphic at $s=s_0$, and we have proved the lemma for $C=N_{ABD}$.

Now, for an arbitrary border-convex $C$, the border sprout $S_{C,\tau}$, can be represented as the union (with disjoint interiors) of $N_{ABD}$ and the sprouts of the form $S_{C^{(k)},\tau_k},\, k=1,2$, for certain cones $C^{(k)}$. Thus the statement of the lemma is valid for it, applying the preceding computation to $N_{ABD}$ and Lemma \ref{lsprout} to $S_{C^{(k)},\tau_k}$. 
\end{proof}

We now comment on how one might arrive at considering the cone $N_{ABD}$ in this proof. We do not know how to prove the lemma directly for an arbitrary sprout $S$ of the border $\tau$, but we already know similar identities for (non-border) sprouts $S_i$ of the two adjacent $B$-facets $\tau_i\supset\tau$. So it is a natural idea to try to simplify the problem, subtracting from $S$ non-overlapping sprouts $S_i$, trying to choose them so that the difference is as small and simple as possible. Now it is an easy exercise of spatial thinking to see that the smallest and simplest difference has the form $N_{ABD}$.

\subsection{Very critical cones}

In contrast to the setting of Theorem \ref{ADJJJ}, we cannot in general choose preferred bases and apices of $B_1$-faces consistently, so we now choose them arbitrarily once and for all in this section. Recall that we fix the choice of apices $P_\tau$ and preferred bases $b_\tau$ for all $B_1$-facets $\tau$ and for all $B_1$-facets of $B_2$-facets $\tau$, and then use them to define preferred bases and apices of $B_2$-facets on the side of a given face or cone, see Subsection \ref{ssbases} for details.

As a result, in contrast to the setting of Theorem \ref{ADJJJ}, different $B_1$-facets, containing a given critical face, may have different preferred bases. Such critical faces are said to be very critical. They are studied in this subsection.

\begin{definition} \label{defbase} The \emph{preferred base 
$b_F=b_{F^{\circ}} \subset \{ 1,2,3,4 \}$ of a 
critical face $F \prec \Gamma_+(f)$ and its 
dual cone $F^{\circ} \in \Sigma_0$} is the 
set of the preferred bases on the side of $F$ for all 
$B$-facets containing $F$. 
We say that the face $F$ and its dual cone 
$F^\circ \in \Sigma_0$ 
are very critical if $|b_F| \geq 2$.  
\end{definition}

\begin{example} The projectivized pictures of the 
Newton polyhedron $\Gamma_+(f)$ on Figure \ref{picverycrit} below give some examples of 
($2$-dimensional) very critical faces 
(hatched), provided that all 
the facets on the pictures are $B$-facets contributing the 
same candidate pole $s_0$. The apices of $B_1$-faces are bold points, and the apices of the $B_2$-facet are the end points of the bold segment. \end{example}
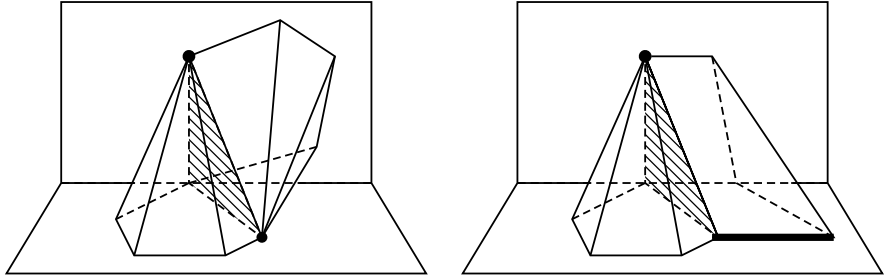
\begin{figure}[h]
\begin{center}
\skipfig{\psscalebox{0.6 0.6} 
{
\begin{pspicture}(0,-3.02)(19.27061,3.02)
\pspolygon[linecolor=black, linewidth=0.04, linestyle=dashed, dash=0.17638889cm 0.10583334cm, fillstyle=hlines, hatchwidth=0.028222222, hatchangle=-45.0, hatchsep=0.1411111](4.035305,1.8)(5.635305,-2.2)(4.035305,-1.0)
\psline[linecolor=black, linewidth=0.04, dotsize=0.2cm 2.0]{*-}(4.035305,1.8)(5.635305,-2.2)(4.8353047,-2.6)(2.835305,-2.6)(2.4353049,-1.8)(4.035305,1.8)(6.035305,2.6)(7.235305,1.8)(5.635305,-2.2)
\psline[linecolor=black, linewidth=0.04](4.035305,1.8)(2.835305,-2.6)
\psline[linecolor=black, linewidth=0.04](4.035305,1.8)(4.8353047,-2.6)
\psline[linecolor=black, linewidth=0.04, dotsize=0.2cm 1.0]{*-}(5.635305,-2.2)(6.035305,2.6)
\psline[linecolor=black, linewidth=0.04](7.235305,1.8)(6.8353047,-0.2)(5.635305,-2.2)
\psline[linecolor=black, linewidth=0.04, linestyle=dashed, dash=0.17638889cm 0.10583334cm](2.4353049,-1.8)(4.035305,-1.0)
\psline[linecolor=black, linewidth=0.04, linestyle=dashed, dash=0.17638889cm 0.10583334cm](4.035305,-1.0)(6.8353047,-0.2)
\psline[linecolor=black, linewidth=0.04, linestyle=dashed, dash=0.17638889cm 0.10583334cm](1.235305,-1.0)(8.035305,-1.0)
\psline[linecolor=black, linewidth=0.04](1.235305,-1.0)(2.835305,-1.0)
\psline[linecolor=black, linewidth=0.04](8.035305,-1.0)(6.435305,-1.0)
\pspolygon[linecolor=black, linewidth=0.04](1.235305,3.0)(1.235305,-1.0)(0.03530495,-3.0)(9.235305,-3.0)(8.035305,-1.0)(8.035305,3.0)
\pspolygon[linecolor=black, linewidth=0.04, linestyle=dashed, dash=0.17638889cm 0.10583334cm, fillstyle=hlines, hatchwidth=0.028222222, hatchangle=-45.0, hatchsep=0.1411111](14.035305,1.8)(15.635305,-2.2)(14.035305,-1.0)
\psline[linecolor=black, linewidth=0.04, dotsize=0.2cm 2.0]{*-}(14.035305,1.8)(15.635305,-2.2)(14.835305,-2.6)(12.835305,-2.6)(12.435305,-1.8)(14.035305,1.8)(14.035305,1.8)(15.635305,-2.2)(15.635305,-2.2)
\psline[linecolor=black, linewidth=0.04](14.035305,1.8)(12.835305,-2.6)
\psline[linecolor=black, linewidth=0.04](14.035305,1.8)(14.835305,-2.6)
\psline[linecolor=black, linewidth=0.04, linestyle=dashed, dash=0.17638889cm 0.10583334cm](12.435305,-1.8)(14.035305,-1.0)
\psline[linecolor=black, linewidth=0.04, linestyle=dashed, dash=0.17638889cm 0.10583334cm](11.235305,-1.0)(18.035305,-1.0)
\psline[linecolor=black, linewidth=0.04](11.235305,-1.0)(12.835305,-1.0)
\psline[linecolor=black, linewidth=0.04](18.035305,-1.0)(17.235306,-1.0)
\pspolygon[linecolor=black, linewidth=0.04](11.235305,3.0)(11.235305,-1.0)(10.035305,-3.0)(19.235306,-3.0)(18.035305,-1.0)(18.035305,3.0)
\psline[linecolor=black, linewidth=0.04](15.501971,-2.2)(18.168638,-2.2)(15.501971,1.8)(14.168638,1.8)
\psline[linecolor=black, linewidth=0.04, linestyle=dashed, dash=0.17638889cm 0.10583334cm](15.501971,1.8)(16.035305,-1.0)(18.168638,-2.2)
\psline[linecolor=black, linewidth=0.16](15.501971,-2.2)(18.168638,-2.2)
\end{pspicture}
}}

\caption{very critical faces}\label{picverycrit}

\end{center}
\end{figure}

This subsection is devoted to the study of very critical 
faces depending on their dimension. First of all, 
by Lemma \ref{nontrivn} there is no critical vertex, 
and a facet $\tau$ is critical if and only if the 
candidate pole $s_0$ is contributed by $\tau$ (and thus $\tau$ is a $B$-facet in the assumptions of Theorem \ref{mainB}). A 2-dimensional face 
$F \prec \Gamma_+(f)$ is critical 
if and only if it separates 
two $B$-facets $\tau_1$ and $\tau_2$, contributing $s_0$. 
This critical face $F$ is 
very critical, if moreover we have 
$b_{\tau_1}^F \ne b_{\tau_2}^F$. 

\begin{definition} \label{defdelim} 
Let $F$ be a $2$-dimensional 
very critical face separating two 
$B$-facets $\tau_1$ and $\tau_2$. Then we 
define the {\it $F$-delimiter} $D_F=D_{F^\circ} \subset \RR^4$ 
to be the vector subspace in $\RR^4$ generated by 
the $b_{\tau_1}^F$ and  $b_{\tau_2}^F$'th coordinate lines in
$\RR^4$
and a ray $R \subset \relint F^\circ$. 

The delimiter is said to be \emph{generic}, if, for every non-critical face $G\subset F$, the critical set $D_F\cap K_G$ is nowhere dense in $D_F\cap \relint G^\circ$.
\end{definition}

\begin{lemma} 
Under the assumptions of Theorem \ref{mainB}, 
any $2$-dimensional very critical face $F$ is a triangle. If it is a border, then the delimiter $D_F$ is the affine span of the dual cone of the $V$-edge of $F$. Otherwise the affine span ${\rm aff}(F^\circ)$ of $F^\circ$ is 
transversal to the delimiter $D_F$. 
In particular, in both cases $D_F$ is a hyperplane in $\RR^4$. 
\end{lemma}
\begin{proof} 
The face $F$ is a triangle by Lemma \ref{Noquadrn}. 
Assume that ${\rm aff}(F^\circ)$ 
is not transversal to the delimiter. 
Then for the coordinate plane 
$H= \{ v_{b_{\tau_1}^F}=v_{b_{\tau_2}^F}=0 \} \subset \RR^4$ 
we have $\dim \{ {\rm aff}(F) \cap H \} \geq 1$. 
This implies that ${\rm aff}(F)$ is not 
transversal to $H$, and hence 
their intersection is a $V$-edge 
in both $\tau_1$ and $\tau_2$. 
Then $F$ is a border. 
\end{proof}
\begin{corollary} \label{genericexists} If $F$ is a non-border very critical face, then the rays $R \subset \relint F^\circ$ in Definition \ref{defdelim} parameterize a one-dimensional family of $F$-delimiters. Among them, all but finitely many delimiters are generic.
\end{corollary}

An edge $F \prec \Gamma_+(f)$ is critical, 
if and only if all of the facets containing it 
are $B$-facets $\tau$ contributing to the 
candidate pole $s_0$. Since 
we have $\dim F^\circ =3$ in this case, moreover, for any 
apex $P$ of such $\tau$ on the side of $F$, 
the critical hyperplane $L_P$ coincides with 
${\rm aff}(F^\circ)$.

\begin{proposition}\label{NEVC} 
Under the assumption of Theorem \ref{mainB}, no edge 
of $\Gamma_+(f)$ is very critical. 
\end{proposition}

\begin{proof} 
Assume that an edge $P_1P_2$ is very critical. 
Then, by Proposition \ref{pcritface}(2), there exist $B$-facets $\tau_1$ and $\tau_2$ containing
it with the apices $Q_1$ and $Q_2$ on the side of $P_1P_2$,
and their critical hyperplanes
$L_{Q_i}:
\langle u,Q_i \rangle s_0+ \langle u,\mbbo \rangle =0$ 
both
coincide with the hyperplane
$\langle u ,P_1-P_2 \rangle =0$
generated by the dual cone $(P_1P_2)^{\circ}$. Moreover,
since $P_1P_2$ is not in a coordinate plane, we have $P_1=Q_1$ and
$P_2=Q_2$ or vice versa. Thus the vectors $P_1s_0+ \mbbo$ and
$P_2s_0+ \mbbo$ are parallel.

On the other hand, it cannot happen that one of the points $P_1$ and
$P_2$ is in the apex of $\tau_1$ and in the base of $\tau_2$, and the
other one is in the apex of $\tau_2$ and in the base of $\tau_1$.
Otherwise, reordering coordinates if necessary, we would have
$P_1=(1,0,*,*)$, $P_2=(0,1,*,*)$, $P_1s_0+ \mbbo =(1+s_0,1,*,*)$,
$P_2s_0+ \mbbo =(1,1+s_0,*,*)$. 
Since the last two vectors are parallel and 
$s_0 \ne 0$, we have $s_0=-2$, 
$P_1s_0+ \mbbo = -(P_2s_0+ \mbbo )$ and hence 
$P_1+P_2= \mbbo$.  Thus, up to
reordering the coordinates and 
$P_i$'s, we have $P_1=(1,0,0,0)$ (i.e. $f$
has no singularity at the origin) or 
$P_1=(1,0,1,0)$ and $P_2=(0,1,0,1)$ and 
hence $f$ has a Morse singularity at the 
origin by Lemma \ref{NDGG}. 

If one of $P_i$'s is in the apex of all $B$-facets 
$\tau_j$'s containing $P_1P_2$, and one of 
$\tau_j$'s is  $B_2$, then the other 
facet containing its quadrilateral face 
$F$ also contains $P_1P_2$,
and thus is also $B_2$ (it cannot be $B_1$, 
because it has a quadrilateral face $F$ 
outside the coordinate hyperplanes). Then we 
would have two $B_2$-facets with a common 
quadrilateral face $F$ (see Lemma \ref{Noquadrn} for a contradiction). 

Thus all of $\tau_j$'s are $B_1$-facets. 
As we have seen above, one of $P_1$ and $P_2$  
is equal to the apices 
of all $\tau_j$'s, and the other one is 
in the bases of all $\tau_j$'s. Then,
among $\tau_j$'s, we can find at least two pairs  of
$B_1$-facets with a common apex, a
common triangular face and different bases. 

These two pairs surround two borders, and,
since there are no $B$-borders by the assumptions 
of Theorem \ref{mainB}, they are $B^2$-borders.
Two $B^2$-borders with a common apex $A$, intersecting in a common edge $AC$ in the interior of $\RR^4_+$,
by their definition have $C=(0,0,1,1)$ and $A=(1,1,0,0)$, which by Lemma \ref{NDGG} implies that the singularity $f$ is Morse non-degenerate.
\end{proof}

\subsection{A tubular neighborhood of the critical subfan revisited}

We are now ready to prove Theorem \ref{mainB}. When referring to $B$-facets or critical faces in the course of the proof, we always imply only the faces contributing to the candidate pole $s_0\ne-1$, for which we are proving Theorem \ref{mainB} (thus all the choices and objects that we introduce for the proof completely depend on the choice of $s_0$). Recall that by this time we have 

-- chosen once and for all a preferred base of every $B_1$-facet and every $B_1$-facet of a $B_2$-facet in the Newton polyhedron $\Gamma_+(f)$, 

-- depending on this choice, called some cones critical in the dual fan $\Sigma_0$ (see Definitions \ref{defcritn} and \ref{defbase} respectively),

-- chosen once and for all a generic delimiter hyperplane $D_\sigma$ for every 2-dimensional very critical cone $\sigma$, dual to a non-border (see Definition \ref{defdelim}), and defined the delimiter hyperplane $D_\sigma$ for every 1-dimensional very critical cone $\sigma$, dual to a $B_2$-facet (see Definition \ref{defdelimb2n}). 
\begin{definition}\label{delimited}
These two kinds of faces and their dual cones will be called {\it delimited}.\end{definition}

Recall that, similarly to Subsection \ref{ssgeomn}, we choose once and for all an affine structure on the projectivization ${\mathbb P}\RR^n_+$ and refer to cones and their projectivizations interchangeably whenever it causes no confusion.

According to Corollary \ref{critclosed}, the set of projectivized critical cones in the dual fan $\Sigma_0$ is a closed polyhedral complex $\Sigma_c$. 
We shall construct a generic piecewise linear tubular neighborhood of $\Sigma_c$, whose boundary is transversal to the critical sets of non-critical cones and delimiters of very critical cones.

For every projectivized closed cone $\sigma\in\Sigma_c$, choose a convex piecewise linear (with respect to the selected affine structure) ``distance function'' $d_\sigma:{\mathbb P}\RR^n_+\to\RR_+$, satisfying the following properties:

1) It vanishes on $\sigma$ and is strictly positive outside of it.

2) For every non-critical cone $\sigma'\in\Sigma_0$, the corner locus of $d_\sigma$ is transversal to the projectivized critical set ${\mathbb P}K_{\sigma'}$ in the complement to $\sigma$.

3) Moreover, for every non-critical cone $\sigma'$ and every delimited face $\sigma''\subset\sigma'$, the corner locus of $d_\sigma$ is transversal to the projectivized delimiter ${\mathbb P}D_{\sigma''}\cap\sigma'$ and the critical set ${\mathbb P}D_{\sigma''}\cap K_{\sigma'}$ in the complement to $\sigma$.

Such a distance function can be constructed from a suitably generic piecewise-linear norm in the same way as in Section 4. For this distance, we shall consider $\varepsilon$-neighborhoods $B_\sigma(\varepsilon)=\{u\,|\, d_\sigma(u)<\varepsilon\}$ and their boundaries $S_\sigma(\varepsilon)$, which are all piecewise-linear sets.

Associating positive numbers $\varepsilon_\sigma$ to all $\sigma\in\Sigma_c$, introduce the following sets:

-- the open neighborhood $U=\bigcup_{\sigma\in\Sigma_c} B_\sigma(\varepsilon_\sigma)$ of the critical complex $\Sigma_c$;

-- for every $\sigma\in\Sigma_c$, the set $U_\sigma=B_\sigma(\varepsilon_\sigma)\setminus\bigcup_{\sigma'\subsetneq\sigma} B_{\sigma'}(\varepsilon_{\sigma'})$;

-- for every delimited $\sigma\in\Sigma_c$, the {\it delimiter disk} $DD_\sigma=U_\sigma\cap {\mathbb P}D_\sigma$.

\begin{lemma}\label{ltubular} One can choose the numbers $\varepsilon_\sigma$ so that
\begin{enumerate}
\item For every projectivized non-critical cone $\sigma'\in{\mathbb P}\Sigma_0$, no vertex of the boundary of 
$U_\sigma\cap\sigma'$ and $DD_\sigma\cap\sigma'$ is contained in the critical set ${\mathbb P}K_{\sigma'}\cap\relint\sigma'$;

\item $U_\sigma$ is contained in the star of $\sigma$ and is border-convex (Definition \ref{defbordconv}) if $\sigma$ is dual to a border;
\item $U_\sigma\cap U_\delta=\varnothing$ unless $\sigma=\delta$, and $\bar U_\sigma\cap \bar U_\delta=\varnothing$ unless $\sigma\subset\delta$ or vice versa;
\item $DD_\sigma$ divides $U_\sigma$ into two connected components. 
\end{enumerate}
\end{lemma}
\begin{proof} All of these properties are satisfied if the tuple $(\varepsilon_\sigma)$ is chosen generically (i.e. avoiding finitely many hyperplanes in the space of all such tuples) and rapidly decreasing (i.e. 
$\varepsilon_\sigma\ll\varepsilon_\delta\ll 1$ for all $\delta\subset\sigma$).
\end{proof}

\begin{definition}[c.f. Definition \ref{defvf4}] The {\it vertex function} $$v:U\setminus(\mbox{delimiter disks})\to(\mbox{vertices of }\Gamma_+(f))$$ is defined on every $U_\sigma$ as follows:
\begin{enumerate}
\item if a projectivized cone $\sigma\in\Sigma_c$ is dual to a non-delimited face $F\subset\Gamma_+(f)$, then all facets containing $F$ are $B$-facets, and their apices on the side of $F$ are all equal to the same vertex $P$. Then we define $v(\cdot)$ on $U_{\sigma}$ as $P$.
\item if $\sigma\in\Sigma_c$ is dual to a delimited triangle $F$, separating two $B$-facets $\tau_1$ and $\tau_2$, then we define $v(\cdot)$ on each of the two components of $U_\sigma\setminus DD_\sigma$ as the apex of the corresponding facet $\tau_i$ on the side of this component.
\item if $\sigma\in\Sigma_c$ is dual to a $B_2$-facet $\tau$, then we define $v(\cdot)$ on each of the two components of $U_\sigma\setminus DD_\sigma$ as the apex of $\tau$ on the side of this component.
\end{enumerate}
\end{definition}

Note that the vertex function is locally constant on its domain (thanks to Proposition \ref{NEVC}).
We now use this observation to consistently substitute every piece of the neighborhood $U$ by an appropriate sprout. Recall that we refer to cones and their projectivizations interchangeably, and, in particular,  $S_{W,\tau}$ for the projectivization $W$ of a cone $C$ is another notation for the sprout $S_{C,\tau}$.

-- If $\sigma\in\Sigma_c$ is neither delimited nor dual to a border, 
then the vertex function $v$ equals a constant $P$ on $\sigma$, so we define $V_{\sigma,0}$ as the sprout $S_{U_\sigma,\tau}$ (see Definition \ref{defsprout}) for any $B$-facet $\tau$ containing the dual face of $\sigma$. Note that neither the sprout $V_{\sigma,0}:=S_{U_\sigma,\tau}$ nor its root $R_{\sigma,0}:=R_{U_\sigma,\tau}=P$ depend on the choice of $\tau$.

-- If $\sigma\in\Sigma_c$ is dual to a border, then the vertex function $v$ equals a constant $P$ on $\sigma$, and we define $V_{\sigma,0}$ as the border sprout $S_{U_\sigma,\sigma}$ (see Definition \ref{defbsprout}) and the root $R_{\sigma,0}$ as $R_{U_\sigma,\tau}=P$.

-- If $\sigma\in\Sigma_c$ is dual to a very critical triangle, separating two $B$-facets $\tau_{\pm 1}$ with vertices $P_{\pm 1}$ on the side of $\sigma$, then we define $V_{\sigma,\pm 1}$ as the sprout  $S_{\{v=P_{\pm 1}\}\cap U_\sigma, \tau_{\pm 1}}$ and the root $R_{\sigma,\pm 1}$ as $R_{U_\sigma,\tau_{\pm 1}}=P_{\pm 1}$.

-- If $\sigma\in\Sigma_c$ is dual to a  $B_2$-facet $\tau$ with the apices $P_{\pm 1}$, then we define $V_{\sigma,\pm 1}$ as the sprout $S_{\{v=P_{\pm 1}\}\cap U_\sigma,\tau}$ and the root $R_{\sigma,\pm 1}$ as $R_{\{v=P_{\pm 1}\}\cap U_\sigma,\tau}=P_{\pm 1}$. Also, choosing a ray $r$ in the dual cone $\sigma'$ to the quadrilateral non-$V$-face of $\tau$ outside of $K_{\sigma'}$, we define $V_{\sigma,0}$ as the delimeter sprout $S_{r,\tau}$ (see Lemma \ref{ldsproutn}), leaving $R_{\sigma,0}$ undefined. 

Thanks to Proposition 5.13, we have no other cases to consider. Now define the {\it sprouting} $S_P$ of a vertex $P$ as the union of all the sprouts $V_{\sigma,\ast}\, (\ast=0,\pm 1)$ such that $R_{\sigma,*}=P$.

We have introduced the same system of notation as in Section \ref{ssgeomn}, giving it meaning in a more general setting (most notably, admitting $B^2$-borders). With this wider meaning of notation at hand, the proof of Theorem \ref{mainB} almost literally repeats the one for Theorem \ref{ADJJJ} (we repeat it for the convenience of the reader).

\begin{lemma} \label{lproofmainB} \begin{enumerate}
\item The contribution of $S_P$ equals $$\int_{S_P} \exp\bigl(-\langle u, P\rangle s-\langle u, \mbbo\rangle\bigr)\, du$$ modulo a function that has no pole at $s_0$.
\item No edge of the boundary of $S_P$ is critical for $(s_0,P)$.
\end{enumerate}
\end{lemma}
\begin{proof} The part (1) follows from Lemmas \ref{lsprout} and \ref{lbsprout}. To deduce (2), it is enough (by Lemma \ref{sproutrays}) to show that every vertex of $(\partial U_\sigma)\cap\sigma'$ and $(\partial DD_\sigma)\cap\sigma'$ for every projectivized cone $\sigma'\supset\sigma$ is either not in ${\mathbb P}K_{\sigma'}$, or not a projectivized edge of $\partial S_P$. For non-critical $\sigma'$, this follows from Lemma \ref{ltubularn}(1), and every critical $\sigma'$ is either in the projectivized interior of $S_P$, or disjoint from its closure, or intersects its boundary at an interior point of its facet $DD_{\sigma'}$ (thus no vertex in $\sigma'$ can be a projectivized edge of $\partial S_P$).

\end{proof}

{\it Proof of Theorem \ref{mainB}.} By the preceding lemma and Lemma \ref{regintegr}, the contribution of  every $S_P$ to the topological $\zeta$-function of $f$ has no pole at $s_0$. By Lemma \ref{ldsproutn}, the same is true for $V_{\sigma,0}$ for every $B_2$-facet $\sigma$. Since $$V=\bigcup_{\sigma,\; \ast=0,\pm 1} V_{\sigma,\ast}$$ contains all the critical cones in its interior, for every cone $\sigma'\in\Sigma_0$ the set $\sigma'\setminus V$ has edges of two types: either edges of $(\partial V)\cap\sigma'$ or non-critical 1-dimensional cones of $\Sigma_0$. The edges of the first kind are not in the critical set of any cone by Lemma \ref{lproofmainBn}(2), and for the second kind the same holds by definition. Thus the contribution of $\sigma'\setminus V$ to the topological zeta function has no pole at $s_0$ as well by Lemma \ref{regintegr}. We have subdivided $\RR^n_+$ into pieces 

-- $S_P$ for some vertices $P$, 

-- $V_{\tau^\circ,0}$ for some $B_2$-facets $\tau$ ,

-- $\sigma'\setminus V$ for some non-critical cones $\sigma'$,

\noindent so that none of them contributes the pole $s_0$.
$\hfill \square$

\subsection{Generalizing the notion of $B$-facets} 

We have seen that a $B_1$ or $B_2$-facet alone never contributes its candidate pole. In Section \ref{sec:7} we shall prove a somewhat complementary fact:
$$ \mbox{\it For $n=4$, all other facets do contribute their (nearby) monodromy eigenvalues.} \eqno{(*)}
$$
(See Section \ref{sec:7} for a precise statement.) This dichotomy is central for the proof of the monodromy conjecture for non-degenerate singularities.

However, for $n>4$, in order to keep the fact $(*)$ true, we should exclude from our consideration $B$-facets in a certain more general sense than the one assumed in Definition \ref{B-F}. What is the proper general notion of a $B$-facet in arbitrary dimension? A possible answer given in  Definition \ref{defbfacet0}.2 is based on the following lemma that we need in order to prove the fact $(*)$.

\begin{lemma}\label{ALEX} 
For $n=4$ if a compact facet 
$\tau \prec \Gamma_+(f)$ is not 
a $B$-facet, then 
it splits into lattice simplices (with no new vertices)
so that one of the simplices is not of type $B_1$. 

Equivalently: if every four affinely independent vertices of a compact facet $\tau \prec \Gamma_+(f)$ form a $B_1$-simplex, then $\tau$ is a $B$-facet.
\end{lemma}
{\it Proof.} 
The facet $\tau$ contains a face $F$ 
not contained in a coordinate 
hyperplane. Note the following facts about every such $F$:
\medskip 
\par \noindent 
{\bf (1)}: $F$ has at most $4$ vertices. 
Otherwise it contains a triangle whose sides 
are not in coordinate hyperplanes, and the union 
of this triangle and any vertex 
of $\tau \setminus F$ gives a non-$B_1$-simplex 
in $\tau$.
\medskip 
\par \noindent 
{\bf (2)}: If $F$ is a quadrilateral, 
then some pair of its opposite edges are 
contained in coordinate hyperplanes, say, 
$\{ v_1=0 \}$ and $\{ v_2=0 \}$, otherwise we get 
the same contradiction as in {\bf (1)}.
In this case, if a vertex of $F$ at 
the hyperplane $\{ v_1=0 \}$ has $v_2>1$, then 
this vertex, the two vertices of 
$F \cap \{ v_2=0 \}$ and any other vertex of 
$\tau \setminus F$ form a non-$B_1$-simplex 
in $\tau$. Thus both vertices of 
$F$ in the hyperplane $\{ v_1=0 \}$ have $v_2=1$ 
and vice versa. Thus $\tau$ is a $B_2$-facet. 
\medskip 
\par \noindent 
{\bf (3)}: If $F$ is a triangle, 
then at least one of its edges is contained in a 
coordinate hyperplane, otherwise we get 
the same contradiction as in {\bf (1)}. 
\medskip 
\par \noindent 
{\bf (3.1)}:  If the triangle $F$ has exactly one edge in a coordinate hyperplane, say, 
$\{ v_1=0 \}$, then the coordinate $v_1$ of 
the other vertex of $F$ equals $1$, 
otherwise $F$ together with any vertex 
from $\tau \setminus F$ form a 
non-$B_1$-simplex in $\tau$. 
Also in this case, all other vertices 
of $\tau$ should be in the hyperplane 
$\{ v_1=0 \}$, because otherwise such vertex 
together with $F$ form a non-$B_1$-simplex 
in $\tau$. 
Thus $\tau$ is a $B_1$-pyramid for $v_1$. 
\medskip 
\par \noindent 
{\bf (3.2)}: If the triangle $F$ has all 
three edges in coordinate hyperplanes, then 
denote by $V$ the set of the vertices of $\tau$ 
outside $F$. Note that every point of $V$ is in a coordinate hyperplane, otherwise it would form a non-$B_1$-simplex together with $F$.
\medskip
\par \noindent 
{\bf (3.2.1)}: If $V$ is contained in a coordinate 
hyperplane $L$, containing one of the edges of $F$, 
then $\tau$ has one vertex $v$ outside $L$. So, 
depending on the distance of $v$ to $L$, the 
facet $\tau$ either contains a non-$B_1$-simplex, 
or is itself a $B_1$-pyramid with the base $L$.
\medskip
\par \noindent 
{\bf (3.2.2)}: If $V$ has a point in each of the three coordinate 2-planes containing the vertices of $F$, then w.l.o.g. these three points are outside the common coordinate edge (say, $v_1=v_2=v_3=0$) of the three 2-planes (otherwise we would arrive at {\bf (3.2.1)}). Since two vertices of a Newton diagram in a 2-plane cannot have the same coordinate, either the vertex of $F$ or a point of $V$ in the $(v_1v_2)$-plane has $v_3\ne 1$. This point, two similar points with respective non-unit coordinates in the other 2-planes, and one of the remaining vertices of $\tau$ form a non-$B_1$-simplex.
\medskip
\par \noindent 
{\bf (3.2.3)}: If $V$ has a point in a coordinate 2-plane (say, $v_1=v_2=0$) containing a vertex of $F$, and a point in the coordinate 3-plane (say, $v_3=0$) containing the opposite edge of $F$, then w.l.o.g. these two points do not belong to smaller coordinate planes (otherwise we would arrive at {\bf (3.2.1)}). Since two vertices of a Newton diagram in a 2-plane cannot have the same coordinate, either the vertex of $F$ or a point of $V$ in $v_1=v_2=0$ has $v_3\ne 1$. This point, together with two vertices of $F$ and one point of $V$ in $v_3=0$, form a non-$B_1$-simplex.
\medskip
\par \noindent 
{\bf (3.2.4)}: It remains to consider the case when $V$ has a point in each of at least two coordinate 3-planes (say, $v_1=0$ and $v_2=0$) containing the edges of $F$, and each of these two points is not contained in a smaller coordinate plane (otherwise we would arrive at {\bf (3.2.1-3)}). Then the two mentioned points of $V$ and the two vertices of $F$ outside of $v_1=v_2=0$ form a non-$B_1$-simplex (they cannot form a quadrilateral, otherwise we would arrive at {\bf (2)}).
\medskip 
\par \noindent 
{\bf (3.3)}: The only remaining case is 
that $F$ is a triangle, exactly two of 
whose faces are in coordinate hyperplanes. 
Since its third edge is not in a 
coordinate hyperplane, then it should be an edge 
of another $2$-dimensional face 
$G$ of $\tau$ not contained in a coordinate hyperplane. 
\medskip
\par \noindent 
{\bf (3.3.1)}: If $G$ is also a triangle, exactly two of 
whose faces are in coordinate hyperplanes, then 
the convex hull of $F\cup G$ has a triangular face $F'$, 
whose edges are in three different coordinate hyperplanes. 
So this case can be done in the same way as {\bf (3.2)} 
(although $F'$ is not necessarily a face of $\tau$, we can 
still consider the set $V$ of all the vertices of $\tau$ 
outside $F'$ and proceed as in {\bf (3.2)}). 
\medskip
\par \noindent 
{\bf (3.3.2)}: Otherwise, $G$ is of one of the types {\bf (2)} or
{\bf (3.1)}, and thus $\tau$ is $B_1$ or 
$B_2$ as shown in the corresponding 
paragraphs. 
\hfill{}$\square$

\section{Eigenvalues of monodromy and corners}\label{sec:4}

In the first subsection, we formulate the main result of this section: 
certain configurations of $V$-faces (so called corners) always 
contribute a non-zero multiplicity of the expected sign to the 
corresponding monodromy eigenvalue. As a corollary, we prove the 
monodromy conjecture for a large class of Newton-non-degenerate 
singularities in arbitrary dimension.

The rest of the section is devoted to the proof of the main result. 
In particular, in the second subsection we introduce the notion 
of a hypermodular function, which may be of independent 
interest for convex geometry and analysis.

\subsection{Motivation and results}

Let $f(x_1,\ldots,x_n)$ be a polynomial 
on $\CC^n$ such that $f(0)=0$. For lattice 
simplices $\tau$ contained in compact facets of 
$\Gamma_+(f)$ we define their V-faces and 
polynomials $\zeta_{\tau} (t) = 
(1-t^{N( \tau )})^{\Vol_{\ZZ}( \tau )} 
\in \CC [t]$ in the same way as for faces of $\Gamma_+(f)$. 

Let us first observe the following fact. 

\begin{proposition}\label{Key-1} 
Let $\tau \prec \Gamma_+(f)$ be a compact 
facet such that $\gamma = \tau \cap \{ v_i=0 \}$ 
is one of its facets. Then 
$F_{\tau, \gamma}(t):= 
\zeta_{\tau}(t)/\zeta_{\gamma}(t) 
\in \CC (t)$ is a polynomial of $t$. If we 
assume moreover that $\tau$ is 
not a $B_1$-pyramid for the 
variable $v_i$, then the complex number 
\begin{equation*}
\lambda = \exp 
\left(-2 \pi i \frac{\nu ( \tau )}{N ( \tau )} 
\right) \in \CC
\end{equation*}
is a root of the polynomial. 
\end{proposition}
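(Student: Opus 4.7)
The plan is to factor $\zeta_\tau/\zeta_\gamma$ into cyclotomic pieces and then read off whether $\lambda$ appears among its roots.

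First I would prove that $\zeta_\tau/\zeta_\gamma$ is a polynomial. By Lemma \ref{LLT} the integer $k:=N(\tau)/N(\gamma)$ is well-defined, giving the factorization
$$
1-t^{N(\tau)}=\bigl(1-t^{N(\gamma)}\bigr)\,Q(t),\qquad Q(t):=1+t^{N(\gamma)}+\cdots+t^{(k-1)N(\gamma)}.
$$
I would then establish the lattice inequality $V_\tau\geq V_\gamma$ by picking any vertex $P$ of $\tau$ outside $\gamma$ and comparing $\tau$ with the subpyramid $\mathrm{conv}(\{P\}\cup\gamma)\subseteq \tau$, whose normalized volume in the affine lattice $L_\tau:=\mathrm{aff}(\tau)\cap\ZZ^n$ equals $h\cdot V_\gamma$, where $h\geq 1$ is the lattice height of $P$ above $\mathrm{aff}(\gamma)$ in $L_\tau$. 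Combining these yields
$$
\zeta_\tau/\zeta_\gamma=\bigl(1-t^{N(\gamma)}\bigr)^{V_\tau-V_\gamma}\cdot Q(t)^{V_\tau}\in\CC[t].
$$

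For the second assertion, $\lambda^{N(\tau)}=1$ holds by definition of $\lambda$, while $\lambda^{N(\gamma)}=1$ is equivalent to $k\mid\nu(\tau)$. Hence $\lambda$ fails to be a root of $\zeta_\tau/\zeta_\gamma$ if and only if both $V_\tau=V_\gamma$ and $k\mid\nu(\tau)$ hold. I would prove by contrapositive that these two equalities together force $\tau$ to be a $B_1$-pyramid for $v_i$, which finishes the proof. The first equality makes the pyramid comparison sharp, so $\tau=\mathrm{conv}(\{P\}\cup\gamma)$ for a unique apex $P$ whose lattice height above $\mathrm{aff}(\gamma)$ in $L_\tau$ equals $1$. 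Since $\gamma$ is a facet of $\tau$ with $\gamma\subseteq\{v_i=0\}$, one has $\mathrm{aff}(\gamma)=\mathrm{aff}(\tau)\cap\{v_i=0\}$, and the $v_i$-coordinate then induces an injection $L_\tau/L_\gamma\hookrightarrow\ZZ$ with image $d\ZZ$ for $d:=\gcd(a(\tau)_j:j\neq i)$; so height $1$ translates to $(P)_i=d$. A parallel computation, restricting the equation $\langle a(\tau),v\rangle=N(\tau)$ to $\mathrm{aff}(\gamma\cup\{0\})$, identifies $k$ with $d$ (this is the analogue for our $\gamma$ of the identity in the remark following Lemma \ref{LLT}). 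Then $k=d\mid\nu(\tau)=\sum_j a(\tau)_j$ together with $d\mid a(\tau)_j$ for $j\neq i$ forces $d\mid a(\tau)_i$, whence $d\mid\gcd(a(\tau)_1,\ldots,a(\tau)_n)=1$. Therefore $d=1$ and $(P)_i=1$, so $\tau$ is a $B_1$-pyramid for $v_i$.

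The main obstacle is the identification $k=d$: relating the intrinsically defined lattice distance $N(\gamma)$ to the primitive conormal $a(\tau)\in\ZZ^n$. When $s_\gamma=n-1$ (so that $\gamma$ is a $V$-face) this reduces to a direct computation in $\RR^{S_\gamma}$; the residual case $s_\gamma=n-2$ must be handled by an intermediate-face argument using Lemma \ref{LLT}.
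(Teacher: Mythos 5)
Your proof is correct and reaches the same conclusion as the paper, but by a genuinely different route. The paper argues directly: if $\tau$ is not $B_1$ and the apex $P$ has $v_i$-coordinate $h\geq 2$, it introduces the hyperplane $L_\tau$ parallel to $\mathrm{aff}(\tau)$ through $(1,\ldots,1)$, observes that $\lambda$ is a root of $\zeta_\gamma$ precisely when $L_\tau\cap\{v_i=0\}$ is rational, and then translates $L_\tau\cap\{v_i=0\}$ by the lattice vector $P-(1,\ldots,1)$ to land in $\mathrm{aff}(\tau)\cap\{v_i=h-1\}$; from the presence of lattice points at levels $0,\ h-1,\ h$ it concludes the lattice height of $P$ above $\gamma$ inside $L_\tau$ equals $h\geq 2$, so $\Vol_\ZZ(\tau)\geq 2\Vol_\ZZ(\gamma)$ and the multiplicity of $t-\lambda$ in $\zeta_\tau$ exceeds that in $\zeta_\gamma$. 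You instead work with the contrapositive, factor $\zeta_\tau/\zeta_\gamma$ into $(1-t^{N(\gamma)})^{V_\tau-V_\gamma}Q(t)^{V_\tau}$, characterize failure of $\lambda$ to be a root by the two conditions $V_\tau=V_\gamma$ and $k\mid\nu(\tau)$, and then carry out a purely arithmetic argument with $d=\gcd(a(\tau)_j:j\neq i)$ to force $d=1$ and $(P)_i=1$. Both approaches are valid and the geometric core (lattice height of $P$ above $\gamma$) is the same, but yours makes the dichotomy between the two causes of cancellation explicit and replaces the hyperplane-translation trick by a gcd computation.

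One remark on the obstacle you flag at the end: the case $s_\gamma=n-2$ never arises. If $\gamma=\tau\cap\{v_i=0\}$ were contained in a second coordinate hyperplane $\{v_j=0\}$, then $\gamma\subset\mathrm{aff}(\tau)\cap\{v_i=0\}\cap\{v_j=0\}$; since $a(\tau)\in\ZZ_{>0}^n$ and $N(\tau)>0$, the hyperplane $\mathrm{aff}(\tau)$ cannot contain $\{v_i=v_j=0\}$, nor can $\mathrm{aff}(\tau)\cap\{v_i=0\}$ lie in $\{v_j=0\}$ (both would force $N(\tau)=0$ by evaluating at the origin), so this triple intersection has dimension at most $n-3<\dim\gamma$, a contradiction. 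Hence $\gamma$ is automatically a $V$-face, the identification $k=d$ follows immediately from the equation $\sum_{j\neq i}a(\tau)_jv_j=N(\tau)$ restricted to $\RR^{S_\gamma}$, and the ``intermediate-face argument'' you anticipated is unnecessary.
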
 

\begin{proof} 
By Lemma \ref{LLT} we can easily prove that 
$F_{\tau, \gamma} = \zeta_{\tau} / \zeta_{\gamma} 
\in \CC (t)$ is a polynomial. Let us prove 
the remaining assertion. If $\tau$ is 
not a pyramid over $\gamma = \tau \cap \{ v_i=0 \}$, 
then we have $\Vol_{\ZZ} ( \tau ) > 
\Vol_{\ZZ} ( \gamma )$ and the assertion is 
obvious. So it suffices to consider the case 
where $\tau$ is a pyramid over 
$\gamma = \tau \cap \{ v_i=0 \}$ but its 
unique vertex $P \prec \tau$ such that 
$P \notin \gamma$ has height $h \geq 2$ from the 
hyperplane $\{ v_i=0 \} \subset \RR^n$. 
In this case, we define two hyperplanes 
$H_{\tau}$ and $L_{\tau}$ in $\RR^n$ by 
\begin{equation*}
H_{\tau}= \{ v \in \RR^n \ | \ 
\langle a( \tau ) , v \rangle 
= N ( \tau ) \}, 
\end{equation*}
\begin{equation*} 
L_{\tau}= \{ v \in \RR^n \ | \ 
\langle a( \tau ) , v \rangle 
= \langle a( \tau ) , \mbbo 
\rangle = \nu ( \tau ) \}. 
\end{equation*} 
Note that $P \in \tau \subset H_{\tau}$ 
and $L_{\tau}$ is the hyperplane 
passing through the point 
$(1,1, \ldots, 1) \in \RR^n_+$ and 
parallel to $H_{\tau}$. 
Namely $H_{\tau}$ is the affine span 
${\rm aff} ( \tau ) \simeq \RR^{n-1}$ 
of $\tau$. Moreover the 
affine subspace $L_{\tau} \cap \{ v_i=0 \} 
\subset \RR^n$ is parallel to the 
affine span $H_{\tau} \cap \{ v_i=0 \} 
\subset \RR^n$ of $\gamma = \tau \cap \{ v_i=0 \}$. 
By Lemma \ref{RESON}, this implies that $\lambda = \exp 
(-2 \pi i \nu ( \tau )/N ( \tau ) ) \in \CC$ 
is a root of $\zeta_{\gamma}(t)$ if and only if 
$L_{\tau} \cap \{ v_i=0 \}$ is rational i.e. 
$L_{\tau} \cap \{ v_i=0 \} \cap \ZZ^n 
\not= \emptyset$. On the other hand, it is 
easy to see that the affine subspace 
$H_{\tau} \cap \{ v_i=h-1 \} 
\subset \RR^n$ is a parallel translation of 
$L_{\tau} \cap \{ v_i=0 \}$ by a lattice 
vector. Hence if $L_{\tau} \cap \{ v_i=0 \}$ 
is rational, then 
$H_{\tau} \cap \{ v_i=h-1 \} \cap \ZZ^n 
\not= \emptyset$ and the lattice height of 
the pyramid $\tau$ from its base 
$\gamma = \tau \cap \{ v_i=0 \}$ 
is $h \geq 2$ i.e. $\Vol_{\ZZ} ( \tau ) \geq 2 
\Vol_{\ZZ} ( \gamma )$. It follows that 
the polynomial $F_{\tau, \gamma}$ 
is divisible by the factor $t- \lambda$. 
This completes the proof. 
\end{proof} 

Motivated by this proposition, we introduce the following 
definitions. 

\begin{definition}\label{HCC} 
Let $\tau$ be a $k$-dimensional  
lattice V-simplex contained in a compact facet of 
$\Gamma_+(f)$. 
\begin{enumerate}
\item 
We say that $\tau$ has a (possibly empty) 
\emph{corner $\tau_0 \prec \tau$ of codimension $r$} if 
$\dim \tau - \dim \tau_0 =k- \dim \tau_0 =r$ and 
any face $\sigma$ of $\tau$ 
containing it is a $V$-face. 
\item 
If $\tau$ has a (possibly empty) 
corner $\tau_0 \prec \tau$ of codimension $r$, then we set 
\begin{equation*} 
F_{\tau,\tau_0} (t)= \prod_{\sigma: \tau_0 \prec \sigma \prec \tau, 
\sigma \not= \emptyset} 
\Bigl\{  \zeta_{ \sigma } (t) 
\Bigr\}^{(-1)^{k - \dim \sigma}} \in \CC (t). 
\end{equation*} 
\end{enumerate}
\end{definition} 

\begin{remark} Every $(n-1)$-dimensional  
lattice simplex $\tau$ contained in a compact facet of 
$\Gamma_+(f)$ has a unique corner of maximal 
codimension, which we will denote by $\mathcal{C}_{\tau}$.
We will also write shortly $F_{\tau} (t)$ for 
$F_{\tau,\mathcal{C}_{\tau}} (t)$.
\end{remark} 

In the next subsection we will prove 
the following result. 

\noindent \begin{theorem}\label{Key-5} 
\begin{enumerate} 
\item
Let $\tau$ be a $k$-dimensional  
lattice V-simplex contained in a compact facet of 
$\Gamma_+(f)$. Assume that for some $r \geq 1$ 
it has a non-empty corner $\tau_0$ of codimension $r$, then 
$F_{\tau,\tau_0}(t) \in \CC (t)$ 
is a polynomial of $t$. 
\item
If $k=n-1$ and if moreover $\tau$ is 
not a $B_1$-pyramid, then the complex number 
\begin{equation*}
\lambda = \exp 
\left(-2 \pi i \frac{\nu ( \tau )}{N ( \tau )} 
\right) \in \CC
\end{equation*}
is a root of the polynomial $F_{\tau}(t)$. 
\end{enumerate}
\end{theorem}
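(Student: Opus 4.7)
The plan is to generalize the approach of Proposition~\ref{Key-1}, which handles the codimension-one case $r=1$, by combining its geometric ingredient with a combinatorial inclusion--exclusion argument expressed in terms of a supermodularity property of the function $I \mapsto \Vol_\ZZ(\sigma_I)$, where $\sigma_I := \tau \cap \bigcap_{i \in I} \{v_i = 0\}$ for $I \subset \{i_1,\ldots,i_r\}$ (so $\sigma_\emptyset = \tau$, and $\sigma_I$ is a face of $\sigma_J$ whenever $J \subset I$). Lemma~\ref{LLT} then gives $N(\sigma_J) \mid N(\sigma_I)$ for all such $J \subset I$, with two consequences. First, for any $\mu \in \CC^\ast$, the order of vanishing of $F_\tau(t)$ at $t=\mu$ (negative values indicating a pole) equals
\begin{equation*}
m_\mu(F_\tau) = \sum_{I \in \mathcal{I}(\mu)} (-1)^{|I|}\,\Vol_\ZZ(\sigma_I),
\qquad
\mathcal{I}(\mu) := \{I : \mu^{N(\sigma_I)} = 1\}.
\end{equation*}
Second, $\mathcal{I}(\mu)$ is upward-closed in $2^{\{i_1,\ldots,i_r\}}$, since $\mu^{N(\sigma_I)} = 1$ together with $N(\sigma_I) \mid N(\sigma_J)$ forces $\mu^{N(\sigma_J)} = 1$. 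Polynomiality of $F_\tau$ amounts to $m_\mu(F_\tau) \geq 0$ for every $\mu$, and finding $\lambda$ among its roots amounts to $m_\lambda(F_\tau) \geq 1$.

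Next, I would identify the filter $\mathcal{I}(\lambda)$ geometrically, imitating Proposition~\ref{Key-1}. Let $L_\tau \subset \RR^n$ be the hyperplane through $(1,\ldots,1)$ parallel to ${\rm aff}(\tau)$. By applying the parallel-translation argument of Proposition~\ref{Key-1} inside the coordinate subspace $\bigcap_{i \in I}\{v_i = 0\}$, I would show that $\lambda$ is a zero of $\zeta_{\sigma_I}(t)$ precisely when $L_\tau \cap \bigcap_{i \in I}\{v_i = 0\}$ meets the lattice $\ZZ^n$. This turns $\mathcal{I}(\lambda)$ into a genuinely geometric filter on $2^{\{i_1,\ldots,i_r\}}$ determined by rationality properties of intersections of $L_\tau$ with coordinate hyperplanes.

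Both the non-negativity of $m_\mu(F_\tau)$ for every $\mu$ and the strict positivity $m_\lambda(F_\tau) \geq 1$ when $\tau$ is not a $B_1$-pyramid are then reduced to a single combinatorial statement about the set function $\phi \colon I \mapsto \Vol_\ZZ(\sigma_I)$ on the Boolean lattice $2^{\{i_1,\ldots,i_r\}}$: its alternating sum over every filter is non-negative, and this sum can vanish only in a very rigid geometric configuration. This is precisely the \emph{fully supermodular} property to be introduced in Section~\ref{sec:5}; once established, its equality case will be traced back, via the description of $\tau$ in terms of its apex vertices $V_1,\ldots,V_r$ (the $V_j$ being the unique vertex of $\tau$ with $(V_j)_{i_j}>0$) and their lattice heights from the hyperplanes $\{v_{i_j}=0\}$, to $\tau$ being a $B_1$-pyramid for some $v_{i_j}$, i.e.\ a $B_1$-pyramid in the sense of Definition~\ref{Pyramid}.

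The main obstacle is the combinatorial core of the last step: proving full supermodularity of $\phi$ and sharply characterizing its equality case. Handling the alternating sums over \emph{arbitrary} filters uniformly, rather than one filter at a time, is what forces the introduction of the new notion of a fully supermodular function in Section~\ref{sec:5}; once that machinery is in place, polynomiality of $F_\tau$ and the presence of $\lambda$ among its roots both drop out of a single inequality applied to the two geometric filters $\mathcal{I}(\mu)$ and $\mathcal{I}(\lambda)$.
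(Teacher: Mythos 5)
Your high-level scaffolding is right, and it does match the spirit of the paper's argument: writing the multiplicity of $t-\mu$ in $F_\tau(t)$ as an alternating sum of $\Vol_\ZZ(\sigma_I)$ over the index sets $I$ with $\mu^{N(\sigma_I)}=1$, noticing that this index set is half-closed by Lemma~\ref{LLT}, and then guessing that the whole theorem reduces to a one-shot combinatorial inequality captured by the ``fully supermodular'' notion of Section~\ref{sec:5}. But there are two genuine problems, one small and one large.

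The small one: with your convention $\sigma_I = \tau\cap\bigcap_{i\in I}\{v_i=0\}$, enlarging $I$ shrinks the face, so for $J\subset I$ one has $\sigma_I\prec\sigma_J$ and Lemma~\ref{LLT} gives $N(\sigma_I)\mid N(\sigma_J)$ — the opposite of what you wrote. Consequently $\mathcal{I}(\mu)$ is \emph{downward}-closed (an ideal), not a filter. This is a fixable sign flip (the paper indexes faces the other way round, $\tau_I$ growing with $I$, and there the set is a filter), but as stated your upward-closedness argument is wrong.

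The large one: you have not actually identified, let alone proved, the combinatorial core. The paper's ``fully supermodular'' property is $\phi^\uparrow(I)\geq 0$ for all $I\subset S$, i.e.\ nonnegativity of the M\"obius derivative at every principal down-set — not nonnegativity of alternating sums of $\Vol_\ZZ(\sigma_I)$ over arbitrary filters (or ideals), which is a different and a priori unrelated statement. More importantly, the paper does \emph{not} apply full supermodularity to the raw set function $I\mapsto\Vol_\ZZ(\sigma_I)$ at all. Its proof first reduces (by a unimodular change of coordinates plus a projection) to the case $r=n-1$ where there are explicit formulas $\Vol_\ZZ(\tau_I)={\rm gcd}_I\,D_I/D$ and $N(\tau_I)=Dk/{\rm gcd}_I$; then, for each candidate root $\exp(2\pi i m/N(\tau))$, it builds a \emph{new}, $m$-dependent auxiliary function $\phi=\prod_p\phi_p$ (a product over primes of $p$-local set functions $\phi_p$ with an $m$-dependent cutoff), arranges that $\phi^\uparrow(S)$ equals the desired multiplicity, proves each $\phi_p$ is fully supermodular by a careful telescoping/pairing of its alternating sum, and invokes Lemma~\ref{SSM} (products preserve full supermodularity, with an explicit strictness criterion). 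The ``$\tau$ not a $B_1$-pyramid'' part then requires a delicate equality-case analysis (the two $p=2$ cases in Proposition~\ref{PPGC}) to rule out vanishing; your appeal to ``apex vertices'' and lattice heights does not engage with any of this. In short: you've sketched the reduction to ``some'' supermodularity lemma, but the crucial ideas — the reduction to $r=n-1$, the prime-by-prime factorization into cutoff functions $\phi_p$, Lemma~\ref{SSM}, and the strictness/equality analysis — are all absent, and without them there is no proof.
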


We can generalize Theorem \ref{Key-5} slightly 
to allow also simplices with empty corners as follows. 
If an $(n-1)$-dimensional  
lattice simplex $\tau$ contained in a compact facet of 
$\Gamma_+(f)$ has an empty corner $\tau_0 = \emptyset$, then 
we have the expression $\tau =A_1A_2 \cdots A_n$ such that 
for any $1 \leq i \leq n$ the vertex 
$A_i$ is on the positive part of the $i$-th 
coordinate axis of $\RR^n$. 

\begin{proposition}\label{EMPCC} 
Let $\tau$ be an $(n-1)$-dimensional  
lattice simplex contained in a compact facet of 
$\Gamma_+(f)$. Assume that it has 
an empty corner $\tau_0 = \emptyset$, then 
the function of $t$ 
\begin{equation*}
F_{\tau}(t) \cdot  
\Bigl( 1-t \Bigr)^{(-1)^{n}} \in \CC (t) 
\end{equation*}
is a polynomial. If we assume moreover 
that $\tau$ is not a $B_1$-simplex, then 
the complex number 
\begin{equation*}
\lambda = \exp 
\left(-2 \pi i \frac{\nu ( \tau )}{N ( \tau )} 
\right) \in \CC
\end{equation*}
is a root of the polynomial. 
\end{proposition} 

\begin{proof} 
By the embedding $\RR^n \hookrightarrow 
\RR^n \times \RR$, $v \longmapsto (v,0)$ we 
regard $\tau$ as a lattice simplex in 
$\RR^n \times \RR$ and set $Q(0,1) \in 
\RR^n \times \RR$. Let $\tau^{\prime}$ 
be the convex hull of $\{ Q \} \cup \tau$ 
in $\RR^n \times \RR$. 
In this situation, it is easy to see that we have 
an equality 
\begin{equation*}
F_{\tau^{\prime},Q}(t)= F_{\tau}(t) \cdot  
\Bigl( 1-t \Bigr)^{(-1)^{n}} 
\end{equation*}
from which the first assertion immediately follows by Theorem \ref{Key-5}. 
Since we have $N( \tau^{\prime} )=N( \tau )$ and 
$\nu ( \tau^{\prime} )= \nu ( \tau ) + N( \tau )$, 
the second assertion also follows from 
Theorem \ref{Key-5}.
\end{proof}

Together with Theorem \ref{ADJJJ}, 
following the strategy of Lemahieu-Van Proeyen \cite{L-V} 
we can now confirm the monodromy conjecture for 
non-degenerate hypersurfaces in many cases also for 
$n \geq 4$. 
Let $\tau_1, \ldots, \tau_k \prec \Gamma_+(f)$ be 
the compact facets of $\Gamma_+(f)$. Then we say 
that the Newton polytope of $f$ has a good pavement 
by lattice simplices if for any $1 \leq i \leq k$ 
there exists a decomposition $\tau_i= \cup_{j=1}^{n_i} \tau_{ij}$ 
of $\tau_i$ into ($n-1$)-dimensional lattice simplices 
$\tau_{ij}$ for which the following conditions are satisfied. 

\medskip
\par \noindent
{\bf (i)} For any $1 \leq i \leq k$ and $1 \leq j \leq n_i$ 
the lattice simplex $\tau_{ij}$ has no V-face or 
it has a (non-empty) corner 
which is contained in any V-face of $\tau_{ij}$. 

\medskip
\par \noindent
{\bf (ii)} If $\tau_{ij} \not= \tau_{i^{\prime} j^{\prime}}$ 
then they have no common V-face. Moreover any V-face of 
$\Gamma_+(f)$ is decomposed into those of the lattice 
simplices $\tau_{ij}$. 

\medskip
\par \noindent
{\bf (iii)} For any $1 \leq i \leq k$ the facet $\tau_i$ 
is a $B$-facet or there exists a lattice simplex 
$\tau_{ij}$ in it which is not a $B_1$-pyramid. 
\medskip 

\begin{theorem}\label{APPT} 
Assume that $f$ is non-degenerate and 
the Newton polytope of $f$ has a good pavement 
by lattice simplices. 
Let $s_0 \in \CC$ be a pole 
of $Z_{ {\rm top}, f}(s)$ which is 
contributed only by compact facets of $\Gamma_+(f)$, and 
those of them that are $B$-facets are consistent 
in the sense of Definition \ref{defconsist}. 
Then the complex number 
$\exp ( 2 \pi i  s_0 ) \in \CC$ 
is an eigenvalue of the 
monodromy of $f$ at 
the origin $0 \in \CC^n$.
\end{theorem} 

\begin{proof}
If $s_0=-1$ the assertion is trivial. Otherwise, 
by Theorem \ref{ADJJJ} the pole $s_0 \not= -1$ 
is contributed also by a non-$B$-facet $\tau_i$ 
of $\Gamma_+(f)$. Moreover by Theorem \ref{TOV} 
and the conditions {\bf (i)}, {\bf (ii)} we have 
\begin{equation*}
\zeta_{f,0} (t) = 
\Bigl\{ \prod_{i=1}^k \prod_{j=1}^{n_i} 
F_{\tau_{ij}} (t) \Bigr\}^{(-1)^{n-1}}. 
\end{equation*}
Then the complex number 
$\exp ( 2 \pi i  s_0 ) \in \CC$ 
is an eigenvalue of the 
monodromy of $f$ at the origin $0 \in \CC^n$ 
by Theorem \ref{Key-5} applied to 
the condition {\bf (iii)}. 
\end{proof}

\begin{example}
We consider the hypersurface 
$$H: f(x_1,x_2,x_3,x_4,x_5)=x_1^8+x_2^6+x_3^{10}+x_4^{12}+
x_5^9+x_1^2x_3^3+x_1x_2x_4+x_1x_2x_5^3+x_2^2x_3=0$$ which is 
non-degenerate at the origin in $\mathbb{C}^5$.
We denote the vertices of $\Gamma_+(f)$ by 
$$A=(8,0,0,0,0), B=(0,6,0,0,0), C=(0,0,10,0,0), 
D=(0,0,0,12,0), E=(0,0,0,0,9),$$ 
$$F=(2,0,3,0,0), G=(1,1,0,1,0), H=(1,1,0,0,3), 
I=(0,2,1,0,0).$$
The Newton polyhedron $\Gamma_+(f)$ has $10$ 
compact facets, which are $$\tau_1=EFGHI, 
\tau_2=CEFGI, \tau_3=CDEFG, \tau_4=BEGHI, \tau_5=CDEGI, $$ 
$$\tau_6=BDEGI, 
\tau_7=ABGHI, \tau_8=AEFGH, \tau_9=ADEFG, \tau_{10}=AFGHI.$$
Then we find that 
$$\zeta_{f,0}(t)=\prod_{i=1}^{10}F_{\tau_i}(t) 
F_{CDE}(t)F_{AEF}(t)F_{BEI}(t)F_{CEI,CI}(t)F_{CEF,CF}(t).$$
It follows immediately by Theorem \ref{Key-5} 
that the monodromy conjecture holds for $H$ at the origin.
Notice that we did not take $F_{CEI}(t)=
(\zeta_{CEI}(t)\zeta_{C}(t))/(\zeta_{CE}(t)\zeta_{CI}(t))$ 
nor $F_{CEF}(t)=(\zeta_{CEF}(t)
\zeta_{C}(t))/(\zeta_{CE}(t)\zeta_{CF}(t))$
as we have already the contributions of the 
V-faces $CE$ and $C$ in $F_{CDE}(t)=
(\zeta_{CDE}(t)\zeta_{C}(t)\zeta_{D}(t)
\zeta_{E}(t))/(\zeta_{CD}(t)\zeta_{CE}(t)\zeta_{DE}(t))$.  \qed 
\end{example}

\subsection{Hypermodular functions} 

For the proof of Theorem \ref{Key-5} we shall introduce 
some new notions and their basic properties. 
Let $S$ be a finite set 
and denote its power set by $2^S$. Namely 
elements of $2^S$ are subsets $I \subset S$ of $S$. 
Then for a function $\phi : 2^S \longrightarrow \ZZ$ 
we define new ones $\phi^{\downarrow}, 
\phi^{\uparrow} : 2^S \longrightarrow \ZZ$ by 
\begin{equation*} 
\phi^{\downarrow} (I) = 
\sum_{J \subset I} \phi (J), \qquad 
\phi^{\uparrow} (I) = 
\sum_{J \subset I} (-1)^{|I|-|J|} \phi (J). 
\end{equation*}
We call $\phi^{\downarrow}$ (resp. $\phi^{\uparrow}$) 
the antiderivative (resp. derivative) of $\phi$. 
Then we can easily check that 
$\phi^{\uparrow \downarrow} = 
\phi^{\downarrow \uparrow} = \phi$. 

\begin{definition}\label{FSM} 
\begin{enumerate} 
\item We say that the function $\phi$ is 
\emph{hypermodular} if $\phi^{\uparrow} 
(I) \geq 0$ for 
any subset $I \subset S$. 
\medskip 
\item The 
function $\phi$ is called \emph{strictly 
hypermodular} if it is 
hypermodular and $\phi^{\uparrow} 
(S) > 0$. 
\end{enumerate} 
\end{definition} 

\begin{lemma}\label{SSM} 
The product of two 
hypermodular functions 
$\phi, \psi : 2^S \longrightarrow \ZZ$ 
is hypermodular. Moreover it is 
strictly hypermodular if and only 
if there exist subsets $I, J \subset S$ 
of $S$ such that $I \cup J=S$ and 
both $\phi^{\uparrow}(I)$ and $  
\psi^{\uparrow}(J)$ are strictly positive. 
\end{lemma}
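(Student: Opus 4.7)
The plan is to derive an explicit convolution-type formula for the derivative of a product and then read off both assertions from it. Specifically, for any $\phi, \psi : 2^S \to \ZZ$ I would prove the identity
\begin{equation}
(\phi \psi)^{\uparrow}(I) \ = \ \sum_{\substack{A, B \subset I \\ A \cup B = I}} \phi^{\uparrow}(A) \, \psi^{\uparrow}(B) \qquad (I \subset S).
\end{equation}
Once this is in hand, full supermodularity of the product is immediate: if $\phi^{\uparrow}(A) \geq 0$ and $\psi^{\uparrow}(B) \geq 0$ for every $A, B$, then every summand is non-negative, hence so is $(\phi\psi)^{\uparrow}(I)$. Likewise the second claim is immediate for $I=S$: since every summand is non-negative, $(\phi\psi)^{\uparrow}(S)>0$ holds if and only if at least one summand $\phi^{\uparrow}(A)\psi^{\uparrow}(B)$ with $A \cup B = S$ is strictly positive, which is exactly the stated existence of $I,J\subset S$ with $I\cup J=S$ and $\phi^{\uparrow}(I), \psi^{\uparrow}(J) >0$.

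To prove the identity I would start from $\phi = \phi^{\uparrow\downarrow}$ and $\psi = \psi^{\uparrow\downarrow}$ to write
\begin{equation}
(\phi \psi)(K) \ = \ \sum_{A \subset K} \sum_{B \subset K} \phi^{\uparrow}(A) \, \psi^{\uparrow}(B),
\end{equation}
then apply the derivative $(\,\cdot\,)^{\uparrow}$ and swap the order of summation to obtain
\begin{equation}
(\phi \psi)^{\uparrow}(I) \ = \ \sum_{A, B \subset I} \phi^{\uparrow}(A) \, \psi^{\uparrow}(B) \, \sum_{A \cup B \subset K \subset I} (-1)^{|I|-|K|}.
\end{equation}
The inner alternating sum vanishes unless $A \cup B = I$ (standard: writing $K = (A\cup B) \sqcup L$ with $L \subset I\setminus(A\cup B)$, the sum becomes $(1-1)^{|I\setminus(A\cup B)|}$, which equals $1$ for $A\cup B=I$ and $0$ otherwise). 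This leaves precisely the claimed formula.

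The main (and essentially only) obstacle is this bookkeeping step; once the alternating-sum cancellation is carried out carefully, both the non-strict and strict parts of the lemma follow from non-negativity of the individual summands. There is no real analytic content beyond Möbius inversion on the Boolean lattice $2^S$, so the proof should be short.
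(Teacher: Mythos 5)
Your proposal is correct and is essentially the same computation as the paper's: both start from $\phi=\phi^{\uparrow\downarrow}$, $\psi=\psi^{\uparrow\downarrow}$, expand the product, apply $(\cdot)^{\uparrow}$, and use the alternating-sum cancellation on the Boolean lattice to arrive at $(\phi\psi)^{\uparrow}(I)=\sum_{A\cup B=I}\phi^{\uparrow}(A)\psi^{\uparrow}(B)$, from which both assertions are immediate. You merely spell out the inclusion-exclusion step more explicitly than the paper does.
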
 

\begin{proof} 
For any subset $R \subset S$ of $S$ we have 
\begin{align*}
( \phi \psi )^{\uparrow}(R)   = & 
( \phi^{\uparrow \downarrow} 
\psi^{\uparrow \downarrow} 
 )^{\uparrow}(R) = 
\sum_{I \cup J \subset U \subset R} 
(-1)^{|R|-|U|} 
\phi^{\uparrow}(I) \psi^{\uparrow}(J) 
\\
& = \sum_{I \cup J =R} \phi^{\uparrow}(I) 
\psi^{\uparrow}(J). 
\end{align*}
Then the assertion immediately follows. 
\end{proof}

\subsection{Reduction to the case $k=n-1$ and $r=n-1$} 

We can obviously suppose that $k=n-1$ as the 
computations are made in the coordinate hyperplane containing $\tau$. 
We now explain how to reduce the proof of Theorem \ref{Key-5} 
to the case $r=n-1$. For simplicity assume 
that the corner $\gamma \prec \tau$ of the simplex 
$\tau \subset \partial \Gamma_+(f)$ is defined 
by $\gamma = \tau \cap 
\{ v_1=v_2= \cdots =v_r=0 \}$. We set 
\begin{equation*}
\lambda = \exp 
\left(-2 \pi i \frac{\nu ( \tau )}{N ( \tau )} 
\right) \in \CC. 
\end{equation*}
As in the proof of Proposition \ref{Key-1} 
we define two parallel affine hyperplanes 
$H_{\tau}$ and $L_{\tau}$ in $\RR^n$ by 
\begin{equation*}
H_{\tau}= \{ v \in \RR^n \ | \ 
\langle a( \tau ) , v \rangle 
= N ( \tau ) \}, 
\end{equation*}
\begin{equation*} 
L_{\tau}= \{ v \in \RR^n \ | \ 
\langle a( \tau ) , v \rangle 
= \langle a( \tau ) , \mbbo 
  \rangle = \nu ( \tau ) \}. 
\end{equation*} 
Let $W= \{ v_1=v_2= \cdots =v_r=0 \} 
\simeq \RR^{n-r} \subset \RR^n$ be 
the linear subspace of $\RR^n$ 
spanned by $\gamma$. Similarly, for a 
face $\sigma$ of $\tau$ containing $\gamma$ 
let $W_{\sigma} \simeq 
\RR^{\dim \sigma +1} \subset \RR^n$ be 
the linear subspace of $\RR^n$ 
spanned by $\sigma$. Then by Lemma \ref{RESON} 
$\zeta_{\sigma} ( \lambda )=0$ if and only if 
the affine hyperplane 
$L_{\tau} \cap W_{\sigma} \subset W_{\sigma}$ 
of $W_{\sigma}$ is rational i.e. 
$L_{\tau} \cap W_{\sigma} \cap \ZZ^n 
\not= \emptyset$. Let $\Phi_0: W \simto W$ be 
a unimodular transformation of $W$ such 
that $\Phi_0( \gamma ) \subset W \cap 
\{ v_n=c \}$ for some $c \in \ZZ_{>0}$. Then we can 
easily extend it to a unimodular transformation 
$\Phi : \RR^n \simto \RR^n$ of $\RR^n$ 
which preserves $W_{\sigma}$ for any 
$\sigma \prec \tau$ containing $\gamma$ 
and the point $\mbbo = (1,1, \ldots, 1) \in \RR^n$ 
We can choose such $\Phi$ so that 
the heights of $\tau$ and 
$\Phi ( \tau )$ from each coordinate hyperplane 
in $\RR^n$ containing $W$ are the same. 
Indeed, for the invertible matrix 
$A_0 \in {\rm GL}_{n-r}( \ZZ )$ representing 
$\Phi_0: W \simto W$ it suffices to define 
$\Phi : \RR^n \simto \RR^n$ by taking an 
invertible matrix 
$A \in {\rm GL}_{n}( \ZZ )$ of the form 
\begin{equation*} 
A =  \left( \begin{array}{c|c}
I_r & 0 \\ 
\hline  
* & A_0 \\ 
\end{array} \right) 
\in {\rm GL}_{n}( \ZZ )
\end{equation*} 
such that $A \mbbo = \mbbo$, 
where $I_r \in {\rm GL}_{r}( \ZZ )$ 
stands for the identity matrix of size $r$. 
By this construction of $\Phi$, 
$\tau$ is a $B_1$-pyramid if and only if 
$\Phi ( \tau )$ is so. 
Set $\tau^{\prime} = \Phi ( \tau )$ and 
define two parallel affine hyperplanes 
$H_{\tau^{\prime}}$ and $L_{\tau^{\prime}}$ 
in $\RR^n$ similarly to the case of $\tau$ 
so that we have 
$\Phi ( H_{\tau} )= H_{\tau^{\prime}}$. 
Since $\Phi ( L_{\tau} )$ is parallel to 
$\Phi ( H_{\tau} )= H_{\tau^{\prime}}$ 
and passes through the point 
$\Phi ( \mbbo ) = \mbbo \in \RR^n$, 
we have also 
$\Phi ( L_{\tau} )= L_{\tau^{\prime}}$. 
Since the unimodular transformation 
$\Phi$ preserves lattice distances, 
we thus obtain $N( \tau )=N ( \tau^{\prime} )$,  
$\nu ( \tau )= \nu ( \tau^{\prime} )$ and 
\begin{equation*}
\lambda = \exp 
\left(-2 \pi i 
\frac{\nu ( \tau^{\prime} )}{N ( \tau^{\prime} )} 
\right). 
\end{equation*}
Moreover for any $\sigma \prec \tau$ containing $\gamma$ 
we have $N( \sigma ) = N( \Phi( \sigma ))$ 
and hence 
$\zeta_{ \sigma } (t) \equiv 
\zeta_{\Phi ( \sigma )} (t)$. 
Then we obtain an equality 
$F_{\tau}(t)=F_{\tau^{\prime}}(t)$, 
where we slightly generalized 
Definition \ref{HCC} in an obvious way 
to define $F_{\tau^{\prime}}(t)$. 
Hence, to prove 
Theorem \ref{Key-5} we may assume that 
the corner $\gamma$ of $\tau$ is contained in 
$W \cap \{ v_n=c \}$ for some $c \in \ZZ_{>0}$. 
Let $\pi : \RR^n \longrightarrow \RR^{r+1}$, 
$v \longmapsto (v_1, \ldots, v_r, v_n)$ be the 
projection. Then by the definition of normalized 
volumes, for any face $\sigma$ of $\tau$ 
containing the corner $\gamma \subset 
W \cap \{ v_n=c \}$ we have 
$\Vol_{\ZZ}( \sigma )= \Vol_{\ZZ}( \pi ( \sigma ))
\cdot \Vol_{\ZZ}( \gamma )$ and hence 
$\zeta_{\sigma}(t)= \{ \zeta_{\pi ( \sigma )}
(t) \}^{\Vol_{\ZZ}( \gamma )}$. We thus 
obtain an equality 
$F_{\tau}(t)= \{ F_{\pi ( \tau )}
(t) \}^{\Vol_{\ZZ}( \gamma )}$. Moreover we have 
$N( \tau )=N( \pi ( \tau ))$ and 
$\nu ( \tau )= \nu ( \pi ( \tau ))$.  
This implies that we have only to consider 
the case $r=n-1$.

\subsection{The proof of the case $r=n-1$} 

We have reduced our proof to the case where 
$r=n-1$, a vertex $Q$ of our simplex 
$\tau =QA_1A_2 \cdots A_{n-1}$ has the form 
$Q=(0,0, \ldots, 0, c)$ for some $c \in \ZZ_{>0}$ 
and its edges are given by 
\begin{equation*} 
\overrightarrow{QA_1} = 
\left(  \begin{array}{c}
      a_1 \\
      0 \\
      \vdots \\
      0 \\
      b_1 
    \end{array}  \right),  \qquad 
\overrightarrow{QA_2} =  
\left(  \begin{array}{c}
      0 \\
      a_2 \\
      \vdots \\ 
      0 \\
      b_2 
    \end{array}  \right), 
\  \ldots \ldots  ,  \qquad  
\overrightarrow{QA_{n-1}} =  
\left(  \begin{array}{c}
      0 \\
      0 \\
      \vdots \\ 
      a_{n-1} \\
      b_{n-1} 
    \end{array}  \right), 
\end{equation*} 
where $a_1, a_2, \ldots, a_{n-1} \in \ZZ_{>0}$ 
and $b_1, b_2, \ldots, b_{n-1}  \in \ZZ$. 
We set 
\begin{equation*} 
D= \prod_{i=1}^{n-1} a_i, \qquad 
K_i= \frac{b_i}{a_i} \cdot D \ (1 \leq i \leq n-1) 
\end{equation*} 
and $K= \sum_{i=1}^{n-1} K_i$. 
Note that $b_i$ and $K_i$ are negative. 
Moreover for 
a subset $I \subset S= \{ 1,2, \ldots, n-1 \}$ 
we denote by $\tau_I \prec \tau$ the face of 
$\tau$ whose vertices are $Q$ and $A_i \ 
(i \in I)$ and set 
\begin{equation*} 
D_I= \prod_{i \in I} a_i, \qquad 
{\rm gcd}_I = {\rm GCD} \Bigl( D, K_i \ (i \in I) 
\Bigr) >0. 
\end{equation*} 

\begin{lemma}\label{LODDP} 
The $|I|$-dimensional normalized volume 
$\Vol_{\ZZ}( \tau_I )$ of $\tau_I$ is given by 
the formula 
\begin{equation*} 
\Vol_{\ZZ}( \tau_I ) =
{\rm gcd}_I \cdot \frac{D_I}{D} 
\end{equation*} 
and we have 
\begin{equation*} 
N( \tau_I) =\frac{D}{{\rm gcd}_I} \cdot c 
= \frac{D_I}{\Vol_{\ZZ}( \tau_I )} \cdot c. 
\end{equation*} 
In particular, we have
\begin{equation*} 
N( \tau )= 
\frac{D}{{\rm gcd}_S} \cdot c. 
\end{equation*} 
\end{lemma} 

\begin{proof}
We only treat the case $I=S$ and $\tau_I= \tau$. The 
general case can be treated similarly. First, 
note that the primitive conormal vector $a( \tau )
\in \ZZ^n$ of the 
$(n-1)$-dimensional simplex $\tau$ is equal to 
\begin{equation*}
\frac{1}{{\rm gcd}_S} 
\left(  \begin{array}{c}
      -K_1 \\
      -K_2 \\
      \vdots \\
      -K_{n-1} \\
      D 
    \end{array}  \right). 
\end{equation*}
From this, the assertion for $N( \tau )$ immediately follows. 
Let $\widetilde{\tau} \subset \RR^n$ be the $n$-dimensional 
simplex obtained by taking the convex hull of $\tau$ 
and the point $R=(0,0, \ldots, 0, c+1)$. 
Then by the above formula for $a( \tau )$, 
the lattice height of $\widetilde{\tau}$ 
from its base $\tau$ (i.e. the lattice 
distance of the point $R$ from the affine 
span ${\rm aff}(\tau)$) 
is equal to $\frac{D}{{\rm gcd}_S}$. 
Since the $n$-dimensional normalized 
volume of $\widetilde{\tau}$ is equal to $D$, 
we get also the remaining assertion 
$\Vol_{\ZZ}( \tau ) ={\rm gcd}_S$. 
\end{proof} 

For a subset $I \subset S= \{ 1,2, \ldots, n-1 \}$ 
we set 
\begin{equation*} 
\zeta_{I} (t) = \Bigl\{ 1-t^{N( \tau_I )} 
\Bigr\}^{\Vol_{\ZZ}( \tau_I )} 
\in \CC [t]
\end{equation*} 
so that we have the equality 
\begin{equation*} 
F_{\tau}(t) = \prod_{I \subset S} 
\Bigl\{ \zeta_{I} (t) 
\Bigr\}^{(-1)^{n-1-|I|}}. 
\end{equation*}

\begin{lemma}\label{LOD} 
The complex number 
\begin{equation*}
\lambda = \exp 
\left(-2 \pi i \frac{\nu ( \tau )}{N ( \tau )} 
\right) \in \CC
\end{equation*}
is a root of the polynomial $\zeta_I(t)$ 
if and only if ${\rm gcd}_I | K$. 
\end{lemma}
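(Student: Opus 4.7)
The plan is to reduce the statement to a divisibility condition on $\nu(\tau)$ and $N(\tau)$, and then extract that condition from an explicit formula for the primitive conormal vector $a(\tau)$. Since $\zeta_I(t) = (1 - t^{N(\tau_I)})^{\Vol_\ZZ( \tau_I )}$, the number $\lambda$ is a root of $\zeta_I(t)$ if and only if $\lambda^{N(\tau_I)} = 1$, which by the definition of $\lambda$ amounts to the rational number $\nu(\tau) N(\tau_I)/N(\tau)$ being an integer. Using the formulas $N(\tau_I) = Dk/\gcd_I$ and $N(\tau) = Dk/\gcd_S$ derived just before the lemma, this ratio equals $\nu(\tau)\gcd_S/\gcd_I$, and the task is to show that this lies in $\ZZ$ if and only if $\gcd_I \mid K$.

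The next step is to compute $\nu(\tau) = \sum_{i=1}^n a(\tau)_i$ by pinning down the components of the primitive inward conormal $a(\tau) = (\alpha_1, \ldots, \alpha_n)$. The equation $\langle a(\tau), Q\rangle = N(\tau)$ with $Q = (0,\ldots,0,k)$ forces $\alpha_n = N(\tau)/k = D/\gcd_S$. The orthogonality conditions $\langle a(\tau), \overrightarrow{QA_i}\rangle = 0$ give $\alpha_i a_i + \alpha_n b_i = 0$, hence $\alpha_i = -\alpha_n b_i / a_i = -K_i/\gcd_S$; this is an integer because $\gcd_S$ divides each $K_i$ by definition. The vector $(\alpha_1,\ldots,\alpha_n)$ is primitive since $\gcd(\alpha_1,\ldots,\alpha_n) = \gcd(K_1,\ldots,K_{n-1},D)/\gcd_S = 1$, so it is indeed $a(\tau)$ (up to sign, which is fixed by the requirement that $a(\tau)$ be the inward conormal of the compact facet).

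Summing the components then yields
\begin{equation}
\nu(\tau) = \alpha_n + \sum_{i=1}^{n-1} \alpha_i = \frac{D - K}{\gcd_S}.
\end{equation}
Substituting into the criterion from the first paragraph, $\lambda$ is a root of $\zeta_I(t)$ if and only if
\begin{equation}
\frac{\nu(\tau)\gcd_S}{\gcd_I} = \frac{D - K}{\gcd_I} \in \ZZ,
\end{equation}
i.e.\ $\gcd_I \mid (D - K)$. Since $\gcd_I \mid D$ by the very definition of $\gcd_I$, this divisibility is equivalent to $\gcd_I \mid K$, which is the claim.

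There is no real obstacle here; the only point requiring any care is the primitivity of the vector $(\alpha_1,\ldots,\alpha_n)$ and the sign convention for the inward conormal, but both issues are settled by the identity $\gcd(D, K_1,\ldots,K_{n-1}) = \gcd_S$ together with the standing assumption that $\tau$ is a compact facet of $\Gamma_+(f)$ with corner at $Q$.
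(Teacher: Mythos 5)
Your proof is correct and follows essentially the same route as the paper: both compute the primitive conormal vector $a(\tau) = \frac{1}{\gcd_S}(-K_1,\ldots,-K_{n-1},D)$, derive $\nu(\tau) = (D-K)/\gcd_S$, and reduce the root condition to the divisibility $\gcd_I \mid (D-K)$, which together with $\gcd_I \mid D$ gives $\gcd_I \mid K$. The only difference is that you spell out the orthogonality and primitivity verification for $a(\tau)$, which the paper states without derivation.
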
 

\begin{proof} 
By the formula for $a( \tau )$ in the proof of 
Lemma \ref{LODDP}, we obtain 
\begin{equation*}
\nu ( \tau )= 
\frac{D-K}{{\rm gcd}_S} 
\end{equation*}
and 
\begin{equation}\label{QQE} 
\frac{\nu ( \tau ) N ( \tau_I
 )}{N( \tau )} = 
\frac{D-K}{{\rm gcd}_I}. 
\end{equation}
Note that $\lambda$ is a root of 
$\zeta_I(t)$ if and only if 
$\nu ( \tau ) N ( \tau_I
 )/N( \tau )$ is an integer. Then the 
assertion follows immediately from 
(\ref{QQE}) and the fact ${\rm gcd}_I | D$. 
This completes the proof. 
\end{proof}

By this lemma the multiplicity of $t- \lambda$ 
in the rational function $F_{\tau}(t)$ is 
equal to 
\begin{equation*}
\sum_{I:  \ {\rm gcd}_I | K} 
(-1)^{n-1-|I|}  \ {\rm gcd}_I \cdot \frac{D_I}{D}. 
\end{equation*}
Similarly we obtain the following result. 

\begin{lemma}\label{LODD} 
For any $m \in \ZZ$ the complex number 
$\exp \left(2 \pi i m/N ( \tau ) \right) \in \CC$ 
is a root of the polynomial $\zeta_I(t)$ 
if and only if ${\rm gcd}_I |  (m \cdot {\rm gcd}_S)$. 
\end{lemma}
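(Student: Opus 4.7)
The plan is to mimic the proof of Lemma~\ref{LOD} almost verbatim, replacing the specific exponent $-\nu(\tau)/N(\tau)$ by $m/N(\tau)$ and tracking how the divisibility condition transforms. Since $\zeta_I(t)=(1-t^{N(\tau_I)})^{\Vol_{\ZZ}(\tau_I)}$, its roots are precisely the $N(\tau_I)$-th roots of unity, each with positive multiplicity. Therefore $\exp(2\pi i m / N(\tau))$ is a root of $\zeta_I(t)$ if and only if
\begin{equation}
\frac{m \cdot N(\tau_I)}{N(\tau)} \in \ZZ.
\end{equation}

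The next step is to evaluate the ratio $N(\tau_I)/N(\tau)$ using the explicit formulas already established in this subsection, namely
\begin{equation}
N(\tau_I) = \frac{D}{{\rm gcd}_I} \cdot k, \qquad N(\tau) = \frac{D}{{\rm gcd}_S} \cdot k.
\end{equation}
Dividing, the factors $D$ and $k$ cancel and we obtain $N(\tau_I)/N(\tau) = {\rm gcd}_S/{\rm gcd}_I$. Since $I \subset S$, fewer constraints give a larger gcd, so ${\rm gcd}_S \mid {\rm gcd}_I$ and the quotient is a positive rational number with denominator dividing ${\rm gcd}_I$. Substituting into the criterion above, the condition becomes
\begin{equation}
\frac{m \cdot {\rm gcd}_S}{{\rm gcd}_I} \in \ZZ,
\end{equation}
which is exactly the statement that ${\rm gcd}_I \mid (m \cdot {\rm gcd}_S)$.

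There is no real obstacle here: the lemma is a direct unwinding of the roots-of-unity characterization together with the two volume formulas for $N(\tau_I)$ and $N(\tau)$. As a sanity check one recovers Lemma~\ref{LOD} by taking $m = -\nu(\tau)$, in which case $m \cdot {\rm gcd}_S = -(D-K)$, and since ${\rm gcd}_I \mid D$ the divisibility ${\rm gcd}_I \mid (m \cdot {\rm gcd}_S)$ is equivalent to ${\rm gcd}_I \mid K$, matching the earlier statement exactly. The only thing to be careful about in the write-up is to state the roots-of-unity characterization cleanly (the multiplicity $\Vol_{\ZZ}(\tau_I)$ is positive, so it does not affect which $\lambda$ are roots), after which the argument is a one-line computation.
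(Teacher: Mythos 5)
Your argument is correct and is exactly the "similarly" the paper has in mind when it states Lemma~\ref{LODD} without a separate proof: like the proof of Lemma~\ref{LOD}, it reduces to checking that $m\cdot N(\tau_I)/N(\tau)=m\cdot\mathrm{gcd}_S/\mathrm{gcd}_I$ is an integer, using the formulas $N(\tau_I)=\frac{D}{\mathrm{gcd}_I}k$ already recorded in that subsection. The sanity check recovering Lemma~\ref{LOD} from $m=-\nu(\tau)$ (using $\mathrm{gcd}_I\mid D$) confirms the two lemmas fit together as the paper intends.
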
 

\begin{proposition}\label{PGC} 
The function $F_{\tau}(t)$ is a polynomial in $t$. 
\end{proposition}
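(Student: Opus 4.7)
\emph{Plan.}
The goal is to show, for every complex number $\mu$, that the total multiplicity of the factor $(t-\mu)$ in the rational function $F_\tau(t)$ is non-negative. Since each $\zeta_I(t)=(1-t^{N(\tau_I)})^{\Vol_{\ZZ}(\tau_I)}$ has its roots among the $N(\tau_I)$-th roots of unity and $N(\tau_I)\mid N(\tau)$ for every $I\subset S$ (Lemma~\ref{LLT}), this multiplicity vanishes unless $\mu$ is an $N(\tau)$-th root of unity. Writing $\mu=\exp(2\pi i m/N(\tau))$ for $m\in\ZZ$ and applying Lemma~\ref{LODD}, the multiplicity in question becomes
\[
M:=\sum_{\substack{I\subset S\\ \gcd_I \mid m\cdot \gcd_S}}(-1)^{n-1-|I|}\Vol_{\ZZ}(\tau_I),
\]
so the proposition is equivalent to $M\geq 0$ for every $m\in\ZZ$.

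Set $\phi(I):=\Vol_{\ZZ}(\tau_I)\cdot\mathbf{1}[\gcd_I\mid m\cdot\gcd_S]\in\ZZ_{\geq 0}$. Then $M=\phi^{\uparrow}(S)$ in the notation of Section~\ref{sec:5}, so it suffices to prove the stronger statement that $\phi$ is fully supermodular (Definition~\ref{FSM}). Since $\phi$ factors multiplicatively as $\phi=\prod_{p\mid D}\phi_p$ with
\[
\phi_p(I):=p^{v_p(\Vol_{\ZZ}(\tau_I))}\cdot\mathbf{1}[v_p(\gcd_I)\leq v_p(m\cdot\gcd_S)]
\]
(the product being finite, and $\phi_p\equiv 1$ for primes $p\nmid D$), Lemma~\ref{SSM} reduces the problem to showing that each $\phi_p$ is fully supermodular.

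Establishing the full supermodularity of $\phi_p$ is the main obstacle. Introducing the $p$-adic invariants $\alpha_j:=v_p(a_j)$ and $\tilde c_j:=\max\{0,\,v_p(a_j)-v_p(b_j)\}$, the explicit formulae of this section give $v_p(\gcd_I)=v_p(D)-\max_{j\in I}\tilde c_j$ (with the convention $\max_\emptyset:=0$), so that $\phi_p(I)$ equals $p^{\sum_{j\in I}\alpha_j-\max_{j\in I}\tilde c_j}$ exactly when a hitting condition of the form $\max_{j\in I}\tilde c_j\geq T_p$ holds (vacuous when $T_p\leq 0$, otherwise forcing $I$ to meet $R_p:=\{j:\tilde c_j\geq T_p\}$), and vanishes otherwise. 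I plan to sort the indices of $S$ in non-decreasing order of $\tilde c_j$ and induct on $|S|$, removing at each step the index $n'$ attaining the maximum; the M\"obius inversion $\phi_p^{\uparrow}$ then decomposes into a contribution from the sub-simplex spanned by $Q$ and $\{A_j : j\neq n'\}$ plus a correction that is a product of the elementary fully supermodular functions $g_j(I)=p^{\alpha_j-\tilde c_j}$ if $j\in I$ and $1$ otherwise, which are fully supermodular since $\alpha_j\geq\tilde c_j$. The subtle point is that neither the hitting indicator nor the factor $p^{-\max_{j\in I}\tilde c_j}$ is individually fully supermodular, so they must be absorbed together into the elementary $g_j$'s using the inequality $\alpha_j\geq\tilde c_j$, which expresses the integrality of $\Vol_{\ZZ}(\tau_{\{j\}})$. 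Once $\phi_p^{\uparrow}(J)\geq 0$ is established for every $J\subset S$, Lemma~\ref{SSM} combines the $\phi_p$'s to yield $\phi^{\uparrow}(S)\geq 0$ and the proposition follows.
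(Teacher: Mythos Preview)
Your reduction is identical to the paper's: both reduce to showing that the alternating sum $G_m$ is non-negative for every $m$, factor $\phi=\prod_p\phi_p$ over primes, and invoke Lemma~\ref{SSM} to reduce to the full supermodularity of each $\phi_p$. Your $p$-adic description of $\phi_p$ via $\alpha_j=v_p(a_j)$, $\tilde c_j=\max\{0,\alpha_j-v_p(b_j)\}$, and the threshold $T_p$ is exactly the paper's $\phi_p$ in different notation.

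The gap is in the final step. If $n'$ is the index with maximal $\tilde c_j$ and $J\ni n'$, splitting the M\"obius sum over whether $n'\in I$ gives
\[
\phi_p^{\uparrow}(J)\;=\;p^{\alpha_{n'}-\tilde c_{n'}}\!\!\prod_{j\in J\setminus\{n'\}}\!\!(p^{\alpha_j}-1)\;-\;\phi_p^{\uparrow}(J\setminus\{n'\}),
\]
so the sub-simplex contribution enters with a \emph{minus} sign. An inductive hypothesis $\phi_p^{\uparrow}(J\setminus\{n'\})\geq 0$ therefore points the wrong way; what you would need is an \emph{upper} bound on the smaller derivative, which your scheme does not supply. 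Moreover the correction term is $p^{\alpha_{n'}-\tilde c_{n'}}\prod_{j\neq n'}(p^{\alpha_j}-1)$, not the derivative of $\prod_j g_j$ (the latter would produce factors $p^{\alpha_j-\tilde c_j}-1$, since $\phi_p$ involves $\max_{j\in I}\tilde c_j$ rather than $\sum_{j\in I}\tilde c_j$). The paper handles this by unrolling the recursion completely: sorting so that $\beta_j-\alpha_j$ is non-decreasing, one obtains an explicit alternating sum, and the key inequality $p^{\beta_{i_{j-1}}}(p^{\alpha_{i_j}}-1)\geq p^{\beta_{i_j}}$ (valid precisely because $\beta_{i_j}<\alpha_{i_j}$ for the relevant terms) allows consecutive terms to be paired off with the correct sign. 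Your plan contains no analogue of this pairing estimate, and without it the induction does not close.
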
 

\begin{proof} 
By Lemma \ref{LODD} it suffices to show that 
for any $m \in \ZZ$ the alternating sum 
\begin{equation*}
G_m= 
\sum_{I:  \ {\rm gcd}_I | (m \cdot {\rm gcd}_S)} 
(-1)^{n-1-|I|}  \ {\rm gcd}_I \cdot \frac{D_I}{D} 
\end{equation*}
is non-negative. Fix $m \in \ZZ$ and for 
a prime number $p$ denote its multiplicities in 
the prime decompositions of $a_i, b_i$ and 
$m$ by $\alpha (p)_i, \beta (p)_i$ and 
$\delta (p)$ respectively. We set 
\begin{equation*}
\gamma (p)= \delta (p) + 
{\rm min}_{1 \leq i \leq n-1} 
\{ \beta (p)_i- \alpha (p)_i, 0 \} 
\end{equation*}
and define a function $\phi_p : 2^S 
\longrightarrow \ZZ$ by 
\begin{equation*}
\phi_p (I) 
= \begin{cases}
p^{ {\rm min}_{i \in I} 
\{ \beta (p)_i- \alpha (p)_i, 0 \} 
+ \sum_{i \in I} \alpha (p)_i }
 & \Bigl( {\rm min}_{i \in I} 
\{ \beta (p)_i- \alpha (p)_i, 0 \} 
\leq \gamma (p) \Bigr), \\
 & \\ 
\ 0 & ( \text{otherwise} ). 
\end{cases}
\end{equation*}
Then it is easy to see that for the 
function $\phi = \prod_{p: \ \text{prime}} 
\phi_p : 2^S \longrightarrow \ZZ$ we have 
\begin{equation*}
\phi^{\uparrow} (S)=G_m. 
\end{equation*}
Indeed, this follows immediately from the fact that 
for $I \subset S$ the multiplicity of $p$ in 
${\rm gcd}_I$ is equal to 
\begin{equation*}
{\rm min}_{i \in I} 
\{ \beta (p)_i- \alpha (p)_i, 0 \} 
+ \sum_{1 \leq i \leq n-1} \alpha (p)_i. 
\end{equation*}
By Lemma \ref{SSM} we have only to prove 
that for any prime number $p$ the function 
$\phi_p : 2^S \longrightarrow \ZZ$ is 
hypermodular. For this purpose, 
we reorder the pairs $(a_i, b_i)$ 
$(1 \leq i \leq n-1)$ so that we have 
\begin{equation*}
\beta (p)_1- \alpha (p)_1 \leq 
\beta (p)_2- \alpha (p)_2 \leq \cdots \cdots 
\leq \beta (p)_{n-1}- \alpha (p)_{n-1}. 
\end{equation*}
Fix a subset $I= \{ i_1, i_2, i_3, \ldots \} 
\subset S= \{ 1,2, \ldots, n-1 \}$ 
$(i_1 < i_2 < i_3 < \cdots )$ of $S$. 
We will show the non-negativity of 
the alternating sum 
\begin{equation}\label{EQRC} 
\phi_p^{\uparrow} (I) = 
\sum_{J \subset I} (-1)^{|I|-|J|} 
\phi_p(J). 
\end{equation}
We define 
$q \geq 0$ to be the maximal number 
such that 
$\beta (p)_{i_q}- \alpha (p)_{i_q} <0$ 
(resp. $\beta (p)_{i_q}- \alpha (p)_{i_q} 
\leq \gamma (p)$) in the case 
$\gamma (p) \geq 0$ (resp. $\gamma (p) < 0$). 
First let us consider the case 
$\gamma (p) \geq 0$. Then 
for $1 \leq l \leq q$ the part of the 
alternating sum 
\eqref{EQRC} over the 
subsets $J \subset I$ such that 
${\rm min} J = i_l$ is equal to 
\begin{equation*} 
(-1)^{l-1} p^{\beta (p)_{i_l}} \prod_{j>l} 
(p^{\alpha (p)_{i_j}} -1). 
\end{equation*}
Indeed, for instance the term in this 
alternating sum which corresponds to  
$J= \{ i_l, i_{l+1}, \ldots \ldots \} \subset 
I= \{ i_1, i_2, \ldots, i_l, \ldots \ldots \}$ 
is equal to 
\begin{equation*}
(-1)^{l-1} p^{\beta (p)_{i_l}- \alpha (p)_{i_l} 
+ \sum_{j \geq l} \alpha (p)_{i_j} }
=(-1)^{l-1} p^{\beta (p)_{i_l}} 
\prod_{j>l} p^{\alpha (p)_{i_j}}. 
\end{equation*}
Moreover the remaining part of \eqref{EQRC} 
is equal to 
\begin{equation*} 
(-1)^{q} \prod_{j>q} 
(p^{\alpha (p)_{i_j}} -1). 
\end{equation*}
We thus obtain the equality 
\begin{align*}
\phi_p^{\uparrow} (I)  & 
= p^{\beta (p)_{i_1}} \prod_{j>1} 
(p^{\alpha (p)_{i_j}} -1) - 
p^{\beta (p)_{i_2}} \prod_{j>2} 
(p^{\alpha (p)_{i_j}} -1) + \cdots \cdots 
\\
& \cdots + 
(-1)^{q-1} p^{\beta (p)_{i_q}} \prod_{j>q} 
(p^{\alpha (p)_{i_j}} -1) 
+ (-1)^{q} \prod_{j>q} 
(p^{\alpha (p)_{i_j}} -1). 
\end{align*}
Note that for any $1 \leq j \leq q$ we 
have $\beta (p)_{i_j}- \alpha (p)_{i_j}
<0$ and obtain an inequality 
\begin{equation}\label{EQQU} 
p^{\beta (p)_{i_{j-1}}} (p^{\alpha (p)_{i_j}} -1) 
\geq p^{\alpha (p)_{i_j}} -1 \geq 
p^{\beta (p)_{i_{j}}}. 
\end{equation}
Thus, subdividing the terms in 
the above expression of 
$\phi_p^{\uparrow} (I)$ into pairs, 
we get the desired non-negativity 
$\phi_p^{\uparrow} (I) \geq 0$. 
Finally let us consider the case 
$\gamma (p) < 0$. In this case, we have 
the following expression of 
$\phi_p^{\uparrow} (I)$: 
\begin{equation*}
\phi_p^{\uparrow} (I)  
= p^{\beta (p)_{i_1}} \prod_{j>1} 
(p^{\alpha (p)_{i_j}} -1) - 
p^{\beta (p)_{i_2}} \prod_{j>2} 
(p^{\alpha (p)_{i_j}} -1) + \cdots + 
(-1)^{q-1} p^{\beta (p)_{i_q}} \prod_{j>q} 
(p^{\alpha (p)_{i_j}} -1). 
\end{equation*}
Then by using the inequality \eqref{EQQU} 
we can prove the non-negativity 
$\phi_p^{\uparrow} (I) \geq 0$ 
as in the previous 
case $\gamma (p) \geq 0$. 
This completes the proof. 
\end{proof}

\begin{proposition}\label{PPGC} 
Assume that $\tau$ is not a $B_1$-simplex. 
Then the complex number 
\begin{equation*}
\lambda = \exp 
\left(-2 \pi i \frac{\nu ( \tau )}{N ( \tau )} 
\right) \in \CC
\end{equation*}
is a root of 
the polynomial $F_{\tau}(t)$. 
\end{proposition}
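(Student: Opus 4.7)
The plan is to bootstrap from the proof of Proposition~\ref{PGC}. By Lemma~\ref{LOD} and the calculation carried out there, the multiplicity of $(t-\lambda)$ in $F_\tau(t)$ equals $G_{m_0} = \phi^\uparrow(S)$, where $m_0$ is chosen so that $m_0 \cdot \gcd_S = K - D$ and hence $\lambda = \exp(2\pi i m_0/N(\tau))$, and $\phi = \prod_p \phi_p$ is the product (over primes) of the fully supermodular functions constructed there, only finitely many of which are non-trivial. The goal is to upgrade the weak inequality $\phi^\uparrow(S) \geq 0$ to strict positivity. Iterating the strict half of Lemma~\ref{SSM} over the finitely many relevant $\phi_p$, it suffices to exhibit a family of subsets $(I_p)_p$ of $S$ indexed by primes with $\bigcup_p I_p = S$ and $\phi_p^\uparrow(I_p) > 0$ for every $p$ appearing in the family; for any remaining prime $q$ one takes $I_q = \emptyset$, and then $\phi_q^\uparrow(\emptyset) = \phi_q(\emptyset) = 1$ is immediate from the definition.

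Under the reductions of Subsection~5.2, the hypothesis that $\tau$ is not a $B_1$-simplex forces $a_i \geq 2$ for every $i \in S = \{1,\ldots,n-1\}$, since otherwise $\tau$ would be a $B_1$-pyramid for some coordinate $v_i$ with $i \leq n-1$. Hence every $i \in S$ admits at least one prime divisor $p$ of $a_i$, i.e.\ $\alpha(p)_i \geq 1$. The plan is to choose, for each $i \in S$, one such prime $p(i)$ and set $I_p := \{i \in S : p(i) = p\}$; the non-empty sets $I_p$ cover $S$, and by construction $\alpha(p)_i \geq 1$ for every $i \in I_p$.

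The remaining task is to verify $\phi_p^\uparrow(I_p) > 0$ for each chosen prime $p$, and this proceeds by revisiting the alternating-sum computation in the proof of Proposition~\ref{PGC}. Reorder the elements of $I_p$ so that $\beta(p)_i - \alpha(p)_i$ is non-decreasing and group the resulting sum into consecutive pairs, together with a leftover term when $|I_p|$ is odd and the tail term $(-1)^q\prod_{j>q}(p^{\alpha(p)_{i_j}}-1)$ when $\gamma(p) \geq 0$. Since every $i \in I_p$ satisfies $\alpha(p)_i \geq 1$, each prefactor $\prod_{j>2l}(p^{\alpha_{i_j}} - 1)$ is strictly positive, and outside a borderline $2$-adic regime each bracket $p^{\beta_{i_{2l-1}}}(p^{\alpha_{i_{2l}}}-1) - p^{\beta_{i_{2l}}}$ is strictly positive as well, giving the required strict inequality.

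The principal obstacle is the marginal case $p=2$ with $\alpha(2)_i = 1$ and $\beta(2)_i = 0$ for every $i \in I_2$, in which each pairwise bracket collapses to $0$. Here one has to examine directly the $2$-adic valuations of $K$, $D$ and $\gcd_S$ to show that the parity of $|I_2|$ and the sign of $\gamma(2)$ conspire so that exactly one of the leftover term or the tail term survives, contributing $+1$ and yielding $\phi_2^\uparrow(I_2) = 1 > 0$ (the parity check is that, when all $b_i$ attached to $I_2$ are odd, $|I_2|$ even forces $v_2(K-D) > v_2(\gcd_S)$ and hence $\gamma(2) \geq 0$, producing the tail, while $|I_2|$ odd leaves an unpaired leftover of value $+1$). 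Once this case is dispatched, $\prod_p \phi_p^\uparrow(I_p)$ is a strictly positive summand of the expansion $(\prod_p \phi_p)^\uparrow(S) = \sum_{\bigcup I_p = S}\prod_p \phi_p^\uparrow(I_p)$, giving $\phi^\uparrow(S) > 0$ and finishing the proof.
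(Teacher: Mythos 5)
Your overall blueprint is the right one and your identification of $m_0$ with $(K-D)/\gcd_S$ is correct (in fact one can check that the resulting $\phi_p$ coincides with the $\psi_p$ used in the paper, since $v_p(\gcd_I) \leq v_p(D)$ for every $I$, so the thresholds $\gamma(p)$ and $\mu(p)$ cut the same subsets). However, the crucial step where you diverge from the paper is the choice of the covering family $(I_p)$, and there your argument has a genuine gap. You pick one prime $p(i)$ per index and set $I_p = \{i : p(i) = p\}$, producing a \emph{partition} of $S$. The paper instead takes $I_p = \{i : \alpha(p)_i > 0\}$, i.e.\ \emph{all} indices $i$ with $p \mid a_i$. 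The distinction is not cosmetic: the positivity of the factors attached to primes $q$ with $I_q = \emptyset$, and the resolution of the borderline $p=2$ case, both rely on the complement $S \setminus I_p$ consisting entirely of indices with $\alpha(p)_i = 0$.

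Concretely, you write ``for any remaining prime $q$ one takes $I_q = \emptyset$, and then $\phi_q^\uparrow(\emptyset) = \phi_q(\emptyset) = 1$ is immediate from the definition.'' This is false in general: $\phi_q(\emptyset) = 1$ only when the threshold $\gamma(q)$ (equivalently $\mu(q) = v_q(K) - v_q(D)$) is $\geq 0$; if $\mu(q) < 0$ then $\phi_q(\emptyset) = 0$. The paper rules this case out via Lemma~\ref{NOCC}, whose proof uses that every $i \in S \setminus I_q$ has $\alpha(q)_i = 0$, whence $v_q(K_i) = v_q(D) + \beta(q)_i \geq v_q(D) > v_q(D) + \mu(q)$. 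With your partition, a prime $q$ can divide $a_i$ for some $i$ even though $I_q = \emptyset$ (because you assigned $p(i)$ to be a different prime divisor of $a_i$), and then $v_q(K_i)$ can lie \emph{below} $v_q(D)$, invalidating the argument; in that situation $\mu(q) < 0$ is genuinely possible and your factor vanishes.

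The same structural issue infects your treatment of the marginal $p=2$ case. Your parity claim --- that $|I_2|$ even forces $v_2(K-D) > v_2(\gcd_S)$, hence $\gamma(2) \geq 0$ --- amounts to controlling $v_2(K)$ by the parity of the number of summands $K_i$ ($i \in I_2$) at the minimal $2$-adic level. That control breaks down as soon as some $i \notin I_2$ has $\alpha(2)_i > 0$, because then $v_2(K_i) = v_2(D) + \beta(2)_i - \alpha(2)_i$ can be strictly less than $v_2(D) - 1$ and dominate $v_2(K)$ regardless of $|I_2|$. (You also describe the degenerate pattern as ``$\alpha(2)_i = 1$, $\beta(2)_i = 0$ for every $i \in I_2$,'' but the first index may have $\alpha(2)_{i_1} = a > 1$ with $\beta(2)_{i_1} = 0$, as in the paper's Cases 1 and 2.) The paper's resolution is an \emph{impossibility} argument: it reduces $K$ modulo $2^{\alpha(p)}$ where $\alpha(p) = \sum_i \alpha(p)_i$, and this computation again crucially uses that $2^{\alpha(p)} \mid K_i$ for all $i \in S_p := S \setminus I_p$, which only holds for the paper's choice of $I_p$. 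To repair your proof, replace the partition by $I_p := \{i : p \mid a_i\}$ (which still covers $S$ because $a_i \geq 2$ for all $i$), restore Lemma~\ref{NOCC} to dispose of the $I_p = \emptyset$ versus $\mu(p) < 0$ conflict, and carry out the mod-$2^{\alpha(p)}$ congruence in the two degenerate subcases exactly as in the paper.
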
 

\begin{proof} 
By Lemma \ref{LOD} it suffices to show that 
the alternating sum 
\begin{equation*}
G= 
\sum_{I:  \ {\rm gcd}_I | K} 
(-1)^{n-1-|I|}  \ {\rm gcd}_I \cdot \frac{D_I}{D} 
\end{equation*}
is positive. For 
a prime number $p$ denote its multiplicities in 
the prime decompositions of $a_i, b_i$ and 
$K$ by $\alpha (p)_i, \beta (p)_i$ and 
$\kappa (p)$ respectively. We set 
\begin{equation*}
\mu (p)= \kappa (p) - 
\sum_{i=1}^{n-1}   \alpha (p)_i 
\end{equation*}
and define a function $\psi_p : 2^S 
\longrightarrow \ZZ$ by 
\begin{equation*}
\psi_p (I) 
= \begin{cases}
p^{ {\rm min}_{i \in I} 
\{ \beta (p)_i- \alpha (p)_i, 0 \} 
+ \sum_{i \in I} \alpha (p)_i }
 & \Bigl( {\rm min}_{i \in I} 
\{ \beta (p)_i- \alpha (p)_i, 0 \} 
\leq \mu (p) \Bigr), \\
 & \\ 
\ 0 & ( \text{otherwise} ).  
\end{cases}
\end{equation*}
Then it is easy to see that for the 
function $\psi = \prod_{p: \ \text{prime}} 
\psi_p : 2^S \longrightarrow \ZZ$ we have 
\begin{equation*}
\psi^{\uparrow} (S)=G. 
\end{equation*}
Now let us set $S_p= \{ 1 \leq i \leq n-1 \ | \ 
\alpha (p)_i =0 \}$ and $I_p =S \setminus S_p 
= \{ 1 \leq i \leq n-1 \ | \ 
\alpha (p)_i >0 \}$. By our assumption 
we have $a_i >1$ for any $1 \leq i \leq n-1$ 
and hence 
$\cup_{p: \ \text{prime}} I_p 
=S$.  By Lemma \ref{SSM}, in order to 
show the positivity $\psi^{\uparrow} (S) 
>0$ it suffices to prove that for any 
prime number $p$ we have 
$\psi_p^{\uparrow} (I_p) >0$. 
As in the proof of Proposition \ref{PGC} 
we reorder the pairs $(a_i, b_i)$ 
$(1 \leq i \leq n-1)$ so that we have 
\begin{equation*}
\beta (p)_1- \alpha (p)_1 \leq 
\beta (p)_2- \alpha (p)_2 \leq \cdots \cdots 
\leq \beta (p)_{n-1}- \alpha (p)_{n-1} 
\end{equation*}
and $\alpha (p)_i \geq \alpha (p)_{i+1}$ 
whenever $\beta (p)_i- \alpha (p)_i 
= \beta (p)_{i+1}- \alpha (p)_{i+1}$.  
Moreover we set $I_p = \{ i_1, i_2, i_3, \ldots \}$ 
$(i_1 < i_2 < i_3 < \cdots )$. 
We define 
$q \geq 0$ to be the maximal number 
such that 
$\beta (p)_{i_q}- \alpha (p)_{i_q} <0$ 
(resp. $\beta (p)_{i_q}- \alpha (p)_{i_q} 
\leq \mu (p)$) in the case 
$\mu (p) \geq 0$ (resp. $\mu (p) < 0$). 
Then we have the same expressions of 
$\psi_p^{\uparrow} (I_p) >0$ as 
in the proof of Proposition \ref{PGC}. 
In the case $\mu (p) \geq 0$ we have 
\begin{align}\label{SSS1} 
\psi_p^{\uparrow} (I_p)  & 
= p^{\beta (p)_{i_1}} \prod_{j>1} 
(p^{\alpha (p)_{i_j}} -1) - 
p^{\beta (p)_{i_2}} \prod_{j>2} 
(p^{\alpha (p)_{i_j}} -1) + \cdots \cdots 
\\
& \cdots + 
(-1)^{q-1} p^{\beta (p)_{i_q}} \prod_{j>q} 
(p^{\alpha (p)_{i_j}} -1) 
+ (-1)^{q} \prod_{j>q} 
(p^{\alpha (p)_{i_j}} -1). \nonumber
\end{align}
In the case $\mu (p) < 0$ we have 
\begin{equation}\label{SSS2} 
\psi_p^{\uparrow} (I_p)  
= p^{\beta (p)_{i_1}} \prod_{j>1} 
(p^{\alpha (p)_{i_j}} -1) - 
p^{\beta (p)_{i_2}} \prod_{j>2} 
(p^{\alpha (p)_{i_j}} -1) + \cdots + 
(-1)^{q-1} p^{\beta (p)_{i_q}} \prod_{j>q} 
(p^{\alpha (p)_{i_j}} -1). 
\end{equation}
By the definitions of 
$I_p = \{ i_1, i_2, i_3, \ldots \} \subset S$ 
and $q \geq 0$ we have $i \in I_p$ for any 
$i \leq i_q$. Eventually we find that 
$i_j=j$ for any $j \leq q$. 
First let us consider the case $I_p = \emptyset$. 
Then we have 
\begin{equation*}
\psi_p (I_p) 
= \begin{cases}
p^0=1 & ( \mu (p) \geq 0 ), \\
 & \\ 
\ 0 & ( \mu (p) < 0 ). 
\end{cases}
\end{equation*}
In the case $\mu (p) \geq 0$ we thus obtain 
the positivity $\psi_p^{\uparrow} (I_p) 
>0$. But in the case $\mu (p) < 0$ the 
condition $I_p = \emptyset$ implies $q=0$ and 
such a case cannot occur by the following lemma. 

\begin{lemma}\label{NOCC} 
The case $I_p = \emptyset$ and $\mu (p) < 0$ cannot occur. 
\end{lemma} 
\begin{proof} 
Assume that $I_p = \emptyset$ and $\mu (p) < 0$. By the 
definition of $\mu (p)$ we have 
\begin{equation}\label{ETT} 
{\rm mult}_p (K) = 
{\rm mult}_p (D \cdot p^{\mu (p)} ). 
\end{equation}
Moreover for any $i \in S = S \setminus I_p$ 
we have 
\begin{equation*}
{\rm mult}_p (K_i) \geq {\rm mult}_p (D) >  
{\rm mult}_p (D \cdot p^{\mu (p)} ), 
\end{equation*}
where we used the condition $\mu (p) <0$ in 
the second inequality. We thus obtain 
the inequality 
\begin{equation*}
{\rm mult}_p (K) = {\rm mult}_p ( \sum_{i \in S} K_i)
> {\rm mult}_p (D \cdot p^{\mu (p)} )
\end{equation*}
which contradicts \eqref{ETT}. 
\end{proof}

By this lemma, it remains for us to treat the 
case $I_p \not= \emptyset$. From now on, 
we assume that $I_p \not= \emptyset$. Note 
that the inequality \eqref{EQQU} becomes 
an equality only in the case 
$p=2$, $\beta (p)_{i_{j-1}} = \beta (p)_{i_{j}}
=0$ and $\alpha (p)_{i_{j}}=1$. By Lemma 
\ref{NOCC} this means that the sums 
\eqref{SSS1} and \eqref{SSS2} may 
be zero only in the following two cases: 

\medskip 
\par \noindent 
{\bf Case 1}: $p=2$, $\mu (p) \geq 0$, 
$q=2m+1$ for $m \geq 0$ and 
$( \alpha (p)_1, \beta (p)_1) =(a,0)$ 
for $a>0$, $( \alpha (p)_2, \beta (p)_2) 
= \cdots \cdots 
= ( \alpha (p)_q, \beta (p)_q)=(1,0)$. 

\medskip 
\par \noindent 
{\bf Case 2}: $p=2$, $\mu (p) < 0$, 
$q=2m$ for $m \geq 1$ and 
$( \alpha (p)_1, \beta (p)_1) =(a,0)$ 
for $a>0$, $( \alpha (p)_2, \beta (p)_2) 
= \cdots \cdots 
= ( \alpha (p)_q, \beta (p)_q)=(1,0)$. 

\medskip 
\par \noindent
Indeed, in the case $\mu (p) \geq 0$ and 
$q=2m$ for $m \geq 0$, if $q < |I_p|$ the 
last term $(-1)^{q} \prod_{j>q} 
(p^{\alpha (p)_{i_j}} -1)$ of the 
alternating sum \eqref{SSS1} is positive. 
Even if $q = |I_p|$ we still have the positivity 
\begin{equation*}
(-1)^{q} \prod_{j>q} 
(p^{\alpha (p)_{i_j}} -1) = 
\psi_p( \emptyset )=1>0. 
\end{equation*}
Let us show that none of the above two 
cases can occur. 

\medskip 
\par \noindent 
{\bf Case 1}: Set $\alpha (p)= \sum_{i \in S} 
\alpha (p)_i$. Then $2^{\alpha (p)} |D$ and 
for any $i \in S_2$ we have 
$2^{\alpha (p)} |K_i$. We thus obtain 
the equality 
\begin{align*} 
K  & 
\equiv 2^{\alpha (p)-a} \cdot {\rm odd} + 
(q-1) \cdot 2^{\alpha (p)-1}  \cdot {\rm odd} + 
\sum_{j>q}  2^{\alpha (p)+ 
\beta (p)_{i_j}- \alpha (p)_{i_j} } 
\\
& \equiv 2^{\alpha (p)-a} \cdot {\rm odd} + 
(q-1) \cdot 2^{\alpha (p)-1}  \cdot {\rm odd} 
\equiv 2^{\alpha (p)-a} \cdot {\rm odd} 
\end{align*}
mod $2^{\alpha (p)}$, where we used also the 
fact that $\beta (p)_{i_j}- \alpha (p)_{i_j} 
\geq 0$ for any $j>q$. We conclude that 
$2^{\alpha (p)}$ does not divide $K$, 
which contradicts our assumption 
$\mu (p) \geq 0$. 

\medskip 
\par \noindent 
{\bf Case 2}: By the condition $q \geq 2$ 
we have $-1= \beta (p)_{i_2}- \alpha (p)_{i_2} 
\leq \mu (p)$. Then by $\mu (p) <0$ we obtain 
$\mu (p)=-1$. As in {\bf Case 1}, by using 
the fact that $q-1$ is odd and $\mu (p)=-1$, 
if $a=1$ we obtain the equality 
\begin{equation*}
K \equiv 
\sum_{j>q}  2^{\alpha (p)+ 
\beta (p)_{i_j}- \alpha (p)_{i_j} } 
\equiv 0
\end{equation*}
mod $2^{\alpha (p)}$. But this result 
$2^{\alpha (p)} | K$ contradicts our 
assumption $\mu (p) <0$. If 
$a>1$ we obtain the equality 
\begin{equation*}
K \equiv 2^{\alpha (p)-a} \cdot {\rm odd}
\end{equation*}
mod $2^{\alpha (p)-1}$. But 
it also contradicts $\mu (p) =-1$. 

\medskip 
\par \noindent 
This completes the proof. 
\end{proof}

\section{On non-convenient Newton polyhedra}\label{sec:6}

When dealing with a singularity $(f,0)$ with non-convenient 
Newton polyhedron $\Gamma_+(f)$, it happens already in dimension 
2 and 3 that one has to search for the monodromy eigenvalue at 
some point of the hypersurface $f^{-1}(0)$ close to the origin.

\begin{definition}[cf. \cite{E}] Let $f:(\CC^n,0)\to(\CC,0)$ 
be a germ of a holomorphic function. For all sufficiently 
small $x_0\in\CC^n$, the {\it nearby singularity} germ 
$$f_{x_0}:(\CC^n,x_0)\to(\CC,0),\quad f_{x_0}(x)=f(x_0+x),
$$ is well defined. We shall refer to the roots and poles 
of the monodromy $\zeta$-function of the latter germ as 
{\it nearby monodromy eigenvalues} of $f$.
\end{definition}

\subsection{Nearby singularities at coordinate lines}

Notice that the Newton polyhedron at a generic point of a $k$-dimensional 
coordinate plane is the product of the projection of the Newton 
polyhedron along that coordinate plane by $\mathbb{R}^k_+$. 
In this subsection, we prove the 
following generalization of 
\cite[Lemma 9]{L-V}. 

\begin{proposition}\label{NDPP} 
Assume that $f$ is non-degenerate
at the origin 
$0 \in \CC^n$, then except for finitely many $c \in \CC$ 
the polynomial $f_c(x)=f(x_1,x_2, \ldots, x_{n-1}, x_n+c)$ 
is non-degenerate at the origin 
$0 \in \CC^n$. 
\end{proposition}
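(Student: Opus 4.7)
Plan: The strategy is to describe the Newton polyhedron of $g_c(x) := f(x_1, \ldots, x_{n-1}, x_n+c)$ at the origin for generic $c\in\CC$, read off the compact faces and their face polynomials, and then deduce non-degeneracy from a generic-smoothness argument controlled by the non-degeneracy of $f$ itself.

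Let $\pi \colon \RR^n \longrightarrow \RR^{n-1}$ denote the projection onto the first $n-1$ coordinates and set $Q := \pi(\Gamma_+(f)) \subset \RR_+^{n-1}$. Expanding $(x_n+c)^{v_n}$ by the binomial formula, the coefficient of $x_1^{w_1}\cdots x_{n-1}^{w_{n-1}} x_n^k$ in $g_c$ is the polynomial
\begin{equation*}
D_{w,k}(c) \ = \ \sum_{v \in \supp f,\ \pi(v)=w,\ v_n\geq k} c_v \binom{v_n}{k} c^{v_n-k}
\end{equation*}
in $c$. Its leading coefficient is $c_v\binom{v_n}{k}\not=0$ at the maximal admissible $v_n$, so $D_{w,k}(c)$ vanishes only for finitely many $c$. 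Outside this finite exceptional set $E_0\subset\CC$ the support of $g_c$ is maximal and $\Gamma_+(g_c) = Q\times \RR_+$, whose compact faces are exactly $\sigma\times\{0\}$ for $\sigma$ a compact face of $Q$; a direct computation yields
\begin{equation*}
(g_c)_{\sigma\times\{0\}}(x_1, \ldots, x_{n-1}) \ = \ f_{\tilde \sigma}(x_1, \ldots, x_{n-1},\, c),
\end{equation*}
where $\tilde \sigma := \pi^{-1}(\sigma)\cap \Gamma_+(f)$ is the face of $\Gamma_+(f)$ cut out by the covector $(a, 0) \in \ZZ_+^n$, with $a$ the primitive inner conormal vector of $\sigma$ in $\RR_+^{n-1}$.

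Consequently, non-degeneracy of $g_c$ at the origin for $c \notin E_0$ is equivalent to smoothness and reducedness in $(\CC^*)^{n-1}$ of each hypersurface $\{f_{\tilde \sigma}(\cdot, c) = 0\}$ as $\sigma$ ranges over the compact faces of $Q$. Fix such a $\sigma$. Since $f_{\tilde \sigma}$ is quasi-homogeneous of positive degree $m$ in $(x_1,\ldots,x_{n-1})$ with respect to the positive weight $a$, Euler's identity shows that the singular locus of $\{f_{\tilde \sigma}(\cdot, c)=0\}$ in $(\CC^*)^{n-1}$ coincides with $\{x \in (\CC^*)^{n-1} : \nabla_x f_{\tilde \sigma}(x,c) = 0\}$. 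Hence the set of bad $c$'s equals the image under ${\rm pr}_n \colon (\CC^*)^n \to \CC^*_{x_n}$ of the algebraic subset $Z_{\tilde \sigma} := \{(x,c)\in (\CC^*)^n : \partial_{x_i} f_{\tilde \sigma}(x,c)=0,\ i=1,\ldots,n-1\}$, which, by Chevalley's theorem, is a constructible subset of the $1$-dimensional base $\CC^*_{x_n}$ and therefore either finite or cofinite. It thus suffices to exhibit, for every compact face $\sigma \prec Q$, at least one $c^* \in \CC^*$ for which $\{f_{\tilde \sigma}(\cdot, c^*) = 0\}$ is smooth in $(\CC^*)^{n-1}$, since then the total bad set is a finite union of finite sets.

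To produce such $c^*$, the plan is to apply generic smoothness to the projection $\{f_{\tilde \sigma} = 0\}\cap (\CC^*)^n \longrightarrow \CC^*_{x_n}$, for which one needs the smooth locus of the total hypersurface $\{f_{\tilde \sigma}=0\}$ to dominate $\CC^*_{x_n}$. For every compact face $\tau \prec \Gamma_+(f)$ contained in $\tilde \sigma$ one has $(f_{\tilde \sigma})_\tau = f_\tau$, and by the non-degeneracy of $f$ the hypersurface $\{f_\tau=0\}$ is smooth and reduced in $(\CC^*)^n$. A hypothetical irreducible component of $Z_{\tilde \sigma}$ mapping dominantly onto $\CC^*_{x_n}$ would, by taking limits along $\CC^*$-orbits of the weight-$(a,0)$ action inside a toric compactification of $(\CC^*)^n$ adapted to the normal fan of $\tilde \sigma$, converge to a point in a boundary toric stratum corresponding to some compact sub-face $\tau \prec \tilde \sigma$; the limiting equations would then yield a singular point of $\{f_\tau=0\}$, contradicting the non-degeneracy of $f$. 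The main obstacle is precisely this toric bootstrap: for $n=2$, which is the situation of \cite[Lemma 9]{L-V}, every compact face $\sigma$ of $Q$ is a vertex, $f_{\tilde \sigma}(\cdot, c)$ reduces to a monomial times a nonzero polynomial in $c$, and the reduction to a single compact face is immediate; for $n \geq 3$ and positive-dimensional $\sigma$ the combinatorics of the non-compact face $\tilde \sigma$ requires a careful toric-geometric analysis of the various compact sub-faces of $\tilde \sigma$ and of how they meet the boundary of $(\CC^*)^n$.
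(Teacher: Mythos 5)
Your reduction is sound and matches the paper's framework: for generic $c$ the polyhedron $\Gamma_+(g_c)$ is $Q\times\RR_+$, its compact faces are $\sigma\times\{0\}$, and $(g_c)_{\sigma\times\{0\}}=f_{\tilde\sigma}(\cdot,c)$; likewise the observation that the bad set of $c$'s is a constructible subset of $\CC$ (hence finite or cofinite) is a clean way to reach the finiteness claim once a single good $c^*$ is exhibited. The gap, which you yourself flag as the ``main obstacle,'' is precisely the exhibition of $c^*$, and the sketched limit argument does not work as stated. Taking $t\to 0$ along the $(a,0)$-weighted $\CC^*$-orbit of a point $(x^*,c^*)\in Z_{\tilde\sigma}$ keeps $c^*$ fixed and sends the orbit to the stratum $T_{\tilde\sigma}$ of the compactification of $(\CC^*)^n$ --- that is, to the stratum indexed by $\tilde\sigma$ itself, which is a \emph{non-compact} face. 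The non-degeneracy hypothesis says nothing about $f_{\tilde\sigma}$, only about compact faces, so there is no contradiction to extract at that stratum. To land in a stratum of a compact sub-face $\tau\prec\tilde\sigma$ one would need a different one-parameter subgroup, and getting a well-chosen one uniformly in the putative dominant component is exactly the ``careful toric-geometric analysis'' you defer.

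The paper fills this gap by working inside the $(n-1)$-dimensional torus $T'$ of the non-compact facet itself (after a unimodular change of variables identifying $\mathrm{aff}(\tau')$ with $\RR^{n-1}$), compactifying $T'$ via a smooth subdivision of the normal fan of the \emph{truncated} polytope $\Delta=\tau'\cap\{v_n\leq l\}$, and extending the monomial $h=x_{n-1}$ (the ``$c$'' coordinate) to this toric variety $X_\Sigma$; since $h$ is a monomial it has no indeterminacy, and near the boundary strata $T_C$ corresponding to compact faces of $\tau'$ it reads $h(y)=y_1^{m_1}\cdots y_k^{m_k}$ in an affine chart. Non-degeneracy of $f$ gives transversality of $\overline{\{f_{\tau'}=0\}}$ with those strata, and because $h^{-1}(c)$ concentrates near those very strata as $|c|\to 0$, one obtains transversality of $h^{-1}(c)$ with $\overline{\{f_{\tau'}=0\}}$ for all $0<|c|\ll 1$. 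It is this deformation $c\to 0$ (not the $(a,0)$-flow) that pushes singular fibers onto the compact strata where non-degeneracy can be invoked. Until you carry out an argument of this shape, the proposal is a correct setup with the central step left open.
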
 

\begin{proof} 
Let $\pi : \RR^n \longrightarrow \RR^{n-1}$ be the 
projection along the last variable. Then 
except for finitely many $c \in \CC$ the Newton polyhedron 
$\Gamma_+(f_c)$ of $f_c$ is equal to 
the product $\pi (\Gamma_+(f)) \times \RR_+$. 
Let $\tau^{\prime} \prec \Gamma_+(f)$ be a 
face of $\Gamma_+(f)$ which is non-compact 
for the variable $v_n$ and denote 
its image by the projection 
$\pi : \RR^n \longrightarrow \RR^{n-1}$ by 
$\sigma \subset \RR^{n-1}$. Assume that 
$\sigma$ is compact. 
Here we shall treat only the case where 
$\tau^{\prime}$ is a facet and hence 
$\dim \sigma =n-2$. The other cases 
can be treated similarly. 
By a unimodular transformation of $\RR^n
=\RR^{n-1} \times \RR$ induced by that of 
its first factor $\RR^{n-1}$. 
we regard $\tau^{\prime}$ as a lattice polytope in 
its affine span ${\rm aff}( \tau^{\prime}) 
\simeq \RR^{n-1}$ 
and the $\tau^{\prime}$-part 
$f_{\tau^{\prime}}$ of $f$ as a Laurent polynomial 
on $T^{\prime} = ( \CC^*)^{n-1}_{x_1, \ldots, x_{n-1}}$, 
where the last variable $x_n$ of $f_{\tau^{\prime}}$ 
corresponds to the last one $x_{n-1}$ of the Laurent 
polynomial. We denote the latter 
also by $f_{\tau^{\prime}}$. Then by 
our assumption for any compact face $\tau$ of 
$\tau^{\prime}$ the hypersurface 
$\{ f_{\tau}=0 \} \subset T^{\prime}$ is smooth. Moreover the 
$\sigma$-part of the polynomial 
$f(x_1,x_2, \ldots, x_{n-1}, x_n+c)$ is 
naturally identified with the Laurent polynomial 
$f_{\tau^{\prime}}(x_1, \ldots, x_{n-2}, c)$. 
Therefore, in order to prove the assertion, 
by our previous description of $\Gamma_+(f_c)$ 
it suffices to show that 
except for finitely many 
$c \in \CC$ the hypersurface 
\begin{equation*}
W_c= \{ (x_1, \ldots, x_{n-2}) \ | \ 
f_{\tau^{\prime}}(x_1, \ldots, x_{n-2}, c) 
=0 \} \subset ( \CC^*)^{n-2}
\end{equation*}
in $T^{\prime} \cap \{ x_{n-1}=c \} \simeq 
( \CC^*)^{n-2}$ is smooth. 
Let $h: T^{\prime}= ( \CC^*)^{n-1} \longrightarrow 
\CC$ be the function defined by 
$h(x_1, \ldots, x_{n-1})=x_{n-1}$. 
Then the set of $c \in \CC$ for which 
$W_c \subset ( \CC^*)^{n-2}$ is not 
smooth is contained in the 
discriminant variety of the map 
$h|_{\{ f_{\tau^{\prime}}=0 \}} : 
\{ f_{\tau^{\prime}}=0 \} \longrightarrow \CC$. 
For $\e >0$ let $B(0; \e )^*= \{ c \in \CC \ | \ 
0 < |c| < \e \}$ be the punctured disk 
centered at the origin $0 \in \CC$. 
Then there exists a sufficiently small 
$0 < \e \ll 1$ such that the hypersurface 
$W_c \subset ( \CC^*)^{n-2}$ is smooth 
for any $c \in B(0; \e )^*$. 
Indeed, let $\Delta =  \tau^{\prime} \cap 
\{ v_n \leq l \} \subset \tau^{\prime}$ 
($l \gg 0$) be the truncation of $\tau^{\prime}$. 
Let $\Sigma_0$ be 
the dual fan of the $(n-1)$-dimensional 
polytope $\Delta$ in $\RR^{n-1}$ and 
$\Sigma$ its smooth subdivision. We denote by 
$X_{\Sigma}$ the toric variety associated to 
$\Sigma$ (see \cite{Fulton} and \cite{Oda} etc.). 
Then $X_{\Sigma}$ is a smooth 
compactification of $T^{\prime} = ( \CC^*)^{n-1}$.
Recall that $T^{\prime} = ( \CC^*)^{n-1}$ 
acts naturally on $X_{\Sigma}$ and the 
$T^{\prime}$-orbits in it are parametrized by 
the cones in the smooth fan $\Sigma$. 
For a cone $C \in \Sigma$ denote by 
$T_C \simeq ( \CC^*)^{n-1- \dim C} \subset 
X_{\Sigma}$ the $T^{\prime}$-orbit associated 
to $C$. By our assumption above, if $C \in \Sigma$ 
corresponds to a compact face $\tau$ of 
$\tau^{\prime}$ then the hypersurface 
$W= \overline{  \{ f_{\tau^{\prime}}=0 \} } 
\subset X_{\Sigma}$ intersects 
$T_C \subset X_{\Sigma}$ transversally. 
We denote the meromorphic 
extension of 
$h: T^{\prime}= ( \CC^*)^{n-1} \longrightarrow 
\CC$ to $X_{\Sigma}$ by the same letter $h$. 
Note that $h$ has no point of indeterminacy 
on the whole $X_{\Sigma}$ (because 
it is a monomial). 
Then as $|c| \longrightarrow 
0$ the level set $h^{-1}(c) \subset 
X_{\Sigma}$ of $h$ tends to the union of 
the $T^{\prime}$-orbits which correspond to 
the compact faces of $\tau^{\prime}$. 
More precisely, if a cone $C \in \Sigma$ 
corresponds to a compact face of 
$\tau^{\prime}$ then there exists an affine 
chart $\CC^{n-1}_y$ of $X_{\Sigma}$ on which 
\begin{equation*}
T_C=  \Bigl\{ y=(y_1, \ldots, y_{n-1} ) \ | \ 
y_i=0 \ (1 \leq i \leq \dim C), \ 
y_i \not=0 \ ( \dim C +1 \leq i \leq n-1) \Bigr\} 
\end{equation*}
and $h(y)=y_1^{m_1} y_2^{m_2} \cdots y_k^{m_k}$ 
($m_i \in \ZZ_{>0}$) for some 
$k \geq 1$. 
By this explicit description of $h$ we see 
that for $0< |c| \ll 1$ the hypersurface 
$h^{-1}(c)$ intersects $W$ 
transversally. It follows that 
\begin{equation*}
W_c= W \cap h^{-1}(c) \cap T^{\prime} 
 \subset h^{-1}(c) \cap T^{\prime} 
\simeq ( \CC^*)^{n-2}
\end{equation*}
is smooth for $0< |c| \ll 1$. 
This completes the proof. 
\end{proof}

Note also that at almost all points 
on a coordinate axis contained 
in the hypersurface, the compact part of the Newton 
polyhedron there coincides with the compact part of 
the projection of $\Gamma_+(f)$ along that coordinate 
axis. Then the monodromy zeta function can be computed 
by the same Varchenko formula in one dimension less, since 
by Proposition \ref{NDPP} 
generic nearby singularity germs are 
still non-degenerate.

\begin{example}\label{exa2dim1}
1) If $f(x_1,x_2)=x_1^{a_1}x_2^{a_2}g(x_1,x_2)$ with $g$ not divisible by $x_i$, then we have the nearby eigenvalue $\sqrt[a_i]{1}$ on the $i$-th axis.

2) If $f(x_1,x_2,x_3)=x_1^{a_1}x_2^{a_2}x_3^{a_3}g(x_1,x_2,x_3)$ with $g$ not divisible by $x_i$, then we have the nearby eigenvalue $\sqrt[a_i]{1}$ at every point of the $i$-th coordinate plane except for the points of the coordinate lines and, most notably, the surface $g=0$.
\end{example}

\subsection{Nearby singularities outside coordinate lines}

The following example shows that from dimension four on, one 
might not always find the eigenvalue of monodromy corresponding to a pole of the topological zeta function at a point on a coordinate axis or even a generic point on a coordinate plane (a subtle shadow of this difference between generic and not so generic points of a coordinate plane can be seen already in dimension 3, see Example \ref{exa2dim1}).

\begin{example} \label{exnon0convenient}
Consider the polynomial $f(x_1,x_2,x_3,x_4)=x_3^6+x_2^4 x_3^5 +x_1^2 
x_2^{13} x_3^2 + x_2^{13} x_3^2 x_4^2$, which is 
non-degenerate at the origin. One finds that $-1/3$ 
is a pole of
$Z_{ {\rm top}, f}(s)$, contributed by the only compact facet of 
$\Gamma_+(f)$. For the zeta function of monodromy at the origin one finds
$$\zeta_{f,0} (t) = \frac{1-t^6}{1-t^{24}},$$
and so $e^{-2i \pi/3 }$ is not a zero or pole of this function. 
One can check that there does not exist a point $b=(c_1,c_2,c_3,c_4)$ 
in $\{f=0\}$ such that the compact part of the Newton polyhedron 
$\Gamma_+(g)$ of $g(x_1,x_2,x_3,x_4)=f(x_1-c_1,x_2-c_2,x_3-c_3,x_4-c_4)$ is a 
projection of $\Gamma_+(f)$ along the minimal coordinate plane containing $b$,
and $e^{-2i \pi/3 }$ is a zero or pole of $\zeta_{f,b}(t)$. 
However, $e^{-2i \pi/3 }$ is an eigenvalue of monodromy at 
the points of the curve $\mathcal{C}=\{(c,0,0,-ic),  c \in 
\mathbb{C}^* \} \subset \{f=0\}$. Note that these are exactly the points where $\Gamma_+(g)$ is strictly smaller that the projection of $\Gamma_+(f)$, due to a cancellation of two monomials in $f$. \qed
\end{example}

In this particular example, one can check that the singularity is still non-degenerate at the points of the curve $\mathcal{C}$ where we found the eigenvalue of monodromy.
However, this will not always be the case, as shown in the next example.

\begin{example}\label{exquitnondegenerate}
We consider $g(x_1,x_2,x_3,x_4)= x_3^6 + x_2^4 x_3^5 + x_1^2 x_2^{13} x_3^2 + x_2^{13} x_3^2 x_4^2 + 2 x_1^{100} x_2^7 x_3^4 x_4^{100} + x_1^{200} x_2^{14} x_3^2 x_4^{200}$, which up to terms of higher order equals the polynomial $f$ 
considered in Example \ref{exnon0convenient}. 
The polynomial $g$ is non-degenerate at the origin, and its Newton polyhedron at the origin has one compact facet, spanned by the vertices $A(0,13,2,2),B(2,13,2,0),C(0,4,5,0)$ and $D(0,0,6,0)$. It contributes the candidate pole $-1/3$ which is a pole of 
$Z_{top,g}(s)$. For the zeta function of monodromy at the origin one finds 
$$\zeta_{g,0}(t)=\frac{1-t^6}{1-t^{24}}.$$ 
In Example \ref{exnon0convenient} we found the eigenvalue of monodromy $e^{-2\pi i/3}$ at the points $(c, 0, 0, \pm i c), c \in \CC$.
In the translated local coordinates at the point $(c, 0, 0, \pm i c)$, the principal part of $g$ will be 
$$x_3^6 + x_2^4 x_3^5 + 2 c^{200} x_2^7 x_3^4 + c^{400} x_2^{14} x_3^2 + 2 c x_1 x_2^{13} x_3^2 - 2 i c x_2^{13} x_3^2 x_4,$$
having a degenerate edge $x_3^6 + 2 c^{200} x_2^7 x_3^4 + c^{400} x_2^{14} x_3^2$.
 \qed
\end{example}

This example shows we have to quit the non-degenerate setting to prove the monodromy conjecture for non-degenerate singularities in dimension 4 and higher. 
It also shows that the Newton polyhedron of a singularity at an adjacent singular point may depend not only on the Newton polyhedron of the initial singularity, but also on higher order terms.

This obstacle motivated the first author to introduce the notion of tropical monodromy eigenvalues (see \cite{E}). The main result in \cite{E} makes it possible to find some of the nearby monodromy eigenvalues outside the coordinate axes, given only the Newton polyhedron of a non-degenerate singularity at the origin.
We recall this result and restrict to dimension four from now on.

Assume that $f(x) \in \CC [x_1, \ldots, x_4]$ 
is non-degenerate at the origin $0 \in \CC^4$. 
Pick some pole of the topological zeta function $s_0$ and denote the corresponding candidate eigenvalue of the Milnor monodromy by $t_0=\exp(2\pi i s_0)$.

We suppose that there is a V-vertex $A$ contained in an unbounded face, contributing to the eigenvalue $t_0$. 
With no loss in generality, assume that $A$ is on the coordinate axis $O_1$. 

Let $I \subset \{1,\ldots,4\}$, $I \neq \emptyset$ and $I \neq \{1,\ldots,4\}$. We will denote $\pi_I=\pi_{\{x_i\}_{i \in I}}$ for the projection map 
\begin{eqnarray*}
\RR^4 & \longrightarrow  & \RR^{4-\vert I \vert} \\
(x_1,x_2,x_3,x_4) & \longmapsto  & (\hat{x}_1,\hat{x}_2,\hat{x}_3,\hat{x}_4),
\end{eqnarray*}
where $\hat{x_i}$ means that $x_i$ is removed if and only if $i \in I$. When $I=\{x_i\}$ is a singleton, we will also write $\pi_{x_i}$.

\begin{theorem}\label{nearby}\cite[Cor. 6.15]{E}
Let $\pi_I : \ZZ^4 \to \ZZ^2$ be the projection map. Let $N$ be the projection of the Newton polyhedron $\Gamma_+(f)$ under $\pi_I$.
Let a compact edge $E$ of the polygon $N$ be the projection of a two-dimensional compact face $F$ of $\Gamma_+(f)$. Denote the lattice distance from $E$ to the origin by $d$ and some root of the polynomial $t^d-1$ by $t'_0$. 
If $E$ is not a segment of lattice length $1$ such that exactly one of its end points is a $V$-vertex of $N$ contributing to $t'_0$ and such that another end point has a unique preimage in $F$, then 
$t'_0$ is a monodromy eigenvalue of the germ of $f$ at a non-zero point of $\{f=0\}$. 

More specifically, when $I = \{x_1,x_2\}$, then there exists a curve $C_{E,N,t'_0}$ through the origin in the coordinate plane $x_3=x_4=0$ (and outside the axes $O_1$ and $O_2$) such that $t'_0$ is a monodromy eigenvalue of the germ of $f$ at a generic point of $C_{E,N,t'_0}$.
\end{theorem}
\begin{remark}\label{remnearby}
\begin{enumerate}
\item If the face $F$ contains a V-vertex on a coordinate axis out of $I$ contributing to $t'_0$, then the condition in Theorem \ref{nearby} is fulfilled. 
\item The nearby monodromy eigenvalues provided by this theorem are called {\it tropical}, because the proof of their existence in \cite{E} is based on the calculus of so called tropical characteristic classes.
\end{enumerate}
\end{remark}
For instance, this theorem allows to find the ``complicated'' nearby monodromy eigenvalue in Example \ref{exquitnondegenerate}.

\section{The monodromy conjecture for $n=4$}\label{sec:7}

Assume that $f(x) \in \CC [x_1, \ldots, x_4]$ 
is non-degenerate at the origin $0 \in \CC^4$. 
Pick some pole of the topological zeta function $s_0$ and denote the corresponding candidate eigenvalue of the Milnor monodromy by $t_0=\exp(2\pi i s_0)$.
Our aim is to prove that, once Theorem \ref{mainB} does not guarantee that $s_0$ is fake, $t_0$ is a root or pole of the monodromy $\zeta$-function of a singularity of $f$ at some point near the origin.

For the rest of the paper, we may assume w.l.o.g. the compactness of every $B$-border, adjacent to two $B$-facets contributing the pole $s_0$. Indeed, towards the contradiction, if at least one such border $\tau$ were non-compact, then we would have one of the following (up to reordering the coordinates):

-- if $\tau$ is the Minkowski sum of the point $(1,1,*,*)$ and the 3rd and 4th coordinate rays, then $s_0=-1$;

-- if $\tau$ is the Minkowski sum of the segment  from $(1,1,*,*)$ to $(0,0,*,*)$ and the 4th coordinate ray, then its projection along the 4th coordinate is a segment from $(1,1,*)$ to $(0,0,*)$, adjacent to two $B$-faces contributing $s_0$ in the Newton polyhedron of $f(x_1,x_2,x_3,x_4+\varepsilon)$ for a small constant $\varepsilon\ne 0$. Then \cite{L-V} ensures that in this case $\exp(2\pi i s_0)$ is a monodromy eigenvalue of the singularity of the function $f$ at $(0,0,0,\varepsilon)$.

In the second subsection we will see how \cite{E} allows us to 
isolate many cases of the combinatorial structure of the Newton polyhedron $\Gamma_+(f)$, which ensure that $t_0$ is a nearby monodromy eigenvalue outside the origin.

In the third subsection, we continue this work in the presence of a triangulation of $\Gamma_+(f)$ (which can be constructed only if $\Gamma_+(f)$ does not fall within the scope of the second subsection). In the fourth subsection,  we subdivide the $V$-pieces of this triangulation into groups such that each of them `contributes' a nonnegative multiplicity to $t_0$ as an eigenvalue of the monodromy $\zeta$-function at the origin, in the sense of the following definition.

\begin{definition} 
Recall that a $V$-face $F$ is said to contribute to the eigenvalue $t_0$ if $t_0^{N(F)}=1$. The number $(-1)^{\codim F-1}\Vol_\ZZ(F)$ for a contributing $F$ and 0 for a non-contributing $F$ is called \emph{the contribution of $F$}. The sum of contributions of all faces from some set of faces $S$ is called \emph{the contribution of $S$}.
\end{definition}

Finally, in the last subsection, we show that once one of the aforementioned groups violates the assumptions of Theorem \ref{mainB}, its contribution is strictly positive. This proves the monodromy conjecture for non-degenerate singularities of 4 variables. 

The aforementioned subdivision of facets into groups is based on the fact that every $V$-simplex $F$ has a unique minimal corner (possibly equal to $F$), and the following notion.

\begin{definition} \label{deffamily} For any triangulation of the union of compact faces of $\Gamma_+(f)$, the \emph{family of a $V$-simplex $F$} of this triangulation is the set of all faces of $F$, containing its minimal corner (note that all of them are $V$-faces by definition of the corner). A family is said to be trivial if it consists of one element. The dimension of the maximal simplex in a family is also referred to as the \emph{dimension of the family}.
\end{definition}

\begin{example}
We illustrate the notion of family of a $V$-simplex by an example in dimension $3$.
Let $P=(0,0,a), Q=(0,b,c), R=(d,0,e)$ with $a, b, c, d$ and $e$ non-zero and let $S$ be a vertex outside the coordinate planes such that $PQS$ and $PRS$ are different $V$-triangles containing $P$. Then the minimal corner of $PQS$ is $PQ$ and the family of $PQS$ is $\{PQS,PQ\}$.
The minimal corner of $PRS$ is $PR$. Hence the family of $PRS$ is $\{PRS,PR\}$.
\end{example}

Recall that, by Theorem \ref{Key-5}, the contribution of every family to every eigenvalue is non-negative or non-positive, depending on the dimension of the family.

Before implementing our general plan, we devote the first subsection to a sandbox three-dimensional version of this story (first, in order to illustrate a significantly more complicated four-dimensional case beforehand, and, secondly, because we shall need this three-dimensional statement anyway). Although the three-dimensional result is essentially covered by \cite{L-V}, the logic of our reasoning is different, and this difference becomes important in higher dimensions.
Indeed, the result in \cite{E} makes us to approach the monodromy conjecture for non-isolated singularities first by searching for the monodromy eigenvalue outside the origin and then, if necessary, at the origin (having already excluded many combinatorial possibilities for the structure of the Newton polytope).

\subsection{Two- and three-dimensional case}

We intentionally formulate things in a more complicated way than we could for two or three variables, in order to keep all the wording consistent with the four-dimensional case.

\begin{theorem} \label{sandbox2} For every non-degenerate $f\in{\mathbb C}[x,y]$, if a family $F$ has positive contribution to the candidate eigenvalue $t_0$, then $t_0$ is a nearby monodromy eigenvalue for $f$. 
\end{theorem}
\begin{proof} Note that positive contribution implies that the family $F$ is 1-dimensional (not 0-dimensional).

Step 1: looking for the monodromy eigenvalue outside the origin (c.f. Example \ref{exa2dim1}). 

If $\Gamma_+(f)$ has a $V$-vertex that contributes to the eigenvalue $t_0$ and is not contained in a compact $V$-edge, then $t_0$ is obviously a nearby monodromy eigenvalue of $f$ at a point of a coordinate axis.

Step 2: splitting $\Gamma_+(f)$ into families otherwise. 

Define the register of $\Gamma_+(f)$ as the set of families of all $V$-edges. Neglecting the cases covered by Step 1, we notice that every $V$-simplex enters exactly one family in the register. 

All families in the register have non-negative contribution, and the family $F$ has positive contribution, thus the total contribution to the eigenvalue $t_0$ is positive.
\end{proof}

\begin{theorem} \label{sandbox3} For every non-degenerate $f\in{\mathbb C}[x,y,z]$ and a triangulation of the compact faces of its Newton polyhedron, if a family $F$ of this triangulation has positive contribution to the candidate eigenvalue $t_0$, then $t_0$ is a nearby monodromy eigenvalue for $f$, unless all compact $V$-simplices containing $F$ 
are contained in 
the same (larger) family whose contribution is 0.
\end{theorem}
\begin{proof} Step 1: looking for the monodromy eigenvalue outside the origin.

If $\Gamma_+(f)$ has a $V$-edge $G$, whose family has non-zero contribution to the eigenvalue $t_0$, and which is not contained in a compact $V$-triangle, then such face is contained in a non-compact face, parallel to a coordinate axis (say, $O_1$) and not contained in a coordinate plane. Then $G$ is also a $V$-edge of the projection of $\Gamma_+(f)$ along $O_1$, satisfying the assumption of the preceding theorem (recall Proposition \ref{NDPP}), so $t_0$ is a nearby monodromy eigenvalue of $f$ at a point of axis $O_1$.

Step 2: splitting $\Gamma_+(f)$ into families otherwise.

Define the register of $\Gamma_+(f)$ as the set of

-- families of all $V$-triangles,

-- for all $V$-edges outside the aforementioned families, the families of these $V$-edges;

-- (families of) all $V$-vertices outside the aforementioned families. 

Every $V$-simplex enters exactly one family on the register. In particular, assume towards contradiction that a $V$-vertex $A$ is contained in several families. If one of them contains the others, then the others are not on the register; otherwise $A$ is contained in two 1-dimensional families, but then none of them is on the register. 

Neglecting the cases covered by Step 1, there are no families of $V$-edges that are not contained in $V$-triangles and have non-zero contribution. Thus the register contains only even-dimensional families and families having zero contribution.

By Theorem \ref{Key-5} the contribution of every even-dimensional family to the multiplicity of $t_0$ is non-negative. Thus all families on the register have non-negative contribution.

Moreover, at least one of them has positive contribution: if $F$ is a 2-dimensional family, then it is on the register with positive contribution.
If $F$ is 0-dimensional, and all compact faces containing $F$ are contained in a larger family, then the larger family is on the register with non-zero (i.e. positive) contribution. If $F$ is 0-dimensional otherwise, then it is on the register with positive contribution.
\end{proof}

\subsection{Dimension 4 : Looking for the eigenvalue $t_0$ outside the origin.}
We suppose that there is a V-vertex $A$ contributing to the eigenvalue $t_0$, and we assume that $A$ is on the coordinate axis $O_1$. 

In this subsection we start to exploit as much as possible Proposition 
\ref{NDPP} and Theorem \ref{nearby} to derive properties on the combinatorial structure of the Newton polyhedron locally around $A$.

\begin{definition} The \emph{link} $L_A$ is the subdivision of the triangle $T_A$ defined by
$$x_1=0,\, x_2+x_3+x_4=\varepsilon, x_2\geqslant 0,\, x_3\geqslant 0,\, x_4\geqslant 0 $$
into the isomorphic images of faces of $\Gamma_+(f)$ intersected with the hyperplane $x_2+x_3+x_4=\varepsilon$ under the projection $\pi_{x_1}$, for $\varepsilon>0$ small enough. (The link does not depend on the choice of $\varepsilon>0$ in the sense that the links for all $\varepsilon>0$ small enough are affine isomorphic to each other.)

The image of a face $F\subset\Gamma_+(f)$ containing $A$ in the link $L_A$ is referred to as the \emph{link of $F$} in $L_A$.
\end{definition}
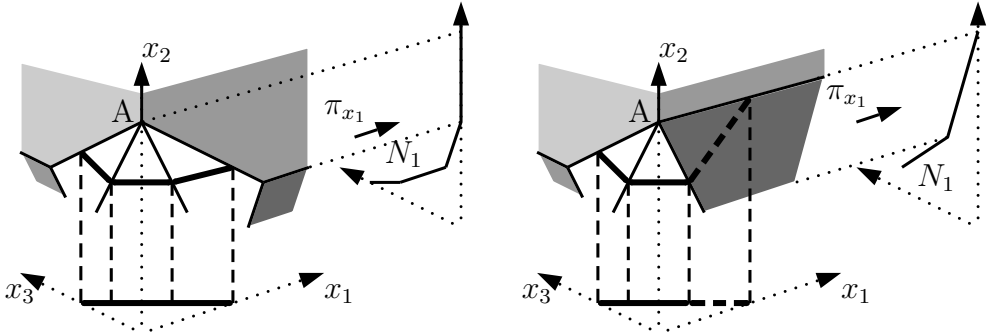
\begin{figure}\begin{center}
\skipfig{\psscalebox{1.0 1.0} 
{
\begin{pspicture}(0,-2.1114144)(13.06,2.1114144)
\definecolor{colour0}{rgb}{0.6,0.6,0.6}
\definecolor{colour1}{rgb}{0.4,0.4,0.4}
\definecolor{colour2}{rgb}{0.8,0.8,0.8}
\psline[linecolor=white, linewidth=0.04, fillstyle=solid,fillcolor=colour0](0.6,0.11141449)(0.2,0.31141448)(0.4,-0.08858551)(0.8,-0.2885855)(0.6,0.11141449)
\psline[linecolor=white, linewidth=0.04, fillstyle=solid,fillcolor=colour1](3.4,-0.08858551)(3.2,-0.6885855)(3.8,-0.4885855)(4.0,0.11141449)
\psline[linecolor=white, linewidth=0.04, fillstyle=solid,fillcolor=colour2](1.8,1.1114144)(1.8,0.7114145)(0.6,0.11141449)(0.2,0.31141448)(0.2,1.5114145)(1.8,1.1114144)
\psline[linecolor=white, linewidth=0.04, fillstyle=solid,fillcolor=colour0](1.8,1.1114144)(1.8,0.7114145)(3.4,-0.08858551)(4.0,0.11141449)(4.0,1.7114145)(1.8,1.1114144)
\psline[linecolor=black, linewidth=0.04, linestyle=dotted, dotsep=0.10583334cm, arrowsize=0.05291667cm 4.0,arrowlength=1.5,arrowinset=0.0]{->}(1.8,-2.0885856)(1.8,1.5114145)
\psline[linecolor=black, linewidth=0.04, linestyle=dotted, dotsep=0.10583334cm, arrowsize=0.05291667cm 4.0,arrowlength=1.5,arrowinset=0.0]{->}(1.8,-2.0885856)(4.2,-1.2885855)
\psline[linecolor=black, linewidth=0.04, linestyle=dotted, dotsep=0.10583334cm, arrowsize=0.05291667cm 4.0,arrowlength=1.5,arrowinset=0.0]{->}(1.8,-2.0885856)(0.2,-1.2885855)
\psline[linecolor=black, linewidth=0.08, dotsize=0.07055555cm 2.0,dotsize=0.07055555cm 2.0]{cc-cc}(1.0,-1.6885855)(3.0,-1.6885855)
\psline[linecolor=black, linewidth=0.04](1.8,0.7114145)(0.6,0.11141449)
\psline[linecolor=black, linewidth=0.04](1.8,0.7114145)(1.2,-0.4885855)
\psline[linecolor=black, linewidth=0.04](1.8,0.7114145)(2.4,-0.4885855)
\psline[linecolor=black, linewidth=0.04](1.8,0.7114145)(3.4,-0.08858551)
\psline[linecolor=black, linewidth=0.04](3.4,-0.08858551)(4.0,0.11141449)
\psline[linecolor=black, linewidth=0.04](3.4,-0.08858551)(3.2,-0.6885855)
\psline[linecolor=black, linewidth=0.08, dotsize=0.07055555cm 2.0,dotsize=0.07055555cm 2.0]{cc-cc}(1.0,0.31141448)(1.4,-0.08858551)
\psline[linecolor=black, linewidth=0.08, dotsize=0.07055555cm 2.0,dotsize=0.07055555cm 2.0]{cc-cc}(1.4,-0.08858551)(2.2,-0.08858551)
\psline[linecolor=black, linewidth=0.08, dotsize=0.07055555cm 2.0,dotsize=0.07055555cm 2.0]{cc-cc}(2.2,-0.08858551)(3.0,0.11141449)
\psline[linecolor=black, linewidth=0.04, linestyle=dashed, dash=0.17638889cm 0.10583334cm](1.0,0.31141448)(1.0,-1.6885855)
\psline[linecolor=black, linewidth=0.04, linestyle=dashed, dash=0.17638889cm 0.10583334cm](1.4,-0.08858551)(1.4,-1.6885855)
\psline[linecolor=black, linewidth=0.04, linestyle=dashed, dash=0.17638889cm 0.10583334cm](2.2,-0.08858551)(2.2,-1.6885855)
\psline[linecolor=black, linewidth=0.04, linestyle=dashed, dash=0.17638889cm 0.10583334cm](3.0,0.11141449)(3.0,-1.6885855)
\psline[linecolor=black, linewidth=0.04](1.8,0.7114145)(1.8,1.3114145)
\psline[linecolor=black, linewidth=0.04](0.6,0.11141449)(0.2,0.31141448)
\psline[linecolor=black, linewidth=0.04](0.6,0.11141449)(0.8,-0.2885855)
\psline[linecolor=black, linewidth=0.04, arrowsize=0.05291667cm 4.0,arrowlength=1.5,arrowinset=0.0]{->}(4.6,0.51141447)(5.2,0.7114145)
\psline[linecolor=black, linewidth=0.04, linestyle=dotted, dotsep=0.10583334cm, arrowsize=0.05291667cm 4.0,arrowlength=1.5,arrowinset=0.0]{->}(6.0,-0.6885855)(4.4,0.11141449)
\psline[linecolor=black, linewidth=0.04, linestyle=dotted, dotsep=0.10583334cm, arrowsize=0.05291667cm 4.0,arrowlength=1.5,arrowinset=0.0]{->}(6.0,-0.6885855)(6.0,2.3114145)
\psline[linecolor=black, linewidth=0.04, linestyle=dotted, dotsep=0.10583334cm](1.8,0.7114145)(6.0,1.9114145)
\psline[linecolor=black, linewidth=0.04, linestyle=dotted, dotsep=0.10583334cm](4.0,0.11141449)(6.0,0.7114145)
\psline[linecolor=black, linewidth=0.04](6.0,2.1114144)(6.0,0.7114145)(5.8,0.11141449)
\psline[linecolor=black, linewidth=0.04](5.8,0.11141449)(5.2,-0.08858551)
\psline[linecolor=black, linewidth=0.04](5.2,-0.08858551)(4.8,-0.08858551)
\rput[bl](4.2,-1.6885855){$x_2$}
\rput[bl](1.8,1.5114145){$x_1$}
\rput[bl](0.0,-1.6885855){$x_3$}
\rput[bl](4.2,0.7114145){$\pi_{x_2}$}
\rput[bl](5.0,0.11141449){$N_2$}
\rput[bl](1.4,0.7114145){A}
\psline[linecolor=white, linewidth=0.04, fillstyle=solid,fillcolor=colour0](7.4,0.11141449)(7.0,0.31141448)(7.2,-0.08858551)(7.6,-0.2885855)(7.4,0.11141449)
\psline[linecolor=white, linewidth=0.04, fillstyle=solid,fillcolor=colour1](8.6,0.7114145)(9.2,-0.4885855)(10.4,-0.08858551)(10.78,1.3114145)
\psline[linecolor=white, linewidth=0.04, fillstyle=solid,fillcolor=colour2](8.6,1.1114144)(8.6,0.7114145)(7.4,0.11141449)(7.0,0.31141448)(7.0,1.5114145)(8.6,1.1114144)
\psline[linecolor=white, linewidth=0.04, fillstyle=solid,fillcolor=colour0](8.6,1.1114144)(8.6,0.7114145)(10.8,1.3114145)(10.8,1.3114145)(10.8,1.7114145)(8.6,1.1114144)
\psline[linecolor=black, linewidth=0.04, linestyle=dotted, dotsep=0.10583334cm, arrowsize=0.05291667cm 4.0,arrowlength=1.5,arrowinset=0.0]{->}(8.6,-2.0885856)(8.6,1.5114145)
\psline[linecolor=black, linewidth=0.04, linestyle=dotted, dotsep=0.10583334cm, arrowsize=0.05291667cm 4.0,arrowlength=1.5,arrowinset=0.0]{->}(8.6,-2.0885856)(11.0,-1.2885855)
\psline[linecolor=black, linewidth=0.04, linestyle=dotted, dotsep=0.10583334cm, arrowsize=0.05291667cm 4.0,arrowlength=1.5,arrowinset=0.0]{->}(8.6,-2.0885856)(7.0,-1.2885855)
\psline[linecolor=black, linewidth=0.08, dotsize=0.07055555cm 2.0,dotsize=0.07055555cm 2.0]{cc-cc}(7.8,-1.6885855)(9.0,-1.6885855)
\psline[linecolor=black, linewidth=0.04](8.6,0.7114145)(7.4,0.11141449)
\psline[linecolor=black, linewidth=0.04](8.6,0.7114145)(8.0,-0.4885855)
\psline[linecolor=black, linewidth=0.04](8.6,0.7114145)(9.2,-0.4885855)
\psline[linecolor=black, linewidth=0.04](8.6,0.7114145)(10.78,1.3114145)
\psline[linecolor=black, linewidth=0.08, dotsize=0.07055555cm 2.0,dotsize=0.07055555cm 2.0]{cc-cc}(7.8,0.31141448)(8.2,-0.08858551)
\psline[linecolor=black, linewidth=0.08, dotsize=0.07055555cm 2.0,dotsize=0.07055555cm 2.0]{cc-cc}(8.2,-0.08858551)(9.0,-0.08858551)
\psline[linecolor=black, linewidth=0.08, linestyle=dashed, dash=0.17638889cm 0.10583334cm, dotsize=0.07055555cm 2.0,dotsize=0.07055555cm 2.0]{cc-cc}(9.0,-0.08858551)(9.8,1.0314145)
\psline[linecolor=black, linewidth=0.04, linestyle=dashed, dash=0.17638889cm 0.10583334cm](7.8,0.31141448)(7.8,-1.6885855)
\psline[linecolor=black, linewidth=0.04, linestyle=dashed, dash=0.17638889cm 0.10583334cm](8.2,-0.08858551)(8.2,-1.6885855)
\psline[linecolor=black, linewidth=0.04, linestyle=dashed, dash=0.17638889cm 0.10583334cm](9.0,-0.08858551)(9.0,-1.6885855)
\psline[linecolor=black, linewidth=0.04, linestyle=dashed, dash=0.17638889cm 0.10583334cm](9.8,0.9114145)(9.8,-1.6885855)
\psline[linecolor=black, linewidth=0.04](8.6,0.7114145)(8.6,1.3114145)
\psline[linecolor=black, linewidth=0.04](7.4,0.11141449)(7.0,0.31141448)
\psline[linecolor=black, linewidth=0.04](7.4,0.11141449)(7.6,-0.2885855)
\psline[linecolor=black, linewidth=0.04, arrowsize=0.05291667cm 4.0,arrowlength=1.5,arrowinset=0.0]{->}(11.2,0.7114145)(11.8,0.9114145)
\psline[linecolor=black, linewidth=0.04, linestyle=dotted, dotsep=0.10583334cm, arrowsize=0.05291667cm 4.0,arrowlength=1.5,arrowinset=0.0]{->}(12.8,-0.6885855)(11.2,0.11141449)
\psline[linecolor=black, linewidth=0.04, linestyle=dotted, dotsep=0.10583334cm, arrowsize=0.05291667cm 4.0,arrowlength=1.5,arrowinset=0.0]{->}(12.8,-0.6885855)(12.8,2.3114145)
\psline[linecolor=black, linewidth=0.04, linestyle=dotted, dotsep=0.10583334cm](10.68,1.2914145)(12.8,1.9114145)
\psline[linecolor=black, linewidth=0.04, linestyle=dotted, dotsep=0.10583334cm](10.4,-0.08858551)(12.4,0.51141447)
\psline[linecolor=black, linewidth=0.04](12.8,1.9114145)(12.4,0.51141447)(11.8,0.11141449)
\rput[bl](11.0,-1.6885855){$x_2$}
\rput[bl](8.6,1.5114145){$x_1$}
\rput[bl](6.8,-1.6885855){$x_3$}
\rput[bl](10.8,0.9114145){$\pi_{x_2}$}
\rput[bl](12.0,-0.2){$N_2$}
\rput[bl](8.2,0.7114145){A}
\psline[linecolor=black, linewidth=0.08, linestyle=dashed, dash=0.17638889cm 0.10583334cm, dotsize=0.07055555cm 2.0,dotsize=0.07055555cm 2.0]{cc-cc}(9.0,-1.6885855)(9.8,-1.6885855)
\end{pspicture}
}}\end{center}
\caption{links of Newton polyhedra}\label{piclink}
\end{figure}
For example, the link of a vertex of a Newton polyhedron is shown in bold on Figure \ref{piclink}. 

The following fact seems to be common knowledge, but we give a proof as we have not found an exact reference, and the fact is not entirely tautological. 
\begin{proposition}
The union of the relative interiors of the links of bounded faces in $L_A$ is closed and contractible.
\end{proposition}
\begin{proof}
We denote $A(a,0,0,0)$. By taking a slice of the Newton polyhedron $\Gamma_+(f)$ with the hyperplane $x_1=a-\epsilon$, it is sufficient to prove that the union of the compact faces $\mathcal{C}$ of every Newton polyhedron $N$ is contractible.

Let $N_\epsilon$ be the Minkowski sum of $N$ and a ball of radius $\epsilon$. The union of its compact faces $\mathcal{C}_\epsilon$ (i.e. the set of those boundary points that do not belong to a ray of boundary points) is a topological disc, because the homeomorphism with the standard simplex is provided by the Gauss-Bonnet map (sending every boundary point to its unit exterior normal vector). Now the family of sets $\mathcal{C}_\epsilon$ is a family of vanishing neighborhoods of $\mathcal{C}$, so the contractibility of all $\mathcal{C}_\epsilon$ implies the contractibility of $\mathcal{C}$.
\end{proof}
Recall that a ray $r\in\RR^n$ is said to belong to the recession cone of a set $S\in\RR^n$, if the Minkowsky sum of $r$ and $S$ equals $S$.
\begin{definition} \label{defiface} A face of $\Gamma_+(f)$ is called an \emph{at least $I$-face (resp. at most $I$-face)}, $I\subset\{1,2,3,4\}$, if its recession cone contains the positive coordinate axes $O_i,\, i\in I$ (resp. its recession cone does not contain any coordinate axe $O_i$ if $i \notin I$), and is called an (exactly) $I$-face, if it is at least and at most $I$-face. This terminology transfers to the corresponding pieces of the link $L_A$.
\end{definition}
For example, on Figure \ref{piclink}, unbounded facets of the Newton polyhedron are grey, and a $2$-piece of the link, corresponding to an unbounded $1$-facet, is shown in bold dashes.
\begin{definition}
Let $v_i$ be the vertex of the triangle $T_A$ opposite to its edge $x_i=0$.
Let $l$ be a line separating $v_i$ from the other vertices of the pieces of $L_A$ (i.e. passing close enough to $v_i$). Then the subdivision of the segment $l\cap T_A$ into its intersections with the pieces of $L_A$ is independent of the choice of $l$ (up to a projective transformation) and is called the \emph{link of the vertex $v_i$ in the link $L_A$}.
\end{definition}

\begin{remark}\label{remlink} Let $v_i$ be the vertex of the triangle $T_A$ opposite to its edge $x_i=0$. Denote by $N_i$ the projection $\pi_{x_i}\Gamma_+(f)$ along $O_i$. Notice that for almost every $c \in \mathbb{C}$, the polynomial $f(x_1,\ldots,x_{i-1},x_i-c,x_{i+1},\ldots,x_n)$ has $N_i$ as Newton polyhedron and is non-degenerate by Proposition \ref{NDPP}.
\begin{enumerate}
\item If the vertex $v_i$ as a piece of the subdivision $L_A$ corresponds to a bounded edge of $\Gamma_+(f)$, then no pieces of the link $L_A$ correspond to at least $i$-faces. In this case the point $\pi_{x_i}(A)$ is not a vertex of $N_i$.
\item Otherwise, the pieces of the link $L_A$, containing $v_i$, are exactly the pieces corresponding to at least $i$-faces. 
Then the point $\pi_{x_i}(A)$ is a vertex $A_i$, whose link is (projectively) isomorphic to a subdivision of the link of $v_i$ in $L_A$.
\end{enumerate}
\end{remark}
See Figure \ref{piclink} for three-dimensional examples of both cases.
The preceding proposition extends to $I$-faces as follows. We shall refer to the links of $I$-faces in the link $L_A$ as $I$-pieces of the link $L_A$.
\begin{corollary} The union of the relative interiors for all exactly $I$-pieces of the link $L_A$ is contractible, and the union of the relative interiors for all at most $I$-pieces of the link $L_A$ is closed.
\end{corollary}

We get the first result about the combinatorial configuration locally at the V-vertex $A$. 

\begin{lemma}\label{lnoij} If $A$ contributes to the monodromy eigenvalue $t_0$, and $t_0$ is not a nearby eigenvalue outside the origin, then there are two possibilities:

-- either $A$ is contained in a unique facet outside the coordinate planes, and this facet is an $\{i,j\}$-facet, 

-- or $A$ is contained in no $I$-faces for $|I|>1$, and in at most one $i$-facet for every $i \in \{2,3,4\}$.

\end{lemma}
\begin{proof} We discuss only faces containing $A$ and only pieces of the link $L_A$.

(0) If there is an $\{i,j,k\}$-facet, then $t_0$ is a nearby monodromy eigenvalue at every point of the $\{i,j,k\}$-coordinate hyperplane.

(1) If there is more than one two-dimensional at least $i$-piece in the link, then, by Remark \ref{remlink}, the (family of the) vertex $\pi_{x_i}(A)$ of the polyhedron $N_i$ satisfies the assumption of Theorem \ref{sandbox3} (for any triangulation of compact faces of $N_i$).

(2) Assume there is an $\{i,j\}$-piece in the link such that at least one of its edges is not contained in the boundary of $T_A$ and is disjoint from its vertices, then the two-dimensional face $F\subset\Gamma_+(f)$, corresponding to this edge, satisfies the assumptions of Theorem \ref{nearby} by Remark \ref{remnearby}.

(3) Assume there is an $\{i,j\}$-piece in the link such that none of its edges satisfies the condition requested in (2). If it is the unique two-dimensional piece in the link $L_A$, then we arrive at the situation (0), otherwise we arrive at the situation (1) (possibly with $j$ instead of $i$).
\end{proof}

\subsection{Dimension 4 : Triangulating.}

In this subsection we continue exploiting Proposition \ref{NDPP} to get further information on the combinatorial structure of the link $L_A$.
We shall thus investigate in particular the cases when $t_0$ is a monodromy eigenvalue of the singularity of $f$ at some point of the $i$-th coordinate axis (although not at the origin). The Newton polyhedron of such singularity equals $\RR_+\times N_i$ (see Remark \ref{remlink} for notation), so we could apply the 3-dimensional Theorem \ref{sandbox3} to its analysis. However, for this theorem, we need triangulations of the bounded faces of the polyhedra $N_i$.

The preceding Lemma \ref{lnoij} will play a crucial role.
Indeed, under its assumption, every triangulation $\mathcal{T}$ of the compact faces of the Newton polyhedron $\Gamma_+(f)$ `naturally'  (see Remark \ref{remtrianglink}) induces a triangulation of the link $L_A$ of a $V$-vertex $A$, contributing to the eigenvalue $t_0$:

1. Assuming $A=(*,0,0,0)$, take the isomorphic images of simplices of $\mathcal{T}$ intersected with the hyperplane $x_2+x_3+x_4=\varepsilon$ under the projection $\pi_{x_1}$ for small $\varepsilon$.

2. Subdivide every $i$-piece (Definition \ref{defiface}) of the link $L_A$ by several segments from the vertex $v_i$, so that the resulting pieces together with the ones from (1) form a triangulation of $T_A$.

This triangulation will be denoted by $\tilde L_A$. (If the link $L_A$ contains a unique two-dimensional piece, corresponding to an $\{i,j\}$-facet, we triangulate it trivially.)

\begin{remark}\label{remtrianglink} 1. The triangulation $\tilde L_A$ is natural in the sense that, in the notation of Remark \ref{remlink}, it agrees with the corresponding triangulations of the projection polyhedra $N_i$. More specifically, every compact face of $N_i$ is the projection of one compact face of $\Gamma_+(f)$, so the triangulation $\mathcal{T}$ of $\Gamma_+(f)$ induces a triangulation $\mathcal{T}_i$ of the compact faces of $N_i$. Assume that the link $L_A$ contains an $i$-piece, then the $\mathcal{T}_i$-triangulated link of $A_i$ in $N_i$ is affinely isomorphic to the triangulated link of $v_i$ in $\tilde L_A$. This is an important refinement of Remark \ref{remlink}, as we shall see later in Lemma \ref{lhexagon}.

2. No triangulation of the link $L_A$ may be natural in the above sense in the presence of $\{i,j\}$-pieces with edges 
outside the boundary of $T_A$. So we really have to work under the assumptions of Lemma \ref{lnoij} in this subsection. In particular, we have no natural notion of a link triangulation $\tilde L_A$ associated to $\mathcal{T}$ for a $V$-vertex $A$ that does not contribute to the eigenvalue $t_0$. 
\end{remark}
In what follows, we refer to the simplices in the triangulation $\mathcal{T}$ as $V$-simplices or just $V$-faces, because we shall not be interested in faces of $\Gamma_+(f)$ in the usual sense anymore.

We will now continue to study how the combinatorics of $\Gamma_+(f)$ assures the existence of a nearby monodromy eigenvalue $t_0$ outside the origin, but, this time, taking into account the chosen triangulation $\mathcal{T}$.

We will choose once and for all a triangulation $\mathcal{T}$ and corresponding link triangulations $\tilde L_A$ according to the following lemma.

\begin{lemma} \label{lgoodtriang} If $s_0$ is a pole of the topological zeta function, then there exists a triangulation $\mathcal{T}$ (with no new vertices) of the Newton polyhedron $\Gamma_+(f)$, such that either it has a non-$B$-simplex (see Definition \ref{B-F}) contributing to the eigenvalue $t_0=\exp{2\pi i  s_0}$, or a $B$-border, whose $V$-edge contributes to the eigenvalue $t_0$. 

\end{lemma}

\begin{proof} Since $\Gamma_+(f)$ does not satisfy the assumptions of Theorem \ref{mainB} for $s_0$, either it contains a $B$-border contributing to $t_0$, or a non-$B$-facet, contributing $s_0$. In the first case, the $V$-edge of the border contributes to the sought eigenvalue (see the proof of Lemma \ref{otherconfigB1}). 
In the second case, triangulate the contributing non-$B$-facet by Lemma \ref{ALEX} (so that one of the resulting non-$B$-simplices contributes to the sought eigenvalue) and extend this triangulation arbitrarily to the whole $\Gamma_+(f)$.
\end{proof}

The absence of the nearby monodromy eigenvalue $t_0$ outside the origin imposes lots of restrictions on combinatorics of the Newton polyhedron $\Gamma_+(f)$ and the triangulation $\mathcal{T}$. Let us use these restrictions to make some crucial conclusions about the combinatorics of the triangulated links of the $V$-vertices.

\begin{definition} The vertex $v_i$ of the triangle $T_A$ opposite to the edge $x_i=0$ is said to be the \emph{$i$-th corner of the link $L_A$}, if the link triangulation $\tilde L_A$ contains a piece of the form $v_iBC$, where $B$ and $C$ are points on the two edges of $T_A$ containing $v_i$. Its star is the set of four pieces $v_iBC, v_iB, v_iC, v_i$.
\end{definition}

\begin{lemma} \label{lhexagon} If $A$ contributes to the eigenvalue $t_0$ and $t_0$ is not a nearby eigenvalue outside the origin, then for every $i \in \{2,3,4\}$, either the edge of $\Gamma_+(f)$ corresponding to the vertex $v_i\in T_A$ is bounded and is not a corner of a $V$-facet, or the link $L_A$ has the $i$-th corner (which may correspond to an $i$-edge or to a bounded $V$-face of $\Gamma_+(f)$). In particular, there are six alternatives for the vertex $A$ in this situation:

-- The vertex $A$ is a corner of a $V$-facet;

-- The vertex $A$ is a corner of a $V$-edge and is contained in a unique facet, which is an $\{i,j\}$-facet.

-- The vertex $A$ is contained in a $V$-edge that is not a corner of a $V$-facet, and the link $L_A$ has two corners.

-- The vertex $A$ is contained in two $V$-edges that are not corners of $V$-facets, and the link $L_A$ has one corner.

-- The vertex $L_A$ is contained in three $V$-edges that are not corners of $V$-facets.

-- The link $L_A$ has three corners.

\end{lemma}
\begin{proof} 1. If we exclude the first two cases from our consideration, then, by Lemma \ref{lnoij}, there is at most one two-dimensional $i$-piece in the link $L_A$ for $i\in\{2,3,4\}$. However, it still could contain more than one two-dimensional $i$-piece of the triangulated link $\tilde L_A$. Once we exclude this possibility, we prove the lemma.

2. So assume towards the contradiction that the unique $i$-piece of the link $L_A$ (corresponding to an $i$-facet $\tau\ni A$) contains 

a) no $(i,j)$-pieces and 

b) at least two $i$-pieces of the triangulated link $\tilde L_A$ (corresponding to compact triangles $\tau_k\ni A$ in the boundary of $\tau$). 

From this, we shall conclude that Theorem \ref{sandbox3} is applicable to the family $F=\{A_i\}$ in the polyhedron $N_i$ with the triangulation $\mathcal{T}_i$ (in the notation of Remarks \ref{remlink} and \ref{remtrianglink}), because this family cannot fall within the ``unless'' case that we exclude in the statement of Theorem \ref{sandbox3}. Once we prove this, Theorem \ref{sandbox3} assures that $t_0$ is a nearby eigenvalue on the $i$-th coordinate axis outside the origin, which contradicts our assumption.

3. In remains to deduce from (a) and (b) above that the family $F$ does not fall within the case that we exclude in the statement of Theorem \ref{sandbox3}. Indeed, (a) implies that all faces of $N_i$, which contain the vertex $A_i$ and are not contained in a coordinate plane, are compact. And (b) assures that such faces contain at least two triangles with the vertex $A_i$ in the triangulation $\mathcal{T}_i$: these are the projections of $\tau_k$ along the $i$-th coordinate axis.

Thus all pieces of the triangulation $\mathcal{T}_i$ containing $A_i$ cannot belong to the same family (because every family contains at most one triangle).
\end{proof}

We will need the following combinatorial observation, applicable to the conclusion of Lemma \ref{lhexagon}. Let $\tilde L$ be a triangulation of a triangle $T$.
\begin{definition}\label{definscr} A triangle of $\tilde L$ is said to be \emph{interior}, if none of its edges is contained in the edges of $T$, and no one of its vertices is contained in the vertices of $T$. An interior triangle of $\tilde L$ is said to be \emph{inscribed}, if its three vertices are in the interior of the three edges of $T$. A vertex of $T$ is called a \emph{corner of the triangulation $\tilde L$}, if it is a vertex of only one of the triangles in the triangulation.
\end{definition}
\begin{lemma}\label{ltriangles} \begin{enumerate}
\item If the triangulation $\tilde L$ of a triangle $T$ has three corners, then either it has an inscribed triangle, or it has at least three interior triangles.
\item If the triangulation $\tilde L$ has a corner, and no edge of the triangulation connects a vertex of $T$ with its opposite edge, then $T$ has an interior triangle.
\end{enumerate}
\end{lemma}
Informally speaking, this means that, in the setting of Lemma \ref{lhexagon}, the link looks similarly to one of the six examples on Figure \ref{pichex}. The pieces of the link that may correspond to $i$-facets for some $i$ are shown in grey; the white area may be subdivided into pieces in a more complicated way than the one shown on the picture.
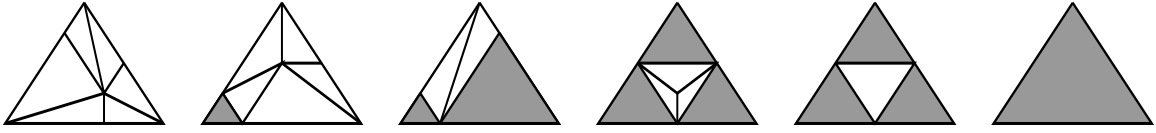
\begin{figure}
    \centering
\skipfig{\psscalebox{0.65 1.0} 
{
\begin{pspicture}(0,-0.82121325)(23.296568,0.82121325)
\definecolor{colour0}{rgb}{0.6,0.6,0.6}
\psline[linecolor=black, linewidth=0.04](5.6482844,0.8070711)(4.0482845,-0.79292893)(7.2482843,-0.79292893)(5.6482844,0.8070711)
\psline[linecolor=black, linewidth=0.04](4.448284,-0.39292893)(4.8482842,-0.79292893)(5.6482844,0.0070710755)(4.448284,-0.39292893)
\psline[linecolor=black, linewidth=0.04](5.6482844,0.8070711)(5.6482844,0.0070710755)(6.448284,0.0070710755)
\psline[linecolor=black, linewidth=0.04](5.6482844,0.0070710755)(7.2482843,-0.79292893)
\psline[linecolor=black, linewidth=0.04, fillstyle=solid,fillcolor=colour0](4.448284,-0.39292893)(4.0482845,-0.79292893)(4.8482842,-0.79292893)(4.448284,-0.39292893)
\psline[linecolor=black, linewidth=0.04](9.648284,0.8070711)(8.048285,-0.79292893)(11.248284,-0.79292893)(9.648284,0.8070711)
\psline[linecolor=black, linewidth=0.04, fillstyle=solid,fillcolor=colour0](8.448284,-0.39292893)(8.048285,-0.79292893)(8.848285,-0.79292893)(8.448284,-0.39292893)
\psline[linecolor=black, linewidth=0.04, fillstyle=solid,fillcolor=colour0](10.048285,0.40707108)(8.848285,-0.79292893)(11.248284,-0.79292893)(10.048285,0.40707108)
\psline[linecolor=black, linewidth=0.04](9.648284,0.8070711)(8.848285,-0.79292893)
\psline[linecolor=black, linewidth=0.04, fillstyle=solid,fillcolor=colour0](13.648284,0.8070711)(12.048285,-0.79292893)(15.248284,-0.79292893)(13.648284,0.8070711)
\psline[linecolor=black, linewidth=0.04, fillstyle=solid](12.848285,0.0070710755)(13.648284,-0.79292893)(14.448284,0.0070710755)(12.848285,0.0070710755)
\psline[linecolor=black, linewidth=0.04, fillstyle=solid,fillcolor=colour0](17.648285,0.8070711)(16.048285,-0.79292893)(19.248283,-0.79292893)(17.648285,0.8070711)
\psline[linecolor=black, linewidth=0.04, fillstyle=solid](16.848284,0.0070710755)(17.648285,-0.79292893)(18.448284,0.0070710755)(16.848284,0.0070710755)
\psline[linecolor=black, linewidth=0.04](12.848285,0.0070710755)(13.648284,-0.39292893)(14.448284,0.0070710755)
\psline[linecolor=black, linewidth=0.04](13.648284,-0.39292893)(13.648284,-0.79292893)
\psline[linecolor=black, linewidth=0.04](1.6482843,0.8070711)(0.048284303,-0.79292893)(3.2482843,-0.79292893)(1.6482843,0.8070711)
\psline[linecolor=black, linewidth=0.04](1.6482843,0.8070711)(2.0482843,-0.39292893)(2.4482844,0.0070710755)
\psline[linecolor=black, linewidth=0.04](2.0482843,-0.39292893)(3.2482843,-0.79292893)
\psline[linecolor=black, linewidth=0.04](0.048284303,-0.79292893)(2.0482843,-0.39292893)(2.0482843,-0.79292893)
\psline[linecolor=black, linewidth=0.04](2.0482843,-0.39292893)(1.2482843,0.40707108)
\psline[linecolor=black, linewidth=0.04, fillstyle=solid,fillcolor=colour0](21.648285,0.8070711)(20.048285,-0.79292893)(23.248283,-0.79292893)(21.648285,0.8070711)
\end{pspicture}
}}
    \caption{examples of links of vertices in the setting of Lemma \ref{lhexagon}}
    \label{pichex}
    
\end{figure}

\begin{proof} The proof of Part 1 proceeds by induction on the number of triangles in the triangulation. If any corner triangle can be glued with its (unique) adjacent triangle into a larger triangle, then glue and apply the induction hypothesis. Otherwise each of the three corner triangles are adjacent to some interior triangles $T_i$. If $T_i=T_j$, then its vertices are in the interior of the three edges of $T$, and otherwise $T_i$ are three different interior triangles.

Part 2 is proved in the same way.
\end{proof}

From Proposition \ref{NDPP} we can also deduce the following result.

\begin{lemma} \label{lchasm} If a $V$-triangle $F\subset\Gamma_+(f)$ is not contained in a $V$-facet, and its family (see Definition \ref{deffamily}) contributes to the eigenvalue $t_0$, then $t_0$ is a nearby monodromy eigenvalue outside the origin.
\end{lemma}
\begin{proof} Under these assumptions, if $F$ is in the $i$-th coordinate hyperplane, then $\pi_{x_i}F$ is a facet of $N_i=\pi_{x_i}\Gamma_+(f)$, whose family contributes to $t_0$, so $t_0$ is a nearby monodromy eigenvalue at a point of the $i$-th coordinate hyperplane by Theorem \ref{sandbox3}.
\end{proof}

\subsection{Monodromy conjecture for non-degenerate singularities of four variables}\label{Shappyend}

We now prove the monodromy conjecture for non-degenerate singularities of four variables similarly to Theorem \ref{sandbox3}. Recall our setting.

Let $f\in{\mathbb C}[x_1,x_2,x_3,x_4]$ be non-degenerate at the origin. Let $s_0$ be a pole of the topological zeta function of $f$ and set $t_0=\exp(2\pi i  s_0)$.
Analogously to the proof of Theorem \ref{sandbox3}, we choose once and for all a triangulation $\mathcal{T}$ of the Newton polyhedron $\Gamma_+(f)$ in accordance with Lemma \ref{lgoodtriang}, and we will define the register of $\Gamma_+(f)$ as a disjoint union of certain groups of families. It will be practical to work with what we call extended families.

\begin{definition} The \emph{extended family of a $V$-simplex $F$} is the set of all $V$-faces from the family of $F$ and, in the case of $\dim\, F=3$, also the $V$-vertex of $F$, if it is a maximal by inclusion $V$-face of $F$. In this case, $F$ has no other $V$-faces except for, maybe, the facet of $F$ opposite to the $V$-vertex (notice that if $A$ and $PQ$ are V-faces in a facet $APQR$, then also $APQ$ is a V-face).
\end{definition}
\begin{remark} A $V$-vertex $A$ is in the extended family of a $V$-tetrahedron $\tau$ if and only if the image of $\tau$ in the triangulated link $\tilde L_A$ is an interior triangle or coincides with $T_A$.
\end{remark}
\begin{definition} The \emph{register $\R$ of $\Gamma_+(f)$} (depending on the chosen triangulation) is the set of the following extended families:

-- extended families of all 3-dimensional $V$-simplices;

-- extended families of all 2-dimensional $V$-simplices that do not enter the aforementioned extended families;

-- extended families of all 1-dimensional $V$-simplices that do not enter the aforementioned extended families.
\end{definition}

\begin{remark}\label{rem1to1} Notice that every positive-dimensional $V$-simplex enters exactly one extended family of the register. This is obvious for simplices of dimensions 2 and 3, and a $V$-edge $E$ may enter families of two different $V$-triangles, but in this case, by the subsequent Lemma \ref{lvvv}, both of these triangles are themselves in families of $V$-tetrahedra $\tau_1$ and $\tau_2$, so their own families are not in the register. Thus the extended family of $E$ is itself in the register for $\tau_1\ne\tau_2$ (because $E$ is not a corner of any tetrahedron in this case), and $E$ is in the extended family of $\tau_1=\tau_2$ otherwise.
\end{remark}

\begin{lemma}\label{lvvv} If a bounded $(n-3)$-dimensional $V$-face $\nu$ of a Newton polyhedron in $\RR^n$ is contained in two bounded $(n-2)$-dimensional $V$-faces $\nu_1$ and $\nu_2$, then each of these faces is contained in a bounded $V$-facet.
\end{lemma}
\begin{proof} This is obvious for $n=3$, and the general case reduced to $n=3$ by taking the projection of the Newton polyhedron along the affine span of $\nu$.
\end{proof}

We will now prove the monodromy conjecture for non-degenerate singularities of four variables, modulo several lemmas in the next subsection regarding certain exotic families.

\begin{theorem}
Let $f\in{\mathbb C}[x_1,x_2,x_3,x_4]$ be non-degenerate at the origin. Let $s_0 \neq -1$ be a pole of the topological zeta function of $f$ and set $t_0=\exp(2\pi i  s_0)$. If $t_0$ is not a (tropical) nearby monodromy eigenvalue outside the origin, then $t_0$ is a root of the monodromy zeta function of $f$ at the origin, and hence a monodromy eigenvalue. 
\end{theorem}

\begin{proof}
Let $\mathcal{T}$ be a triangulation of the Newton polyhedron $\Gamma_+(f)$, according to Lemma \ref{lgoodtriang}, and 
induce from $\mathcal{T}$ the corresponding link triangulations $\tilde L_v$ for every $V$-vertex $v$, contributing to the eigenvalue $t_0$ (see the beginning of the preceding subsection).

For every $V$-vertex $v$, denote by $r(v)$ the number of extended families containing $v$ in the register $\R$.

By Remark \ref{rem1to1}, we represent the multiplicity of the candidate root $t_0=\exp(2 \pi i s_0)$ of the monodromy zeta function as 
$$\sum_{\mathcal F} (\mbox{contribution of }{\mathcal F}\mbox{ to the multiplicity of }t_0)+\sum_v \bigl(r(v)-1\bigr), \eqno{(*)}$$
where ${\mathcal F}$ runs over the register, and $v$ runs over $V$-vertices, contributing to $t_0$.

We will prove that every term in every sum of $(*)$ is non-negative, and, moreover, at least one term in $(*)$ is strictly positive.

In our setting we have :

{\bf 1.} The contribution of every extended family ${\mathcal F}\in\R$ to the multiplicity of $t_0$ is non-negative. For $3$-dimensional extended families, this follows from Theorem \ref{Key-5} and Lemma \ref{lemmatau5} in the next subsection. 
Note that there are no $2$-dimensional families contributing to $t_0$ by Lemma \ref{lchasm}. As $s_0 \neq -1$, the contribution of every $1$-dimensional family to $t_0$ is also non-negative (see for example the proof of \cite[Prop. 5]{L-V}).

{\bf 2.} For every $V$-vertex $A$, contributing to the eigenvalue $t_0$, we have $r(A)\geqslant 1$. This is because, by Lemmas \ref{lhexagon} and \ref{ltriangles}(1), the vertex $A$ is contained in one of the following:

-- a $V$-tetrahedron, for which $A$ is a corner;

-- a $V$-edge, whose extended family is in the register;

-- a $V$-tetrahedron, whose image in the triangulated link $\tilde L_A$ is interior, and whose extended family thus contains $A$.

{\bf 3.} The contribution of at least one odd-dimensional extended family ${\mathcal F}\in\R$ is positive, or $r(v)>1$ for some $V$-vertex $v$. To see this, consider the following possible cases:

-- If Lemma \ref{lgoodtriang} provides a $B$-border $AA_1A_2$ contained in two $V$-tetrahedra, whose candidate pole of the topological zeta function equals $s_0$, then we have 
two subcases for its $V$-edge $A_1A_2$:


-- -- Assume for every $A_i$ the following: if it is a $V$-vertex, contributing to $t_0$, then the edge $AA_i$ is in a coordinate plane. Under this assumption, the contribution of the family of the $V$-edge $A_1A_2$ of this border is positive by Lemma \ref{otherconfigB1} in the next subsection.

-- -- Assume that some $A_i$ (say, $A_1$) breaks the preceding assumption, then, in the triangulated link $\tilde L_{A_1}$, exactly one interior segment contains the vertex $v$ corresponding to the $V$-edge $A_1A_2$: this segment corresponds to the border triangle $AA_1A_2$ and does not split the link by our assumption. Now we have again two subcases:

-- -- -- The triangulated link $\tilde L_{A_1}$ has exactly two corners, then Lemma \ref{ltriangles}(2) applies to $\tilde L_{A_1}$ and provides an interior triangle. The $V$-vertex $A_1$ is then contained in the extended families of the $V$-edge $A_1A_2$ and the $V$-tetrahedron, corresponding to the interior triangle, so $r(A_1)>1$  (both of these families are on the register, since $A_1A_2$ is not a corner).

-- -- -- The triangulated link $\tilde L_{A_1}$ has at most one corner, then $A_1$ is contained by Lemma \ref{lhexagon} in the extended families of two $V$-edges, which are not corners, hence on the register, so $r(A_1)>1$. 

-- If Lemma \ref{lgoodtriang} provides a contributing $V$-tetrahedron, whose extended family coincides with the usual family, then the contribution of this family is positive by Theorem \ref{Key-5}.

-- If Lemma \ref{lgoodtriang} provides a $V$-tetrahedron $F$, whose candidate pole of the topological zeta function is $s_0$, and whose extended family consists of the family of $F$ and one additional contributing $V$-vertex $A$, then we have two subcases for the triangulated link $\tilde L_A$ by Lemma \ref{lhexagon}:

-- -- One of the vertices of $\tilde L_A$ corresponds to a $V$-edge, whose extended family contains $A$ and is contained in the register. In this case $r(A)\geqslant 2$, because $A$ is also in the extended family of $F$.

-- -- There are three corners in $\tilde L_A$. This case subdivides into the following subcases by Lemma \ref{ltriangles}(1):

-- -- -- There are three interior triangles in $\tilde L_A$. Then $r(A)>2>1$.

-- -- -- There is an inscribed triangle in $\tilde L_A$, and it is not the image of $F$. Then $r(A)>1$.

-- -- -- There is an inscribed triangle in $\tilde L_A$, and it is the image of $F$. Then, Lemma \ref{lemmalinalg1} or \ref{FITH} from the next subsection applies to $F$, so its extended family has a positive contribution to the multiplicity of $t_0$.

We conclude that the number $t_0$ is a root of the monodromy zeta function and, in particular, a monodromy eigenvalue of the singularity $f$ at the origin. 
\end{proof}

\subsection{Exotic families}

In the course of the proof of the monodromy conjecture for $n=4$, we encountered certain exotic families of $V$-faces, whose contributions to the multipliciy of the corresponding monodromy eigenvalue ought to be non-zero.
Their contributions are estimated in this subsection.

For the most part (Lemmas \ref{lemmalinalg1}--\ref{FITH}), we will study the extended family of a facet $\tau=ABCD$ if it does not coincide with the family of $\tau$ (i. e. consists of a $V$-vertex $A\prec\tau$ and possibly its opposite triangular face whenever it is a $V$-face), and often moreover assume that $\tau$ defines an inscribed triangle (Definition \ref{definscr}) in the link of $A$.


\begin{lemma} \label{lemmalinalg1}
Let $\tau =ABCD$ be a 
$3$-dimensional lattice 
simplex in a compact facet of 
$\Gamma_+(f)$ 
such that 
$v=A(0,0,0,\alpha)$ is its only proper V-face contributing to $t_0=e^{-2\pi i \nu(\tau)/N(\tau)}$.
If $B(\beta_1,\beta_2,\beta_3,\beta_4), C(\gamma_1,\gamma_2,\gamma_3,\gamma_4)$ and $D(\delta_1,\delta_2,\delta_3,\delta_4)$ with $\beta_2=\gamma_3=\delta_1=0$,
then $\Vol_{\ZZ}(\tau) \neq 1$.  
\end{lemma}

\begin{proof}
Assuming to the contrary that $\Vol_{\ZZ}(\tau) = 1$, the vector product of $AB, AC, AD$ is a primitive vector $(a,b,c,d)$, normal to $\tau$:
$$ \mbox{aff}(\tau)  \leftrightarrow ax_1+bx_2+cx_3+dx_4=N, \mbox{ with}$$
\begin{eqnarray*}
a & = & \gamma_2\delta_3(\alpha-\beta_4) + \delta_2\beta_3(\alpha-\gamma_4) - \beta_3\gamma_2(\alpha-\delta_4) \\
b & = & \delta_3\beta_1(\alpha-\gamma_4) + \beta_3\gamma_1(\alpha-\delta_4) - \gamma_1\delta_3(\alpha-\beta_4) \\
c & = & \beta_1\gamma_2(\alpha-\delta_4) + \gamma_1\delta_2(\alpha-\beta_4) - \delta_2\beta_1(\alpha-\gamma_4) \\
d & = &  \beta_1\gamma_2\delta_3 + \gamma_1\delta_2 \beta_3 \\
N & = & \alpha d.
\end{eqnarray*}
Since $A$ contributes to $t_0$, we have $d \mid a+b+c+d$. Let $k \in \ZZ$ be such that $a+b+c+d=(-k+1)d$ and let $M$ be the matrix
\begin{equation*}
\left( \begin{array}{cccc}
\beta_1 &  0 &\beta_3 & \beta_4- \alpha\\
\gamma_1 &\gamma_2 & 0 & \gamma_4- \alpha\\
0 & \delta_2  &\delta_3 & \delta_4-\alpha\\
1 & 1 & 1 & k
 \end{array} \right). 
\end{equation*}
Then $M ^t (a,b,c,d)=~^t0$. By Lemma \ref{b} it follows that $d$ divides the minors $M^i_4$, $1 \leq i \leq 4$, which are
 \begin{eqnarray*}
M^1_4 & = & \gamma_1\delta_2 + \gamma_2\delta_3 - \gamma_1\delta_3 \\
M^2_4 & = & \delta_3\beta_1+\delta_2\beta_3 - \delta_2\beta_1\\
M^3_4 & = & \beta_1\gamma_2+ \beta_3\gamma_1 - \beta_3\gamma_2\\
M^4_4 & = & d.
\end{eqnarray*}
As $A$ is a maximal (by inclusion) proper V-face of $\tau$, none of the numbers $\beta_1,\beta_3,\gamma_1,\gamma_2, \delta_2,\delta_3$ equals $0$, and then obviously $M^1_4, M^2_4,$ and $M^3_4$ are strictly less than $d$. 
As 
$$\frac{M^1_4}{\gamma_1\delta_3}+ \frac{M^2_4}{\delta_2\beta_1}+\frac{M^3_4}{\beta_3\gamma_2} = \left(\frac{\delta_2}{\delta_3}+\frac{\delta_3}{\delta_2} \right)+
\left(\frac{\gamma_2}{\gamma_1}+\frac{\gamma_1}{\gamma_2}\right)+\left(\frac{\beta_3}{\beta_1}+\frac{\beta_1}{\beta_3}\right) - 3 >0, $$ 
it follows that at least one of the minors $M^1_4, M^2_4,$ or $M^3_4$ is strictly positive, which contradicts the fact that $d$ divides the minors $M^i_4$, $1 \leq i \leq 4$.
\end{proof}

\begin{lemma}\label{lemmalinalg2}
Let $\tau=APQR$ with $A(2,0,0,0), P(0,0,p_2,p_3), 
Q(0,q_1,0,q_3), R(0,r_1,r_2,0)$ 
be a $3$-dimensional lattice 
simplex in a compact facet of 
$\Gamma_+(f)$ contributing to $t_0=e^{-2 \pi i \nu(\tau)/N(\tau)} \neq 1$. 
Assume that $v=A$ 
and $\sigma =PQR$ are the only proper
contributing $V$-faces to $t_0$ in $\tau$.
Then $(\Vol_{\ZZ}( \sigma ),\Vol_{\ZZ}( \tau )) \neq (1,2)$. 
\end{lemma}

\begin{proof}
Suppose that $\Vol_{\ZZ}( \tau )=2$ and 
$\Vol_{\ZZ}( \sigma )=1$. 
Let 
\begin{equation*}
{\rm aff} (\tau) : \  ax_1+bx_2+cx_3+dx_4
=N(\tau)=2a 
\end{equation*}
be the equation of ${\rm aff} (\tau)$ 
with ${\rm gcd}(a,b,c,d)=1$. 
One has $\frac{N(\tau)}{{\rm gcd}(b,c,d)}=N(\sigma)$ and as $\sigma$ contributes to $t_0$, we have 
$$\frac{\nu(\tau)}{N(\tau)}N(\sigma)=\frac{a+b+c+d}{N(\tau)}N(\sigma) \in \mathbb{Z}.$$
This implies that ${\rm gcd}(b,c,d)$ divides $a$. As ${\rm gcd}(a,b,c,d)=1$, we get that ${\rm gcd}(b,c,d)=1$.
Since the $V$-face $v=A$ 
contributes to $t_0$, we have also 
$a|(b+c+d)$. Then by $N( \tau )=2a$, we obtain 
$t_0=1$ (which is excluded) or 
$t_0=-1$.
We study what happens when 
$t_0=-1$. This implies that 
$2a | (b+c+d)$. 
As $\Vol_{\ZZ}(\tau)=2$, by Proposition \ref{PLT} (2) 
the even integers $2a,2b,2c$ and $2d$ are the 
$3 \times 3$ minors of the matrix
 $$ \left( \begin{array}{c}
 PA \\
 QA \\
 RA
 \end{array} \right) = \left( \begin{array}{cccc}
 -2 & 0 &p_2 &p_3 \\
 -2 &q_1 &0 &q_3 \\
 -2 &r_1 &r_2 &0
 \end{array} \right),$$
 and hence the expressions for $a,b,c$ and $d$ become
 \begin{eqnarray*}
 a & =& \frac{q_1r_2p_3+r_1p_2q_3}{2}, 
\qquad b  = r_2p_3+p_2q_3-q_3r_2, 
 \end{eqnarray*}
 \vspace*{-1cm}
 \begin{eqnarray*}
 c &= & p_3q_1+q_3r_1-r_1p_3, \qquad d  = q_1r_2+r_1p_2-p_2q_1.
 \end{eqnarray*}
For the 
integer $k=(b+c+d)/a$ the vector $(-k,1,1,1)$ is a rational 
linear combination of 
$\overrightarrow{AP}, \overrightarrow{AQ}$ 
and $\overrightarrow{AR}$, because $(-k,1,1,1)\cdot(a,b,c,d)=0$, and $\overrightarrow{AP}, \overrightarrow{AQ}$ 
and $\overrightarrow{AR}$ generate the orthogonal complement to $(a,b,c,d)$.

Let $H \simeq \RR^3$ be the affine hyperplane in $\RR^4$ 
containing $\tau$ and consider the lattice 
$L:=H \cap \ZZ^4 \simeq \ZZ^3$ in it. For the 
integer $k=(b+c+d)/a$, we have $(-k,1,1,1) \in L$. Since 
the normalized volume of $\tau$ is $2$, the 
sublattice $K$ of $L$ generated by the three 
vectors $\overrightarrow{AP}, \overrightarrow{AQ}$ 
and $\overrightarrow{AR}$ is of index $2$ in $L$, i.e.
$[L:K]=2$.

This means there exist integers $x,y,z$ such that
 \begin{equation}\label{eqM}
 \left( \begin{array}{ccc}
 -2& -2& -2 \\
 0 &q_1 &r_1 \\
p_2 &0 &r_2 \\
p_3 &q_3 &0
 \end{array} \right)\left( \begin{array}{c}
 x \\y \\z
 \end{array} \right)=\left( \begin{array}{c}
-2k \\
2 \\
2 \\
2
\end{array} \right).
\end{equation}
We define a matrix $M$ by 
\begin{equation*}
M= \left( \begin{array}{ccc}
 0 &q_1 &r_1 \\
p_2 &0 &r_2 \\
p_3 &q_3 &0
 \end{array} \right). 
\end{equation*}
Then by Cramer's rule we find that
\begin{equation*}
x= \frac{ 2(r_2 q_1 + r_1 q_3 - r_2 q_3)}{ {\rm det} 
(M)} \quad \mbox{,} 
\quad y=  \frac{ 2(r_2 p_3 + r_1 p_2 - 
r_1 p_3)}{  {\rm det} (M)},
\end{equation*}
\begin{equation*}
\mbox{and} \quad z= \frac{ 2(q_1p_3+p_2q_3-
p_2q_1)}{  {\rm det} (M)}.
\end{equation*}

Now we study the possible signs of $x,y$ and $z$. 
If $p_2 \geq p_3$ and $q_1 \geq q_3$, then $y > 0$ and $x > 0$.
If $p_2 \geq p_3$ and $q_1 \leq q_3$, then $y>0$ and $z > 0$.
If $p_2 \leq p_3$  and $r_1 \leq r_2$, then $z > 0$ and $y > 0$ 
and so on. Thus we find that at least two of the integers 
$x,y$ and $z$ are always positive. By permuting them, 
we may assume that $x > 0$ and $y > 0$. 
As none of $p_2,p_3,q_1,q_3,r_1,r_2$ is equal to $0$, 
the equation $p_3x+q_3y=2$ obtained by \eqref{eqM} 
implies that $p_3=q_3=1$ and $x=y=1$. 
Consequently we get 
\begin{equation*}
a = \frac{q_1r_2+r_1p_2}{2}, 
\ b  = p_2, \ c =  q_1, \ d  
= q_1r_2+r_1p_2-p_2q_1
\end{equation*}
and ${\rm det} (M)= q_1r_2 +r_1p_2$. 
As we supposed that $2a |(b+c+d)$, we have 
\begin{equation*}
(q_1r_2 +r_1p_2) | (p_2+q_1+q_1r_2+r_1p_2-p_2q_1) 
 \ \Longleftrightarrow \ 
{\rm det} (M) | (p_2+q_1-p_2q_1)
\end{equation*}
and $z$ is an even integer. 
Hence, again by \eqref{eqM} and by using that $x=y=1$, 
we find that $p_2$ and $q_1$ should be even. 
Then we have 
\begin{equation*}
{\rm gcd} (b,c,d) = {\rm gcd} 
(p_2,q_1,q_1r_2+r_1p_2-p_2q_1) \geq 2. 
\end{equation*}
However, it contradicts  
${\rm gcd} (b,c,d) =1$. 
This completes the proof. 
\end{proof}

Recall that, for a $V$-face $\tau$, we define
\begin{equation*}
\zeta_{\tau} (t) = 
\Bigl( 1-t^{N( \tau )} \Bigr)^{\Vol_{\ZZ}( \tau )} 
\in \CC [t]. 
\end{equation*}

\begin{lemma} \label{lemmatau5}
1. Let $\tau =APQR$ be a 
$3$-dimensional lattice 
simplex in a compact facet of 
$\Gamma_+(f)$ 
such that 
$v=A(\alpha,0,0,0)$ and $\sigma=PQR$ are 
V-faces. 
Then 
\begin{equation*}
\frac{\zeta_{\tau}(t) \cdot (1-t)}{\zeta_v(t) \cdot 
\zeta_{\sigma}(t)} 
\end{equation*}
is a polynomial.

2. If $\Vol_{\ZZ}(\tau) = \Vol_{\ZZ}(\sigma)$, then 1 is the only common root of the polynomials in the denominator. 
\end{lemma}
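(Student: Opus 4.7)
The plan is to factor
\begin{equation*}
\frac{\zeta_\tau(t) \cdot (1-t)}{\zeta_v(t) \cdot \zeta_\sigma(t)} \;=\; \frac{\zeta_\tau(t)}{\zeta_\sigma(t)} \cdot \frac{1-t}{1-t^\alpha}
\end{equation*}
and use Proposition~\ref{Key-1} to handle the first factor. First I would observe that $\sigma \subset \{v_1 = 0\}$. Since $\sigma$ is a $V$-face of dimension $2$, we have $s_\sigma = 3$, so $\sigma$ lies in a unique coordinate hyperplane $\{v_j = 0\}$; if $j \geq 2$, then $A = (\alpha, 0, 0, 0)$ also satisfies $v_j = 0$, forcing $\tau \subset \{v_j = 0\}$ and hence $N(\tau) = 0$, which is impossible for a compact face of $\Gamma_+(f)$. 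Thus $\sigma = \tau \cap \{v_1 = 0\}$ is a facet of $\tau$, and Proposition~\ref{Key-1} shows that $\zeta_\tau/\zeta_\sigma \in \CC[t]$ is already a polynomial. It therefore remains to verify that this polynomial is divisible by $(1-t^\alpha)/(1-t) = 1 + t + \cdots + t^{\alpha-1}$.

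Next I would compare multiplicities at primitive $k$-th roots of unity, using $\alpha \mid N(\tau)$ (from $A \in \tau$) and $N(\sigma) \mid N(\tau)$ (Lemma~\ref{LLT}). The cases $k \mid \alpha$, $k \nmid N(\sigma)$ are automatic, since they reduce to $\Vol_{\ZZ}(\tau) \geq 1$; the cases $k = 1$ and ($k \nmid \alpha$, $k \mid N(\sigma)$) both reduce to $\Vol_{\ZZ}(\tau) \geq \Vol_{\ZZ}(\sigma)$; and the delicate case, where $k \mid \gcd(\alpha, N(\sigma))$ for some $k > 1$, demands the strict inequality $\Vol_{\ZZ}(\tau) \geq \Vol_{\ZZ}(\sigma) + 1$.

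To treat both inequalities uniformly, I would compute the ratio $\Vol_{\ZZ}(\tau)/\Vol_{\ZZ}(\sigma)$. Let $a(\tau) = (a_1, a_2, a_3, a_4)$ be the primitive conormal of $\tau$ and set $d = \gcd(a_2, a_3, a_4)$. Restricting the equation $\sum a_i v_i = N(\tau)$ of $\mathrm{aff}(\tau)$ to $\{v_1 = 0\}$ gives the equation of $\mathrm{aff}(\sigma)$, yielding $N(\sigma) = N(\tau)/d$; the condition $A \in \tau$ gives $N(\tau) = a_1 \alpha$; and the existence of lattice points $P, Q, R$ in $\mathrm{aff}(\sigma) \cap \ZZ^4$ forces $d \mid N(\tau)$, whence $d \mid \alpha$ via $\gcd(a_1, d) = 1$. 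Viewing $\widehat{\tau} = \mathrm{conv}(\{0\} \cup \tau)$ both as a pyramid with apex $0$ over $\tau$ and as a pyramid with apex $A$ over $\widehat{\sigma} = \mathrm{conv}(\{0\} \cup \sigma) \subset \{v_1 = 0\}$, the lattice pyramid formula gives
\begin{equation*}
N(\tau) \Vol_{\ZZ}(\tau) \;=\; \Vol_{\ZZ}(\widehat{\tau}) \;=\; \alpha \Vol_{\ZZ}(\widehat{\sigma}) \;=\; \alpha N(\sigma) \Vol_{\ZZ}(\sigma),
\end{equation*}
so $\Vol_{\ZZ}(\tau) = (\alpha/d) \Vol_{\ZZ}(\sigma)$. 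Since $\alpha/d \geq 1$, this immediately settles the non-delicate cases.

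The main obstacle is the delicate case, which I would resolve by contradiction. If $\alpha = d$, then $N(\sigma) = N(\tau)/d = a_1$, so $\gcd(\alpha, N(\sigma)) = \gcd(d, a_1) = 1$ (using $\gcd(a_1, a_2, a_3, a_4) = 1$), contradicting the existence of $k > 1$ with $k \mid \gcd(\alpha, N(\sigma))$. Hence $\alpha > d$, i.e., $\alpha/d \geq 2$, so $\Vol_{\ZZ}(\tau) \geq 2 \Vol_{\ZZ}(\sigma) \geq \Vol_{\ZZ}(\sigma) + 1$, completing the argument.
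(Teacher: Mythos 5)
Your proof is correct and follows essentially the same route as the paper, resting on the same arithmetic facts: $N(\tau) = a_1\alpha$, $N(\sigma) = N(\tau)/\gcd(a_2,a_3,a_4)$, and $\gcd(a_1, \gcd(a_2,a_3,a_4)) = 1$ (from the primitivity of $a(\tau)$). The only notable difference is that you derive the pyramid-volume identity $\Vol_{\ZZ}(\tau) = (\alpha/\gcd(a_2,a_3,a_4))\,\Vol_{\ZZ}(\sigma)$ explicitly and run a uniform root-of-unity multiplicity check, whereas the paper splits on $\Vol_{\ZZ}(\tau) > \Vol_{\ZZ}(\sigma)$ versus equality and asserts the equivalence $\Vol_{\ZZ}(\tau) = \Vol_{\ZZ}(\sigma) \Leftrightarrow \gcd(N(v), N(\sigma)) = 1$ without derivation before observing that the only common zero of $\zeta_v$ and $\zeta_\sigma$ is $t=1$.
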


\begin{proof}
If $\Vol_{\ZZ}(\tau) > \Vol_{\ZZ}(\sigma)$ 
then the assertion is obvious. 
So suppose that $\Vol_{\ZZ}(\tau)= \Vol_{\ZZ}(\sigma)$. 
Let 
\begin{equation*}
{\rm aff} (\tau) : \  
a x_1 + b x_2 + cx_3 + dx_4 = N(\tau)
\end{equation*}
be the equation of ${\rm aff} (\tau)$ 
with ${\rm gcd}(a,b,c,d)=1$. 
Since we have 
\begin{equation*}
N(v)= \alpha = \frac{N( \tau )}{a} , \quad 
N( \sigma )= \frac{N( \tau )}{{\rm gcd}(b,c,d)}
\end{equation*}
and ${\rm gcd}(a,b,c,d)=1$, the equivalent (by Proposition \ref{PLT} and Lemma \ref{LLT}) conditions 
\begin{equation*}
\Vol_{\ZZ}(\tau)= \Vol_{\ZZ}(\sigma) \ 
\Longleftrightarrow \ 
{\rm gcd}( N(v), N( \sigma ))=1 
\end{equation*}
imply that $N( \tau )= a \cdot 
{\rm gcd}(b,c,d)$, $N( \sigma )=a$ 
and $N(v)= \alpha = {\rm gcd}(b,c,d)$. 
Hence we get 
\begin{equation*}
\frac{\zeta_{\tau}(t)}{\zeta_v(t) \cdot 
\zeta_{\sigma}(t)} 
=
\frac{(1-t^{ a \cdot \alpha }
)^{\Vol_{\ZZ}(\sigma)}}{
(1-t^{ \alpha }) \cdot (1-t^{a}
)^{\Vol_{\ZZ}(\sigma)
}}.
\end{equation*}
The only common zero of $\zeta_v(t)$ and 
$\zeta_{\sigma}(t)$ is equal to $1$, because ${\rm gcd}(a,\alpha)=1$. Thus the denominator divides the numerator.
\end{proof}

\begin{lemma}\label{FITH}
Let $\tau=APQR$ with $A(\alpha,0,0,0), P(0,0,p_2,p_3), 
Q(0,q_1,0,q_3), R(0,r_1,r_2,0)$ 
be a $3$-dimensional lattice 
simplex in a compact facet of 
$\Gamma_+(f)$. 
Assume that $v=A$ 
and $\sigma =PQR$ are its proper
contributing $V$-faces to $
t_0 = e^{-2 \pi i \nu ( \tau )/N ( \tau )} \not= 1$. 
Then for the polynomial 
\begin{equation*}
F(t)= \frac{\zeta_{\tau}(t) \cdot 
(1-t)}{\zeta_v(t) \cdot 
\zeta_{\sigma}(t)} \in \CC [t] 
\end{equation*}
(see Lemma \ref{lemmatau5}) we have 
$F( t_0 )=0$. 
\end{lemma}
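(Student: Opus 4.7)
The plan is to read off the multiplicity of $\lambda$ in $F(t) = \zeta_{\tau}(t)(1-t)/(\zeta_v(t)\zeta_\sigma(t))$ directly from the factorizations $\zeta_\tau = (1-t^{N(\tau)})^{\Vol_\ZZ(\tau)}$, $\zeta_v = 1-t^{\alpha}$, $\zeta_\sigma = (1-t^{N(\sigma)})^{\Vol_\ZZ(\sigma)}$. Since $\lambda \neq 1$ while $\lambda^\alpha = \lambda^{N(\sigma)} = \lambda^{N(\tau)} = 1$, this multiplicity equals $m := \Vol_\ZZ(\tau) - 1 - \Vol_\ZZ(\sigma)$, so it suffices to show $m \geq 1$. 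Letting $(a,b,c,d)$ be the primitive conormal to ${\rm aff}(\tau)$ as in the proof of Lemma \ref{lemmatau5}, and setting $e := {\rm gcd}(b,c,d)$ (which divides $\alpha$ because ${\rm gcd}(a,e) = 1$), expansion of the $4\times 4$ determinant computing $\Vol_\ZZ(\widehat{\tau})$ along the $A$-row yields $\Vol_\ZZ(\widehat{\tau}) = \alpha \cdot \Vol_\ZZ(\widehat{\sigma})$, whence the key identity $\Vol_\ZZ(\tau) = (\alpha/e)\Vol_\ZZ(\sigma)$ and $m = (k-1)\Vol_\ZZ(\sigma) - 1$ with $k := \alpha/e \in \ZZ_{\geq 1}$.

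The case $k = 1$ is excluded by $\lambda \neq 1$, since it would give ${\rm gcd}(\alpha, N(\sigma)) = {\rm gcd}(e,a) = 1$ and hence $\lambda = 1$; the case $k \geq 3$ gives $m \geq 2\Vol_\ZZ(\sigma) - 1 \geq 1$ immediately. The whole content of the lemma lies in the case $k = 2$, where I must prove $\Vol_\ZZ(\sigma) \geq 2$. Here $\lambda$ has order dividing ${\rm gcd}(\alpha, N(\sigma)) = 2$, forcing $\lambda = -1$; a parity check on $\nu(\tau)/N(\tau) = (a+b+c+d)/(2ae)$ then forces $e = 1$ and $\alpha = 2$. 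The assumption that $v$ and $\sigma$ exhaust the contributing $V$-faces forces positivity $p_2, p_3, q_1, q_3, r_1, r_2 \geq 1$: otherwise (say $p_2 = 0$) the edge $AP$ would be a $V$-face with $N(AP) = 2p_3/{\rm gcd}(2, p_3)$ even, making $\lambda = -1$ a root of $\zeta_{AP}$. Combined with positivity, the relations $cp_2+dp_3 = bq_1+dq_3 = br_1+cr_2 = 2a$ yield $b+c, b+d, c+d \leq 2a$, and together with the integrality and oddness of $(a+b+c+d)/a$ forced by $\lambda = -1$ this pins down $b+c+d = 2a$.

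The final step is to exhibit $(0,1,1,1)$ as a lattice point of $\sigma$ distinct from $P, Q, R$, which immediately gives $\Vol_\ZZ(\sigma) \geq 2$. It lies on ${\rm aff}(\sigma)$ since $b+c+d = 2a = N(\sigma)$. To show it lies inside the triangle $\sigma$, I would solve the barycentric system by Cramer's rule against the matrix with rows $(0,q_1,r_1)$, $(p_2,0,r_2)$, $(p_3,q_3,0)$ (whose determinant is $\Vol_\ZZ(\widehat{\sigma})$). Substituting the three defining relations together with $b+c+d = 2a$ collapses the three numerators to $(2a/b)[1-(q_3-1)(r_2-1)]$, $(2a/c)[1-(p_3-1)(r_1-1)]$, $(2a/d)[1-(p_2-1)(q_1-1)]$, so nonnegativity of all three barycentric coordinates reduces to the three inequalities $(p_2-1)(q_1-1), (p_3-1)(r_1-1), (q_3-1)(r_2-1) \leq 1$. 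Rewriting the defining relations as $c(p_2-1)+d(p_3-1) = b$, $b(q_1-1)+d(q_3-1) = c$, $b(r_1-1)+c(r_2-1) = d$, if say $p_2 \geq 2$ and $q_1 \geq 2$, then $c \leq b$ and $b \leq c$ force $b = c$ and then $p_2 = q_1 = 2$, $p_3 = q_3 = 1$, so $(p_2-1)(q_1-1) = 1$; the other two cases are symmetric.

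The hardest step will be the Cramer-plus-combinatorics argument in the case $k = 2$; the rest of the proof is short case analysis. The combinatorial core is the derivation $b+c+d = 2a$ from positivity and $\lambda = -1$, together with the three bounds $(p_i-1)(q_j-1) \leq 1$ which make the barycentric coordinates of $(0,1,1,1)$ nonnegative.
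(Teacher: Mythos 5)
Your proof is correct, and it diverges from the paper's own argument in interesting ways; let me compare.

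Both proofs reduce to showing that the multiplicity of $t-\lambda$ in $F$, namely $\Vol_\ZZ(\tau)-1-\Vol_\ZZ(\sigma)$, is at least $1$, with the hard case being $\alpha=2$, $\Vol_\ZZ(\sigma)=1$, $\Vol_\ZZ(\tau)=2$. But you reach this configuration differently and then dispose of it differently. The paper first invokes the lattice-height argument from the proof of Proposition \ref{Key-1} (since $\sigma$ contributes, $\Vol_\ZZ(\tau)=\alpha\Vol_\ZZ(\sigma)$), which forces $\gcd(b,c,d)=1$ right away. You instead use only the elementary determinant identity $\Vol_\ZZ(\widehat{\tau})=\alpha\Vol_\ZZ(\widehat{\sigma})$, getting $\Vol_\ZZ(\tau)=(\alpha/e)\Vol_\ZZ(\sigma)$ with $e=\gcd(b,c,d)$, and then recover $e=1$ only in the case $k=\alpha/e=2$, via the parity argument $\lambda=-1\Rightarrow ae\mid(a+b+c+d)\Rightarrow e\mid a\Rightarrow e=1$ (valid since $e\mid b+c+d$ and $\gcd(a,e)=1$). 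This bypasses the appeal to Proposition \ref{Key-1}'s geometry. More importantly, in the hard case the paper assumes $\Vol_\ZZ(\tau)=2$, argues that the system $M(x,y,z)^T=(2,2,2)^T$ has an integer solution, studies signs to force $p_3=q_3=1$, $x=y=1$, and then derives a $2$-adic contradiction with $\gcd(b,c,d)=1$. You instead extract the clean relation $b+c+d=2a$ (combining $\lambda=-1$, i.e. $2a\mid(b+c+d)$, with $b+c,b+d,c+d\leq 2a$), then exhibit $(0,1,1,1)$ as a lattice point of $\sigma$ other than $P,Q,R$ via barycentric coordinates, forcing $\Vol_\ZZ(\sigma)\geq 2$ and the contradiction. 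I have checked the identities $b(q_1r_2+q_3r_1-q_3r_2)=2a(q_3+r_2-q_3r_2)$ etc., which use $cp_2+dp_3=bq_1+dq_3=br_1+cr_2=b+c+d=2a$, as well as the bounds $(p_2-1)(q_1-1)\leq 1$ etc., and they are all correct. Your geometric endgame (an explicit extra lattice point of $\sigma$) is arguably more conceptual than the paper's parity contradiction, and the observation $b+c+d=2a=N(\sigma)$, which makes $(0,1,1,1)\in\mathrm{aff}(\sigma)$ immediate, is a nice find.

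One small caveat: the positivity $p_2,p_3,q_1,q_3,r_1,r_2\geq 1$ that you use is, as in the paper, taken as implicit from the hypotheses (if one of them vanished, $\tau$ would have additional $V$-faces, conflicting with the assumption that $v$ and $\sigma$ are \emph{the} $V$-faces of $\tau$). Your justification via the edge $AP$ being a contributing $V$-face when $p_2=0$ is sound. Likewise the strict positivity of $a,b,c,d$ (used for the bounds $b+c\leq 2a$, etc.) comes from $\tau$ lying in a compact facet, which you use correctly.
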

\begin{example}
The only $V$-facet of the function $f(x_1,x_2,x_3,x_4)=x_1^2+x_2^3x_3^3+x_2^3x_4^3+x_3^3x_4^3$ 
has the extended family as in Lemma \ref{FITH} and contributes the pole $s_0=-3/4$. 
One easily checks that the multiplicity of the corresponding monodromy eigenvalue $t_0=e^{-8\pi i/3}$ equals $18-9+1=10\ne 0$.
\end{example}
\begin{proof} Writing $F(t)$ by the definition as
\begin{equation*}
F(t)
=
\frac{(1-t^{ N(\tau) }
)^{ \Vol_{\ZZ}(\tau)} 
\cdot (1-t)}{
(1-t^{ N(v) })^1 \cdot (1-t^{N(\sigma)}
)^{\Vol_{\ZZ}(\sigma)
}},
\end{equation*}
the statement is obvious if $\Vol_{\ZZ}(\tau) > 
\Vol_{\ZZ}(\sigma)+1$.
Since $\Vol_{\ZZ}(\sigma)$ divides $\Vol_{\ZZ}(\tau)$ by Proposition \ref{PLT}, 
the only exceptions from the first inequality would be the cases

1) $\Vol_{\ZZ}(\tau) = 
\Vol_{\ZZ}(\sigma )$ or

2) $\Vol_{\ZZ}(\tau) = 2,\, 
\Vol_{\ZZ}(\sigma)=1$.


In the first case the sought statement follows from Lemma \ref{lemmatau5}.2. Suppose now that $\Vol_{\ZZ}(\tau) = 2,\, 
\Vol_{\ZZ}(\sigma)=1$. Let 
\begin{equation*}
{\rm aff} (\tau) : \  ax_1+bx_2+cx_3+dx_4
=N(\tau)=a \cdot \alpha
\end{equation*}
be the equation of ${\rm aff} (\tau)$ 
with ${\rm gcd}(a,b,c,d)=1$. If the $V$-face $\sigma$ 
does not contribute to $t_0$, then obviously $F(t_0)=0$. If the $V$-face $\sigma$ 
does contribute to $t_0$, then
$$N(\sigma)\frac{(a+b+c+d)}{N(\tau)} \in \ZZ.$$ As $N(\sigma)=N(\tau)/{\rm gcd}(b,c,d)$, this would imply that
$$\frac{(a+b+c+d)}{{\rm gcd}(b,c,d)} \in \ZZ.$$
As ${\rm gcd}(a,b,c,d)=1$,
one gets that ${\rm gcd}(b,c,d)=1$ and hence $N(\sigma)=N(\tau)$.
As $$\Vol_{\ZZ}(\tau) =\frac{\alpha \cdot {\rm det} (M)}{N(\tau)}=2 
\quad \mbox{ and } \quad \Vol_{\ZZ}(\sigma) =\frac{{\rm det} (M)}{N(\sigma)}=1,$$
where
\begin{equation*}
M= \left( \begin{array}{ccc}
 0 &q_1 &r_1 \\
p_2 &0 &r_2 \\
p_3 &q_3 &0
 \end{array} \right),
\end{equation*} we find $\alpha=2$.
This case is excluded by Lemma \ref{lemmalinalg2}.

\end{proof}

It now remains to study contributions of $B$-borders (Definition \ref{defborder}), generalizing
the proof of Theorem 15 of \cite{L-V}.

\begin{lemma}\label{otherconfigB1}
Assume that $f(x) \in \mathbb{C}[x_1,\ldots,x_4]$ is 
non-degenerate at the origin $0 \in \mathbb{C}^4$.
Let $\tau_1$ and 
$\tau_2$ be compact $B$-facets in $\Gamma_+(f)$ 
intersecting in a 
$B$-border $AA_1A_2$ with a $V$-edge $A_1A_2$, and contributing the same candidate pole 
$s_0 \neq 1$.

Assume additionally for every $A_i$ the following: if it is a $V$-vertex, contributing to the eigenvalue $t_0=\exp(-2\pi i s_0)$, then the edge $AA_i$ is in a coordinate plane.

Then the contribution of the family of the $V$-edge $A_1A_2$ of this border to the multiplicity of the corresponding eigenvalue $t_0=\exp(-2\pi i s_0)$ is positive, i.e.
for the polynomial 
\begin{equation*}
F(t)= \frac{\zeta_{A_1A_2}(t)\cdot (1-t)}{\zeta_{A_1}(t) \cdot 
\zeta_{A_2}(t)} \in \CC [t] 
\end{equation*}
we have $F( t_0 )=0$.

\end{lemma}

 \begin{proof}
Without loss of generality, we can assume 
that both $\tau_1$ and $\tau_2$ are compact 
simplicial $B_1$-facets: indeed, if $\tau_i$ 
is a $B_2$-facet, then it contains a 
$B_1$-tetrahedron with the same $B$-border, 
and we can consider this $B_1$-tetrahedron instead of $\tau_i$.

We redenote $\tau_1=ABCD$ and $\tau_2=ABCE$ such that $A=(1,1,\alpha_3,\alpha_4), 
B=(0,0,\beta_3,\beta_4), C=(0,0,\gamma_3,\gamma_4), 
D=(0,\delta_2,\delta_3,\delta_4)$ and $E=(\e_1, 0,
\e_3, \e_4)$. By computing the equation of the affine space passing through $A,B,C$ and $D$, we compute the candidate pole $s_0$ contributed by $\tau_1$ and $\tau_2$ :
$$s_0 =  \frac{(\gamma_4-\beta_4)(\alpha_3-\beta_3-1)+(\beta_3-\gamma_3)(\alpha_4-\beta_4-1)}{\beta_3(\gamma_4-\beta_4)+\beta_4(\beta_3-\gamma_3)}.$$
As the lattice index of the V-segment $BC$ is equal to the absolute value of
$$\frac{\beta_3(\gamma_4-\beta_4)+\beta_4(\beta_3-\gamma_3)}{{\rm gcd}(\gamma_4-\beta_4,\beta_3-\gamma_3)},$$
the V-face $BC$ contributes to $t_0$.
If neither $B$ nor $C$ contribute to the eigenvalue $t_0$, then the statement is obvious by projecting along $BC$.

So first suppose that $B$ is a contributing V-vertex 
and $C$ is not. Say $\beta_3=0$.
Then, by the additional assumption in the statement of the lemma, we have $A=(1,1,0,\alpha_4)$, so the affine space 
passing through the facet $ABCD$ has as equation:
\[x_1\left[\gamma_3(\delta_4-\beta_4) - \delta_3(\gamma_4-\beta_4) 
- \delta_2\gamma_3(\alpha_4-\beta_4)\right]+x_2\left[
\delta_3(\gamma_4-\beta_4)-
\gamma_3(\delta_4-\beta_4)\right]+\]\[x_3\left[\delta_2
(\beta_4-\gamma_4)\right]+
x_4(\gamma_3\delta_2)=\beta_4\gamma_3\delta_2.\]
The corresponding candidate pole is then
\[\frac{\gamma_3(\alpha_4-\beta_4)+\gamma_4-\beta_4-
\gamma_3}{\beta_4\gamma_3}.\]
As $B$ also contributes to $t_0$, one has 
that $\gamma_3$ divides $\beta_4-\gamma_4$. Notice that $\mbox{Vol}_{\mathbb{Z}}(BC)=\gamma_3$ and that $\gamma_3>1$ (otherwise we have a $B^2$-border).

We now suppose that both $B$ and $C$ are contributing V-vertices to $t_0$. Then, by the additional assumption in the statement of the lemma, we have $A=(1,1,0,0)$, so as before one gets 
$\mbox{Vol}_{\mathbb{Z}}(BC)>1$. As $C$ contributes to $t_0$, 
one would have a cancelation if $\mbox{Vol}_{\mathbb{Z}}(BC)=2$. Now
\[s_0=\frac{-\gamma_3\beta_4-\beta_4-\gamma_3}{
\beta_4 \gamma_3}\] and
so $\beta_4$ should divide $\gamma_3$. We have 
$\mbox{Vol}_{\mathbb{Z}}(BC)=gcd(\beta_4,\gamma_3)=\gamma_3$, 
because $B$ contributes to $t_0$. If $\mbox{Vol}_{\mathbb{Z}}(BC)=2$, 
then $\gamma_3=2$ and $\beta_4 = 2$ (otherwise we have a $B^2$-border), so $t_0=1$.

\end{proof}

\section{Appendix: some elements of lattice geometry}\label{appendix}
We recall some basic notions and facts about the geometry of ${\mathbb Z}^n$ that we use throughout the paper.
A latticed space is a real affine space $A$ with an integer lattice $L\subset A$ such that $\dim L=\dim A$. For instance:

-- ${\mathbb R}^n$ will be always considered a latticed space with the lattice ${\mathbb Z}^n$;

-- A rational affine subspace $A\subset {\mathbb R}^n$ will be always considered a latticed space with the lattice $A\cap{\mathbb Z}^n$;

-- The quotient space of ${\mathbb R}^n$ along its rational affine subspace $A$ (i.e. ${\mathbb R}^n/(A-a)$ for $a\in A$) will be always considered a latticed space, whose lattice is the image of ${\mathbb Z}^n$ under the quotient map.

A lattice polytope in a latticed space $A$ is a polytope all of whose vertices belong to the lattice.

The lattice volume form on a latticed space $A$ with the lattice $L$ is the volume form such that the volume of $A/L$ equals $(\dim A)!$, or, equivalently, the minimal positive volume form such that the volume of every lattice polytope in $A$ is integer.

A segment in a latticed space $A$ is said to be primitive, if its end points are the only lattice points that it contains. The lattice distance between two lattice points $a$ and $b$ is the number of primitive segments into which the lattice points subdivide the segment $ab$. In coordinates, the lattice distance between $a$ and $b\in{\mathbb Z}^n$ is the GCD of the coordinates of the difference $b-a$.

The lattice distance from a lattice affine subspace $A\subset{\mathbb R}^n$ to the origin can be defined in one of the following equivalent ways:
 
I) it is the lattice distance between 0 and the image of $A$ under the projection $p$ of ${\mathbb R}^n$ along $A$; 
in particular, if $A$ is a hypersurface given by an equation $a_1v_1+\cdots+a_nv_n=q$ with coprime integer coefficients $a_i$ and $q$, then the lattice distance from $A$ to 0 equals $|q|$.

II) It is the maximum of lattice distances between the points of $A$ and 0.

\begin{remark}\label{remaffdivide}
By the definition, the lattice distance from 0 to any point of $A$ divides the distance from $0$ to $A$. As a consequence, the distance from any affine subspace $A'\subset A$ divides the distance from $0$ to $A$. 
\end{remark}

The lattice distance of a lattice polytope $P$ in ${\mathbb R}^n$ to the a lattice point $a$ is defined as the lattice distance from the affine hull of the shifted polytope $P-a$ to the origin.


Metric computations with lattice length and distances naturally translate into the lattice setting. For instance, the following statements directly follows from their well known metric versions: 

\begin{proposition}\label{PLT}
1) The lattice volume of a lattice pyramid in ${\mathbb R}^n$ equals the lattice volume of its base times the lattice distance from the base to the apex.

2) The lattice volume of an $(n-1)$-dimensional simplex in $\ZZ^n$ generated by vectors $v_1\ldots,v_{n-1}$ is equal to the lattice length of the vector product $v_1\wedge\cdots\wedge v_{n-1}$.
\end{proposition}

Denote the lattice distance from a lattice polytope $P\in{\mathbb R}^n$ to the origin by $N(P)$.
\begin{remark}\label{remaffdivide2}
1) If $\pi$ is the projection of ${\mathbb R}^n$ along an affine subspace of the affine hull of the polytope $P$, then $N(P)=N(\pi(P))$ by the  definition of the lattice distance.

2) If $j$ is the embedding ${\mathbb R}^n\to{\mathbb R}^n\oplus{\mathbb R}^m$, then $N(P)=N(j(P))$.
\end{remark}

\begin{lemma}\label{LLT} 
1) For a lattice polytope $\tau\subset{\mathbb R}^n$ and its face $\gamma$,
we have $N( \gamma ) | N( \tau )$. 

2) For lattice polytopes $\gamma\subset{\mathbb R}^k$ and $\gamma'\subset{\mathbb R}^{n-k}$ and their affine hull $\tau\subset {\mathbb R}^k\oplus{\mathbb R}^{n-k}$, we have $N(\tau)=LCM(N(\gamma),N(\gamma'))$.

3) If $\gamma'$ is a point in the setting of Part 2, then the lattice distance from $\gamma'$ to $\gamma$ equals $GCD(N(\gamma),N(\gamma'))$.
\end{lemma} 

\begin{proof} Part 1 rephrases Remark \ref{remaffdivide}. Parts 2 and 3 are obvious if $\gamma$ and $\gamma'$ are points and $k=n-k=1$. The general case reduces to this one by Remark \ref{remaffdivide2} for the projections of ${\mathbb R}^k$ and ${\mathbb R}^{n-k}$ along the affine hulls of $\gamma$ and $\gamma'$.
\end{proof}

Besides these geometric observations, in the study of exotic families, we use the following observation from the integer linear algebra. Let us first fix notation. For a square matrix $A$ of size $n \times n$ and for $I, J \subset \{1,\ldots,n\}$ with same cardinality, we denote $A^I_J$ for the minor of $A$ removing the rows with index in the set $I$ and 
removing the columns with index in the set $J$. 
When $I=\{x_i\}$ is a singleton, we will also write $A^{x_i}_J$ and idem for $J$.
\begin{lemma}\label{b}
Let $A \in M_{n}(\ZZ)$ be a square matrix of size $n \times n$ with integer entries.  Let $P=(P_1,\ldots,P_n)^t$ be a primitive vector of size $n$ such that $AP=0$.
Then $P_n$ divides the minors $A^{i}_{n}$, for $  1 \leq  i \leq n$.
\end{lemma}

\begin{proof}
We proceed by induction on $n$. One easily verifies that the statement is true when $n=2$.
We now take a matrix $A  \in M_{n}(\ZZ)$ of size $n > 2$. By elementary row operations over $\ZZ$, we transform the matrix $A$ in a matrix $B$ having first column $(k,0,\ldots,0)^t,$ with $k \in \ZZ$.
Then it still holds that $BP=0$. We set $d={\rm gcd}(P_2,\ldots,P_n)$. As $P$ is a primitive vector, we deduce that $d$ divides $k$.
By the induction hypothesis we have that $P_n/d$ divides the minors $B^{\{1,i\}}_{\{1,n\}}$, for all $i \in \{2,\ldots,n\}$. Hence $P_n$ divides $kB^{\{1,i\}}_{\{1,n\}}=B^{i}_{n}$, for all $i \in \{2,\ldots,n\}$. As $B^{1}_{n}=0$, we get
that $P_n$ divides $B^{i}_{n}$, for all $i \in \{1,\ldots,n\}$ and so $P_n$ also divides the original minors $A^{i}_{n}$, for all $i \in \{1,\ldots,n\}$.
\end{proof}

\noindent
\textsc{A. Esterov\\ National Research University 
Higher School of Economics \\
Faculty of Mathematics NRU HSE, 6 Usacheva, 119048,
Moscow, Russia} \\

\vspace{-0.4cm}
\noindent \emph{E-mail address}: aesterov@hse.ru
\\ \\
\noindent
\textsc{A. Lemahieu\\ Universit\'e C\^ote d'Azur, CNRS, Laboratoire J.-A. Dieudonn\'e \\UMR CNRS 7351, Parc Valrose, 06108 Nice Cedex 02, France
}\\

\vspace{-0.4cm}
\noindent \emph{E-mail address}: ann.lemahieu@unice.fr
\\ \\
\noindent
\textsc{K. Takeuchi\\ 
Mathematical Institute, Tohoku University\\
Aramaki Aza-Aoba 6-3, Aobaku, Sendai, 980-8578, Japan}\\

\vspace{-0.4cm}
\noindent \emph{E-mail address}: takemicro@nifty.com 

\end{document}